\newtheorem{theorem}{Theorem}    
\newtheorem{proposition}[theorem]{Proposition}
\newtheorem{corollary}[theorem]{Corollary}
\newtheorem{lemma}[theorem]{Lemma}
\newtheorem{conjecture}[theorem]{Conjecture}
\newtheorem{definition}{Definition}
\newtheorem{question}[definition]{Question}
\newtheorem{example}[definition]{Example}
 \numberwithin{theorem}{section}
 \numberwithin{definition}{section}
 \numberwithin{equation}{section}
\def\bff{\mathbf f}
\def\bx{\mathbf x}
\def\bh{\mathbf h}
\def\bk{\mathbf k}
\def\bn{\mathbf n}
\def\bg{\mathbf g}
\def\bs{\mathbf s}
\def\bp{\mathbf p}
\def\by{\mathbf y}
\def\bm{\mathbf m}
\def\bxi{{\mathbf \xi}}
\def\by{\mathbf y}
\def\bz{\mathbf z}
\newcommand{\norm}[1]{ \|  #1 \|}
\def\scriptd{{\mathcal D}}
\def\scripts{{\mathcal S}}
\def\scriptf{{\mathcal F}}
\def\scriptm{{\mathcal M}}
\def\scripte{{\mathcal E}}
\def\scripta{{\mathcal A}}
\def\scriptastar{{\mathcal A}^*}
\DeclareFontFamily{U}{mathx}{\hyphenchar\font45}
\DeclareFontShape{U}{mathx}{m}{n}{
	<5> <6> <7> <8> <9> <10>
	<10.95> <12> <14.4> <17.28> <20.74> <24.88>
	mathx10 
}{}
\DeclareSymbolFont{mathx}{U}{mathx}{m}{n}
\DeclareMathAccent{\widecheck}{0}{mathx}{"71}
\def\bk{\mathbf k}
\def\kernel{\operatorname{kernel}}
\def\eps{\varepsilon}
\def\reals{\mathbb R}
\def\naturals{\mathbb N}
\def\integers{\mathbb Z}
\def\one{\mathbf 1}
\def\complex{{\mathbb C}\/}
\def\lt{L^2}
\def\three{\{1,2,3\}}
\def\distance{\operatorname{distance}}
\begin{document}
\setcounter{tocdepth}{1}

\title [Trilinear oscillatory integral inequalities] 
{On Trilinear Oscillatory Integral Inequalities \\ and  Related Topics}

 \author{Michael Christ}

\address{
        Michael Christ\\
        Department of Mathematics\\
        University of California \\
        Berkeley, CA 94720-3840, USA}
\email{mchrist@berkeley.edu}

\date{February 16, 2022.}

\begin{abstract}
Inequalities are established for certain trilinear scalar-valued
functionals. These functionals act on measurable functions of one real variable,
are defined by integration over two-- or three--dimensional spaces,
and are controlled in terms of Lebesgue space norms of the functions,  
and in terms of negative powers 
of large parameters describing a degree of oscillation.
Related sublevel set inequalities are a central element of the analysis. 
\end{abstract}

\keywords{Multilinear functionals, oscillatory integrals, sublevel sets}

\subjclass[2010]{42B20, 26D15}

\dedicatory{In memoriam Elias~M.~Stein} 
\thanks{Research supported in part by NSF grants DMS-13363724 
and DMS-1901413.}

\maketitle
\tableofcontents

\section{Introduction}

Oscillatory integral operators, inequalities governing them,
curvature, and the interrelationships between these topics
are pervasive themes in the work of E.~M.~Stein, as for instance in
\cite{steinwainger},\cite{steinbook},\cite{steinICMvideo}.
In the present paper, we
investigate scalar-valued multilinear oscillatory integral forms
\begin{equation} \label{multioscform1}
T_\lambda^\phi(\bff) = \int_{B} e^{i\lambda \phi(\bx)} 
\prod_{j\in J} f_j(x_j)\,d\bx, 
\end{equation}
along with related forms and inequalities.
Here $B$ is a ball or product of balls in $(\reals^d)^J$,
$J$ is a finite index set of cardinality $|J|\ge 2$,
$\bff = (f_j: j\in J)$ is a tuple of 
rather arbitrary functions $f_j:\reals^d\to\complex$,
$\bx = (x_j: j\in J)\in(\reals^d)^J$,
$\phi:(\reals^d)^J\to\reals$ is a $C^\infty$ function,
and $\lambda\in\reals$ is a large parameter.
More generally, one can form
\begin{equation}  \label{multioscform2}
S_\lambda(\bff) = \int_{B\subset\reals^D} e^{i\lambda \phi(\bx)} 
\prod_{j\in J} (f_j\circ\varphi_j)(\bx)\,d\bx, 
\end{equation}
with $B\subset\reals^D$ a ball,
$\varphi_j:B\to \reals^d$ smooth submersions,
and with the cardinality of $J$ finite but 
$|J|d$ possibly large relative to $D$.
We seek upper bounds, for $|T_\lambda^\phi(\bff)|$ and for $|S_\lambda(\bff)|$,
that are small when $|\lambda|$ is large,
require no smoothness hypothesis on $\bff$,
are uniform over a large class of $\bff$,
reflect cancellation due to oscillation of $e^{i\lambda\phi}$ when $|\lambda|$
is large, and also reflect the influence of geometric and algebraic
effects implicit in  $(\phi, (\varphi_j: j\in J))$.
In this paper we establish such bounds,
and deduce an application to related multilinear
forms in which no oscillatory factors are overtly present.

\subsection{Background}
Inequalities of the form
\begin{equation} \label{Lpversion}
|T_\lambda^\phi(\bff)| \le C|\lambda|^{-\gamma} \prod_{j\in J} \norm{f_j}_{L^{p_j}(\reals^d)},
\end{equation}
with $\gamma>0$ and $C<\infty$ dependent on $\phi$  and on $\eta$,
have been analyzed in various works.
H\"ormander \cite{hormander} 
established the fundamental upper bound $O(|\lambda|^{-d/2}\norm{f_1}_2\norm{f_2}_2)$
for the bilinear case $|J|=2$,
with the mixed Hessian $\frac{\partial^2 \phi(x,y)}{\partial x \, \partial y}$ 
everywhere nonsingular. 
The bilinear case, with $x,y$ in spaces of unequal dimensions,
has been intensively studied in connection with Fourier restriction inequalities.
Likewise, in connection with Fourier restriction,
multilinear forms have been investigated, in which each function $f_j$
is individually acted upon by a linear oscillatory integral operator,
and the product of the resulting functions is integrated.
Such multilinear forms are not studied here.

Forms $|T_\lambda^\phi(\bff)|$ and $|S_\lambda(\bff)|$,
with $|J|\ge 3$, have been studied by
Phong-Stein-Sturm \cite{PSS},
Gilula-Gressman-Xiao \cite{gressmanetal}, 
and others.
An introductory treatment can be found in the book \cite{steinbook} of Stein.
The works \cite{PSS} and \cite{gressmanetal} 
deal with general phase functions $\phi$, 
and seek optimal relationships between $\phi$, decay
exponents $\gamma$, and Lebesgue exponents $p_j$.
\cite{PSS} also emphasizes stability --- whether the optimal
exponent $\gamma$ is a lower semicontinuous function of $\phi$. 

The regime $|J| > D/d$ is singular,
in the sense that the integral extends only
over a positive codimension subvariety
of the Cartesian product of the domains of the functions $f_j$.
Variants of the form \eqref{multioscform2}, in the
singular regime and with all mappings $\varphi_j$ linear,
were investigated 
by Li, Tao, Thiele and the present author \cite{CLTT}. They 
established conditions under which there exists an exponent $\gamma>0$
for which a corresponding inequality holds. 
One of the results of the present paper relaxes the assumption of linearity.
Another treats certain cases with $\varphi_j$ linear that
were not treated in \cite{CLTT}, and provides
an alternative proof of one of the main results of that work.

Results of this type under lower bounds for certain partial derivatives
of the phase function, but with no upper bounds at all,
have been investigated by Carbery and Wright
\cite{carberywright} for $|J|\ge 3$, building on earlier work
\cite{CCW} for the bilinear case $|J|=2$.
This thread is not developed further in the present paper, in which
upper bounds are implicit through smoothness hypotheses on phase functions.

For certain ranges of exponent tuples $\bp = (p_j: j\in J)$,
the works \cite{PSS} and \cite{gressmanetal} establish upper bounds for \eqref{multioscform1}
with optimal exponents $\gamma$ as $|\lambda|\to\infty$,
up to powers of $\log(1+|\lambda|)$, 
where ``optimal'' means largest possible under indicated hypotheses on $\phi$
for given $\bp$.
We will not review the hypotheses of those works precisely,
but their general form is significant for our discussion.
For each $\alpha$ 
(with $\alpha_j\ne 0$ for at least two distinct indices $j$)
there are certain parameters $\bp,\gamma$ for which nonvanishing of 
$\partial^{\alpha} \phi\,/\,\partial \bx^\alpha$ at $\bx_0$
implies validity of
\eqref{Lpversion}, with $B=B(\bx_0,r)$ for sufficiently small $r>0$.
This conclusion is independent of
other coefficients in the Taylor expansion of $\phi$ about $\bx_0$.
Thus any other nonvanishing coefficients imply corresponding inequalities, 
and interpolation of the resulting inequalities yields further inequalities. 
All bounds obtained in these cited works are consequences of bounds obtained in this way, 
together with inclusions among $L^p$ spaces resulting from H\"older's inequality.

\subsection{Four questions}

\begin{question} \label{question1}
Let $\phi$ be a
real analytic, real-valued 
phase function. Let $\bx_0\in (\reals^d)^J$. 
For which $\gamma>0$ do there exist $C<\infty$ and 
a neighborhood $B$ of $\bx_0$ such that
\begin{equation} \label{Linftyversion1}
|T_\lambda^\phi(\bff)| \le C|\lambda|^{-\gamma} \prod_{j\in J} \norm{f_j}_{L^\infty}
\ \text{ for all functions $f_j\in L^\infty$,} 
\end{equation}
for every $\lambda\in\reals$?
\end{question}

For multilinear forms of the more general type \eqref{multioscform2},
a less precise question is at present appropriate.
\begin{question} \label{question1b}
Let $\phi$ be a real analytic, real-valued 
phase function. 
Let $\varphi_j:\reals^D\to\reals^d$ be real analytic submersions. 
Let $\bx_0\in \reals^D$. 
Under what conditions on $\phi$ and on $(\varphi_j: j\in J)$
does there exist
$\gamma>0$ 
such that
\begin{equation} \label{Linftyversion2}
|S_\lambda(\bff)| \le C|\lambda|^{-\gamma} \prod_{j\in J} \norm{f_j}_{L^\infty}
\ \text{ for all functions $f_j\in L^\infty$,} 
\end{equation}
for every $\lambda\in\reals$?
\end{question}
Roughly speaking, the difficulty in establishing inequalities
\eqref{Linftyversion2} increases as the ratio $d/D$ increases.

In the formulation \eqref{Linftyversion1}, 
the main structural hypothesis is that the 
nonoscillatory part of the
integrand is a product of factors $f_j(x_j)$. No smoothness
is required of these factors, and
the strongest possible size restriction, $L^\infty$, is imposed. 
As a refinement, one could ask to what degree the $L^\infty$
norms could be replaced by weaker $L^{p_j}$ norms
without reducing $\gamma$. 
One focus of the present paper is on obtaining a comparatively large
exponent $\gamma$, rather than on weakening the hypotheses
under which it is obtained.
Although we are not able to determine optimal exponents $\gamma$ in Question~\ref{question1},
we improve on the largest exponent previously known 
for generic real analytic phases in the trilinear case with  $d=1$.
We show that interactions between monomial terms
can give rise to upper bounds not obtainable from monomial-based inequalities. 

A second focus, for related functionals,
is on obtaining some decay inequality of power law type,
for situations in which no decay bound was previously known,
without attention to the value of the exponent $\gamma$. 
See for instance Theorem~\ref{thm:3}.

Oscillation can arise implicitly through the presence of
high frequency Fourier components in the factors $f_j$,
instead of explicitly through the presence of overt oscillatory
factors $e^{i\lambda \phi}$. This suggests a third question.

\begin{question} \label{question:implicitoscillation}
Let $\eta$ be smooth and compactly supported. Let $J$ be a finite index set.
Let $\varphi_j:\reals^D\to\reals^d$ be real analytic submersions.
Under what circumstances can the quantity 
$\int \prod_{j\in J} (f_j\circ\varphi_j)(x)\,\eta(x)\,dx$
be majorized by a product of strictly negative
Sobolev norms of the factors $f_j$?
\end{question}

In analyzing these questions about oscillatory integral forms,
we are led to questions about sublevel sets.
Let $\varphi_j:[0,1]^2\to\reals^1$ and $a_j:[0,1]^2\to\reals^1$ be real analytic.
To an ordered triple $\bff$ of Lebesgue measurable $f_j:\reals\to\reals$, and to
 $\eps>0$, associate the sublevel set
\begin{equation} \label{sublset}
S(\bff,\eps) = \{x\in[0,1]^2: \big| \sum_{j=1}^3 a_j(x) (f_j\circ\varphi_j)(x)\big|
<\eps\}.
\end{equation}

\begin{question} \label{question:sublevel}
Under what hypotheses
on $(\varphi_j,a_j: j\in\three)$
and what conditions on $\bff$
do there exist $\gamma>0$ and $C<\infty$
such that for every small $\eps>0$,
\begin{equation} \label{ineq/sublevel}
|S(\bff,\eps)| \le C\eps^\gamma?
\end{equation}
\end{question}

Some condition on $\bff$ is needed to exclude 
trivial solutions with $\bff\equiv 0$
or with every $|f_j|$ small pointwise.
Another necessary condition for an inequality \eqref{ineq/sublevel}
is that any exact smooth solution
$\bff$ of $\sum_j a_j\cdot (f_j\circ\varphi_j)\equiv 0$,
in an arbitrary nonempty open set, should vanish identically. 
Situations in which there is a small family
of such exact solutions, e.g. constant $\bff$
or affine $\bff$, are also of interest. In such
a situation, one asks instead whether
the inequality \eqref{ineq/sublevel}
can fail to hold only for those $\bff$ that are closely approximable by 
elements of the family of exact solutions.

One of the themes of this work is the web of 
interconnections between these three questions and their variants.
For instance, variants of \eqref{ineq/sublevel}, 
in which the coefficients $a_j$ are vector-valued,
arise naturally in our investigation of Questions~\ref{question1}
and \ref{question1b}.

\subsection{Content of paper}
We begin with remarks and examples placing
Question~\ref{question1} better in context. 
We then focus on the trilinear case, with $d=1$.
In all previous results for this case known to this author, 
the exponent $\gamma$ obtained for
\eqref{Linftyversion1} has been no greater than $\tfrac12$,
and we seek to surpass that threshold.
We introduce a condition for $\phi$,
whose negation we call rank one degeneracy on some hypersurface,
or simply rank one degeneracy.
We prove that if $\phi\in C^\omega$ is not rank one degenerate 
and satisfies an auxiliary hypothesis,
then the inequality \eqref{Linftyversion1} 
holds for some $\gamma$ strictly greater than $\tfrac12$.
Conversely, if $\phi\in C^\omega$ is rank one degenerate on some hypersurface,
then there exists no $\gamma>\tfrac12$ for which  \eqref{Linftyversion1} holds.
In a sense clarified in \S\ref{section:hypotheses},
the nondegeneracy hypothesis is satisfied by generic $C^\omega$ phase functions. 
We also explore the connection between multilinear oscillatory
forms \eqref{multioscform2} and the multilinear oscillatory forms studied in \cite{CLTT}.

Two applications to Question~\ref{question:implicitoscillation} are derived. 
The first is to products $\prod_{j=1}^3 (f_j\circ\varphi_j)$,
for functions $f_j:\reals^1\to\complex$ and for systems of mappings 
$\varphi_j:\reals^2\to\reals^1$
satisfying an appropriate curvature condition. 
We show that these are indeed well-defined as distributions
when $f_j$ lie in Sobolev spaces of slightly negative orders.
The second, a consequence of the first, is an alternative
proof of a theorem of Joly, M\'etivier, and Rauch \cite{JMR}
on weak convergence of products
$\prod_{j=1}^3 (f_j\circ\varphi_j)$
when the functions $f_j$ are weakly convergent
and the system of mappings $(\varphi_j: j\in\{1,2,3\})$
satisfies a suitable curvature hypothesis.
We exploit the improvement of the exponent $\gamma$ beyond the threshold $\tfrac12$
in \eqref{Linftyversion1} in these applications.

Our main results concerning \eqref{Linftyversion1} and 
\eqref{Linftyversion2}, respectively, are 
Theorem~\ref{mainthm} and Theorem~\ref{thm:nonosc}.  
Their proofs are based on decomposition 
in phase space, a dichotomy between structure and pseudo-randomness,
a two scale analysis, and a connection with sublevel sets.

In \S\ref{section:moreon}
we use the same method to give an alternative proof of a theorem
of Li, Tao, Thiele, and the author \cite{CLTT},
and to establish an extension. 

The machinery developed here establishes, and utilizes,
upper bounds of the form \eqref{ineq/sublevel} 
for Lebesgue measures of sublevel sets
associated to certain vector-valued functions,
in situations in which having each $f_j$ vector-valued is an advantage.
However, we also study the scalar-valued case.
In \S\ref{section:FEqns} 
we consider sublevel sets of the type \eqref{sublset},
with constant coefficients $a_j$,
and establish upper bounds for their Lebesgue measures
under natural hypotheses on $(\varphi_j: j\in\{1,2,3\})$
and appropriate nonconstancy hypotheses on $\bff$.
In \S\ref{section:variablecoeffs} we develop
a rather different method to study sublevel set inequalities
for nonconstant coefficients $a_j$,
in the special case in which the mappings $\varphi_j$ are all linear.
In \S\ref{section:sublevelexample} we construct an example
demonstrating optimality, 
for one of the simplest possible vector-valued instances of \eqref{ineq/sublevel}, 
of the apparently crude bound that our method yields. 
This example is based on multiprogressions of rank greater than $1$.
Finally, \S\ref{section:sublevelquestion} is devoted to further remarks
and questions concerning sublevel set inequalities.

The case $|J|=2$ of \eqref{multioscform1} is already well understood. 
We focus primarily on the next simplest case, 
in which $|J|=3$ and $d=1$, although these restrictions
are relaxed in some of our results. 
The techniques used here are developed further and applied in
two sequels, with Durcik and Roos \cite{CDR} in work on multilinear singular integral operators,
and with Durcik, Kova\v{c},
and Roos \cite{CDKR} in work on averages associated to $\reals$--actions.
The author plans to treat more singular cases,
such as $|J|\ge 4$ in Theorem~\ref{thm:nonosc}, in future work
via a further extension of the method.

In most of the paper, we assume phase functions and
mappings $\varphi_j$ to be real analytic, rather than merely 
infinitely differentiable. This is done primarily because
hypotheses can be formulated more simply in the 
$C^\omega$ case, with its natural dichotomy between 
functions that vanish identically, and those that vanish to finite order. 
Extensions of two of the main theorems to the $C^\infty$ case are formulated in \S\ref{section:variants}.
Another extension will be developed in the forthcoming dissertation of Zirui Zhou.

There are connections between 
the results and methods in this paper, 
a much earlier work of Bourgain \cite{bourgain},
and recent works of Peluse and Prendiville \cite{peluse},
\cite{peluse+prendiville2019},
\cite{peluse+prendiville2020}
involving quantitative nonlinear analogues
of Roth's theorem, cut norms, and degree reduction. 
See also \cite{taublogcut} for an exposition
of some of these ideas.

The author is indebted to Zirui Zhou for corrections
and useful comments on the exposition,
to Philip Gressman for a useful conversation,
and to Terence Tao for pointing out the connection
with the works of Bourgain, Peluse, and Prendiville. 
He thanks Craig Evans for serendipitously acquainting him with 
the work of Joly, M\'etivier, and Rauch, and for stimulating discussion. 

\section{Examples} \label{section:examples}

In all of the examples of this section, and most of the main results
of this paper, $d=1$. $B$ is often replaced by $[0,1]^J$, so
$T_\lambda^\phi(\bff) 
= \int_{[0,1]^J} e^{i\lambda\phi(\bx)} \prod_{j\in J} f_j(x_j)\,d\bx$.
In these examples, 
excepting Example~\ref{example:higherD}, $T_\lambda^\phi$ is trilinear.  

\begin{example} \label{example:first}
{\rm
For $|J|=3$ and 
$\phi(x_1,x_2,x_3) = x_2(x_1 + x_3)$,
the inequality \eqref{Linftyversion1} holds with $\gamma=1$.
To justify this, write
\[
T_\lambda^\phi(\bff)
= |\lambda|^{-1/2} \int_{[0,1]^2} f_1(x) F(y)e^{-i\lambda xy}\,dx\,dy
\]
where $F(y) = c f_2(y) |\lambda|^{1/2} \widehat{f_3}(\lambda y)$
for a certain harmless constant $c\ne 0$.
This function satisfies $\norm{F}_2 \lesssim \norm{f_2}_\infty \norm{f_3}_2
\le \norm{f_2}_\infty \norm{f_3}_\infty$.
One factor of $|\lambda|^{-1/2}$ has already been gained.
The remaining integral is $O(|\lambda|^{-1/2} \norm{f_1}_2\norm{F}_2)$ by
Plancherel's theorem and a change of variables.

This gives $|T_\lambda^\phi(\bff)| \le C|\lambda|^{-1} \norm{f_1}_2\norm{f_2}_\infty
\norm{f_3}_2$. For $p\in[2,\infty)$,
the optimal bound in terms of $\norm{f_1}_\infty \norm{f_2}_\infty \norm{f_3}_p$
is $O(|\lambda|^{-1}|\lambda|^{1/p})$. 
This can be seen by considering $f_j(x_j)=e^{-i\lambda x_j} \one_{[0,1]}(x_j)$
for $j=1,3$. Integrating with respect to $x_1,x_3$ then leaves a function
of $x_2$ whose real part is bounded below 
on $[0,\tfrac\pi4 |\lambda|^{-1}]$ by a positive constant independent of $\lambda$.
Choose $f_2$ to be the indicator function of $[0,\tfrac\pi4|\lambda|^{-1}]$.

However, for $\phi = x_1x_2+x_2x_3$, the situation changes if $\norm{f_2}_\infty$
is replaced by $\norm{f_2}_2$. The inequality
$|T_\lambda^\phi(\bff)| \le C|\lambda|^{-\gamma} \norm{f_2}_2\norm{f_1}_\infty\norm{f_3}_\infty$
holds for $\gamma = \tfrac12$, but not for any strictly larger exponent.
This is seen by choosing $f_2$ to be the indicator function of $[0,\tfrac\pi4 |\lambda|^{-1}]$
and $f_1,f_3$ each to be the indicator function of $[0,1]$.

This example will illustrate subtle points regarding the necessity of 
hypotheses in some of our main results, below.
} \end{example}

\begin{example}
{\rm
More generally, consider 
$\int_{[0,1]^n} 
e^{i\lambda \phi(\bx)}
\prod_{j=1}^n f_j(x_j)
\,d\bx$.
If \eqref{Linftyversion1} holds then
$\gamma \le \tfrac12 (n-1)$. 
Indeed, by a change of variables, one can replace $[0,1]$ by $[-1,1]$.
Define $a_j = \partial \phi/\partial x_j(0)$.
For each $j\le n-1$ define
\[ f_j(x_j) = e^{i\tau\lambda x_j^2}e^{-i\lambda a_j x_j}\eta(x_j)\]
where $\eta$ is a $C^\infty$ function supported in a small neighborhood
of $0$, satisfying $\eta(0)\ne 0$.
If $\tau$ is a sufficiently large constant, depending on $\phi$ but not on $\lambda$,
and if $\eta$ is supported in a sufficiently small neighborhood of $0$,
then by the method of stationary phase,
for a certain constant $c\ne 0$,
as $\lambda\to +\infty$,
\[\Big|
\int_{[0,1]^{n-1}} 
e^{i\lambda \phi(\bx)}
\prod_{j=1}^{n-1} f_j(x_j)
\,dx_1\,dx_2\cdots\,dx_{n-1}\Big|
= c\lambda^{-(n-1)/2} + O(\lambda^{-(n-3)/2})
\]
uniformly for all $x_n$ in some neighborhood $V$ of $0$ independent of $\lambda$.
Choose $f_n(x_n)$ to vanish outside of $V$
and to be equal to  $e^{ih(x_n)}$ with $h$ real-valued
so that \[ e^{ih(x_n)} 
\int_{[0,1]^{n-1}} 
e^{i\lambda \phi(\bx)}
\prod_{j=1}^{n-1} f_j(x_j)
\,dx_1\,dx_2\cdots\,dx_{n-1}
\ge 0\]
for each $x_n\in V$.
Thus $|T_\lambda^\phi(\bff)| = c'\lambda^{-(n-1)/2} + O(\lambda^{-(n-3)/2})$
with $c'\ne 0$.
} \end{example}

\begin{example} \label{example:higherD} {\rm 
For any $n\ge 3$,
the exponent $\gamma = (n-1)/2$ is realized for 
\[ \phi(x_1,\dots,x_n) = x_1x_2 + x_2x_3 + \cdots + x_{n-1}x_n.\]
This follows from the same reasoning as for $n=3$.
} \end{example}

\begin{example} {\rm 
For
$\phi(x_1,x_2,x_3) = x_1 x_2 + x_2x_3 + x_3x_1$, 
the optimal exponent is $\gamma=\tfrac12$.
Choosing $f_j(x) = e^{i\lambda x_j^2/2}$,
the integrand becomes $e^{i\lambda\psi}$
with $\psi(\bx) = (x_1+x_2+x_3)^2$.
This net phase function $\psi$ factors through a submersion from $\reals^3$ to $\reals^1$,
and has a critical point. 

This example, contrasted with $\phi = x_1x_2+x_2x_3$, for which the optimal
exponent is $1$, demonstrates that enlarging
the set of monomials that occur with nonzero coefficients can cause the
optimal exponent $\gamma$ to {\em decrease},
in contrast to the theory for a restricted range of parameters
developed in \cite{PSS} and \cite{gressmanetal}.
}\end{example}

\begin{example} \label{example:instability} {\rm 
More generally, for any parameter $r>0$, the optimal exponent is $\tfrac12$ for
\[\phi(x_1,x_2,x_3) = x_1 x_2 + x_2x_3 + r x_3x_1.\] 
Thus the optimal exponent is not lower semicontinuous with respect to $\phi$.
This also suggests that the exponent $\gamma = (n-1)/2$ is rarely attained.
}\end{example}

\begin{example}\label{example:mellin}{\rm
For $\phi(\bx) = x_1x_2x_3$, the exponent $\gamma=\tfrac12$ is again optimal.
Choosing $f_j(x) = e^{-i\lambda \ln(x)}\one_{[\tfrac12,1]}(x)$,
the net oscillatory factor becomes $e^{i\lambda\psi}$
with $\psi(\bx) = x_1x_2x_3 - \ln(x_1x_2x_3)$.
The gradient of $\psi$ vanishes identically on the hypersurface
$x_1x_2x_3=1$, and the integral is no better than $O(|\lambda|^{-1/2})$.
}\end{example}

\begin{example} \label{example:muddies} {\rm 
$\phi(\bx) = (x_1+x_2)x_3$.
This is merely Example~\ref{example:first}, with the indices $1,2,3$ permuted.
Thus we have already observed that
the inequality \eqref{Linftyversion1} holds with $\gamma=1$. 
Here we reexamine that example from an alternative perspective.
Integrating with respect to $x_3$ leads to
\[ \lambda^{-1/2} \int_{[0,1]^2} f_1(x)f_2(y) F_3(x+y) \,dx\,dy \]
where $F_3$ depends on $\lambda$
and satisfies $\norm{F_3}_{\lt} = O(\norm{f_3}_{\lt})$, but no stronger inequality for any $L^p$
norm of $F_3$ is available.
No oscillatory factor remains, yet we have already shown
in Example~\ref{example:first}
that an upper bound with another factor of $\lambda^{-1/2}$ does hold.
}\end{example}

\begin{example}\label{example:CLTTtype} {\rm
Consider $\phi(\bx) = x_3\varphi(x_1,x_2)+\psi(x_1,x_2)$
where $\psi$ is a polynomial and $(x,y)\mapsto \varphi(x,y)$ is a linear function
that is a scalar multiple neither of $x_1$ nor of $x_2$.
The same analysis as above leads to $\lambda^{-1/2}$ multiplied by
\begin{equation} \label{useCLTT} 
\int_{[0,1]^2} f_1(x)f_2(y) F(\varphi(x,y))e^{i\lambda \psi(x,y)}\,dx\,dy
\end{equation}
with $\norm{F}_2 = O(\norm{f_3}_2)$.
Suppose that $(\varphi,\psi)$ is nondegenerate in the sense that there do not
exist polynomials $h_j$ satisfying
\[ \psi(x,y) = h_1(x)+h_2(y)+h_3(\varphi(x,y)) \qquad \forall\,(x,y).\]
Then according to an inequality\footnote{An alternative proof of this inequality
is developed in \S\ref{section:moreon}.} of 
Li-Tao-Thiele and the present author \cite{CLTT}, the integral \eqref{useCLTT}
satisfies an upper bound of the form
$O(\lambda^{-\delta}\norm{f_1}_2\norm{f_2}_2\norm{F}_2)$
for some\footnote{The imprecise methods of \cite{CLTT} very rarely yield optimal exponents.}
$\delta(\varphi,\psi)>0$.
Thus $|T_\lambda^\phi(\bff)| \le C|\lambda|^{-\tfrac12-\delta} \prod_{j=1}^3 \norm{f_j}_\infty$.

Moreover, the analysis of \cite{CLTT}
implicitly proves that this bound holds uniformly for all sufficiently 
small perturbations of $\varphi,\psi$.
}\end{example}

\begin{example}\label{example:x3k} {\rm
Consider $\phi(\bx) = x_1x_2 + x_2 x_3^k$, with $\naturals\owns k\ge 2$.
For $k=2$,
\eqref{Linftyversion1} holds for every $\gamma$ strictly less than $1$. 
For $k\ge 3$,
it holds for every $\gamma \le \tfrac12 + \tfrac1k$,
and this exponent is optimal.
This can be shown by substituting
$x_3^k = \tilde x_3$
and using 
\[ \big| \int_{[0,1]^3} e^{i\lambda(x_1x_2+x_2x_3}\prod_{j=1}^3 g_j(x_j)\,d\bx\big|
\le C|\lambda|^{-1/2} \norm{g_1}_\infty \norm{g_2}_\infty \norm{g_3}_2\]
with $g_3(y) = f_3(y^{1/k}) y^{\tfrac1k-1}$.
That this exponent cannot be improved when $k\ge 3$
can be shown by considering $f_3$ equal to the indicator function of
$[0,\tfrac\pi4 \lambda^{-1/k}]$. 
} \end{example}

\begin{example} {\rm
Let $\phi(x,y) = x^2y-xy^2$,
or more generally, any homogeneous cubic polynomial that is
not a linear combination of $x^3,y^3,(x+y)^3$.
Then
\[ \big|\iint_{[0,1]^2} e^{i\lambda \phi(x,y)} f_1(x)f_2(y)f_3(x+y)\,dx\,dy\big|
\le C|\lambda|^{-\gamma} \prod_j \norm{f_j}_\infty\]
holds for $\gamma=\tfrac14$ \cite{GressmanXiao}.
Even for this simplest trilinear case of \eqref{multioscform2},
the optimal exponent remains unknown.
}\end{example}

\section{Nondegeneracy and curvature} 

For convenience we integrate over $[0,1]^3$, rather than over a ball,
though this makes no effective difference.
The two formulations are equivalent, by simple and well known arguments involving partitions
of unity and expansion of cutoff functions in Fourier series, resulting
in unimodular factors that can be absorbed into the functions $f_j$.

Thus we study functionals
\begin{equation} \label{3Dform}
T^\phi_\lambda(f_1,f_2,f_3) = \int_{[0,1]^3} e^{i\lambda \phi(\bx)} 
\prod_{j=1}^3 f_j(x_j)\,d\bx
\end{equation}
with $\bx=(x_1,x_2,x_3)\in\reals^3$ 
and $f_j:[0,1]\to\complex$,
and associated inequalities
\begin{equation} \label{Linftyversion}
|T_\lambda^\phi(\bff)| \le C|\lambda|^{-\gamma} \prod_{j=1}^3 \norm{f_j}_{L^{\infty}}.
\end{equation}
We assume throughout the discussion that $\lambda$ is positive
(as may be achieved by complex conjugation if $\lambda$ is initially negative) 
and that $\lambda\ge 1$.
For this situation, none of the results in \cite{PSS} and \cite{gressmanetal}
yield any exponent $\gamma$ strictly greater than $\tfrac12$,
and we focus on exceeding this benchmark exponent $\tfrac12$.

In results of this type, $\phi$ should be regarded as an equivalence class
of functions. 
If
$\tilde\phi$ takes the form
$\tilde\phi(\bx)  = \phi(\bx) - \sum_{j=1}^3 h_j(x_j)$
with all functions $h_j$ real-valued and Lebesgue measurable, then
\[ \sup_{\norm{f_j}_{\infty}\le 1} |T_\lambda^\phi(\bff)|
= \sup_{\norm{f_j}_{\infty}\le 1} |T_\lambda^{\tilde \phi}(\bff)|\]
since
each function $f_j$ can be replaced by $f_je^{-i\lambda h_j}$.
Thus $\tilde\phi$ is equivalent to $\phi$, so far as the 
inequality \eqref{Linftyversion} is concerned.
On the other hand, it is natural to require that the functions $h_j$
possess the same degree of regularity as is required of $\phi$.
The next definition is formulated in terms of maximally regular $h_j$,
but the minimally regular situation inevitably arises in the analysis.

The examples in \S\ref{section:examples} suggest a notion of degeneracy for phases $\phi$,
or equivalently, for such equivalence classes.
Write $\pi_j(x_1,x_2,x_3)=x_j$.

\begin{definition}
Let $U\subset\reals^3$ be open and nonempty.
Let $\phi:U\to\reals$ be $C^\omega$.
Let $H\subset U$ be a $C^\omega$ hypersurface.
$\phi$ is rank one degenerate on $H$ if
there exist $C^\omega$ functions
$h_j$ defined in $\pi_j(U)$
such that the associated net phase function  
$\tilde \phi = \phi -\sum_{j=1}^3 (h_j\circ \pi_j)$
satisfies 
\begin{equation} \label{rank1degen}
(\nabla\tilde\phi)\big|_H\equiv 0.
\end{equation}
\end{definition}

In this definition, $H$ may be defined merely in some small subset of $U$.

$\phi:U\to\reals$ is said to be rank one degenerate on some hypersurface,
or simply rank one degenerate,
if there exist $H\subset U$ and functions $h_j$ such that \eqref{rank1degen} holds.
$\phi:[0,1]^3\to\reals$ is said to be rank one degenerate on some hypersurface
if this holds for the restriction of $\phi$ to $(0,1)^3$.

If \eqref{rank1degen} holds, then the Hessian matrix of $\tilde\phi$
has rank less than or equal to $1$ at each point of $H$, whence the term ``rank one''. 
It is the restriction to $H$ of the full gradient that is assumed
to vanish in \eqref{rank1degen}, rather than the gradient of the restriction.

\begin{example}{\rm
$\phi(\bx) = x_1x_2+x_2x_3+x_3x_1$ is rank one degenerate
on the hypersurface $H$ defined by $x_1+x_2+x_3=0$.
Choosing $h_j(x_j) = -x_j^2/2$
gives $\tilde\phi(\bx) = (x_1+x_2+x_3)^2/2$,
whose gradient vanishes on $H$.

More generally, for $r\ne 0$, the rank one degenerate phases
$\phi_r(\bx) = x_1x_2+x_2x_3+ rx_3x_1$ are
equivalent to phases $\tilde\phi$ whose gradients vanish
along hyperplanes $H_r$ defined by $x_2 = -r(x_1+x_3)$.
} \end{example}

\begin{example}{\rm
Let $r\in\reals$.
$\phi(\bx) = x_3(x_1+x_2) + rx_1x_2x_3$
is not rank one degenerate.
$\phi(\bx) = (x_1+x_2+x_3)^2 + rx_1x_2x_3$
is rank one degenerate if and only if $r=0$.
} \end{example}

\begin{proposition}
If $\phi\in C^\omega$ is rank one degenerate on a hypersurface $H$, then
the inequality \eqref{Linftyversion1} cannot hold for any $\gamma$
strictly greater than $\tfrac12$ on any ball $B$ containing $H$.
\end{proposition}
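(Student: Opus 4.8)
The plan is to produce, for every large $\lambda$, a triple $\bff$ with $\norm{f_j}_{L^\infty}\le 1$ for which $|T_\lambda^\phi(\bff)|\ge c\lambda^{-1/2}$ with $c>0$ independent of $\lambda$; this precludes \eqref{Linftyversion1} for every $\gamma>\tfrac12$ by letting $\lambda\to\infty$. First I would use the equivalence recorded above: since $|e^{-i\lambda h_j}|\equiv 1$, replacing each $f_j$ by $f_je^{-i\lambda h_j}$ gives $\sup_{\norm{f_j}_\infty\le 1}|T_\lambda^\phi(\bff)| = \sup_{\norm{f_j}_\infty\le 1}|T_\lambda^{\tilde\phi}(\bff)|$, so we may assume from the outset that $\nabla\phi$ vanishes on $H$. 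The tangential derivatives of $\phi$ then vanish along $H$, so $\phi$ is locally constant on $H$; working near a single point of $H$ and subtracting that constant — which does not affect $|T_\lambda^\phi|$ — we may also assume $\phi\equiv 0$ on $H$.

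Next I would pass to adapted coordinates. Fix $p\in H\cap B$; after relabeling $x_1,x_2,x_3$ we may assume $H=\{x_1=g(x_2,x_3)\}$ near $p$ with $g$ real analytic. Since $\partial\phi/\partial x_1$ is real analytic and vanishes on $H$, it is divisible by $x_1-g(x_2,x_3)$ in the local analytic ring; combining this with $\phi|_H\equiv 0$ and integrating in $x_1$ from $g(x_2,x_3)$ yields either $\phi\equiv 0$ near $p$, or
\[
\phi(x_1,x_2,x_3)=\big(x_1-g(x_2,x_3)\big)^{m}\,c(x_1,x_2,x_3)
\]
for some integer $m\ge 2$ and some real analytic $c$ not vanishing identically on $H$ near $p$. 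Moving $p$ to a nearby point of $H$ and shrinking the neighborhood, we may assume $c(p)\ne 0$; the exponent $m$ may exceed $2$, which only helps since it produces slower decay. In the flat subcase $\phi\equiv 0$ near $p$ one just takes $f_j=\one_{I_j}$ for small intervals $I_j$ with $I_1\times I_2\times I_3$ inside $B$ and inside the region where $\phi$ vanishes, obtaining $|T_\lambda^\phi(\bff)|=|I_1|\,|I_2|\,|I_3|$, constant in $\lambda$.

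For the main case I take $f_j=\one_{I_j}$, with $I_1$ a short interval about the first coordinate of $p$ and $I_2\times I_3$ a small box about $(p_2,p_3)$, chosen so that $I_1\times I_2\times I_3\subset B$, $c$ keeps its sign $\eps:=\operatorname{sgn} c(p)$ there, and $g(x_2,x_3)$ lies interior to $I_1$ for $(x_2,x_3)\in I_2\times I_3$. For fixed $(x_2,x_3)$ the substitution $u=\big(x_1-g(x_2,x_3)\big)\,|c(x_1,x_2,x_3)|^{1/m}$ is a real analytic diffeomorphism of $I_1$ onto an interval containing $0$ in its interior, and converts $\int_{I_1}e^{i\lambda\phi}\,dx_1$ into $\int e^{i\eps\lambda u^{m}}\rho(u;x_2,x_3)\,du$ with $\rho$ smooth and $\rho(0;x_2,x_3)=|c(g(x_2,x_3),x_2,x_3)|^{-1/m}>0$. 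A routine rescaling $t=\lambda^{1/m}u$ then gives, uniformly for $(x_2,x_3)\in I_2\times I_3$,
\[
\int_{I_1}e^{i\lambda\phi(x_1,x_2,x_3)}\,dx_1=\kappa_m\,\rho(0;x_2,x_3)\,\lambda^{-1/m}+o(\lambda^{-1/m}),\qquad \kappa_m:=\int_{\reals}e^{i\eps t^{m}}\,dt\ne 0.
\]
Integrating in $(x_2,x_3)$ — and using that $\kappa_m$ is a fixed nonzero constant while $\rho(0;\cdot)>0$ — I obtain $|T_\lambda^\phi(\bff)|\ge c_0\lambda^{-1/m}\ge c_0\lambda^{-1/2}$ for all large $\lambda$, with $c_0>0$. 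Since $\norm{f_j}_\infty=1$, this contradicts \eqref{Linftyversion1} for any $\gamma>\tfrac12$.

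The crux, and the main obstacle, is the last display: one needs a genuine lower bound for the $x_1$-integral (not merely a van der Corput upper bound), uniform in the transverse variables, together with the structural observation that after the normalization $\phi|_H\equiv 0$ the value of the phase at the critical point $x_1=g(x_2,x_3)$ is exactly $0$, so the leading contributions from different fibers $(x_2,x_3)$ reinforce rather than cancel. Once that is in hand, the higher-order case $m\ge 3$ and the flat subcase are routine.
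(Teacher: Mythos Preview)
Your argument is correct and follows essentially the same route as the paper: reduce to $\tilde\phi$ with $\nabla\tilde\phi|_H\equiv 0$ via the substitution $f_j\mapsto f_je^{-i\lambda h_j}$, then extract a lower bound of order $\lambda^{-1/m}\ge\lambda^{-1/2}$ by one-dimensional stationary phase on lines transverse to $H$. Your explicit factorization $\phi=(x_1-g)^m c$ and change of variables make uniform and precise what the paper sketches as ``fibering a neighborhood of a point of $H$ by line segments transverse to $H$ and evaluating the asymptotic contribution of each line segment,'' and your case $m=2$ versus $m\ge3$ corresponds exactly to the paper's split between the rank--$1$ Hessian case and the more degenerate case.
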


\begin{proof}
If the Hessian of $\phi$ does not vanish identically on $H$
then there exists a relatively open subset $\tilde H$ of $H$
on which this Hessian has rank $1$.
Choose $f_j(x_j) = e^{i\lambda h_j(x_j)}$,
multiplied by cutoff functions
that localize $\prod_j f_j(x_j)$ to a neighborhood of $\tilde H$,
and invoke asymptotics provided by the method of stationary phase.
The same reasoning applies so long as $\phi$ is not an affine function
on $[0,1]^3$, by fibering a neighborhood of a point of $H$
by line segments transverse to $H$, evaluating the asymptotic
contribution of each line segment as $|\lambda|\to\infty$,
and integrating with respect to a transverse parameter.
\end{proof}

\begin{example}{\rm
For any multi-index $\alpha\in \naturals^3$,
the phase function $\phi(\bx) = \bx^\alpha = \prod_ {j=1}^3 x_j^{\alpha_j}$
is rank one degenerate on every open subset of $(\reals\setminus\{0\})^3$.
Therefore $\phi$ does not satisfy \eqref{Linftyversion1}
with $\gamma>\tfrac12$ on any domain $B$.
} \end{example}

We will also study integrals of the form
\begin{equation}
\int_{\reals^2} e^{i\lambda\psi(\bx)} \prod_{j=1}^3 (f_j\circ\varphi_j)(\bx)\,\eta(\bx)\,d\bx
\end{equation}
with $\psi,\varphi_j:\reals^2\to\reals^1$ real analytic, $\lambda\in\reals$,
and $f_j$ in Lebesgue spaces or Sobolev spaces of negative order.
$\eta\in C^\infty(\reals^2)$ will be a compactly supported smooth cutoff function.
Both the situations in which $\lambda$ 
is a large parameter, and that in which $\lambda=0$, are of interest.

The concepts of a $3$-web, and its curvature, are relevant here.
A {\it $3$-web} 
in $\reals^2$ is by definition a $3$--tuple of pairwise transverse
smooth foliations of a connected open
subset of $\reals^2$ 
\cite{BB},\cite{JMR}.
The leaves of each foliation are one-dimensional. 
If $\varphi_i:\reals^2\to\reals^1$ are smooth functions,
and if $\nabla\varphi_j(\bx)$ and $\nabla\varphi_k(\bx)$ are linearly independent
at every point $\bx$ for each pair of distinct indices $j,k$, then
the datum $(\varphi_j: j\in\{1,2,3\})$ defines a  $3$-web, 
whose leaves are level sets of these functions.
Conversely, any $3$-web is locally defined by such a tuple of functions.
If $\varphi$ and $\tilde\varphi$ define the same foliation,
then any $f\circ \varphi$ can be written as $\tilde f\circ\tilde\varphi$,
where $\tilde f$ has $L^p$ and $W^{s,p}$ Sobolev norms comparable to those of $f$.
Thus the inequalities that we will study will depend on the underlying
web, rather than on the tuple $(\varphi_j)$ used to describe it.

Associated to a $3$-web on an open set $U$ is its curvature, 
a real-valued function with domain $U$ defined by Blaschke, and
discussed in \cite{JMR} and references cited there.
This curvature vanishes at
a point $\bx_0$ if and only if there exist
smooth functions $f_j:\reals\to\reals$ satisfying $f'_j(\varphi_j(\bx_0))\ne 0$
for at least one index $j$,
such that the associated function $F = \sum_{j=1}^3 f_j\circ\varphi_j$ 
satisfies\footnote{Assuming pairwise transversality of the foliations, there 
always exist $f_j$ such that $F(\bx)-F)(\bx_0) = O(|\bx-\bx_0|^3)$.}
$F(\bx)-F(\bx_0) = O(|\bx-\bx_0|^4)$ as $\bx\to\bx_0$.
This condition depends only on the underlying $3$-web,
not otherwise on associated functions $\varphi_j$.
It is invariant under local diffeomorphism of the ambient space $\reals^2$.
The equivalence of this condition with vanishing curvature can be shown via 
a short calculation in local coordinates chosen so that $\varphi_j(x_1,x_2)\equiv x_j$
for $j=1,2$.

If $\varphi_j(\bx)=x_j$ for $j=1,2$,
then a web defined by $(\varphi_j: j\in\{1,2,3\})$
has curvature identically zero in an open set
if and only if the ratio $\frac{\partial\varphi_3/\partial x_1}{\partial\varphi_3/\partial x_2}$
factors locally as the product of a function of $x_1$ alone
with a function of $x_2$ alone \cite{JMR}.

If there exist $f_j$ such that $F$ vanishes identically in a neighborhood
of $\bx_0$ then necessarily $f'_j(\varphi_j(\bx_0))\ne 0$
and the change of variables $\bx\mapsto (f_1\circ\varphi_1(\bx),f_2\circ\varphi_2(\bx))$
and the substitution $\varphi_3\mapsto f_3\circ\varphi_3$
transform all three functions $\varphi_i$ into affine functions.
If $\varphi_j(x_j)\equiv x_j$ for $j=1,2$,
and if 
$\frac{\partial^2 \varphi_3}{\partial x_1\partial x_2}$
vanishes identically in a neighborhood of $\bx_0$,
then $\varphi_3$ is a sum of functions of the individual
coordinates. Therefore the 
curvature vanishes identically in a neighborhood of $\bx_0$.

We say that
$(\varphi_j: j\in\{1,2,3\})$ is equivalent to a linear system
if there exist 
$C^\omega$ real-valued functions $H_j$,
each with derivatives that do not vanish identically
in any neighborhood of $\varphi_j([0,1]^2)$, satisfying
$\sum_{j=1}^3 H_j\circ\varphi_j\equiv 0$.
In this situation, $(\tilde\varphi_j=H_j\circ\varphi_j: j\in\{1,2,3\})$ defines
the same $3$-web as does $(\varphi_j)$. Taking $\tilde\varphi_j$
as coordinates for $j=1,2$,
all three functions $\tilde\varphi_j$ become linear.

If $\nabla\varphi_j,\nabla\varphi_k$ are linearly
independent at $\bx_0$ for each pair of distinct indices $j\ne k$,
then the curvature of the $3$-web defined by $(\varphi_j)$
vanishes identically in a neighborhood of $\bx_0$
if and only if
$(\varphi_j: j\in\{1,2,3\})$ is equivalent to a linear system
in a neighborhood of $\bx_0$.

\medskip
The following lemma connects two notions of curvature/nondegeneracy,
and will be used in the proof of Theorem~\ref{thm:nonosc}.

\begin{lemma} \label{lemma:curv-nondegen}
Suppose that in some nonempty open subset $U\subset \reals^2$,
$\varphi\in C^\infty$, $\partial\varphi/\partial x_i$ vanishes
nowhere for $i=1,2$, and the $3$-web associated to
$(x_1,x_2,\varphi(x_1,x_2))$ has nowhere vanishing curvature.
Then the phase function $\phi(x_1,x_2,x_3)=x_3\varphi(x_1,x_2)$
is not rank one degenerate in any open subset of $U\times (\reals\setminus\{0\})$.
\end{lemma}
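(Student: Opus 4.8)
The plan is to argue by contradiction. Suppose $\phi(x_1,x_2,x_3) = x_3\varphi(x_1,x_2)$ were rank one degenerate on some hypersurface $H$ inside an open subset of $U\times(\reals\setminus\{0\})$. Then there are $C^\omega$ functions $h_j$ on the respective coordinate intervals so that $\tilde\phi = x_3\varphi(x_1,x_2) - h_1(x_1) - h_2(x_2) - h_3(x_3)$ has $(\nabla\tilde\phi)|_H \equiv 0$. First I would write out the three partial derivatives explicitly:
\begin{equation*}
\partial_{x_1}\tilde\phi = x_3\,\partial_{x_1}\varphi - h_1'(x_1),\quad
\partial_{x_2}\tilde\phi = x_3\,\partial_{x_2}\varphi - h_2'(x_2),\quad
\partial_{x_3}\tilde\phi = \varphi(x_1,x_2) - h_3'(x_3).
\end{equation*}
On $H$ all three vanish. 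The third equation forces $\varphi(x_1,x_2) = h_3'(x_3)$ on $H$; since $\partial_{x_i}\varphi$ is nonvanishing, $\varphi$ is a nonconstant function of $(x_1,x_2)$, so this relation determines $x_3 = g(x_1,x_2)$ locally as a $C^\omega$ function on $H$ (using that $h_3'$ has nonzero derivative where needed, or else $H$ is a union of slices $x_3=\text{const}$, a case I would dispose of separately). Thus $H$ is locally a graph $x_3 = g(x_1,x_2)$, and substituting into the first two equations gives, on a neighborhood in $(x_1,x_2)$-space,
\begin{equation*}
g(x_1,x_2)\,\partial_{x_1}\varphi(x_1,x_2) = h_1'(x_1),\qquad
g(x_1,x_2)\,\partial_{x_2}\varphi(x_1,x_2) = h_2'(x_2).
\end{equation*}

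Dividing these two identities (valid since $\partial_{x_2}\varphi$ is nonvanishing and $g = h_3'(x_3) = \varphi$ is nonzero only if $\varphi\neq 0$ — but even where $\varphi=0$, the ratio below is continuous and the identity extends) yields
\begin{equation*}
\frac{\partial_{x_1}\varphi(x_1,x_2)}{\partial_{x_2}\varphi(x_1,x_2)} = \frac{h_1'(x_1)}{h_2'(x_2)}.
\end{equation*}
This exhibits the ratio $\frac{\partial\varphi/\partial x_1}{\partial\varphi/\partial x_2}$ as a product of a function of $x_1$ alone with a function of $x_2$ alone. But the excerpt records that, for a web defined by $(x_1,x_2,\varphi)$, the curvature vanishes identically on an open set precisely when this ratio factors in exactly that way; hence the curvature of the $3$-web associated to $(x_1,x_2,\varphi)$ would vanish on an open subset of $U$, contradicting the hypothesis that this curvature vanishes nowhere. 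This contradiction completes the proof.

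The main obstacle is the bookkeeping around the locus where $\varphi$ (equivalently $x_3$) vanishes or where $h_3'$ is locally constant, so that $H$ need not be a graph over the $(x_1,x_2)$-plane. I would handle this by noting that $H$ has a relatively open dense subset on which one of the coordinate projections is a local diffeomorphism; if $H$ is (an open piece of) a slice $\{x_3 = c\}$ with $c\neq 0$, then the first two equations read $c\,\partial_{x_i}\varphi = h_i'(x_i)$ on an open subset of the $(x_1,x_2)$-plane, which again forces $\partial_{x_1}\varphi$ to depend only on $x_1$ and $\partial_{x_2}\varphi$ only on $x_2$, so $\partial^2\varphi/\partial x_1\partial x_2 \equiv 0$ there, whence (as noted in the excerpt) the web has identically vanishing curvature — the same contradiction. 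In all remaining cases $H$ is locally a graph over a two-dimensional coordinate plane and a minor variant of the computation above applies. Once these cases are organized, the argument is a short calculation.
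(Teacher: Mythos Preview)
Your approach is essentially the paper's: write out $\nabla\tilde\phi$, restrict to $H$, and from the first two equations exhibit $\varphi_1/\varphi_2$ as a product of a function of $x_1$ with a function of $x_2$, contradicting the nonvanishing web curvature. The core computation is identical.

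The paper's case analysis is cleaner and fills the one soft spot in yours. It splits into just two cases: (i) $H$ is, on some open piece, a graph $x_3 = F(x_1,x_2)$; then $x_3\varphi_i = h_i'(x_i)$ on that piece, and since $x_3\ne 0$ in the domain and $\varphi_2$ is nonvanishing, division is legitimate and your ratio argument runs directly (your parenthetical ``$g = h_3'(x_3) = \varphi$'' is a slip---$g$ is $x_3$, not $\varphi$). (ii) $H$ is nowhere such a graph; then the tangent plane to $H$ contains the $x_3$-direction everywhere, so locally $H = \Gamma\times I$ for a curve $\Gamma\subset U$ and an interval $I$; the equations $x_3\varphi_i(x_1,x_2) = h_i'(x_i)$ then hold for $(x_1,x_2)\in\Gamma$ and \emph{all} $x_3\in I$, forcing $\varphi_i\equiv 0$ on $\Gamma$, an immediate contradiction. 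Your ``slice $\{x_3=c\}$'' case is already subsumed by (i), while the genuinely missing case---$H$ vertical---is not a ``minor variant of the computation above'' but the short separate argument in (ii). The third equation $\varphi = h_3'(x_3)$ plays no role and can be dropped.
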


\begin{proof}
Write $\varphi_i = \partial\varphi/\partial x_i$ for $i=1,2$.
Suppose that $\tilde\phi=x_3\varphi(x_1,x_2) - \sum_{j=1}^3 h_j(x_j)$
has gradient identically vanishing on a smooth hypersurface $H$.
If $H$ can be expressed in some nonempty open set in the form $x_3=F(x_1,x_2)$,
then 
$x_3 \varphi_i(x_1,x_2)\equiv h'_i(x_i)$ for $i=1,2$.
It is given that $\varphi_i$ does not vanish. Therefore we may
form the ratio of partial derivatives $\varphi_1/\varphi_2$
and conclude that it can be expressed, in some nonempty open
subset of $U$, as a product of a function of $x_1$ with a function of $x_2$.
This contradicts the hypothesis of nonvanishing curvature,
as shown in \cite{JMR}.

If $\tilde\phi$ has gradient identically vanishing on some smooth
hypersurface $H$ that cannot be expressed in the above form
on any nonempty open set, then $H$ must take the form
$\Gamma\times I$ for some nonconstant curve $\Gamma\subset\reals$
and some interval $I\subset\reals$ of positive length. 
The equations $x_3\varphi_i(x_1,x_2) \equiv h'_i(x_i)$
force $\varphi_i(x_1,x_2)\equiv 0$ on $\Gamma$, 
contradicting the assumption that $\varphi_i = \partial\varphi/\partial x_i$
vanishes nowhere.
\end{proof} 

\section{Formulations of some results} \label{section:mainresults}

The first main result of this paper is concerned with
multilinear expressions
\begin{equation} \label{multioscform3}
T_\lambda^\phi(\bff) = \int_{[0,1]^3} e^{i\lambda \phi(\bx)} 
\prod_{j=1}^3 f_j(x_j)\,d\bx,
\end{equation}
restricting attention to three functions $f_j:[0,1]\to\complex$,
and integrating over $[0,1]^3$ rather than over a ball.

\begin{theorem} \label{mainthm}
Let  $\phi$ be a real analytic, real-valued function in a neighborhood $U$ of $[0,1]^3$.
Suppose that $\phi$ is not rank one degenerate on any hypersurface in $U$.
Suppose that for each pair of distinct indices $j\ne k\in\{1,2,3\}$,
$\frac{\partial^2 \phi} {\partial x_j\,\partial x_k}$
vanishes nowhere on $[0,1]^3$.
Then there exist $\gamma>\tfrac12$ and $C<\infty$ 
such that the operators defined in \eqref{multioscform3} satisfy
\begin{equation} \label{ineq:main}
\big| T_\lambda^\phi(\bff) \big| \le C|\lambda|^{-\gamma} \prod_j \norm{f_j}_2
\end{equation}
uniformly for all functions $f_j\in L^2(\reals^1)$ and all $\lambda\in\reals$.
\end{theorem}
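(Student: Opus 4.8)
The plan is to reduce \eqref{ineq:main} to a sublevel-set estimate for a vector-valued analogue of \eqref{sublset} and then exploit the nondegeneracy hypotheses to obtain a strictly-better-than-$\tfrac12$ gain. Since the bilinear case already furnishes the benchmark exponent $\tfrac12$ when $\partial^2\phi/\partial x_j\partial x_k$ vanishes nowhere, we begin by integrating out one variable — say $x_3$ — using stationary phase or a direct change of variables in the fiber, which extracts one clean factor of $|\lambda|^{-1/2}$ and replaces $f_3$ by a function $F_3$ with $\norm{F_3}_2 = O(\norm{f_3}_2)$ but \emph{no} better $L^p$ control. This leaves a two-dimensional oscillatory integral of the schematic shape $\int_{[0,1]^2} f_1(x_1) f_2(x_2) F_3(\Phi(x_1,x_2)) e^{i\lambda\Psi(x_1,x_2)}\,dx_1\,dx_2$, and the task is to gain an additional $|\lambda|^{-\delta}$ for some $\delta>0$.

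The core is a structure-versus-pseudorandomness dichotomy, carried out after a phase-space (time-frequency) decomposition of each $f_j$ at a suitable intermediate scale in $\lambda$. In the pseudorandom case — roughly, when the relevant Gowers-type / cut-norm quantity built from the $f_j$ is small — a direct $TT^*$ or van der Corput argument against the oscillation of $e^{i\lambda\Psi}$ (using the nonvanishing mixed Hessians of $\phi$) already gives the extra decay, since one is never near an exact solution of the degenerate equation. In the structured case, $\prod_j f_j(x_j)$ is, on the bulk of $[0,1]^3$, well-approximated by $e^{i\lambda g(\bx)}$ for $g$ of the form $\sum_j h_j(x_j)$ with $h_j$ only measurable; then $|T_\lambda^\phi(\bff)|$ being large forces the \emph{net} phase $\phi - \sum_j h_j$ to be small on a set of substantial measure, i.e. the two-scale analysis produces a point near which $\phi$ is, in the appropriate sense, rank one degenerate — contradicting the hypothesis — unless the measure of that near-solution set already decays like a positive power of the tolerance. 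Converting ``$\phi$ is not rank one degenerate on any hypersurface in $U$'' (a $C^\omega$, hence finite-order, nonvanishing statement) into a quantitative sublevel-set bound $|S(\bff,\eps)| \le C\eps^\gamma$ for the associated vector-valued functional is where the real analyticity is used decisively, via a \L ojasiewicz-type inequality; Lemma~\ref{lemma:curv-nondegen} is the bridge that recasts curvature hypotheses in the companion theorem into this rank-one language.

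The main obstacle, and the step I expect to be hardest, is precisely the passage from the qualitative nondegeneracy hypothesis to a \emph{uniform} quantitative sublevel-set inequality with an explicit positive exponent: one must show that if $|\{\bx: |\phi(\bx) - \sum_j h_j(x_j)| < \eps\}|$ fails to be $O(\eps^\gamma)$ for measurable $h_j$ ranging over a huge class, then in the limit one manufactures genuine $C^\omega$ functions $h_j$ whose net phase vanishes to infinite order on a hypersurface — a compactness/normal-families argument complicated by the fact that the $h_j$ are a priori only measurable and that the competing ``small'' families (constants, affine functions) must be quotiented out. Getting the bookkeeping of the two-scale analysis to interface cleanly with this limiting argument, so that the exponent $\delta$ coming out of the dichotomy is genuinely positive and the resulting $\gamma = \tfrac12 + \delta$ exceeds $\tfrac12$, is the crux; everything else (the initial $|\lambda|^{-1/2}$, the $TT^*$ estimates, Hölder/Plancherel manipulations) is routine by comparison.
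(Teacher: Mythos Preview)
Your overall architecture --- phase-space decomposition, a structure/pseudorandomness split, and reduction to a sublevel-set bound governed by the rank-one nondegeneracy hypothesis --- matches the paper's. But two of your concrete steps are off in ways that matter.

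\medskip
\textbf{The initial ``integrate out $x_3$'' step does not apply here.} That maneuver, producing a clean $|\lambda|^{-1/2}$ and a two-dimensional integral of the form $\int f_1(x_1)f_2(x_2)F_3(\Phi(x_1,x_2))e^{i\lambda\Psi(x_1,x_2)}$, works only when $\phi$ is affine in $x_3$; that is the content of Theorem~\ref{thm2}, not Theorem~\ref{mainthm}. For general $\phi$ there is no stationary-phase or change-of-variables step in the $x_3$ fiber that yields this structure: the paper says explicitly (opening of \S\ref{section:proofofmainthm}) that ``the analysis does not split cleanly into two separate steps.'' Instead one works directly in $[0,1]^3$: reduce to $O(\lambda)$--bandlimited $f_j$, do the $\lambda^{-1/2}$--scale microlocal decomposition in all three variables simultaneously, and use the counting that for each $(m_1,m_2,k_1)$ only $O(\lambda^{C\rho})$ tuples $(m_3,k_2,k_3)$ are stationary (this uses the three nonvanishing mixed partials) to absorb the ``pseudorandom'' pieces $h_j$.

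\medskip
\textbf{The sublevel set is for $\nabla\phi$, not $\phi$, and the target exponent is $>1$, not $>0$.} What emerges from the structured case is the set
\[
\scripte=\big\{\bx\in[0,1]^3:\ |\nabla_j\phi(\bx)-h_j(x_j)|\le\eps\ \text{for each }j\in\{1,2,3\}\big\},
\]
a system of three scalar inequalities, one per coordinate direction; see Lemma~\ref{lemma:sublevel2}. Because $\nabla^2_{1,3}\phi\ne 0$, the first inequality already pins $x_3$ to within $O(\eps)$ of a graph $\kappa(x_1,x_2)$, so $|\scripte|=O(\eps)$ automatically. The whole point is to prove $|\scripte|\le C\eps^{1+\delta}$, i.e.\ to beat the trivial exponent $1$; this is what converts into $\gamma>\tfrac12$. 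Your formulation $|\{\bx:|\phi(\bx)-\sum_j h_j(x_j)|<\eps\}|\le C\eps^\gamma$ is the wrong functional and the wrong target.

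\medskip
Finally, the step you flag as hardest --- upgrading measurable $h_j$ to $C^\omega$ ones --- is not done by a normal-families limit. One uses the implicit function theorem (again via the nonvanishing $\nabla^2_{j,k}\phi$) to solve $\nabla_1\phi(x_1,x_2,\kappa)=h_1(x_1)$ for $\kappa$, projects $\scripte$ to a $2$D set $\scripte'$, and then iteratively freezes one coordinate at a time (Lemma~\ref{lemma:squaremeasure}) to read off each $h_j$ as a $C^\omega$ function drawn from a compact family determined by $\phi$, at the cost of shrinking $\scripte'$ to $\scripte'''$ with $|\scripte'''|\gtrsim|\scripte'|^4$. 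Only then does one invoke Lemma~\ref{lemma:scriptf}; the rank-one nondegeneracy hypothesis is exactly what rules out an exact solution $\nabla_j\phi\equiv h_j$ on a hypersurface, forcing $|\scripte'''|\lesssim\eps^\delta$.
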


The condition that a single partial derivative
$\frac{\partial^2 \phi} {\partial x_1\,\partial x_2}$
vanishes nowhere suffices, for $C^\infty$ phases $\phi$ without other hypotheses, to ensure that 
\[ \int_{[0,1]^2} e^{i\lambda\phi(x_1,x_2,x_3)} \prod_{j=1}^2 f_j(x_j)\,dx_1\,dx_2
= O\big(|\lambda|^{-1/2}\norm{f_1}_2\norm{f_2}_2\big)\]
uniformly in $x_3$ \cite{hormander}. 
Consequently
\[  T_\lambda^\phi(\bff)   = O\big(|\lambda|^{-1/2}  
\norm{f_1}_2 \norm{f_2}_2 \norm{f_3}_1\big).\]
The content of Theorem~\ref{mainthm} is the improvement, with appropriate
norms on the right-hand side, of the exponent beyond $\tfrac12$.

The set of all $\phi$ that satisfy the hypotheses of  Theorem~\ref{mainthm}
is nonempty, and is open with respect to the $C^3$ topology. 
The set of all $3$--jets for $\phi$ at $\bx_0$
that guarantee validity of the hypotheses in some small neighborhood
of $\bx_0$ is open and dense. Moreover, its complement is contained in a $C^\omega$ variety
of positive codimension in the space of jets.
This is shown in \S\ref{section:hypotheses}.

The theorem is not valid for $C^\infty$ phases $\phi$ as stated.
If $\phi$ were merely $C^\infty$, 
then $\phi$ could vanish to infinite order at a single point,
without any equivalent phase $\tilde\phi$ satisfying $\nabla\tilde\phi|_H\equiv 0$
for any hypersurface $H$.
Infinite order degeneracy at a point
implies that \eqref{ineq:main} does not hold for any $\gamma>0$,
even with $L^\infty$ norms on the right-hand side of the inequality.
Corresponding remarks apply to other results formulated in this paper.

The norms appearing on the right-hand side
of \eqref{ineq:main} are $L^2$ norms, rather than $L^\infty$.
Thus phases that satisfy the hypotheses of the theorem
enjoy stronger bounds on $L^2\times L^2\times L^2$
than does the example $\phi(\bx) = x_3(x_1+x_2)$, which attains
the largest possible exponent, $\gamma = 1$, on $L^\infty\times L^\infty\times L^\infty$,
but only $\gamma=\tfrac12$ on $L^2\times L^2\times L^2$.
This phase satisfies the main hypothesis of rank one nondegeneracy,
but fails to satisfy the auxiliary hypothesis of three nonvanishing
mixed second partial derivatives.

We believe that under the rank one nondegeneracy hypothesis,
the conclusion holds if one of the three mixed second partial derivatives
vanishes nowhere, but the other two are merely assumed
not to vanish identically.
Theorem~\ref{thm2}, below, supports this belief.

Functions associated to $\phi$ by solutions of certain implicit equations
arise naturally in our analysis, so it is not natural to restrict
attention to polynomial phases in the formulation of the theorems,
as is sometimes done in works on this topic.
Example~\ref{example:mellin} also demonstrates that for polynomial phases,
it is not always natural to restrict to polynomial functions $h_j$
in formulating the equivalence relation between phases
or the notion of rank one degeneracy.

\smallskip
Oscillatory factors do not appear explicitly in the formulation of
our second main result, Theorem~\ref{thm:nonosc},
which is concerned with conditions under which the integral of
$\prod_{j\in J} (f_j\circ\varphi_j)$ is well-defined.
If $\eta\in C^0$ has compact support in $\reals^2$, and if
$\nabla\varphi_j$ and $\nabla\varphi_k$ are linearly independent
at each point in the support of $\eta$ for every pair
of distinct indices $j,k\in\{1,2,3\}$,
and if each $f_j\in L^{3/2}(\reals^1)$,
then the product $\eta(\bx)\prod_{j=1}^3 f_j\circ\varphi_j$
belongs to $L^1(\reals^2)$.
This is a simple consequence of complex interpolation, since the
product belongs to $L^1$ whenever two of the three functions
belong to $L^1(\reals^1)$ and the third belongs to $L^\infty$.
The exponent $\tfrac32$ is optimal in this respect.
This leaves open the possibility that the
integral might be well-defined when the $f_j$
belong to certain Sobolev spaces of negative orders.

For $p\in(1,\infty)$ and $s\in\reals$,
denote by $W^{s,p}$ the Sobolev space of all distributions
having $s$ derivatives in $L^p$.

\begin{question}
Let $J$ be a finite index set. Let $U\subset\reals^2$ nonempty and open.
For $j\in J$, let $\varphi_j:U\to\reals$
be $C^\omega$ with nowhere vanishing gradient. Suppose that for any $j\ne k\in J$,
$\nabla\varphi_j$ and $\nabla\varphi_k$ are linearly independent at almost
every point in $U$. Let $\eta\in C^\infty_0(U)$. Do there exist $s<0$,
$p<\infty$,  and $C<\infty$ such that
\begin{equation} \label{multidesire}
\big| \int_{\reals^2} \eta\cdot \prod_{j\in J} (f_j\circ\varphi_j) \big|
\le C\prod_{j\in J} \norm{f_j}_{W^{s,p}}\end{equation}
for all functions $f_j\in C^1(\varphi_j(U))$?
\end{question}

The answer is negative without further hypotheses. In particular, it is negative
whenever all $\varphi_j$ are linear. But inequalities \eqref{multidesire} do hold
under suitable conditions.

\begin{theorem} \label{thm:nonosc}
Let $\varphi_j\in C^\omega$ for each $j\in\{1,2,3\}$.
Suppose that for every pair of distinct indices $j\ne k\in\{1,2,3\}$,
$\nabla\varphi_j$ and $\nabla\varphi_k$ are linearly independent at $\bx_0$.
Suppose that the curvature of the web defined by $(\varphi_1,\varphi_2,\varphi_3)$
does not vanish at $\bx_0$.
Then there exist $\eta\in C^\infty_0$ satisfying $\eta(\bx_0)\ne 0$
such that for any exponent $p>\tfrac32$,
there exist $C<\infty$ and $s<0$ such that
	\begin{equation} \label{nonosc:conclusion}
\big| \int_{\reals^2} \prod_{j=1}^3 (f_j\circ\varphi_j)\,\eta\big|
\le C\prod_j \norm{f_j}_{W^{s,p}}
	\ \text{ for all $\bff\in (L^{3/2}(\reals^1))^3$.}
\end{equation}
\end{theorem}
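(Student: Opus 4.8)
The plan is to deduce the nonoscillatory inequality \eqref{nonosc:conclusion} from the oscillatory estimate of Theorem~\ref{mainthm}, exploiting the oscillation carried implicitly by the high frequency Fourier components of the $f_j$, with Lemma~\ref{lemma:curv-nondegen} supplying the rank one nondegeneracy of the relevant auxiliary phase. First I would localize and normalize coordinates: since $\nabla\varphi_1$ and $\nabla\varphi_2$ are linearly independent at $\bx_0$, a local $C^\omega$ diffeomorphism of $\reals^2$ turns $(\varphi_1,\varphi_2)$ into the coordinate functions $(x_1,x_2)$, its Jacobian and a localizing cutoff being absorbed into a new $\eta\in C^\infty_0$ with $\eta(\bx_0)\ne 0$; then $\varphi_3$ becomes a function $\varphi(x_1,x_2)$ with $\partial\varphi/\partial x_i$ nowhere zero and, after shrinking $\operatorname{supp}\eta$, with the web $(x_1,x_2,\varphi)$ of nowhere vanishing curvature. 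Next I would Littlewood--Paley decompose $f_j=\sum_{k\ge 0}f_{j,k}$, with $\widehat{f_{j,k}}$ supported where $|\xi|\sim 2^k$ for $k\ge 1$ and $f_{j,0}$ carrying the unit frequency ball. With $N=N(k_1,k_2,k_3)=\max_j 2^{k_j}$, the crux is the dyadic inequality
\begin{equation} \label{dyadicgoal}
\Big|\int_{\reals^2}\prod_{j=1}^3 (f_{j,k_j}\circ\varphi_j)\,\eta\Big|
\le C\,N^{-\delta}\prod_{j=1}^3\norm{f_{j,k_j}}_{L^2}
\end{equation}
for some fixed $\delta>0$. Granting \eqref{dyadicgoal}, one uses $\norm{f_{j,k_j}}_2\lesssim 2^{-sk_j}\norm{f_{j,k_j}}_{W^{s,2}}$, the bound $N\ge 2^{(k_1+k_2+k_3)/3}$, and the choice $s\in(-\delta/3,0)$ to reduce to a product over $j$ of convergent geometric series in $k_j$, obtaining \eqref{nonosc:conclusion} with $p=2$; the remaining cases $p>\tfrac32$, $p\ne 2$, follow by analytic interpolation with the elementary endpoint inequality $\big|\int_{\reals^2}\eta\prod_{j=1}^3 (f_j\circ\varphi_j)\big|\le C\prod_j\norm{f_j}_{L^{3/2}}$ recalled above, together with Littlewood--Paley theory for $L^p$.

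To prove \eqref{dyadicgoal} I would relabel so that the largest frequency is carried by $f_3$; if it is instead carried by $f_1$ or $f_2$, one repeats the normalization step straightening a different pair of the three foliations of the web, which is legitimate since the curvature hypothesis is invariant under diffeomorphism. Inserting the Fourier representation $(f_{3,k_3}\circ\varphi)(x_1,x_2)=c\int_{|\xi|\sim N}\widehat{f_{3,k_3}}(\xi)\,e^{i\xi\varphi(x_1,x_2)}\,d\xi$ and substituting $\xi=Nx_3$, the left side of \eqref{dyadicgoal} becomes, up to a harmless constant,
\begin{equation} \label{recognizeT}
\int_{\reals^3} f_{1,k_1}(x_1)\,f_{2,k_2}(x_2)\,g_3(x_3)\,e^{iN\Phi(\bx)}\,\tilde\eta(\bx)\,d\bx,
\qquad \Phi(\bx)=x_3\,\varphi(x_1,x_2),
\end{equation}
where $g_3(x_3)=cN\,\widehat{f_{3,k_3}}(Nx_3)$ times a smooth cutoff to $\{|x_3|\sim 1\}$, so that $\norm{g_3}_{L^2}\sim N^{1/2}\norm{f_{3,k_3}}_{L^2}$ by Plancherel, and $\tilde\eta\in C^\infty_0(\reals^3)$ equals $\eta(x_1,x_2)$ near the support of the integrand. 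After the standard reduction to integration over $[0,1]^3$ (an affine change of variables, a partition of unity, and Fourier expansion of the cutoffs), \eqref{recognizeT} is estimated exactly as $T^\Phi_N(f_{1,k_1},f_{2,k_2},g_3)$ in \eqref{3Dform}. Now $\Phi$ is real analytic, $\partial^2\Phi/\partial x_1\partial x_3=\partial\varphi/\partial x_1$ and $\partial^2\Phi/\partial x_2\partial x_3=\partial\varphi/\partial x_2$ vanish nowhere, and by Lemma~\ref{lemma:curv-nondegen} (applicable because the web has nonvanishing curvature and $x_3\ne 0$ on the relevant region) $\Phi$ is not rank one degenerate on any hypersurface there; Theorem~\ref{mainthm} therefore supplies a bound $CN^{-\gamma}\norm{f_{1,k_1}}_2\norm{f_{2,k_2}}_2\norm{g_3}_2$ with $\gamma>\tfrac12$, and absorbing the Plancherel factor $\norm{g_3}_2\sim N^{1/2}\norm{f_{3,k_3}}_2$ yields \eqref{dyadicgoal} with $\delta=\gamma-\tfrac12$.

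The main obstacle is the one remaining hypothesis of Theorem~\ref{mainthm}, namely that $\partial^2\Phi/\partial x_1\partial x_2=x_3\,\partial^2\varphi/\partial x_1\partial x_2$ also vanish nowhere on $\operatorname{supp}\tilde\eta$, i.e.\ that $\partial^2\varphi/\partial x_1\partial x_2\ne 0$. Nonvanishing curvature of the web forces this quantity to be not identically zero, but in coordinates adapted to one fixed pair of the three foliations it may vanish at exceptional points near $\bx_0$. I expect to resolve this by choosing, among the three pairs of foliations of the web, a pair for which the associated mixed second partial is nonzero at $\bx_0$; the locus of points at which no such choice succeeds lies in a proper $C^\omega$ subvariety, and near that subvariety one argues by a partition of unity into a good region where Theorem~\ref{mainthm} applies after further localization and a thin neighborhood of the bad subvariety handled by a separate limiting argument --- alternatively, since $\partial^2\varphi/\partial x_1\partial x_2\not\equiv 0$, one may invoke the refinement of Theorem~\ref{mainthm} mentioned after its statement, in which only one of the three mixed second partials is required to vanish nowhere and the other two merely to vanish to finite order. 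This is the only point at which more than the literal statement of Theorem~\ref{mainthm} is needed, and it is also precisely where the improvement of the exponent past $\tfrac12$ is indispensable: with $\gamma=\tfrac12$ the series generated by \eqref{dyadicgoal} would diverge, so only a strict gain $\delta>0$ survives the summation against Sobolev weights of negative order.
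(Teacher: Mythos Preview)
Your approach is essentially the paper's: localize so that $(\varphi_1,\varphi_2)=(x_1,x_2)$, represent the high-frequency part of (at least one) $f_j$ via its Fourier transform to produce an oscillatory integral over $\reals^3$ with phase $\Phi(\bx)=x_3\varphi(x_1,x_2)$, verify rank one nondegeneracy via Lemma~\ref{lemma:curv-nondegen}, and apply Theorem~\ref{mainthm}. The paper streamlines the bookkeeping by proving only the asymmetric bound $C\norm{f_1}_2\norm{f_2}_2\norm{f_3}_{H^s}$ (and its permutations) and then interpolating, which avoids the Littlewood--Paley decomposition of all three functions and the mid-argument relabeling you perform when $\max_j k_j\ne k_3$.

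You correctly isolate the only genuine difficulty --- that Theorem~\ref{mainthm} also requires $\partial^2\Phi/\partial x_1\partial x_2=x_3\,\partial^2\varphi/\partial x_1\partial x_2$ to vanish nowhere --- but neither of your proposed fixes quite works. Your first suggestion, choosing among the three coordinate pairs one for which the mixed partial is nonzero at $\bx_0$, can fail: for $\varphi(x_1,x_2)=x_1+x_2+x_1^2x_2$ the web curvature at the origin is nonzero (the cubic term $x_1^2x_2$ is not a combination of $x_1^3,x_2^3,(x_1+x_2)^3$), yet $\varphi_{12}(0)=0$, and one checks that the analogous mixed partials in the other two coordinate systems also vanish at $0$. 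Your second suggestion invokes the ``refinement'' mentioned after Theorem~\ref{mainthm}, which the paper states only as a belief, not a theorem. The paper's reduction (``making local changes of coordinates'') actually exploits a further degree of freedom you did not use: one may replace $\varphi_3$ by $H\circ\varphi_3$ for any $C^\omega$ diffeomorphism $H$, absorbing $H^{-1}$ into $f_3$ with comparable Sobolev norms. A short computation gives
\[
\frac{\partial^2 (H\circ\varphi)}{\partial x_1\partial x_2}
= H''(\varphi)\,\varphi_1\varphi_2 + H'(\varphi)\,\varphi_{12},
\]
and since $\varphi_1(\bx_0)\varphi_2(\bx_0)\ne 0$ by pairwise transversality, one may always choose $H$ with $H''(\varphi(\bx_0))/H'(\varphi(\bx_0))$ avoiding the single bad value $-\varphi_{12}(\bx_0)/(\varphi_1(\bx_0)\varphi_2(\bx_0))$. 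After this substitution the mixed partial is nonzero at $\bx_0$, and shrinking $\operatorname{supp}\eta$ puts all three nonvanishing hypotheses of Theorem~\ref{mainthm} in force. With this correction your argument goes through.
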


The assumption that $\bff\in L^{3/2}$ guarantees absolute convergence of the integral.
The particular instance of Theorem~\ref{thm:nonosc}
with the ordered triple $(x_1,x_2)\mapsto (x_1,x_1+x_2,x_1+x_2^2)$ of mappings was treated
by Bourgain \cite{bourgain} in 1988.

The proof will implicitly establish a formally stronger inequality. 
Let $\gamma\in(0,1)$. 
Let $\lambda\in(0,\infty)$ be large.
Suppose that the Fourier transform of at least one of the functions $f_j$
is supported in the region in which the Fourier variable satisfies $|\xi|\ge\lambda$.
Partition a sufficiently small neighborhood of the support of $\eta$
into cubes $Q_n$, each of sidelength $\lambda^{-\gamma}$.
Then
\begin{equation*} 
\sum_n \big| \int_{Q_n } \prod_{j=1}^3 (f_j\circ\varphi_j)\big|
\le C\lambda^{s} \prod_j \norm{f_j}_{L^p}.
\end{equation*}

Theorem~\ref{thm:nonosc} is a simple consequence of Theorem~\ref{mainthm},
with the validity of the inequality \eqref{ineq:main} for some exponent strictly greater than $\tfrac12$
being crucial in the analysis.
It is worth noting that the deduction relies on the appearance of
$\lt$ norms, rather than merely $L^\infty$ norms, on the right-hand side of \eqref{ineq:main}.
The tuple $(\varphi_1,\varphi_2,\varphi_3) = (x_1,x_2,x_1+x_2)$
illustrates this relatively delicate distinction. This example
does not satisfy the inequality \eqref{nonosc:conclusion}.
When the analysis used below to reduce Theorem~\ref{thm:nonosc}
to \eqref{ineq:main} is applied to it,
the phase that arises is $\phi(x_1,x_2,x_3)=x_3(x_1+x_2)$.
This is Example~\ref{example:first},
for which the $L^\infty$ inequality holds with $\gamma=1$,
but the $\lt$ inequality \eqref{ineq:main} holds only
for $\gamma=\tfrac12$, not for any larger exponent.

Theorem~\ref{thm:nonosc} has the following immediate consequence
for the weak convergence of products of weakly convergent factors.

\begin{corollary} \label{cor:weakconverge}
Let $\eta,\varphi_j$ satisfy
the hypotheses of Theorem~\ref{thm:nonosc}.
Let $p>\tfrac32$.
For $\nu\in\naturals$ let
$f_j^{\nu}\in L^p$ have uniformly bounded $L^p$ norms.
If $f_j^{\nu}$ converges weakly to $f_j$ 
as $\nu\to\infty$ for $j=1,2,3$ then
\begin{equation}
\prod_{j=1}^3 (f_j^{\nu}\circ\varphi) 
\text{ converges weakly to } 
\prod_{j=1}^3 (f_j\circ\varphi)
\ \text{ as $\nu\to\infty$}
\end{equation}
in a neighborhood of $\bx_0$.
\end{corollary}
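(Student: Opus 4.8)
The plan is to derive Corollary~\ref{cor:weakconverge} from Theorem~\ref{thm:nonosc} by a standard weak-convergence argument, with the quantitative gain in regularity doing the essential work. First I would fix a test function $g\in C^\infty_0$ supported near $\bx_0$ and small enough that the conclusion of Theorem~\ref{thm:nonosc} applies with $\eta$ replaced by $g\eta$ (absorbing $g$ into the cutoff). The goal is to show $\int g\cdot\prod_j (f_j^\nu\circ\varphi_j) \to \int g\cdot\prod_j (f_j\circ\varphi_j)$. Writing the difference of the two products telescopically as a sum of three terms, in each of which exactly one factor is $f_j^\nu - f_j$ and the other two factors are either $f_k^\nu$ or $f_k$, it suffices to treat one such term, say
\[
\int g(\bx)\,\big((f_1^\nu-f_1)\circ\varphi_1\big)\,(f_2^\nu\circ\varphi_2)\,(f_3\circ\varphi_3)\,d\bx,
\]
and show it tends to $0$; the other two terms are handled identically, using the uniform $L^p$ bounds on the $f_k^\nu$ and weak convergence $f_1^\nu\rightharpoonup f_1$.

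The key step is a compactness-via-smoothing maneuver. Since $f_1^\nu$ is bounded in $L^p$ with $p>\tfrac32$, and weak convergence in $L^p$ against a fixed function is what we control, I would split $f_1^\nu - f_1$ into a low-frequency part and a high-frequency part using a Littlewood--Paley cutoff at frequency $R$: write $f_1^\nu - f_1 = P_{\le R}(f_1^\nu-f_1) + P_{>R}(f_1^\nu-f_1)$. For the high-frequency part, Theorem~\ref{thm:nonosc} (or rather the formally stronger frequency-localized inequality stated just after it, with $\lambda = R$) gives a bound
\[
\Big|\int g\,\big((P_{>R}(f_1^\nu-f_1))\circ\varphi_1\big)\,(f_2^\nu\circ\varphi_2)\,(f_3\circ\varphi_3)\Big|
\le C R^{s}\,\norm{f_1^\nu-f_1}_{L^p}\,\norm{f_2^\nu}_{L^p}\,\norm{f_3}_{L^p}
\le C' R^{s},
\]
with $s<0$, uniformly in $\nu$, using the uniform $L^p$ bounds. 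Given $\delta>0$, choose $R$ so large that $C'R^s<\delta$. For the low-frequency part, $P_{\le R}(f_1^\nu - f_1)$ is bounded in $L^p$ and converges to $0$ weakly (since $P_{\le R}$ is continuous and has a formal adjoint acting on test functions); moreover, on the bounded support of $g$, the family $\{P_{\le R}f_1^\nu\}_\nu$ is bounded in every $C^m$ norm, hence precompact in $C^0$ by Arzelà--Ascoli, so $P_{\le R}(f_1^\nu-f_1)\to 0$ uniformly on compact sets. Pulling back by the submersion $\varphi_1$ and integrating against the bounded factor $g\cdot(f_2^\nu\circ\varphi_2)\cdot(f_3\circ\varphi_3)$, which is bounded in $L^{(p/2)'}$ — here one uses $(f_2^\nu\circ\varphi_2)(f_3\circ\varphi_3)\in L^{p/2}$ uniformly, an elementary change-of-variables estimate using transversality of $\varphi_2,\varphi_3$ — the low-frequency contribution tends to $0$ as $\nu\to\infty$ for each fixed $R$. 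Combining, $\limsup_{\nu\to\infty}$ of the original term is $\le\delta$, and $\delta$ is arbitrary.

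The main obstacle I anticipate is the bookkeeping around the pullback by $\varphi_1$: one must check that pre-composition with a submersion, restricted to the support of $g$, sends weak-$L^p$ convergence of $f_1^\nu$ to something testable against $g\cdot(f_2^\nu\circ\varphi_2)(f_3\circ\varphi_3)$, and that the latter is bounded in the appropriate dual exponent $(p/2)'$ uniformly in $\nu$. This is where transversality of the three gradients $\nabla\varphi_j$ enters: it guarantees that $(f_2\circ\varphi_2)(f_3\circ\varphi_3)$ lies in $L^{p/2}_{\mathrm{loc}}$ with norm controlled by $\norm{f_2}_p\norm{f_3}_p$, via a coarea/change-of-variables argument exactly as in the discussion preceding Theorem~\ref{thm:nonosc}. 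The high-frequency estimate is the crucial new input and is precisely the frequency-localized form of Theorem~\ref{thm:nonosc}; the rest is soft. I would also remark that the decomposition should be arranged symmetrically so that in each telescoping term it is the factor carrying $f_j^\nu-f_j$ that gets Littlewood--Paley-split, and the hypothesis $p>\tfrac32$ is used exactly once, to invoke Theorem~\ref{thm:nonosc} for each index.
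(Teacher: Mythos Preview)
Your argument is correct and follows the same underlying route as the paper: telescope the difference of products, and exploit the negative-order Sobolev bound of Theorem~\ref{thm:nonosc} to kill each term. The paper simply declares the corollary an immediate consequence of Theorem~\ref{thm:nonosc}; the intended one-line mechanism is that weak $L^p$ convergence on a bounded interval implies norm convergence in $W^{s,p}$ for $s<0$ (compact Sobolev embedding, dual to Rellich--Kondrachov), so each telescoped term is bounded by $\norm{f_j^\nu-f_j}_{W^{s,p}}$ times uniformly bounded factors, and this tends to zero. Your Littlewood--Paley split $P_{\le R}+P_{>R}$ is just an explicit proof of that compact embedding: the high-frequency piece is small in $W^{s,p}$ uniformly, the low-frequency piece is equicontinuous hence convergent. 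So the content is identical; you have merely unpacked a step the paper takes for granted. One could streamline your write-up by replacing the entire frequency split with a direct appeal to the compact embedding $L^p\hookrightarrow W^{s,p}$ on bounded sets.
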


That is,
\begin{equation}
\int \eta\, \prod_{j=1}^3 (f_j^{\nu}\circ\varphi)
\to \int \eta\, \prod_{j=1}^3 (f_j\circ\varphi)
\ \text{ as $\nu\to\infty$}
\end{equation}
for every function $\eta\in C^\infty$ supported in a sufficiently
small neighborhood of $\bx_0$.

Corollary~\ref{cor:weakconverge}
is a slight variant
of a result established by Joly-M\'etivier-Rauch \cite{JMR} 
using semiclassical defect measures.\footnote{In \cite{JMR}
the functions $\varphi_j$ are $C^\infty$ rather than $C^\omega$.
In Theorem~2.2.1 of \cite{JMR}
it is assumed that 
the curvature is nonzero at $\bx_0$, while in Theorem~2.2.3
the curvature is allowed to vanish on any set of Lebesgue measure zero,
but a stronger hypothesis is imposed on $f_j$.}
In \cite{JMR}, each $f_j\circ\varphi_j$ is replaced by a
function that possesses some quantitative smoothness along the 
level curves of $\varphi_j$ but need not be constant.
Such an extension is a simple consequence of Theorem~\ref{thm:nonosc},
and is formulated and proved below as Theorem~\ref{thm:nonosc2}
and Corollary~\ref{cor:weakconverge2}.

\smallskip
Consider functionals of the form
\begin{equation}
S_\lambda(\bff) =
\int_{[0,1]^2} 
e^{i\lambda \psi(\bx)} 
\prod_{j=1}^3 f_j(\varphi_j(\bx))
\,d\bx.
\end{equation}

\begin{theorem} \label{thm:3}
Let $\varphi_j:[0,1]^2\to\reals$ and $\psi:[0,1]^2 \to\reals$ be real analytic.
Suppose that for any two indices $j,k\in\{1,2,3\}$,
the Jacobian determinant of
the mapping $[0,1]^2\owns \bx \mapsto (\varphi_j(\bx),\varphi_k(\bx))\in\reals^2$
does not vanish identically.
Suppose that there exist no nonempty open subset
$U\subset(0,1)^2$ and $C^\omega$ functions $h_j:\varphi_j(U)\to\reals$ satisfying
\begin{equation} \label{psiexpression}
\psi(\bx) = \sum_{j=1}^3 h_j(\varphi_j(\bx)) \ \text{ for all $\bx\in U$.}
\end{equation}
Then there exist $\delta>0$ and $C<\infty$ satisfying
\begin{equation} \label{thm3conclusion}
|S_\lambda(\bff)| \le C|\lambda|^{-\delta}\prod_{j=1}^3 \norm{f_j}_{L^2}
\ \text{ for all $\bff$ and all $\lambda\in\reals$.}
\end{equation}
\end{theorem}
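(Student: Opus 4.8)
The plan is to reduce Theorem~\ref{thm:3} to the trilinear oscillatory integral estimate of Theorem~\ref{mainthm} --- or rather to the strengthening of it in which only one of the three mixed second partials of the phase is required to vanish nowhere, the other two being merely nonzero identically (cf.\ Theorem~\ref{thm2}). First I would carry out the standard reductions. The set in $[0,1]^2$ where some pairwise Jacobian $\partial(\varphi_j,\varphi_k)/\partial(x_1,x_2)$ degenerates, or some $\nabla\varphi_j$ vanishes, is a real-analytic variety of measure zero; after a partition of unity, a stratification of a neighborhood of this variety, and rescaling of the dyadic pieces, it suffices to prove \eqref{thm3conclusion} on a small box $Q$ on which, say, $\nabla\varphi_1$ and $\nabla\varphi_2$ are linearly independent. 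The rescaled problems carry an effective large parameter $\gtrsim\lambda^{1-\eps}$, the number of scales is $O(\log\lambda)$, and the contribution of a $\lambda^{-\eps}$-neighborhood of the bad variety is $O(\lambda^{-c\eps}\prod_j\norm{f_j}_2)$ by the transversal estimate $\big|\int_R\prod_j|f_j\circ\varphi_j|\big|\lesssim|R|^{1/2}\prod_j\norm{f_j}_2$ for regions $R$ of small measure; so these are harmless. On $Q$, changing variables $\bx\mapsto(y_1,y_2)=(\varphi_1(\bx),\varphi_2(\bx))$ and absorbing the smooth Jacobian weight and cutoff into $f_1,f_2$ via their Fourier series (as in \S3), I may assume $\varphi_1(y)=y_1$, $\varphi_2(y)=y_2$, $\varphi_3(y)=\Phi(y)$, $\psi(y)=\Psi(y)$ with $\Phi,\Psi$ real analytic, with $\partial_1\Phi$ and $\partial_2\Phi$ nowhere vanishing on $Q$, and with no nonempty open subset of $Q$ on which $\Psi=h_1(y_1)+h_2(y_2)+h_3(\Phi(y_1,y_2))$ for $C^\omega$ functions $h_j$.

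Next I would lift the two-dimensional integral to a three-dimensional one --- the reverse of the manipulation in Examples~\ref{example:first} and \ref{example:CLTTtype}. Writing $f_3$ by Fourier inversion and substituting $\xi=\lambda x_3$,
\[
S_\lambda=\lambda^{1/2}\int_{Q\times\reals}e^{i\lambda(\Psi(y_1,y_2)+x_3\Phi(y_1,y_2))}\,f_1(y_1)f_2(y_2)\,g_3(x_3)\,dy_1\,dy_2\,dx_3,
\]
where $g_3(x_3)=c\lambda^{1/2}\widehat{f_3}(\lambda x_3)$ satisfies $\norm{g_3}_{L^2(\reals)}\lesssim\norm{f_3}_{L^2}$ by Plancherel. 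Apart from $x_3$ ranging over $\reals$, this is $\lambda^{1/2}$ times the trilinear form \eqref{multioscform3} for the phase $\phi(\bx)=\Psi(x_1,x_2)+x_3\Phi(x_1,x_2)$. I would split the $x_3$-range dyadically, $|x_3|\sim 2^m$, rescale $x_3=2^m z$ with $z$ in a unit interval, and divide the phase by $2^m$: the resulting phase $\phi_m(y_1,y_2,z)=2^{-m}\Psi+(1+z)\Phi$ carries effective parameter $\lambda 2^m$ and acts on $f_1(y_1),f_2(y_2)$ and a rescaled copy of $g_3$ of $L^2$ norm $2^{-m/2}\norm{g_3\one_{\{|x_3|\sim 2^m\}}}_2$.

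It remains to verify that $\phi$, and each $\phi_m$ uniformly in $m$, satisfies the hypotheses. One mixed second partial, $\partial_1\partial_3\phi=\partial_1\Phi$, vanishes nowhere, and $\partial_2\partial_3\phi=\partial_2\Phi$ vanishes nowhere either; and $\partial_1\partial_2\phi=\partial_1\partial_2\Psi+x_3\,\partial_1\partial_2\Phi$ is not identically zero, since otherwise $\Psi$ and $\Phi$ would both split into sums of a function of $y_1$ and a function of $y_2$, giving the forbidden representation $\Psi=h_1(y_1)+h_2(y_2)+0$. Finally, $\phi$ is not rank one degenerate on any hypersurface: if $\tilde\phi=\phi-\sum_j h_j(x_j)$ satisfied $\nabla\tilde\phi\equiv 0$ on a hypersurface $H$, then $\partial_3\tilde\phi=\Phi(y_1,y_2)-h_3'(x_3)$ vanishes on $H$; since $\partial_1\Phi,\partial_2\Phi$ are nowhere zero, $H$ cannot be purely vertical and is locally the graph $x_3=K(\Phi(y_1,y_2))$ for analytic $K$, whereupon $\partial_1\tilde\phi=\partial_2\tilde\phi=0$ on $H$ force $\Psi+\mathcal{K}(\Phi)$ (with $\mathcal{K}'=K$) to have vanishing mixed second partial, hence $\Psi=h_1(y_1)+h_2(y_2)-\mathcal{K}(\Phi(y_1,y_2))$, again forbidden. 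The same computation, up to harmless scalar factors, disposes of every $\phi_m$. Thus Theorem~\ref{thm2} supplies $\gamma>\tfrac12$ and a constant uniform in $m$ with $\big|T^{\phi_m}_{\lambda 2^m}\big|\le C(\lambda 2^m)^{-\gamma}\norm{f_1}_2\norm{f_2}_2\cdot 2^{-m/2}\norm{g_3\one_{\{|x_3|\sim 2^m\}}}_2$. Summing in $m$, the weights $2^{m(1/2-\gamma)}$ are summable because $\gamma>\tfrac12$, Cauchy--Schwarz in $m$ absorbs the $g_3$ pieces into $\norm{g_3}_2\lesssim\norm{f_3}_2$, the integral over $Q\times\reals$ is $\lesssim\lambda^{-\gamma}\prod_j\norm{f_j}_2$, and the prefactor $\lambda^{1/2}$ yields $|S_\lambda|\lesssim\lambda^{1/2-\gamma}\prod_j\norm{f_j}_2$, i.e.\ \eqref{thm3conclusion} with $\delta=\gamma-\tfrac12$, after reassembling the pieces of the first step.

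The main obstacle is that one genuinely needs an exponent strictly above $\tfrac12$ and the $\lt$ norms of Theorem~\ref{mainthm}/\ref{thm2}, for two linked reasons. The lifted phase $\phi=\Psi+x_3\Phi$ generically has $\partial_1\partial_2\phi$ vanishing along a hypersurface $\{x_3=c(y_1,y_2)\}$, so Theorem~\ref{mainthm} as literally stated does not apply and one must invoke the version tolerating two merely-nonzero-identically mixed partials; excising that hypersurface and estimating crudely is not an option, as it produces bounds growing like a power of $\lambda$. And the lift forces the auxiliary frequency variable $x_3$ over all of $\reals$: the dyadic sum over $|x_3|\sim 2^m$ converges only because $1/2-\gamma<0$, whereas with $\gamma=\tfrac12$ (as from H\"ormander's bilinear bound) or with $L^\infty$ norms on the right the same sum diverges or costs an unacceptable power of $\log\lambda$. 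Establishing uniformity of the constant across the compact family $\{\phi_m\}$, and the bookkeeping of the stratification and rescaling in the first step, are the remaining and more routine technical points.
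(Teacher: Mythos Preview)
Your lifting strategy --- representing $f_3$ by Fourier inversion to produce the three-dimensional phase $\phi(y_1,y_2,x_3)=\Psi(y_1,y_2)+x_3\Phi(y_1,y_2)$ and invoking Theorem~\ref{thm2} dyadically in $|x_3|$ --- is exactly what the paper does in \S\ref{section:completeproofs} for the high-frequency piece of $\widehat{f_3}$ supported where $|\xi|\gtrsim\lambda^{1+\rho}$. The bandlimited remainder the paper handles directly (Lemma~\ref{lemma:CLTTbandlimited}, \S\S\ref{section:decompose}--\ref{section:sublevel1}); since Theorem~\ref{thm2} is itself derived from that lemma, your invocation of Theorem~\ref{thm2} on the bounded-$x_3$ range is legitimate but conceals where the work actually lives.

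The real gap is the uniformity you dismiss as routine. As $m\to\infty$ the phase $\phi_m=2^{-m}\Psi+z\Phi$ tends to $z\Phi$, whose $\psi$-part is identically zero and hence trivially of the forbidden form, so the hypothesis of Theorem~\ref{thm2} fails at the limit and the family $\{\phi_m\}$ is not compact within the class covered by that theorem. Whether the \emph{conclusion} persists uniformly depends on the web. If $(y_1,y_2,\Phi)$ has nonvanishing curvature, then by Lemma~\ref{lemma:curv-nondegen} the limit $z\Phi$ is rank one nondegenerate, and the sublevel bound Lemma~\ref{lemma:sublevel} is in fact proved uniformly for $r\psi$ over $r\in[0,1]$ (the $E_k$ analysis in \S\ref{section:sublevel1}); so uniformity does hold --- but by entering the proof, not by black-box compactness. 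If instead the web is equivalent to a linear system, say $\Phi=y_1+y_2$ after your reduction, the limit is Example~\ref{example:first}, whose optimal $L^2$ exponent is exactly $\tfrac12$; here $\partial_1\partial_2\phi_m=2^{-m}\partial_1\partial_2\Psi\to 0$, the constant in Theorem~\ref{thm2} for $\phi_m$ scales like $2^{m\delta}$, and your shell contributions collapse to $\lesssim\lambda^{-\delta}\,\norm{g_3\one_{\{|x_3|\sim 2^m\}}}_2\,\norm{f_1}_2\norm{f_2}_2$ with no residual decay in $m$, so the sum over $m$ yields the $\ell^1$-norm of the dyadic $L^2$-pieces of $g_3$ rather than $\norm{g_3}_2$. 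The paper forestalls this by separating off the linear-equivalent case at the start of \S\ref{section:reductions} and treating it directly via \cite{CLTT}; your argument needs the same bifurcation.
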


For linear mappings $\varphi_j$, 
two different generalizations of this inequality were proved in \cite{CLTT}. 
For this linear case, and for any tuple $(\varphi_j: j\in\{1,2,3\})$
reducible to a linear tuple by a change of variables,
Theorem~\ref{thm:3} is a special case of results obtained in that work.
While the method of analysis in \cite{CLTT} exploited linearity
of $\varphi_j$, in \S\ref{section:moreon} we sketch an alternative proof 
of one of the two main results of \cite{CLTT} by the method developed here
that allows an extension to the nonlinear case.

If the Jacobian determinant of $\bx\mapsto (\varphi_j(\bx),\varphi_k(\bx))$ 
vanishes nowhere for each pair of distinct indices $j,k$, then
$|S_\lambda(\bff)| \le C\norm{f_i}_1\norm{f_j}_1\norm{f_k}_\infty$
for any permutation $(i,j,k)$ of $(1,2,3)$.
Thus by interpolation, it suffices to prove \eqref{thm3conclusion}
with the $L^2$ norms replaced by $L^\infty$ norms on the right-hand side.

\begin{example}
For $(\varphi_1,\varphi_2,\varphi_3) = (x_1,x_2,x_1+x_2)$
and $\psi(\bx) = x_1^2 x_2$, and with the $\lt$ norms on the
right-hand side replaced by $L^\infty$ norms,
the inequality for $S_\lambda(\bff)$ holds with $\delta = \tfrac14$,
and fails for $\delta > \tfrac13$ \cite{GressmanXiao}. The optimal exponent,
for $L^\infty$ norms, is unknown for even this (simplest) example.
\end{example}

\begin{conjecture} 
Let $J$ be a finite set of indices.
Let $D\ge 2$, and let $d_j\ge 1$ for $j\in J$.
Let $B\subset\reals^D$ be a ball of finite radius.
For each $j\in J$, let $\varphi_j\in C^\omega(B,\reals^{d_j})$
be nonconstant.
Likewise, Let $\psi\in C^\omega(B,\reals)$.
Suppose that $\psi$ cannot be expressed as $\psi = \sum_{j\in J} h_j\circ\varphi_j$
in any open subset of $B$, with $h_j\in C^\omega$.
Then there exists 
$\gamma>0$ such that
for all $\lambda\in\reals$ and all continuous functions $f_j$,
\[ \big| \int_{B} e^{i\lambda\psi} \prod_{j\in J} (f_j\circ\varphi_j) \big|
\le 
C|\lambda|^{-\delta} 
\, \prod_{j\in J} \norm{f_j}_{L^\infty}.\]
\end{conjecture}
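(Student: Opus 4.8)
The plan would be to establish the conjecture by a double induction: an outer induction on $|J|$, whose base cases $|J|=2$ and (for arbitrary $|J|$ but linear $\varphi_j$) are covered respectively by H\"ormander's bilinear bound \cite{hormander} and by the inequalities of \cite{CLTT}, and an inner induction, for fixed $|J|$, on a complexity parameter measuring the order of vanishing forced by the hypothesis that $\psi$ is not locally of the form $\sum_{j\in J} h_j\circ\varphi_j$. Since the desired inequality already carries $L^\infty$ norms, one may work directly under the normalization $\norm{f_j}_{L^\infty}\le 1$, so no interpolation step of the kind used after Theorem~\ref{thm:3} is required.

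First I would introduce a phase space decomposition adapted to the fibrations $\varphi_j$: fix an auxiliary scale $N=|\lambda|^\theta$ with $\theta\in(0,1)$ to be optimized later, and split each $f_j$ into a part whose Fourier transform on $\reals^{d_j}$ is supported where $|\xi|\ge N$ and a part that is slowly varying at scale $N^{-1}$. If some $f_j$ has a substantial high-frequency part, then on each cube of sidelength $N^{-1}$ the factor $f_j\circ\varphi_j$ genuinely oscillates, and a $TT^*$ or van der Corput estimate played against the remaining factors extracts a positive power of $N$; this is the mechanism behind the per-cube inequality recorded just after Theorem~\ref{thm:nonosc}, now in arbitrary dimension. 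If instead every $f_j$ is slowly varying, one replaces the $f_j$ by smooth proxies, expands in Taylor or Fourier series, and is reduced to a stationary-phase estimate for $\int_B e^{i\lambda\psi}\,\eta\,dx$ governed by the order of vanishing of $\nabla\psi$, together with sublevel set bounds of the shape $|\{x\in B:\ |\psi(x)-\sum_{j\in J}h_j(\varphi_j(x))|<\eps\}|\le C\eps^\gamma$ obtained, for $C^\omega$ data, from \L{}ojasiewicz-type inequalities and resolution of singularities.

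The core of the argument would be a structure-versus-pseudorandomness dichotomy coupled with a degree-reduction step in the spirit of Bourgain \cite{bourgain} and Peluse--Prendiville \cite{peluse+prendiville2019}. One needs, for each $j$, a cut-norm adapted to the foliation of $B$ by the fibers of $\varphi_j$, enjoying two properties: if some $f_j$ is small in this cut-norm then the integral is small; and an inverse theorem, to the effect that in the absence of such smallness one may replace $f_j\circ\varphi_j$, on a subdomain, by an object of strictly smaller complexity --- morally a sum of functions each depending on a proper subtuple of the coordinates, or a configuration with fewer essential factors. Substituting back and iterating, the hypothesis on $\psi$ guarantees that the process terminates: a completely structured configuration would exhibit $\psi$ as $\sum_{j\in J} h_j\circ\varphi_j$ on some open set, which is excluded, and in the $C^\omega$ category this qualitative obstruction upgrades to a quantitative bound on the number of iterations, each of which costs only a power of $N$; optimizing over $\theta$ then produces the decay $|\lambda|^{-\delta}$.

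The principal obstacle is the geometry beyond the case $D=2$, $|J|=3$, $d_1=d_2=d_3=1$ of Theorem~\ref{thm:3}. There the analysis leans on the Blaschke curvature of a $3$-web and on the structure of its zero set; in general dimension, with several factors and vector-valued $\varphi_j$, there is no single scalar invariant, the fibers of the $\varphi_j$ have codimensions $d_j$ that combine in many different combinatorial patterns, and the cut-norms --- and, above all, the inverse theorem --- must be arranged so that the ``lower-complexity'' object produced genuinely decreases either $|J|$ or the order-of-vanishing parameter, uniformly in $\lambda$ and stably under replacement of $\psi$ by an equivalent phase $\psi-\sum_{j\in J} h_j\circ\varphi_j$. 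Securing this, together with the uniform sublevel set bound needed to cover small $|\lambda|$, is the hard part, and is the reason the statement is offered here only as a conjecture.
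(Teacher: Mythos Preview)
The statement you are asked to prove is labeled a \emph{Conjecture} in the paper; no proof is given there, and the paper explicitly flags it as open. So there is no ``paper's own proof'' to compare against. Your write-up is accordingly not a proof but a strategy outline, and you say as much in your final sentence.

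As a strategy outline it is broadly aligned with the paper's methods (phase-space decomposition, structure/pseudorandomness dichotomy, reduction to sublevel set bounds), but a few points deserve sharpening. First, your proposed base case ``arbitrary $|J|$ with linear $\varphi_j$, covered by \cite{CLTT}'' is not in fact available: the results of \cite{CLTT}, and the extension proved in the paper as Theorem~\ref{thm:cltt_altproof}, require $|J|<2D/d$ (or $d_j=D-1$), and the paper explicitly shows at the end of \S\ref{section:moreon} that the key step of the argument breaks down once $|J|\ge 2D/d$. So the linear case is itself open in the generality your induction would need. Second, the inverse theorem you invoke for the fibration-adapted cut-norms is precisely the missing ingredient; the paper's machinery produces such a step only when each application of Lemma~\ref{lemma:squaremeasure} eliminates one unknown function, which is what forces $|J|\le 3$ in Theorems~\ref{thm:3} and~\ref{thm:nonosc}. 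For larger $|J|$ the ``lower complexity'' object you would obtain need not have fewer factors, only factors depending on more variables, and controlling that growth is the heart of the difficulty. Your final paragraph correctly identifies this as the obstacle; I would simply emphasize that it is not merely a geometric bookkeeping issue but a genuine gap where no suitable inverse theorem is currently known.
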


For the case in which $D=2$, $d_j=1$, all $\varphi_j$ are linear, and $\psi$ is a polynomial,
this is proved in \cite{CLTT}. 

\medskip
This paper is organized so that Theorem~\ref{thm:nonosc} is proved along with related results,
including Theorems~\ref{mainthm} and \ref{thm2}.
A more direct and somewhat simpler
roof of Theorem~\ref{thm:nonosc} can be extracted from the discussion. 

\section{Variants and extensions} \label{section:variants}

We next formulate a result for the special case 
in which $\phi$ is an affine function of $x_3$; thus
$\phi(\bx) = x_3\varphi(x_1,x_2)+\psi(x_1,x_2)$.
The proof  developed below for this special case
is a simplification of the proof of Theorem~\ref{mainthm},
and relies on Theorem~\ref{thm:3}, 
thus bringing to light connections between these results.

\begin{theorem} \label{thm2}
Let $J=\{1,2,3\}$ and $d=1$.
Let 
\begin{equation}
\phi(x_1,x_2,x_3) = x_3\varphi(x_1,x_2) + \psi(x_1,x_2)
\end{equation}
where $\varphi,\psi$ are real-valued 
real analytic functions defined in a neighborhood of $[0,1]^2$.
Suppose that 
$\partial\varphi/\partial x_1$
and $\partial\varphi/\partial x_2$
vanish nowhere on $[0,1]^2$.
Suppose that there exists no open subset of $[0,1]^2$
in which $\psi$ can be expressed in the form
\begin{equation} \label{pairdegeneracy} 
\psi(x_1,x_2) = Q_1(x_1)+Q_2(x_2)+(Q_3\circ\varphi)(x_1,x_2)\end{equation}
for $C^\omega$ functions $Q_1,Q_2,Q_3$.
Then there exist $\gamma>\tfrac12$ and $C<\infty$ satisfying
\begin{equation} \label{ineq:main2}
\big| T_\lambda^\phi(\bff) \big| \le C|\lambda|^{-\gamma} \prod_{j=1}^3 \norm{f_j}_2
\end{equation}
uniformly for all functions $f_j\in L^2(\reals^1)$ and all $\lambda\in\reals$.
\end{theorem}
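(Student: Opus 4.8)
The plan is to integrate out $x_3$ first, exploiting that $\phi$ is affine in $x_3$, and thereby reduce \eqref{ineq:main2} to an instance of Theorem~\ref{thm:3} with one factor $|\lambda|^{-1/2}$ already in hand. After reducing to $\lambda\ge 1$ by complex conjugation and dismissing bounded $|\lambda|$, I would apply Fubini's theorem to write
\[
T_\lambda^\phi(\bff) = \int_{[0,1]^2} e^{i\lambda\psi(x_1,x_2)}\, f_1(x_1)\,f_2(x_2)\,\Phi\big(\lambda\varphi(x_1,x_2)\big)\,dx_1\,dx_2,
\qquad \Phi(s) := \int_0^1 e^{is t} f_3(t)\,dt,
\]
and then set $F(u) := \Phi(\lambda u)$, so that the inner factor is $(F\circ\varphi)(x_1,x_2)$ and, since $\Phi$ is a constant multiple of the Fourier transform of $f_3\one_{[0,1]}\in L^1\cap L^2$, Plancherel's theorem together with the scaling $s=\lambda u$ gives $\norm{F}_{L^2(\reals)}\le C\lambda^{-1/2}\norm{f_3}_{2}$ (in particular $F$ restricted to the compact interval $\varphi([0,1]^2)$, which is nondegenerate since $\nabla\varphi$ never vanishes, lies in $L^2$ of that interval).

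The next step is to recognize the resulting integral as $S_\lambda(f_1,f_2,F)$, the functional of Theorem~\ref{thm:3} attached to the phase $\psi$ and to the mappings $(\varphi_1,\varphi_2,\varphi_3)=(x_1,x_2,\varphi)$, and to verify the hypotheses of that theorem: the Jacobian determinant of $\bx\mapsto(\varphi_1,\varphi_2)$ is identically $1$, and those of $\bx\mapsto(\varphi_1,\varphi_3)$ and $\bx\mapsto(\varphi_2,\varphi_3)$ are $\partial\varphi/\partial x_2$ and $-\partial\varphi/\partial x_1$, which vanish nowhere by hypothesis; and, crucially, for this choice of $\varphi_j$ the representation \eqref{psiexpression} coincides with \eqref{pairdegeneracy}, so the assumed nonexistence of \eqref{pairdegeneracy} on any open set is exactly the nondegeneracy hypothesis of Theorem~\ref{thm:3}. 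That theorem then supplies $\delta>0$ and $C<\infty$ with $|S_\lambda(f_1,f_2,F)|\le C\lambda^{-\delta}\norm{f_1}_2\norm{f_2}_2\norm{F}_2$, and combining this with the bound on $\norm{F}_2$ yields \eqref{ineq:main2} with $\gamma=\tfrac12+\delta>\tfrac12$.

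I expect no genuine obstacle within this argument: all the real difficulty is deferred to Theorem~\ref{thm:3}, whose own proof (via the sublevel set estimates of Question~\ref{question:sublevel}) carries the weight, whereas the reduction above is essentially bookkeeping, available precisely because $\phi$ is affine in $x_3$ — a structure not present for the general phases of Theorem~\ref{mainthm}. The only points deserving care are the exact matching of the two degeneracy conditions, and the observation that in this special case only the two mixed second derivatives $\partial^2\phi/\partial x_1\partial x_3=\partial\varphi/\partial x_1$ and $\partial^2\phi/\partial x_2\partial x_3=\partial\varphi/\partial x_2$ need be nonvanishing, the third ($\partial^2\phi/\partial x_1\partial x_2$) being irrelevant once $x_3$ has been integrated away — which is the sense in which Theorem~\ref{thm2} supports the belief voiced after Theorem~\ref{mainthm}.
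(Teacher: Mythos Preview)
Your proposal is correct and matches the paper's own derivation: immediately after stating Theorem~\ref{thm2}, the paper observes that Theorem~\ref{thm:3} directly implies it via exactly this reduction, setting $\varphi_j(x_1,x_2)=x_j$ for $j=1,2$, $\varphi_3=\varphi$, and $f_3(t)=|\lambda|^{1/2}\widehat{g_3}(\lambda t)$ so that $T_\lambda^\phi(\bg)=|\lambda|^{-1/2}S_\lambda(\bff)$ with $\norm{f_3}_2=O(\norm{g_3}_2)$. The only nuance the paper adds (in \S\ref{section:reductions}) is to avoid circularity by first establishing only the $O(|\lambda|)$--bandlimited case of Theorem~\ref{thm:3} before invoking it here, since the full Theorem~\ref{thm:3} is completed later using Theorem~\ref{thm2}; but the core argument is identical to yours.
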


Theorem~\ref{thm2} is not quite a special case of 
Theorem~\ref{mainthm}, 
because it is not assumed here that $\frac{\partial^2\psi}{\partial x_1\partial x_2}$
is nonzero, and therefore 
$\frac{\partial^2\phi}{\partial x_1\partial x_2}(0)$
could vanish.

The hypothesis 
that $\psi$ cannot be expressed in the form
\eqref{pairdegeneracy}
is not necessary for the conclusion to hold, as shown by the example
$\phi(\bx) = x_3(x_1+x_2)$, for which $\psi\equiv 0$. 
In this respect, Theorem~\ref{mainthm} is more satisfactory.
A more typical example excluded by this hypothesis is $(\varphi,\psi) = (x_1+x_2,x_1x_2)$,
for which the conclusion \eqref{ineq:main2} does indeed fail.

Theorem~\ref{thm:3} directly implies Theorem~\ref{thm2}. 
Indeed, set $\varphi_j(x_1,x_2)=x_j$ for $j=1,2$, and $\varphi_3=\varphi$.  Then
\begin{equation}
T_\lambda^\phi(\bg) = |\lambda|^{-1/2} S_\lambda(\bff)
\end{equation}
where $f_j=g_j$ for $j=1,2$, and $f_3(t) = |\lambda|^{1/2}\widehat{g_3}(\lambda t)$.
Then $f_3$ satisfies $\norm{f_3}_2 = O(\norm{g_3}_2)$.

The next result combines oscillation with negative order Sobolev norms
in the context of Theorem~\ref{thm:nonosc}.

\begin{theorem} \label{thm:4}
Consider $S_\lambda$ with $J=\{1,2,3\}$, $d=1$, and $D=2$.
Let $\varphi_j\in C^\omega$ for each $j\in\{1,2,3\}$.
Suppose that for any two indices $j\ne k$,
$\nabla\varphi_j$ and $\nabla\varphi_k$ are linearly independent
at every point.
Suppose that there exist no nonempty open subset
$U\subset(0,1)^2$ and $C^\omega$ functions $h_j:\varphi_j(U)\to\reals$ satisfying
\begin{equation} 
\psi(\bx) = \sum_{j=1}^3 h_j(\varphi_j(\bx)) \ \text{ for $\bx\in U$.}
\end{equation}
Suppose also that $(\varphi_1,\varphi_2,\varphi_3)$
is not equivalent to a linear system in any nonempty open set.
Then for each $p>\tfrac32$
there exist $C<\infty$, $\delta>0$, and $s<0$ such that
\begin{equation} 
|S_\lambda(\bff)| \le C(1+|\lambda|)^{-\delta}\prod_{j=1}^3 \norm{f_j}_{W^{s,p}}
\ \text{ for all $\bff\in (L^p)^3$ and all $\lambda\in\reals$.}
\end{equation}
\end{theorem}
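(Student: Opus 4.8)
The plan is to run the argument that reduces Theorem~\ref{thm:nonosc} to Theorem~\ref{mainthm}, but to carry the oscillatory factor $e^{i\lambda\psi}$ along throughout, to replace Theorem~\ref{mainthm} by Theorem~\ref{thm2} wherever the extra phase term $\tau\psi$ appears, and to use as the governing large parameter the maximum of $|\lambda|$ and the Fourier frequencies present in the $f_j$. After the usual reductions --- a partition of unity, expansion of cutoffs in Fourier series, and absorption of the resulting unimodular factors into the $f_j$ --- one may take $\eta$ supported near a point $\bx_0$ and, by transversality, choose coordinates in which two of the three functions $\varphi_j$ are the coordinate functions $x_1,x_2$ and the third is $\varphi$ with $\partial\varphi/\partial x_1$ and $\partial\varphi/\partial x_2$ nowhere vanishing; after replacing that $\varphi_j$ by $H\circ\varphi_j$ for a suitable real-analytic $H$ with $H'(\varphi(\bx_0))\ne0$, one may also arrange $\partial^2\varphi/\partial x_1\partial x_2\ne0$ on the support of $\eta$. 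Next decompose each $f_j=\sum_kP_kf_j$ into Littlewood--Paley pieces of frequency $\mu_j=2^{k_j}$, write $S_\lambda(\bff)=\sum_{k_1,k_2,k_3}S_\lambda(P_{k_1}f_1,P_{k_2}f_2,P_{k_3}f_3)$, and put $\mu_*=2^{\max_jk_j}$.

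For the dyadic triples with $\mu_*\ge|\lambda|$ the implicit oscillation dominates. Let $j_0$ be an index achieving the maximal frequency. Substituting the inverse Fourier representation of $P_{k_{j_0}}f_{j_0}$ turns that factor into an explicit oscillation of frequency $\sim\mu_*$; rescaling the Fourier variable by $\mu_*$ and regarding it as a third integration variable $x_3$ exhibits the resulting expression --- up to a constant, a translation carrying the support into $[0,1]$, and absorption of cutoffs --- as $T^{\phi_\tau}_{\mu_*}(h_1,h_2,h_3)$, where
\[
\phi_\tau(x_1,x_2,x_3)=x_3\,\varphi(x_1,x_2)+\tau\,\psi(x_1,x_2),\qquad \tau=|\lambda|/\mu_*\in[0,1],
\]
$h_1,h_2$ are the other two Littlewood--Paley pieces, $h_3$ is supported where $x_3$ is bounded away from $0$, and $\|h_3\|_{\lt}\lesssim\mu_*^{1/2}\|P_{k_{j_0}}f_{j_0}\|_{\lt}$. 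For $\tau>0$ the hypothesis that $\psi$ is not of the form $\sum_jh_j\circ\varphi_j$ on any open set is exactly the nondegeneracy hypothesis \eqref{pairdegeneracy} of Theorem~\ref{thm2} for the pair $(\varphi,\tau\psi)$, so Theorem~\ref{thm2} yields a bound with some exponent $>\tfrac12$. For $\tau=0$ the proof of Lemma~\ref{lemma:curv-nondegen} --- which uses only that the web of $(\varphi_1,\varphi_2,\varphi_3)$ has curvature not identically zero, i.e.\ that this tuple is nowhere equivalent to a linear system --- shows that $\phi_0=x_3\varphi$ is not rank one degenerate, while the auxiliary hypothesis of Theorem~\ref{mainthm} holds since $\partial^2\varphi/\partial x_1\partial x_2\ne0$ and $x_3\ne0$ on the support of $h_3$; thus Theorem~\ref{mainthm} applies. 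A continuity argument near $\tau=0$ (using the $C^3$-openness of the hypotheses of Theorem~\ref{mainthm}) together with compactness of $\tau\in[\tau_1,1]$ and the continuous dependence of the construction of Theorem~\ref{thm2} on $(\varphi,\tau\psi)$ then produces a single exponent $\gamma>\tfrac12$ and a uniform constant, valid for all $\tau\in[0,1]$; equivalently one invokes the cube-wise ``stronger inequality'' implicit in the proofs of Theorems~\ref{mainthm} and \ref{thm2}, which is unaffected by the lower-frequency modulation $e^{i\lambda\psi}$ since the latter oscillates no faster than $\mu_*$. Substituting this into the sum over $(k_1,k_2,k_3)$ as in the proof of Theorem~\ref{thm:nonosc} --- spreading the gain $\mu_*^{\sigma}$, $\sigma<0$, over the three factors via $\mu_*^{\sigma}=\mu_*^{\sigma/4}\prod_j\mu_j^{\sigma/4}\le\prod_j\mu_j^{\sigma/4}$ and using the square-function characterization of $W^{\sigma/8,p}$ --- the improvement past $\tfrac12$ is precisely what makes the sum converge for every $p>\tfrac32$, and, since $\mu_*\ge|\lambda|$ throughout, it simultaneously yields a factor $(1+|\lambda|)^{-\delta}$.

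For the triples with $\mu_*<|\lambda|$ the explicit phase dominates, and one uses Theorem~\ref{thm:3}. Interpolating its conclusion (equivalently, its form with $L^\infty$ norms on the right-hand side) against the trivial bounds $|S_\lambda(\bff)|\lesssim\|f_i\|_{L^1}\|f_j\|_{L^1}\|f_k\|_{L^\infty}$, valid for every permutation $(i,j,k)$ because the Jacobian determinant of $\bx\mapsto(\varphi_i,\varphi_j)$ is nowhere zero, gives, for each $p>\tfrac32$, an estimate $|S_\lambda(\bff)|\lesssim(1+|\lambda|)^{-\delta_0(p)}\prod_j\|f_j\|_{L^p}$ with $\delta_0(p)>0$. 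Applied to the all-low-frequency pieces, followed by $\|P_{k_j}f_j\|_{L^p}\lesssim\mu_j^{|s|}\|f_j\|_{W^{s,p}}$ and summation over $\mu_j<|\lambda|$, this contributes at most $C(1+|\lambda|)^{-\delta_0(p)+O(|s|)}\prod_j\|f_j\|_{W^{s,p}}$, which is of the desired form once $|s|$ is small.

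Combining the two regimes --- and absorbing the $O\big((\log(2+|\lambda|))^3\big)$ relevant dyadic triples into the power of $1+|\lambda|$ --- yields \eqref{nonosc:conclusion} with $W^{s,p}$ norms in place of $L^p$ norms and with an extra factor $(1+|\lambda|)^{-\delta}$, for suitable $s<0$ and $\delta>0$ depending on $p$. The main obstacle, I expect, is the uniformity in $\tau\in[0,1]$ of the improvement $\gamma>\tfrac12$: one must mesh Theorem~\ref{mainthm} near $\tau=0$ (which requires the hypothesis that $(\varphi_1,\varphi_2,\varphi_3)$ is nowhere equivalent to a linear system, so that $\phi_0$ stays rank one nondegenerate) with Theorem~\ref{thm2} for $\tau$ bounded away from $0$ (which requires the non-decomposability of $\psi$), and one must check that the cube-wise refinements of Theorems~\ref{mainthm}, \ref{thm2}, and \ref{thm:3} used in the summations are genuinely insensitive to the insertion of $e^{i\lambda\psi}$, whose frequency in each regime never exceeds the parameter already being exploited.
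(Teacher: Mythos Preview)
Your approach is correct and is essentially the same as the paper's: split according to whether the implicit frequencies in the $f_j$ or the explicit parameter $\lambda$ dominates, use Theorem~\ref{thm:3} in the latter regime, and in the former regime substitute the Fourier representation of the highest-frequency factor to obtain a $T_\tau^{\Psi}$ with $\Psi=x_3\varphi+(\lambda/\tau)\psi$ and then exploit an exponent $\gamma>\tfrac12$.

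The paper's write-up is much shorter because the high-frequency bound you rederive is precisely \eqref{thm:3lastbound}, already established in the course of proving Theorem~\ref{thm:3}; once one observes that the roles of the three indices in \eqref{thm:3lastbound} can be freely permuted by the coordinate changes $(x_1,x_2)\mapsto(x_1,\varphi)$ and $(x_1,x_2)\mapsto(x_2,\varphi)$, the $p=2$ case of Theorem~\ref{thm:4} follows immediately, and $p\in(\tfrac32,2)$ by interpolation with the elementary $(p,s)=(\tfrac32,0)$ bound. The uniformity in $\tau=\lambda/\mu_*\in[0,1]$ that you flag as the main obstacle is already built into the machinery you are invoking: the proof of Theorem~\ref{thm2} via Lemma~\ref{lemma:sublevel} explicitly compactifies to the family $r\psi$, $r\in[0,1]$, and handles the endpoint $r=0$ using exactly the hypothesis that $(\varphi_1,\varphi_2,\varphi_3)$ is not equivalent to a linear system. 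So there is no need for a separate appeal to Theorem~\ref{mainthm} near $\tau=0$, nor for a patching argument between Theorems~\ref{mainthm} and~\ref{thm2}; a single uniform invocation of Theorem~\ref{thm2} (equivalently, of \eqref{thm:3lastbound}) suffices.
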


Theorem~\ref{thm:nonosc},
Corollary~\ref{cor:weakconverge},
and
Theorem~\ref{thm:3}
imply straightforward generalizations
to integrals over $[0,1]^d$ with products of $d+1$ functions,\footnote{Theorem~\ref{mainthm} 
generalizes in the same way, but the threshold exponent $\gamma = \tfrac12$
is less natural for $d>3$.}
for arbitrary $d\ge 2$. 
Such generalizations are obtained
by changing variables and regarding the domain
of integration as the union of a $d-2$--dimensional
family of two-dimensional slices, in such a way
that $d-2$ of the factors are constant along slices.
Under appropriate hypotheses, the results of this paper
can be applied to each slice.

Here is such an analogue of Theorem~\ref{thm:nonosc}.
Let $d\ge 3$, and let $J$ be an index set of cardinality $|J|=d+1$.
Let $\varphi_j\in C^\omega$ for each $j\in J$.
Suppose that $(\nabla\varphi_j: j\in J)$ are transverse,
in the sense that for any subset $\tilde J\subset J$
of cardinality $d$, $\{\nabla\varphi_j: j\in\tilde J\}$
are linearly independent at $\bx_0$.
For each subset $J'\subset J$ of cardinality $d-2= |J|-3$
consider the foliation of a neighborhood of $\bx_0$ in $\reals^d$
with $2$--dimensional leaves $L_t=\{x: \varphi_j(x)=t_j \ \forall\,j\in J'\}$,
where $t\in\reals^{J'}$.
For each such $t$, restriction of
the family of three functions $\{\varphi_i: i\in J\setminus J'\}$ to $L_t$
defines a web on $L_t$ for each $t$.

\begin{theorem} \label{thm:nonosc_d>2}
Let $d\ge 3$ and $|J|=d+1$.
Let $\bx_0\in\reals^d$, and let $V$ be a neighborhood of $\bx_0$. 
Suppose that at $\bx_0$, $(\varphi_j: j\in J)$
satisfies the transversality hypothesis introduced above.
Suppose that for each $i\in J$
there exists
a subset $J'\subset J$ satisfying $|J'|=d-2$,
with $i\notin J'$,
such that for $t\in \reals^{J'}$ defined by $\varphi_j(\bx_0)=t_j$
for each $j\in J'$, 
the curvature of the web defined above on $L_t$ does not vanish $\bx_0$.
Then there exist $\eta\in C^\infty_0$ satisfying $\eta(\bx_0)\ne 0$,
$C<\infty$, and $s<0$ satisfying
\begin{equation*} 
\big| \int_{\reals^d} \prod_{j=1}^3 (f_j\circ\varphi_j)\,\eta\big|
\le C\prod_{j\in J} \norm{f_j}_{W^{s,2}}
\ \text{ for all $\bff\in (L^2(\reals^d))^J$.}
\end{equation*}
\end{theorem}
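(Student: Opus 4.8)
The plan is to deduce Theorem~\ref{thm:nonosc_d>2} from the two--dimensional statement, Theorem~\ref{thm:nonosc}, by fibering a neighborhood of $\bx_0$ into a $(d-2)$--parameter family of two--dimensional leaves. Fix $i\in J$, and let $J'\subset J\setminus\{i\}$ with $|J'|=d-2$ be the subset supplied by the hypothesis, so that $|J\setminus J'|=3$, $i\in J\setminus J'$, and the web cut out on the leaf $L_{t_0}$ through $\bx_0$ by $(\varphi_j:j\in J\setminus J')$ has nonvanishing curvature at $\bx_0$, where $t_0=(\varphi_j(\bx_0))_{j\in J'}$. The transversality hypothesis applied to the $d$--element set $J'\cup\{j,k\}$ (for any two distinct $j,k\in J\setminus J'$) shows that $\{\nabla\varphi_l:l\in J'\}$ is linearly independent at $\bx_0$ and that, in a $C^\omega$ coordinate system $(u,v,w)\in\reals^{d-2}\times\reals\times\reals$ near $\bx_0$ chosen so that $\varphi_l(u,v,w)=u_l$ for $l\in J'$, the gradients $\nabla_{(v,w)}\big(\varphi_j(t,\cdot,\cdot)\big)$, $j\in J\setminus J'$, remain pairwise linearly independent for $t$ near $t_0$. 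Thus for each such $t$ the leaf $L_t=\{u=t\}$ carries a genuine $3$--web defined by $(\varphi_j(t,\cdot,\cdot):j\in J\setminus J')$, and by real analyticity its curvature is nonvanishing throughout a neighborhood of $\bx_0$ in $L_t$, uniformly for $t$ in a compact neighborhood of $t_0$; this is a compact $C^\omega$ perturbation family of the web at $L_{t_0}$.

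The main input I would extract from the proof of Theorem~\ref{thm:nonosc} is its uniform, frequency--localized form: there are a neighborhood $V_0$ of $\bx_0$, an exponent $\delta>0$, and $C<\infty$ such that for every cutoff supported in $V_0$, for every web in the above perturbation family, and for all $g_j\in L^2(\reals^1)$ with $\widehat{g_i}$ supported in $\{|\xi|\ge\lambda\}$, $\lambda\ge1$,
\[
\Big|\int_{\reals^2}\eta(t,v,w)\!\!\prod_{j\in J\setminus J'}\!\!(g_j\circ\varphi_j(t,\cdot,\cdot))(v,w)\,dv\,dw\Big|\le C\lambda^{-\delta}\!\!\prod_{j\in J\setminus J'}\!\!\|g_j\|_{L^2(\reals^1)},
\]
taking $p=2>\tfrac32$ in Theorem~\ref{thm:nonosc} and discarding the finer $\sum_n|\int_{Q_n}|$ refinement. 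Granting this, I Littlewood--Paley decompose each $f_j=\sum_{k\ge0}\Delta_k f_j$ (with $\Delta_0$ the low--frequency part), so that $\int\eta\prod_j(f_j\circ\varphi_j)=\sum_{\bk}\int\eta\prod_j(\Delta_{k_j}f_j\circ\varphi_j)$, and organize the sum over $\bk$ according to which index $i=i(\bk)$ realizes $k_i=\max_j k_j$ (ties broken arbitrarily). For the block with maximal index $i$, I use the coordinates and slicing attached to that $i$: writing the integral as $\int_{\reals^{d-2}}\big(\prod_{l\in J'}\Delta_{k_l}f_l(t_l)\big)\big(\int_{\reals^2}\eta(t,\cdot,\cdot)\prod_{j\in J\setminus J'}(\Delta_{k_j}f_j\circ\varphi_j(t,\cdot,\cdot))\big)\,dt$ (the Jacobian absorbed into $\eta$), applying the displayed slice estimate with $g_j=\Delta_{k_j}f_j$ and $\lambda\simeq 2^{k_i}$ when $2^{k_i}\gtrsim1$, and then Hölder on the compact $t$--range, yields
\[
\Big|\int\eta\prod_{j\in J}(\Delta_{k_j}f_j\circ\varphi_j)\Big|\le C\,2^{-\delta\max_j k_j}\prod_{j\in J}\|\Delta_{k_j}f_j\|_{L^2(\reals^1)},
\]
uniformly in $\bk$; the finitely many $\bk$ with $\max_j k_j$ bounded are handled instead by the trivial bound, using Bernstein's inequality on $\reals^1$ to pass from $L^\infty$ to $L^2$ on the low--frequency factors.

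Finally I would sum over $\bk$: since $2^{-\delta\max_j k_j}\le\prod_{j\in J}2^{-\delta k_j/(d+1)}$, Cauchy--Schwarz in each $k_j$ (splitting $2^{-\delta k_j/(d+1)}$ into two equal factors, the second summable) gives $\sum_{\bk}\lesssim\prod_{j\in J}\big(\sum_{k\ge0}2^{-\delta k/(d+1)}\|\Delta_k f_j\|_{L^2}^2\big)^{1/2}=C\prod_{j\in J}\|f_j\|_{W^{s,2}}$ with $s=-\delta/(2(d+1))<0$, which is the asserted inequality; absolute convergence of the original integral for $f_j\in L^2(\reals^1)$ follows because, on the compact set $\bigcup_j\varphi_j^{-1}(\mathrm{supp}\,\eta)$, one has $L^2\subset L^{(d+1)/d}$ and $\eta\prod_j(f_j\circ\varphi_j)\in L^1$ whenever each $f_j\in L^{(d+1)/d}$, by multilinear interpolation between the $d+1$ estimates in which one factor is taken in $L^\infty$ and the remaining $d$ in $L^1$ (their product integrates by transversality, since $(\varphi_j:j\ne i)$ is a local coordinate system). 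The main obstacle is the first reduction: one must verify that Theorem~\ref{thm:nonosc} holds not merely for a single distinguished cutoff at $\bx_0$ but with a single pair $(\delta,C)$ valid for all cutoffs supported in a fixed small neighborhood and uniformly across the compact family of slice webs $\{L_t\}$. This should follow by tracking the openness of the hypotheses and the quantitative phase--space decomposition through that proof --- the constants it produces depend only on finitely many derivatives of the data at $\bx_0$ --- but making the uniformity explicit is the substantive part of the argument. A minor secondary point is that the straightening coordinates depend on the choice of maximal index, so the Littlewood--Paley bookkeeping splits into a finite number of cases indexed by that choice, each with its own slicing; this is harmless since $|J|<\infty$.
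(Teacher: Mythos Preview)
Your proposal is correct and follows essentially the same approach as the paper. The paper's proof (given in the paragraph immediately following the theorem statement) slices by the level sets of $(\varphi_j:j\in J')$, applies the asymmetric form of Theorem~\ref{thm:nonosc} on each slice to obtain $C\norm{f_i}_{W^{s,2}}\prod_{j\ne i}\norm{f_j}_{L^2}$ for each $i\in J$, and then invokes multilinear interpolation to replace $s$ by $s/|J|$ in all factors; your Littlewood--Paley decomposition with the $2^{-\delta\max_j k_j}\le\prod_j 2^{-\delta k_j/(d+1)}$ trick is precisely an explicit unpacking of that interpolation step, and you correctly flag the uniformity of Theorem~\ref{thm:nonosc} across the compact family of slice webs as the point that the paper leaves implicit.
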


This hypothesis, application of Theorem~\ref{thm:nonosc}
to integrals over two-dimensional slices defined by
$L_j(\bx)=t_j$ for $j\in J'$, and integration with respect to $t$
over a bounded subset of $\reals^{J'}$
yield an upper bound of the form
$C\norm{f_i}_{W^{s,2}} \prod_{j\ne i} \norm{f_j}_{L^2}$
for some $s<0$, for each $i\in J$. Interpolation of these bounds
then produces an upper bound of the desired form
$C\prod_{j\in J} \norm{f_j}_{W^{s,2}}$,
with $s$ replaced by $s/|J|$.

Theorem~\ref{thm:nonosc_d>2} in turn implies a corresponding 
extension of Corollary~\ref{cor:weakconverge}.

All of our results have extensions to the case of $C^\infty$ phase functions,
but the hypothesis of rank one nondegeneracy must be reformulated.
For Theorem~\ref{mainthm}, such an extension can be phrased as follows.

\begin{theorem} \label{Cinftythm}
Let $J=\{1,2,3\}$, and $d=1$. 
Let  $\phi\in C^\infty$ be real-valued and defined in a neighborhood $U$ of $[0,1]^3$.
Suppose that there do not exist a point $\bz\in U$,
a germ $\scriptm$ of $C^\infty$ manifold $\scriptm$ of dimension $2$ at $\bz$,
and $C^\infty$ functions $h_j$
such that the restriction to $\scriptm$
of the gradient of $\tilde\phi(\bx) = \phi(\bx)-\sum_{j=1}^3 h_j(x_j)$
vanishes to infinite order at $\bz$.

Suppose that for each pair of distinct indices $j\ne k\in\{1,2,3\}$,
$\frac{\partial^2 \phi} {\partial x_j\,\partial x_k}$
vanishes nowhere on $[0,1]^3$.
Then there exist $\gamma>\tfrac12$ and $C<\infty$ satisfying
\begin{equation*} 
\big| T_\lambda^\phi(\bff) \big| \le C|\lambda|^{-\gamma} \prod_j \norm{f_j}_2
\end{equation*}
uniformly for all functions $f_j\in L^2(\reals^1)$ and all $\lambda\in\reals$.
\end{theorem}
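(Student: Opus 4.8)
The plan is to reduce the $C^\infty$ statement to the (already proved) real-analytic Theorem~\ref{mainthm} by a compactness-plus-localization argument, exploiting the fact that the hypothesis has been engineered precisely so that the $C^\infty$ version of ``rank one degeneracy'' — vanishing to infinite order of $\nabla\tilde\phi$ along a germ of $2$-manifold — is the correct obstruction. First I would observe that the conclusion is local: by a partition of unity subordinate to a finite cover of $[0,1]^3$ by small balls, together with Fourier expansion of the cutoffs (the absorption of unimodular factors into the $f_j$ already used in \S\ref{section:mainresults}), it suffices to prove \eqref{ineq:main} with $[0,1]^3$ replaced by a sufficiently small cube $Q$ centered at an arbitrary point $\bz\in U$, with a uniform exponent $\gamma>\tfrac12$ and constant $C$. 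So the task is: for each $\bz$, produce a neighborhood on which the desired bound holds with some $\gamma>\tfrac12$, and then take the minimum $\gamma$ and maximum $C$ over the finite subcover.

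Next I would inspect what the hypothesis gives at a fixed $\bz$. The assumption is that there is no germ of $2$-manifold $\scriptm$ at $\bz$ and no $C^\infty$ functions $h_j$ for which $\nabla\tilde\phi|_\scriptm$ vanishes to infinite order at $\bz$, where $\tilde\phi=\phi-\sum h_j(x_j)$. In particular $\nabla\tilde\phi$ cannot vanish to infinite order at $\bz$ itself for any choice of $h_j$; taking $h_j$ to match the one-variable jets of $\phi$ along the axes, this means: after subtracting off functions of the individual coordinates, $\phi$ does not vanish to infinite order at $\bz$. Since the mixed second partials $\partial^2\phi/\partial x_j\partial x_k$ are assumed nonvanishing everywhere — hence bounded away from zero near $\bz$ — the bilinear Hörmander bound \cite{hormander} already yields a clean $|\lambda|^{-1/2}$ estimate in any two of the three pairs of variables, uniformly in the third; the whole content, exactly as in the analytic case, is to upgrade the exponent past $\tfrac12$ using the non-degeneracy. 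The idea is that the machinery behind Theorem~\ref{mainthm} — phase-space decomposition, the structure/pseudo-randomness dichotomy, the two-scale analysis, and the reduction to a sublevel-set inequality — does not actually require $C^\omega$ regularity of $\phi$; what it requires is a \emph{finite-order} non-degeneracy statement that survives the two-scale rescaling. The role of real analyticity in Theorem~\ref{mainthm} is only to guarantee, via the dichotomy between vanishing identically and vanishing to finite order, that ``not rank one degenerate'' upgrades automatically to a quantitative finite-order statement. In the $C^\infty$ setting one must instead \emph{assume} the quantitative finite-order statement, and the hypothesis on germs of $2$-manifolds is exactly the $C^\infty$ surrogate: it rules out, uniformly, the degenerate configurations that the proof of Theorem~\ref{mainthm} must exclude.

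Concretely, I would run the proof of Theorem~\ref{mainthm} verbatim and identify the finitely many places where $C^\omega$ is invoked. Each such invocation is a statement of the form ``a certain auxiliary function built from $\phi$ (a resultant, a discriminant, an implicitly-defined phase, or the net phase restricted to a hypersurface produced by the two-scale analysis) either vanishes identically or vanishes to order at most $N$, uniformly on a compact set.'' In the analytic case this is automatic; in the $C^\infty$ case the contrapositive of our hypothesis supplies it. Specifically, suppose toward a contradiction that at some $\bz$ no finite order of vanishing $N$ works in the key sublevel-set step; by a standard normal-families / Taylor-jet compactness argument one extracts, as $N\to\infty$, a limiting $2$-dimensional germ $\scriptm$ at $\bz$ and limiting $C^\infty$ functions $h_j$ along which $\nabla\tilde\phi$ vanishes to infinite order — contradicting the hypothesis. (The germ $\scriptm$ arises naturally because the two-scale analysis in the proof of Theorem~\ref{mainthm} produces, at the degenerate scale, a hypersurface in $\reals^3$ along which the net gradient is forced to be small, and an ``infinitely degenerate'' limit of such hypersurfaces is precisely a germ of $2$-manifold; its graph structure in two of the three variables is what lets the $h_j$ be reconstructed.) With the finite-order statement in hand, the quantitative sublevel-set bound \eqref{ineq/sublevel} goes through with constants depending only on $N$ and on $C^M$-norms of $\phi$ for some finite $M=M(N)$, and the rest of the argument — being a soft combination of the dichotomy, interpolation, and Hölder — is insensitive to the regularity class. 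Finally, compactness of $[0,1]^3$ gives a uniform $N$, hence a uniform $\gamma>\tfrac12$ and $C<\infty$.

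The main obstacle, and the step that requires genuine care, is the compactness/limiting argument producing the germ $\scriptm$: one must show that an unbounded-order-of-vanishing scenario, ranging over \emph{all} admissible $C^\infty$ choices of $h_j$ and over the hypersurfaces generated internally by the two-scale analysis, really does force an infinite-order coincidence along some $2$-manifold germ, with the $h_j$ depending only on the individual coordinates. This is where one has to track exactly which hypersurfaces the proof of Theorem~\ref{mainthm} manufactures and verify that their degenerating limits are graphs over a coordinate $2$-plane (possibly after permuting indices), so that the limiting obstruction matches the hypothesis as stated rather than a weaker or stronger variant. Once that correspondence is pinned down, the remainder is bookkeeping: re-deriving the constants in the sublevel-set lemma with explicit dependence on the order of vanishing, and checking that the two-scale rescaling preserves uniform bounds on finitely many derivatives of $\phi$.
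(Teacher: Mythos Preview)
Your proposal is correct and takes essentially the same approach as the paper, which simply asserts that the proof is identical to that of the $C^\omega$ Theorem~\ref{mainthm} ``with small modifications in the concluding sublevel set analysis,'' and explicitly omits the details. Your write-up supplies precisely those omitted details---identifying that the only use of real analyticity is in the sublevel-set step (Lemmas~\ref{lemma:sublevel2} and \ref{lemma:last2Dsublevel} via Lemma~\ref{lemma:scriptf}), and replacing the $C^\omega$ dichotomy there by the compactness argument that an unbounded order of vanishing over the relevant compact families of $(\tilde h_j,\kappa)$ would produce a germ $\scriptm$ violating the hypothesis---so your plan is both sound and faithful to the intended argument.
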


A corresponding modification of the hypotheses of Theorem~\ref{thm:3}
is needed for a $C^\infty$ analogue.
Consider functionals of the form
$ S_\lambda(\bff) = \int_{[0,1]^2} e^{i\lambda \psi(\bx)} 
\prod_{j=1}^3 f_j(\varphi_j(\bx)) \,d\bx $
as in Theorem~\ref{thm:3}, where $\varphi_j:U\to\reals$ and $\psi:U\to\reals$ 
are $C^\infty$ functions defined in some neighborhood $U$ of $[0,1]^2$.

\begin{theorem} \label{thm:3Cinfty}
Suppose that for any two indices $j,k\in\{1,2,3\}$,
the Jacobian determinant of
the mapping $[0,1]^2\owns \bx \mapsto (\varphi_j(\bx),\varphi_k(\bx))\in\reals^2$
does not vanish identically.
Suppose that there do not exist 
$C^\infty$ functions $h_j$ defined in neighborhoods
of the closure of $\varphi_j(U)$
and a point $\bz\in U$ such that
$\psi(\bx) - \sum_{j=1}^3 h_j(\varphi_j(\bx))$
vanishes to infinite order at $\bz$.
Then there exist $\delta>0$ and $C<\infty$ satisfying
\begin{equation*} 
|S_\lambda(\bff)| \le C|\lambda|^{-\delta}\prod_{j=1}^3 \norm{f_j}_{L^2}
\ \text{ for all $\bff$ and all $\lambda\in\reals$.}
\end{equation*}
\end{theorem}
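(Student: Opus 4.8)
The plan is to reduce Theorem~\ref{thm:3Cinfty} to Theorem~\ref{thm:3} by the standard device of restricting to a real analytic $3$-web that approximates the $C^\infty$ data well enough, combined with a compactness argument over $[0,1]^2$. First I would observe that the conclusion is local: it suffices to prove, for each $\bz\in[0,1]^2$, an inequality of the form $|S_\lambda(\bff)| \le C_\bz |\lambda|^{-\delta_\bz}\prod_j \norm{f_j}_2$ for $S_\lambda$ localized by a cutoff $\eta$ supported near $\bz$ with $\eta(\bz)\neq 0$, and then patch these together with a partition of unity on the compact set $[0,1]^2$ (absorbing the finitely many cutoffs into the functions $f_j$ after expanding in Fourier series, exactly as in \S\ref{section:mainresults}). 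Taking $\delta = \min_\bz \delta_\bz$ and $C$ the sum of the local constants, the global bound follows. As usual, by the interpolation remark following Theorem~\ref{thm:3} we may replace the $L^2$ norms by $L^\infty$ norms on the right-hand side, which is convenient because $L^\infty$ bounds localize trivially.

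Next I would make the finite-order structure explicit. The hypothesis says there is no point $\bz$, no $C^\infty$ functions $h_j$ near $\overline{\varphi_j(U)}$, with $\psi - \sum_j h_j\circ\varphi_j$ vanishing to infinite order at $\bz$. Near a fixed $\bz$, after a $C^\infty$ change of variables we may assume $\varphi_1(x_1,x_2)=x_1$ and $\varphi_2(x_1,x_2)=x_2$ (using that the Jacobian of $(\varphi_1,\varphi_2)$ is nonzero near $\bz$, which holds on a dense open set, and then treating the remaining measure-zero set separately or by a limiting argument on $\delta$). In these coordinates $\varphi_3$ is a $C^\infty$ function of $(x_1,x_2)$, and the obstruction becomes: there is no $C^\infty$ $g(x_3)$ with $\psi(x_1,x_2) - g(\varphi_3(x_1,x_2))$ being a sum of a function of $x_1$ and a function of $x_2$ to infinite order at $\bz$ --- equivalently, applying $\partial_1\partial_2$, there is no $g$ with $\partial_1\partial_2\big(\psi - g\circ\varphi_3\big)$ vanishing to infinite order at $\bz$. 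I would then replace $\varphi_3$ and $\psi$ by truncations of their Taylor expansions at $\bz$ of some large finite order $N$; call these $\varphi_3^{(N)}$ and $\psi^{(N)}$, which are polynomials, hence real analytic, and for $N$ large the truncated data still fails the (now real analytic, finite-order) degeneracy condition required in Theorem~\ref{thm:3}, and $(\varphi_1,\varphi_2,\varphi_3^{(N)})$ still has nonvanishing pairwise Jacobians near $\bz$. Applying Theorem~\ref{thm:3} to $(\varphi_1,\varphi_2,\varphi_3^{(N)})$ and $\psi^{(N)}$ gives the local decay estimate with some $\delta_\bz>0$ for the \emph{approximating} functional.

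Finally I would control the error between the true functional $S_\lambda$ and its real-analytic approximant. Writing $\varphi_3 = \varphi_3^{(N)} + R_3$ and $\psi = \psi^{(N)} + R_\psi$ with $R_3, R_\psi$ vanishing to order $N$ at $\bz$, one expands
\begin{equation*}
e^{i\lambda\psi}\prod_{j=1}^3 f_j\circ\varphi_j
= e^{i\lambda\psi^{(N)}}\prod_{j=1}^3 f_j^{(N)}\circ\varphi_j^{(N)}
+ (\text{error}),
\end{equation*}
where on a ball of radius $\rho$ about $\bz$ the difference of phases is $O(\lambda\rho^{N+1})$ and, for the inner function, $f_3\circ\varphi_3$ differs from $f_3\circ\varphi_3^{(N)}$ in a way that is harder to control because $f_3$ has no smoothness. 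Here I would exploit the $L^\infty$ reduction and a two-scale argument: choose $\rho = \lambda^{-\beta}$ for a suitable $\beta\in(0,1)$ so that on each ball of radius $\rho$ the phase perturbation is $o(1)$, estimate $S_\lambda$ on such a ball both by the trivial bound $C\rho^2\prod\norm{f_j}_\infty$ and by Theorem~\ref{thm:3} applied after rescaling the ball to unit size (which converts $\lambda$ into $\lambda\rho^{?}$ up to the degeneracy being inherited), and sum over the $O(\rho^{-2})$ balls tiling the support of $\eta$. The main obstacle is precisely this last step: the functions $f_j$ are merely $L^\infty$ (or $L^2$), so one cannot directly compare $f_3\circ\varphi_3$ with $f_3\circ\varphi_3^{(N)}$ pointwise, and one must instead handle the $f_3$-dependence through the sublevel-set / oscillatory machinery rather than through smooth perturbation. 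The cleanest route, which I would adopt, is to avoid comparing the \emph{functions} $f_j$ at all: instead, apply Theorem~\ref{thm:3} directly to the genuine $C^\omega$ data whenever $(\varphi_j,\psi)$ happen to be real analytic, and for $C^\infty$ data use that on each sufficiently small ball $B(\bz,\rho)$ the rescaled web $(\varphi_j(\bz + \rho\,\cdot))$ converges in $C^\infty$ to its real-analytic (indeed polynomial) model and the curvature-type nondegeneracy is an \emph{open} condition on finitely many derivatives; choosing $\rho$ small but fixed (independent of $\lambda$) makes the rescaled data satisfy a \emph{uniform} real-analytic nondegeneracy, so Theorem~\ref{thm:3} applies to each ball with a single exponent $\delta$ and a single constant, and summing the resulting bounds $C\lambda^{-\delta}$ over the $O(\rho^{-2})$ balls --- with the trivial bound used on the bounded number of balls where the Jacobian of some pair degenerates --- completes the proof. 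The genuinely delicate point to verify carefully is that the finite-order non-expressibility of $\psi$ localizes and survives $C^\infty$-small perturbation at a uniform finite order $N = N(\bz)$; this is where one invokes that $C^\infty$ germs failing the condition form, at each point, a set cut out by finitely many derivative conditions, exactly mirroring the jet-space discussion in \S\ref{section:hypotheses}.
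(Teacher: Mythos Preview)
Your reduction-to-$C^\omega$ strategy does not go through, and the paper proceeds differently. The paper does \emph{not} approximate the $C^\infty$ data $(\varphi_j,\psi)$ by real analytic data and invoke Theorem~\ref{thm:3} as a black box. Instead it reruns the entire proof of Theorem~\ref{thm:3} --- the microlocal decomposition of \S\ref{section:decompose}, the local bound of \S\ref{section:localbound}, the reduction to a sublevel set inequality in \S\ref{section:reducetosublevel} --- with the $C^\infty$ data in place, and modifies only the concluding sublevel set step (the analogue of Lemma~\ref{lemma:sublevel}). There the $C^\omega$ hypothesis entered solely through Lemma~\ref{lemma:scriptf}, applied to the compact family $F_{(\bs,\alpha)}$; in the $C^\infty$ setting the same compact family is produced, and the strengthened hypothesis ``$\psi-\sum_j h_j\circ\varphi_j$ never vanishes to infinite order'' is exactly what guarantees that no member of this family vanishes to infinite order at any point, so the van der Corput--type sublevel bound still holds.

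The gap in your argument is the one you yourself flagged and did not close. In your Option~A (compare $S_\lambda$ with its polynomial approximant), the difference involves $f_3\circ\varphi_3 - f_3\circ\varphi_3^{(N)}$, and since $f_3$ is merely $L^\infty$ this is $O(1)$ pointwise no matter how small $|\varphi_3-\varphi_3^{(N)}|$ is; there is no way to make this error small. Your ``cleanest route'' does not repair this: on a ball of \emph{fixed} radius $\rho>0$, the rescaled data $(\varphi_j(\bz+\rho\,\cdot),\psi(\bz+\rho\,\cdot))$ is still $C^\infty$ and not $C^\omega$, so Theorem~\ref{thm:3} simply does not apply to it. Convergence of the rescaled data to its polynomial jet happens only as $\rho\to 0$, and openness of the nondegeneracy condition in a $C^N$ topology tells you nothing unless you already know a \emph{quantitative} version of Theorem~\ref{thm:3} that depends only on finitely many derivatives --- which is precisely Theorem~\ref{thm:3Cinfty}. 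So the argument is circular. The right fix is not to approximate the geometric data at all, but to carry the $C^\infty$ data through the proof and replace Lemma~\ref{lemma:scriptf} by its finite-order $C^\infty$ analogue.
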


The proofs of 
Theorems~\ref{Cinftythm} and \ref{thm:3Cinfty} are the same as
those of the corresponding results for the $C^\omega$ case,
with small modifications in the concluding sublevel set analysis.
The details of these modifications are omitted.

\medskip
The next part of the paper is organized as follows.
We begin the proofs with
Theorem~\ref{thm2}, reducing it in \S\ref{section:reductions} to
a special case of Theorem~\ref{thm:3}, which we then prove
in \S\S\ref{section:decompose}, \ref{section:localbound},
\ref{section:reducetosublevel}, and \ref{section:sublevel1}.

Theorem~\ref{mainthm} is proved in \S\ref{section:proofofmainthm}
and \S\ref{section:sublevel2} by elaborating on that analysis.
We establish Theorem~\ref{thm:3} in its full generality,
and derive Theorems~\ref{thm:nonosc} and \ref{thm:4} from
these methods and results, in \S\ref{section:completeproofs}.
In \S\ref{section:yetanother} we enunciate and prove
extensions to the Joly-M\'etivier-Rauch
framework, in which the condition that the factors $f_j$
be constant along leaves of foliations is replaced by smoothness
along those leaves.
\S\ref{section:hypotheses}
contains remarks concerning the hypotheses, demonstrating that
these are satisfied generically, in an appropriate sense.

\section{Reductions} \label{section:reductions}

We begin by showing how 
Theorem~\ref{thm2} follows from a bandlimited case of Theorem~\ref{thm:3}.
Let $(\varphi,\psi)$ satisfy its hypotheses. 
There are two cases, depending on whether or not $\varphi$ can be expressed 
in the form 
\begin{equation} \label{varphiform} 
\varphi(x,y) \equiv H(h_1(x) + h_2(y)) \text{ on $[0,1]^2$}
\end{equation}
with $H,h_1,h_2\in C^\omega$.
If $\varphi$ does take the form \eqref{varphiform} then 
a $C^\omega$ change of variables with respect to $x$ and to $y$,
together with replacement of $\varphi$ by $\tilde H\circ\varphi$ for
appropriate $\tilde H$,
reduces matters to the case in which $h_1,h_2$ are linear. In these new coordinates,
$\psi$ remains $C^\omega$, and \eqref{pairdegeneracy} continues to hold.
This places us in the setting of Example~\ref{example:CLTTtype}, 
which was treated above as a consequence of the results of \cite{CLTT}.
We restrict attention henceforth to the second case, in which $\varphi$ cannot be expressed 
in the form \eqref{varphiform}.

Integrate with respect to $x_3$ to reexpress
\begin{multline*} \int_{[0,1]^3} e^{i\lambda x_3\varphi(x_1,x_2) }
e^{i\lambda\psi(x_1,x_2)}
\prod_{j=1}^3 f_j(x_j)\,dx_1\,dx_2\,dx_3
\\
=  |\lambda|^{-1/2} \int_{[0,1]^2} e^{i\lambda\psi(x_1,x_2)} 
f_1(x_1)f_2(x_2)F_3(\varphi(x_1,x_2))\,dx_1\,dx_2\end{multline*}
with $F_3(t) = |\lambda|^{1/2}\widehat{f_3}(\lambda t)$
satisfying $\norm{F_3}_2 = c\norm{f_3}_2$.
Thus
\[ T_\lambda^\phi(\bff) = c|\lambda|^{-1/2}S_\lambda(f_1,f_2,F_3)\]
with $S_\lambda$ defined in terms of the phase function $\psi$,
and with the ordered triple of mappings
\[(\varphi_1,\varphi_2,\varphi_3)(x_1,x_2) = (x_1,x_2,\varphi(x_1,x_2)).\]
The hypotheses of Theorem~\ref{thm:3} are satisfied
by $(\psi,\varphi_1,\varphi_2,\varphi_3)$.
Therefore the conclusion of Theorem~\ref{thm2} for $T_\lambda^\phi(\bff)$
is a consequence of the conclusion of Theorem~\ref{thm:3},
which yields a factor of $|\lambda|^{-\delta}$ with $\delta>0$,
supplementing the factor $|\lambda|^{-1/2}$ that is already present.
\qed

The function $F_3$ is $|\lambda|$--bandlimited, that is, its Fourier
transform was supported in $[-|\lambda|,|\lambda|]$.
Thus this proof relies  only on this bandlimited case of Theorem~\ref{thm:3}.

In the following sections we will establish the conclusion
of Theorem~\ref{thm:3} in the $O(|\lambda|)$--bandlimited case,
thus completing the proof of Theorem~\ref{thm2}.
The general case of Theorem~\ref{thm:3} will be treated later, in \S\ref{section:completeproofs}. 
Theorem~\ref{thm2} will be used in the proof for the general case.
Our treatment of the bandlimited case of Theorem~\ref{thm:3}
will not rely on Theorem~\ref{thm2}, so the reasoning is not circular.

We begin the proof of Theorem~\ref{thm:3},
for general $(\psi,\varphi_1,\varphi_2,\varphi_3)$
satisfying its hypotheses,
without any bandlimitedness hypothesis for the present.
Thus it is given that for each $j\ne k\in\{1,2,3\}$,
$\nabla\varphi_j,\nabla\varphi_k$ are linearly independent 
on the complement of a analytic variety of positive codimension.
If $(\varphi_j: j\in\{1,2,3\})$ is equivalent to a linear system,
then the conclusion \eqref{thm3conclusion} holds.
Indeed, suppose that $\sum_j H_j\circ\varphi_j\equiv 0$.
Supposing initially that the derivatives of $H_j$ vanish nowhere,
the change of variables $\bx\mapsto(H_1\circ\varphi_1(\bx),H_2\circ\varphi_2(\bx))$
reduces matters to the case in which $\varphi_j(\bx)\equiv x_j$ for $j=1,2$.
Replace $\varphi_3$ by $\tilde\varphi_3 = -H_3\circ \varphi_3$.
In these new coordinates,
$\tilde\varphi_3(\bx) = x_1+x_2$, 
and the nondegeneracy hypothesis for $\psi$ continues to hold.
For this situation,
the conclusion \eqref{thm3conclusion} was established in \cite{CLTT}.

In the more general case in which derivatives  $H'_j$
are permitted to vanish at isolated points,
and gradients $\nabla\varphi_j$ are permitted to be pairwise linearly
dependent on analytic varieties of positive codimensions,
the same conclusion is reached by partitioning $[0,1]^2$
into finitely many good rectangles, on each of which each
derivative has absolute value bounded below by $|\lambda|^{-\delta}$,
together with a bad set of Lebesgue measure $O(|\lambda|^{-\delta'})$
for small exponents $\delta,\delta'>0$.
The reasoning of the preceding paragraph 
gives the desired bound for the contribution of each good rectangle,
while the contribution of the remaining bad set is majorized
by a constant multiple of its Lebesgue measure.

We claim further that in order to prove
Theorem~\ref{thm:3}, it suffices to treat 
the special case in which $\varphi_j(x_1,x_2)\equiv x_j$ for $j=1,2$, 
neither partial derivative $\frac{\partial \varphi_3}{\partial x_j}$
with $j=1,2$ vanishes at any point of $[0,1]^2$,
and $(\varphi_j: j\in\{1,2,3\})$ is not equivalent to a linear system.
To justify this claim,
let $\eps>0$ be a small auxiliary parameter, and
partition $[0,1]^2$ into subcubes 
of sidelengths comparable to $\lambda^{-\eps}$.
Discard every subcube on which any one of the three Jacobian determinants
fails to have magnitude greater than $\lambda^{-\eps}$.
The sum of the measures of these discarded subcubes is $O(\lambda^{-\delta})$
for some $\delta=\delta(\eps)>0$.
Treat each of the remaining subcubes by reducing it to $[0,1]^2$
via an affine change of variables. This replaces $\lambda$ by a positive
power of $\lambda$, and likewise modifies $\varphi_j,\psi$.

Next, make the change of variables
$\bx = (x_1,x_2)\mapsto \phi(\bx) = (\varphi_1(\bx),\varphi_2(\bx))$,
which is a local diffeomorphism because of the nonvanishing Jacobian condition.
Replace $\varphi_3$ by $\varphi_3\circ \phi^{-1}$, replace $\varphi_j(\bx)$ by $x_j$
for $j=1,2$, and replace $\psi$ by $\psi\circ \phi^{-1}$.
The hypotheses of Theorem~\ref{thm:3} continue to hold for this new system of data.
$\phi([0,1]^2)$ is no longer equal to $[0,1]^2$,
but is contained in a finite union of rectangles, in each of which the
hypotheses of the theorem hold after affine changes of variables.

This change of variables introduces a Jacobian factor, which is a function
of $\bx$ rather than of individual coordinates. This Jacobian can
be expanded into a Fourier series, expressing it as an absolutely convergent 
linear combination of products
of unimodular functions of the individual coordinates. These factors can be absorbed
into the functions $f_j$.
The case in which $(\varphi_j: j\in\{1,2,3\})$ is equivalent to a linear system
has already been treated.

Write $D  = \frac{d}{dx}$.
\begin{definition}
Let $\lambda\in(0,\infty)$ and $N\in\naturals$.
$\norm{\cdot}_{N,\lambda}$ is the norm on the Banach
space of $N$ times continuously differentiable functions on $[0,1]$ given by
\begin{equation}
\norm{f}_{N,\lambda} = \sum_{k=0}^N \lambda^{-k}\norm{D^k f}_{L^\infty([0,1])}.
\end{equation}
\end{definition}

\S\S\ref{section:decompose}, \ref{section:localbound},
\ref{section:reducetosublevel}, and \ref{section:sublevel1}
are devoted to the proof of the following lemma.

\begin{lemma} \label{lemma:CLTTbandlimited}
Suppose that $\varphi_j(x_j)\equiv x_j$ for $j=1,2$,
that $\varphi_3$ is not expressible in the form $h_1(x_1)+h_2(x_2)$,
and that $\psi$ is not expressible in the form \eqref{psiexpression}.
Then there exist $N,C,\delta$ such that for all $\bff$ and every $\lambda\ge 1$,
\begin{equation} \label{desired}
\big|S_\lambda(\bff)\big| \le C \lambda^{-\delta} \norm{f_1}_\infty \norm{f_2}_\infty
\norm{f_3}_{N,\lambda}.
\end{equation}
\end{lemma}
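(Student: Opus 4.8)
The plan is to establish \eqref{desired} by a phase-space decomposition of the factor $f_3\circ\varphi_3$ combined with a two-scale analysis reducing the estimate to a sublevel-set bound. Since $\varphi_j(x_j)\equiv x_j$ for $j=1,2$, the functional $S_\lambda(\bff)$ is bilinear in $f_1,f_2$ with kernel $K(x_1,x_2)=e^{i\lambda\psi(\bx)}(f_3\circ\varphi_3)(\bx)$, localized by a cutoff. The first step is to dispense with the ``structured'' part of $f_3$: write $f_3=\sum_k g_k$ where $\widehat{g_k}$ is supported at frequency scale roughly $2^k$, and further decompose $g_k$ spatially at scale $\lambda^{-\sigma}$ for a small $\sigma>0$ to be chosen, so that on each such interval $g_k$ is close to a modulated constant $c\,e^{i\mu t}$. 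The low-frequency part $|\mu|\lesssim \lambda^{1-\eps}$ contributes a phase $e^{i\mu\varphi_3(\bx)}$; absorbing it into $\psi$ shows that modulo acceptable errors we may assume either $f_3$ is essentially constant on $\lambda^{-\sigma}$-balls (the ``structured'' alternative) or that the relevant modulation frequency $\mu$ is comparable to $\lambda$ or larger.

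The second step is the \emph{dichotomy between structure and pseudo-randomness}. In the pseudo-random case, $f_3\circ\varphi_3$ oscillates at a genuinely large frequency relative to $\psi$, so one integrates by parts (or applies stationary phase in one variable) in the direction transverse to the level sets of $\varphi_3$; the nonvanishing of $\partial\varphi_3/\partial x_1$ and $\partial\varphi_3/\partial x_2$ guarantees that this transverse direction is uniformly available and yields a gain of a power of $\lambda$, controlled by $\norm{f_3}_{N,\lambda}$ — this is precisely why the inhomogeneous norm, rather than $\norm{f_3}_\infty$, appears on the right-hand side of \eqref{desired}. In the structured case, we are reduced to estimating
\[
\int_{[0,1]^2} e^{i\lambda\psi(\bx)} f_1(x_1)f_2(x_2)\,\mathbf 1_{E}(\varphi_3(\bx))\,d\bx
\]
for various measurable sets $E$, uniformly in $E$; by a standard $TT^*$ / Cauchy–Schwarz expansion in $x_1$ (resp.\ $x_2$), controlling this is reduced to bounding the Lebesgue measure of a sublevel set of the form $\{\,|\Psi(x_1,x_2,x_2')|<\eps\,\}$, where $\Psi$ is built from second differences of $\psi$ along the $\varphi_3$-web, together with the corresponding object for $\varphi_3$ itself.

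The third step is the sublevel-set estimate, carried out in \S\ref{section:sublevel1}. Here the nondegeneracy hypotheses are used in their essential form: that $\varphi_3$ is not of the form $h_1(x_1)+h_2(x_2)$ forces the relevant web-curvature quantity $\partial^2\varphi_3/\partial x_1\partial x_2$ (in the appropriate normalization) to vanish only on a proper real-analytic subvariety, and that $\psi$ is not expressible as $\sum_j h_j\circ\varphi_j$ means the second difference $\Psi$ does not vanish identically; real-analyticity then gives a uniform finite vanishing order, hence $|\{|\Psi|<\eps\}|\le C\eps^{\gamma_0}$ for some $\gamma_0>0$ by the standard one-dimensional sublevel lemma applied fiberwise (Carbery–Christ–Wright). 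Feeding this back through the Cauchy–Schwarz step with $\eps$ a suitable power of $\lambda^{-\sigma}$, and optimizing the exponents $\sigma,\eps$, produces the net gain $\lambda^{-\delta}$ for some $\delta>0$, with $N$ determined by how many integrations by parts the pseudo-random alternative requires.

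\textbf{Main obstacle.} The delicate point is not any single estimate but the \emph{bookkeeping of the two-scale dichotomy}: one must choose the spatial scale $\lambda^{-\sigma}$ and the frequency threshold so that the error terms from replacing $g_k$ by a modulated constant are summable in $k$ and small in $\lambda$, while simultaneously the surviving structured piece still admits a favorable sublevel bound and the pseudo-random piece still admits enough integrations by parts — all with constants uniform over the measurable sets $E$ arising from the level-set decomposition of $f_3$. Ensuring that the degenerate loci (where $\nabla\varphi_j$'s fail transversality, or where the curvature/second-difference quantities vanish) can be excised at the cost of only $O(\lambda^{-\delta'})$ in measure, uniformly, is where the real-analyticity is indispensable and where care is needed.
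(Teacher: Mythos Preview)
Your outline diverges from the paper's argument in an essential way, and there is a genuine gap.

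The paper does \emph{not} decompose only $f_3$; it decomposes \emph{all three} functions simultaneously at spatial scale $\lambda^{-1/2}$ (\S\ref{section:decompose}), writing each $f_j\eta_m$ as a short Fourier series and splitting $f_j=g_j+h_j$ according to the size of the coefficients. On each box $Q_\nu=I_{m_1}\times I_{m_2}$ one obtains an oscillatory integral with a single net phase $\Phi_\nu(\bx)=\lambda\psi(\bx)+\xi_1 x_1+\xi_2 x_2+\xi_3\varphi(\bx)$, and the integral is negligible unless $|\nabla\Phi_\nu(z_\nu)|\le\lambda^{1/2+2\rho}$ (Lemma~\ref{lemma:localbound}). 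The remaining work is the \emph{vector-valued} sublevel estimate of Lemma~\ref{lemma:sublevel}: one must bound the measure of the set where both
\[
|h_1(x)+\varphi_1(x,y)h_3(\varphi(x,y))+\psi_1(x,y)|\le\eps,\qquad
|h_2(y)+\varphi_2(x,y)h_3(\varphi(x,y))+\psi_2(x,y)|\le\eps
\]
hold, for \emph{measurable} $h_j$. This is done by an elimination/freezing argument (Lemma~\ref{lemma:squaremeasure}) that replaces each $h_j$ by a $C^\omega$ function from a compact family, then invokes Lemma~\ref{lemma:scriptf}. No $TT^*$ or Cauchy--Schwarz step appears.

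Your plan has two concrete problems. First, in the pseudo-random case you propose to integrate by parts ``in the direction transverse to the level sets of $\varphi_3$''; but any such direction has nonzero $\partial/\partial x_1$ and $\partial/\partial x_2$ components, so the derivative falls on $f_1(x_1)$ or $f_2(x_2)$, which are merely $L^\infty$. There is no direction in $\reals^2$ along which both $f_1$ and $f_2$ are constant, so the claimed gain is not available without first decomposing $f_1,f_2$ as well. Second, in the structured case you reduce to $\int e^{i\lambda\psi}f_1(x_1)f_2(x_2)\one_E(\varphi_3(\bx))\,d\bx$ and invoke a $TT^*$ step leading to a sublevel set of ``second differences of $\psi$''. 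A single Cauchy--Schwarz leaves $f_2$ (or $f_1$) still present and arbitrary; a double Cauchy--Schwarz yields the phase $\psi(x_1,x_2)-\psi(x_1,x_2')-\psi(x_1',x_2)+\psi(x_1',x_2')$, which vanishes identically on the diagonals $x_1=x_1'$ and $x_2=x_2'$, so the associated sublevel set always has measure $\gtrsim\eps\log(1/\eps)$ regardless of the nondegeneracy hypothesis on $\psi$. Extracting a genuine $\lambda^{-\delta}$ from this, while also summing over the $O(\lambda^\sigma)$ intervals $E$ and handling the four $\one_E$ constraints, requires substantially more than what you have written, and is not how the paper proceeds.

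The key idea you are missing is that the frequencies coming from $f_1$ and $f_2$ must be tracked explicitly, so that the stationarity condition becomes a system of two equations in the three unknown frequency functions $h_1,h_2,h_3$; the two nondegeneracy hypotheses (on $\varphi_3$ and on $\psi$) are exactly what rule out exact solutions of that system, which is what drives the sublevel bound in \S\ref{section:sublevel1}.
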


We have observed that
\begin{equation} \label{TlambdaSlambda}
 T_\lambda^\phi(\bg) = 
 \lambda^{-1/2} 
S_\lambda(\bff) 
\end{equation}
with $f_j=g_j$ for $j=1,2$, $\norm{f_3}_2 \le C \norm{g_3}_2$,
and $f_3$ is $|\lambda|$--bandlimited.
Therefore 
in order to complete the proof of Theorem~\ref{thm2},
it suffices to prove that $|S_\lambda(\bff)|\le C|\lambda|^{-1/2} \norm{f_1}_\infty \norm{f_2}_\infty
\norm{f_3}_2$ under this bandlimitedness assumption on $f_3$.

Theorem~\ref{thm2} follows from this lemma. Indeed,
we may assume without loss of generality that $\lambda>0$,
by replacing $\psi$ by $-\psi$ if $\lambda$ is initially negative.
Since $f_3$ is $\lambda$--bandlimited, we may express $f_3=P_\lambda(f_3)$,
where $P_\lambda$ are linear smoothing operators that satisfy
\begin{equation} \norm{\nabla^k P_\lambda f}_q\le C_{q,k} \lambda^k \norm{f}_q
\ \text{ for all $f\in L^q$} \end{equation}
uniformly for all $q\in[1,\infty]$ 
and $\lambda>0$, for each $k\in\{0,1,2,\dots\}$. 
Thus
\begin{equation}
\norm{P_\lambda f}_{N,\lambda} \le C_N \norm{f}_\infty
\end{equation}
uniformly for all $\lambda>0$ and $f\in L^\infty$.

The hypothesis that $\partial\varphi/\partial x_1$ does not vanish leads immediately 
to an upper bound
\[ |S_\lambda(\bff)| \le C\norm{f_1}_\infty \norm{f_2}_1\norm{f_3}_1,\]
and interchanging the roles of the coordinates
gives a bound $C\norm{f_i}_\infty \norm{f_j}_1\norm{f_k}_1$ for
any permutation $(i,j,k)$ of $(1,2,3)$.
Therefore by interpolation, since $P_\lambda$ is bounded on $L^q$ for all $q$
uniformly in $\lambda$,
\eqref{desired} implies that
\begin{equation} 
\big|S_\lambda(f_1,f_2,P_\lambda(f_3))\big| \le C \lambda^{-\delta} \norm{f_1}_\infty \norm{f_2}_\infty
\norm{f_3}_2.
\end{equation}
By \eqref{TlambdaSlambda},
this completes the proof of Theorem~\ref{thm2}.
\qed

\section{Microlocal decomposition} \label{section:decompose}

We decompose each $f_j$ in phase space into
summands that are essentially supported in rectangles
of dimensions $(\lambda^{-1/2},\lambda^{1/2})$ in $[0,1]_x\times \reals_\xi$.
To do this,
partition $[0,1]$ 
into $\asymp \lambda^{1/2}$
intervals $I_m$ of lengths $|I_m|=\lambda^{-1/2}$.
Let $\eta_m$ be $C^\infty$ functions with each $\eta_m$
supported on the interval $I_m^*$ of length $2\lambda^{-1/2}$ 
concentric with $I_m$, with $\sum_m \eta_m^2\equiv 1$ on $[0,1]$,
and with $d^k\eta_m/dx^k = O(\lambda^{k/2})$ for each $k\ge 0$.

For $\nu = (m_1,m_2)$ let $Q_\nu = I_{m_1}\times I_{m_2}\subset[0,1]\times[0,1]$.
Let $z_\nu$ be the center of $Q_\nu$.
To each $\nu$ are associated those intervals $I_{m_3}$
for which there exists at least one point $\bx=(x_1,x_2)\in Q_\nu$
such that $\varphi(\bx)\in I_{m_3}^*$. 
Because each partial derivative $\partial\varphi/\partial x_j$ vanishes nowhere,
the number of such indices $m_3$ is majorized by a constant independent of $\lambda,\nu$.

Let $\sigma\in(0,1]$ be a small quantity to be chosen at the very end of the analysis.
For each interval $I_m$, decompose
$f_j \eta_m^2$ as
\begin{equation} \label{fjdecomp1}
f_j \eta_m^2 = g_{j,m}+h_{j,m}
\end{equation}
with $g_{j,m},h_{j,m}$ identically zero outside of $I_m^*$, 
\begin{equation} \label{fjdecomp2}
\left\{ \begin{gathered}
g_{j,m}(x) = \eta_m(x) \sum_{k=1}^N a_{j,m,k}e^{i\xi_{j,m,k} x} 
\\
|a_{j,m,k}|=O(\norm{f_j}_\infty), 
\\
\xi_{j,m,k}\in \pi \lambda^{1/2}\integers,
\\
N  = \lceil \lambda^{2\sigma} \rceil.
\end{gathered} \right. \end{equation}
while 
\begin{equation} \label{fjdecomp3}
\left\{ 
\begin{gathered} 
h_{j,m}(x) = \eta_m(x) \sum_{n\in \integers} b_{j,m,n}e^{i\pi \lambda^{1/2} n x} 
\\
(\sum_n |b_{j,m,n}|^2)^{1/2} = O(\norm{f_j}_\infty)
\\
|b_{j,m,n}| = O(\lambda^{-\sigma} \norm{f_j}_\infty).
\end{gathered}
\right. \end{equation}
Decompositions of this type were used by the author and J.~Holmer,
in unpublished work circa 2009, 
to prove upper bounds for certain generalizations of twisted convolution
inequalities.

This is achieved by expanding $f_j\eta_m$ into Fourier series 
\[f_j(x)\eta_m(x) =  \one_{I_m^*}(x)
\sum_{n\in \integers} c_n  e^{i\pi\lambda^{1/2} n x}, \]
with coefficients $c_n$ that depend also on the indices $j,m$.
Define $g_{j,m}$ to be 
the sum of all terms with $|c_n| > \lambda^{-\sigma}\norm{f_j}_\infty$,
multiplied by $\eta_m$. 
Define $h_{j,m} = f_j\eta_m-g_{j,m}$.
By Parseval's identity, there are at most $ \lceil \lambda^{2\sigma} \rceil$
values of $n$ for which 
$|c_n| > \lambda^{-\sigma}$.
Define the frequencies $\xi_{j,m,k}$
and associated coefficients $a_{j,m,k}$ to be those frequencies $\pi\lambda^{1/2}n$
and associated coefficients $c_n$ 
that satisfy $|c_n|>\lambda^{-\sigma} \norm{f_1}_\infty$,
with some arbitrary ordering.
If there are fewer than $N$ indices $n$ for which $|c_n|>\lambda^{-\sigma}\norm{f_1}_\infty$,
then augment this list by introducing extra
indices $k$ so that there are exactly $N$ terms, and
set some $a_{j,m,k}=0$ for each of these extra indices. 
This is done purely for convenience of notation.

Define
\[ g_j = \sum_m g_{j,m} \ \text{ and } \  h_j = \sum_m h_{j,m}
\ \text{ for $j\in\{1,2\}$.} \] 

For $j=3$, this construction is modified in order to exploit
the bandlimited character of $f_3$.
Let $\rho>0$ be another small parameter.\footnote{One may
think of $\rho$ as being arbitrarily small, but of $\sigma$ as moderate in size.
Thus factors such as $\lambda^{-\sigma+C\rho}$ will be small for large $\lambda$,
so long as $C$ remains constant.}
It follows from $N$-fold integration by parts that
\[ |\widehat{f_3\eta_m}(\xi)| \le C_N 
\lambda^{N} |\xi|^{-N} \norm{f_3}_{N,\lambda}
\ \forall\,\xi.  \]
If $N$ is chosen to satisfy $N \ge \rho^{-1}$, it follows that
\[ |\widehat{f_3\eta_m}(\xi)| \le C_N 
\lambda^{-1} \ \text{ whenever $|\xi| \ge \lambda^{1+\rho}$.}\]
Therefore the frequencies $\xi_{3,m,k}$ defined above satisfy 
\begin{equation}\label{xi3band}
|\xi_{3,m,k}|\le  \lambda^{1+\rho}.
\end{equation}

Moreover, if $N$ is chosen sufficiently large as a function of $\rho$,
then the contribution made to $h_3$ by all terms 
$b_{3,m,k} e^{ikx}$ with $|k| \ge \lambda^{1+\rho}$
has $\lt$ norm $O(\lambda^{-1})$. 
Define $F_3$ to be the sum of all of these terms. Then $f_3$ is decomposed as
\begin{equation} \label{F3decomp} f_3 = g_3+h_3+F_3,\end{equation}
with 
\begin{equation} \norm{F_3}_\infty = O(\lambda^{-1}),\end{equation} 
with $g_3,h_3$ enjoying all of the properties indicated above
for $j=1,2$,
and with the supplementary bandlimitedness property 
\[ |n|\le\lambda^{1+\rho}\] for all
frequencies $n$ appearing in terms $\eta_m(x)b_{3,m,n}e^{inx}$,
as well as for all frequencies $\xi_{3,m,k}$.

\section{Local bound} \label{section:localbound}

Recall that
\[ S_\lambda(F_1,F_2,F_3)
= \int_{\reals^2} 
F_1(x_1)F_2(x_2)F_3(\varphi(x_1,x_2))\,e^{i\lambda\psi(x_1,x_2)}
\,dx_1\,dx_2.  \]

Let $\bm = (m_1,m_2,m_3)\in\integers^3$.
Write $\norm{a_{r,\cdot}}_{\ell^p} = (\sum_{n} |a_{r,n}|^p)^{1/p}$,
with the usual limiting interpretation for $p=\infty$.

\begin{lemma} \label{lemma:localbound}
Let $\rho>0$ be a small auxiliary parameter.
Let $f_j$ be functions of the form
\[ f_j(x) = 
\sum_{n\in\integers} a_{j,n}e^{i\pi \lambda^{1/2} nx}\]
with $a_{j,n}\in\complex$ and $|a_{3,n}|=0$ for all $|n|>\lambda^{1+\rho}$. 
Then for each $\bm$
and any permutation $(j,k,l)$ of $(1,2,3)$,
\begin{equation}
\big|S_\lambda(f_1\eta_{m_1},f_2\eta_{m_2},f_3\eta_{m_3}) \big|
\le C
\lambda^{-1+2\rho}
\norm{a_{j,\cdot}}_{\ell^2}
\norm{a_{k,\cdot}}_{\ell^2}
\norm{a_{l,\cdot}}_{\ell^\infty}
\end{equation} 
\end{lemma}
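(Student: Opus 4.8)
The governing idea is that the oscillation must be used: the crude pointwise estimate over the cube $Q_\nu^*$ of sidelength $\asymp\lambda^{-1/2}$ costs only $|Q_\nu^*|\asymp\lambda^{-1}$ but forces $\ell^1$ norms of the coefficient sequences, which is an entire power of $\lambda$ worse than the asserted bound; the improvement comes from the fact that, after Fourier expansion, the resulting oscillatory integrals are of size $\asymp\lambda^{-1}$ only on a sparse ``resonant'' set of frequency triples and are otherwise rapidly decaying. Accordingly, I would first expand $f_1,f_2$ (and, in so far as is needed, $f_3$) in the exponentials $e^{i\pi\lambda^{1/2}nx}$, which are mutually orthogonal on each interval $I_m^*$, obtaining
\[
S_\lambda(f_1\eta_{m_1},f_2\eta_{m_2},f_3\eta_{m_3})
=\sum_{n_1,n_2,n_3}a_{1,n_1}a_{2,n_2}a_{3,n_3}\,I(\bn),
\qquad
I(\bn)=\int\eta_{m_1}(x_1)\eta_{m_2}(x_2)\eta_{m_3}(\varphi(\bx))\,e^{i\Phi_{\bn}(\bx)}\,d\bx ,
\]
with $\bn=(n_1,n_2,n_3)$ and $\Phi_{\bn}(\bx)=\pi\lambda^{1/2}\big(n_1x_1+n_2x_2+n_3\varphi(\bx)\big)+\lambda\psi(\bx)$. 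One has $|I(\bn)|\le C\lambda^{-1}$ in all cases.

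\noindent The heart of the matter is a non-stationary phase estimate for $I(\bn)$. Rescaling $\bx=z_\nu+\lambda^{-1/2}\bw$ and using that $\nabla\varphi,\nabla\psi$ vary by $O(\lambda^{-1/2})$ across $Q_\nu^*$, one finds $\lambda^{-1/2}\nabla\Phi_{\bn}=\mu(\bn)+O(\lambda^{-1/2}|n_3|+1)$ on $Q_\nu^*$, where
\[
\mu(\bn)=\pi(n_1,n_2)+\pi n_3\nabla\varphi(z_\nu)+\lambda^{1/2}\nabla\psi(z_\nu)\in\reals^2 ,
\]
and the second derivatives are likewise $O(\lambda^{-1/2}|n_3|+1)$ at scale $\lambda^{-1/2}$. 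Since the $\eta_m$ are smooth at scale $\lambda^{-1/2}$, repeated integration by parts then gives $|I(\bn)|\le C_N\lambda^{-1}(1+|\mu(\bn)|)^{-N}$ for all $N$, provided $|\mu(\bn)|$ exceeds a fixed multiple of $\lambda^{-1/2}|n_3|+1$. Because $\partial\varphi/\partial x_i$ vanishes nowhere, $\nabla\varphi(z_\nu)$ is bounded below, so the condition $|\mu(\bn)|\lesssim\lambda^{-1/2}|n_3|+1$ pins $n_1$ and $n_2$ each to a window of width $O(\lambda^{-1/2}|n_3|+1)$ about an explicit value $n_i(n_3)$ that is affine in $n_3$ with nonzero slope $-\partial_i\varphi(z_\nu)$; hence $n_3\mapsto n_i(n_3)$ is boundedly-to-one up to the same factor. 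Here the bandlimiting hypothesis on $f_3$ is essential: it controls the broadening factor $\lambda^{-1/2}|n_3|$ of the resonant windows (for the largest admissible frequencies, where $\pi\lambda^{1/2}n_3\varphi$ ceases to be effectively affine at scale $\lambda^{-1/2}$, one additionally subdivides $Q_\nu^*$ into subcubes of sidelength $\asymp(\lambda^{1/2}|n_3|)^{-1/2}$, on which $\varphi$ is again effectively affine, and sums the resulting stationary-phase contributions, once more using $|\nabla\varphi|\gtrsim 1$). Contributions from $\bn$ outside the resonant set are negligible in $\lambda$ after summing against the $\ell^2$ data by Cauchy--Schwarz.

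\noindent It then remains to sum over the resonant set. For any permutation $(j,k,l)$ of $(1,2,3)$, extract $\norm{a_{l,\cdot}}_{\ell^\infty}$ from the sum, apply Cauchy--Schwarz over each of the two frequency windows (of width a small power of $\lambda$) and then over $n_3$, and use the boundedly-to-one property of $n_3\mapsto n_i(n_3)$ to identify the ensuing sums $\sum_{n_3}|a_{i,n_i(n_3)}|^2$ with a small power of $\lambda$ times $\norm{a_{i,\cdot}}_{\ell^2}^2$. Together with $|I(\bn)|\le C\lambda^{-1}$, this yields the factor $\lambda^{-1}$ plus a factor $\lambda^{2\rho}$ of windowing slack, which is exactly the asserted estimate; the argument does not distinguish which index carries the $\ell^\infty$ norm, so all permutations follow simultaneously. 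The step I expect to be the main obstacle is precisely the non-stationary phase bound for $I(\bn)$ when $|n_3|$ is comparable to or larger than $\lambda^{1/2}$, where the curvature of $\varphi$ must be tamed (via the bandlimit of $f_3$, or by the subcube decomposition just described) so as not to broaden the resonant set beyond the allowed slack; everything else is Fourier expansion, integration by parts, Cauchy--Schwarz, and keeping the cumulative slack at $\lambda^{2\rho}$.
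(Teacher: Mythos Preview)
Your approach is correct and essentially the same as the paper's: expand in Fourier series, use integration by parts to show that $I(\bn)$ decays rapidly off a resonant set of $\bn$'s, count the resonant set using the nonvanishing of $\partial_i\varphi$, and finish with Cauchy--Schwarz. Your proposed subcube decomposition for large $|n_3|$ is unnecessary: the bandlimit hypothesis is to be read as $|\pi\lambda^{1/2}n_3|\le\lambda^{1+\rho}$, i.e.\ $|n_3|\lesssim\lambda^{1/2+\rho}$, which already gives $\lambda^{-1/2}|n_3|\lesssim\lambda^\rho$; thus taking the stationarity threshold at $|\mu(\bn)|\le\lambda^{2\rho}$ (rather than $O(1)$) absorbs this broadening uniformly, and a single integration-by-parts argument on the whole of $Q_\nu^*$ suffices.
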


\begin{proof}
For $i=1,2$ we write
$\varphi_i,\psi_i$ as shorthand for $\partial\varphi/\partial x_i$,
$\partial\psi/\partial x_i$, respectively.
Write $\nu = (m_1,m_2)$, and recall that $z_\nu$ denotes
the center of $Q_\nu = I_{m_1}\times I_{m_2}$.
Write $\bx = (x_1,x_2)$.

Let $\bxi = (\xi_j: j\in\{1,2,3\})\in\reals^3$, and suppose that 
\begin{equation} \label{xioneplusbound} 
\max_j|\xi_j|\le  \lambda^{1+\rho}.\end{equation}
Consider
\begin{equation} I(\xi) =  \int_{\reals^2} e^{i\xi_{1}x_1}e^{i\xi_2 x_2} e^{i\xi_3\varphi(\bx)}
e^{i\lambda\psi(\bx)}
\eta_{m_1}(x_1)\eta_{m_2}(x_2)\eta_{m_3}(\varphi(\bx))\,d\bx.\end{equation}
The net phase function in this integral is 
\begin{equation} \label{netPhi}
\Phi(\bx) = \xi_1x_1 + \xi_2 x_2 + \xi_3\varphi(\bx)+\lambda\psi(\bx),\end{equation}
whose gradient is
\begin{equation*} 
\nabla\Phi(\bx) = 
\begin{pmatrix}
\xi_1 + \xi_3\varphi_1(\bx) + \lambda\psi_1(\bx)
\\
\xi_2 + \xi_3\varphi_{2}(\bx) + \lambda\psi_{2}(\bx)
\end{pmatrix}\,.  \end{equation*}

If
\begin{equation} \label{bigatcenter}
\big|\nabla\Phi(z_\nu)|
\ge \lambda^{2\rho} \lambda^{1/2}
\end{equation}
then
\begin{equation} \label{Ibound}
|I| \le C_{\rho,K} \lambda^{-K}
\ \text{for every $K<\infty$.}
\end{equation}
Indeed,
if \eqref{bigatcenter} holds then 
$\big|\frac{\partial\Phi}{\partial x_i}(z_\nu)\big| \ge \tfrac12 \lambda^{2\rho}\lambda^{1/2}$ 
for at least one index $i\in\{1,2\}$. Suppose without loss of generality that this holds for $i=1$.
Then
\begin{equation} \label{bigatcenter_1}
\Big|\frac{\partial \Phi}{\partial x_1}(u_1,u_2) \Big| \ge \lambda^{\rho} \lambda^{1/2}
\ \text{ for every point $(u_1,u_2)\in Q_\nu^* = I_{m_1}^*\times I_{m_2}^*$.}
\end{equation}
This holds because
the function $\lambda\psi_{1}$ varies by at most $O(\lambda\cdot\lambda^{-1/2})$ over $Q_{\nu}^*$,
while the assumption \eqref{xioneplusbound} guarantees that $\xi_3 \varphi_{1}$ varies by at most 
$O(\lambda^{1+\rho}\lambda^{-1/2}) = O(\lambda^{\tfrac12+\rho})$.
Integrating by parts $CK \rho^{-1}$ times 
with respect to the $x_1$ coordinate 
and invoking \eqref{bigatcenter_1}
then yields \eqref{Ibound}.

Now writing $\bn = (n_1,n_2,n_3)\in\integers^3$, 
\[ S_\lambda(f_1\eta_{m_1},f_2\eta_{m_2},f_3\eta_{m_3}) 
= \sum_{n_1,n_2,n_3} 
a_{1,n_1}a_{2,n_2}a_{3,n_3}
I(\pi \lambda^{1/2} n_1,\pi \lambda^{1/2} n_2,\pi \lambda^{1/2} n_3).\]
For any $\bn = (n_1,n_2,n_3)$ there is the trivial bound
$|I(\bn)| = O(|Q_\nu^*|) = O(\lambda^{-1})$.
On the other hand,
by \eqref{Ibound}, 
the $\bn$-th term is $O(\lambda^{-K})$
if the associated phase function $\Phi$ defined by \eqref{netPhi}
with $\bxi =  \pi\lambda^{1/2}\bn$ satisfies
$|\nabla\Phi(z_\nu)| \ge \lambda^{\tfrac12+2\rho}$. 

For each $n_1$, there are $O(\lambda^{2\rho})$ pairs $(n_2,n_3)$
for which $\bxi = \pi\lambda^{1/2}\bn$
fails to satisfy \eqref{bigatcenter}.
This follows from the form of $\nabla\Phi$
and the assumption that both partial derivatives $\varphi_1,\varphi_2$
are nowhere vanishing.
The same holds with the roles of $n_1,n_2,n_3$ permuted in an arbitrary way.
Since the total number of all tuples $(\bm,\bn)$
is $O(\lambda^{3+2\rho})$,
the conclusion of the lemma follows directly from these facts 
by invoking \eqref{Ibound} with $K$ sufficiently large.
\end{proof}

\section{Reduction to sublevel set bound} \label{section:reducetosublevel} 
Let $f_1,f_2$ be decomposed as $f_j = g_j+h_j$ as 
in \eqref{fjdecomp1}, \eqref{fjdecomp2}, 
and let $f_3$ have the modified form $f_3 = g_3+h_3+F_3$ of \eqref{F3decomp}, 
with the restriction \eqref{xi3band}.
Then
$S_\lambda(f_1,f_2,F_3) = O(\lambda^{-1}\prod_{j=1}^3 \norm{f_j}_\infty)$,
so the contribution of $F_3$ can be disregarded
and $f_3$ may be replaced by $\tilde f_3 = g_3+h_3$. 
By summing over all cubes $Q_\nu$
we conclude from Lemma~\ref{lemma:localbound} that
\begin{equation}
|S_\lambda(h_1,f_2,\tilde f_3)| \le C\lambda^{-\sigma} \lambda^{2\rho} \prod_{j=1}^3\norm{f_j}_\infty.
\end{equation}
In the same way,
\begin{equation}
|S_\lambda(g_1,h_2,\tilde f_3)| 
\ + \  |S_\lambda(g_1,g_2,h_3)|  \le C\lambda^{-\sigma} \lambda^{2\rho} \prod_{j=1}^3\norm{f_j}_\infty
\end{equation}
so that 
\begin{equation}
|S_\lambda(f_1,f_2,f_3)| \le 
|S_\lambda(g_1,g_2,g_3)| 
+ C\lambda^{-\sigma} \lambda^{2\rho} \prod_{j=1}^3\norm{f_j}_\infty.
\end{equation}
Thus matters are reduced to the analysis of $S_\lambda(g_1,g_2,g_3)$.

To complete the proof, we analyze functions of the special form
\begin{equation} \label{fjform} 
G_j(x) = \sum_{m} \eta_m(x)  a_{j,m} e^{ix\cdot \xi_{j,m}} 
\end{equation}
with each $a_{j,m}\in\complex$ satisfying $|a_{j,m}|\le 1$, and each $\xi_{j,m}\in\reals$.
For $j=3$, we also assume
\begin{equation} \label{fj2} 
|\xi_{3,m}|\le \lambda^{1+\rho}. 
\end{equation}

For each index $j$, $g_{j}$
is expressed as a sum over $k_j\in\{1,2,\dots,N\}$
of functions $G_j$ of the form \eqref{fjform}, multiplied by $O(\norm{f_j}_\infty)$.
Moreover, each summand $G_3$ is bandlimited in the sense \eqref{fj2}. 
\begin{equation} \label{gtoGsum}
S_\lambda(g_1,g_2,g_3) = 
O\big(\prod_{j=1}^3 \norm{f_j}_\infty) \cdot
\sum_{(k_1,k_2,k_3)\in\{1,2,\dots,N\}^3} |S_\lambda(G_{1,k_1},G_{2,k_2},G_{3,k_3}|\big)
\end{equation}
with $N^3$ terms in the sum.

We will prove:
\begin{lemma} \label{lemma:finalspecialform}
There exist  $\tau_0>0$ and $C<\infty$ such that
for all functions of the form \eqref{fjform}
satisfying also \eqref{fj2},
\begin{equation} |S_\lambda(G_1,G_2,G_3)| \le C\lambda^{-\tau_0} \end{equation}
uniformly for all real $\lambda\ge 1$. 
\end{lemma}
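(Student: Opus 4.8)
The plan is to combine the local oscillatory estimate already established, Lemma~\ref{lemma:localbound}, with a sublevel set inequality to be proved in \S\ref{section:sublevel1}; the dichotomy between structure and pseudo-randomness has by this stage already been exploited, so no further decomposition of the factors is needed.

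First I would write $S_\lambda(G_1,G_2,G_3)=\sum_{\bm}a_{1,m_1}a_{2,m_2}a_{3,m_3}\,I_{\bm}$, where $\bm=(m_1,m_2,m_3)$ and $I_{\bm}$ is the oscillatory integral $I(\xi_{1,m_1},\xi_{2,m_2},\xi_{3,m_3})$ of \S\ref{section:localbound}, localized by $\eta_{m_1}\eta_{m_2}$ to $Q_\nu^*$ and by $\eta_{m_3}(\varphi)$ to $\varphi^{-1}(I_{m_3}^*)$, with $\nu=(m_1,m_2)$. There are only $O(\lambda)$ nonzero terms, since for each $\nu$ only $O(1)$ indices $m_3$ are associated to $Q_\nu^*$, and $|a_{j,m}|\le1$. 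Discard into a negligible remainder all terms with $\max(|\xi_{1,m_1}|,|\xi_{2,m_2}|)>\lambda^{1+\rho}$: for these, one of $\partial_{x_1}\Phi,\partial_{x_2}\Phi$ has magnitude $\gtrsim\lambda^{1+\rho}$ on all of $Q_\nu^*$ (the remaining terms in \eqref{netPhi} varying only by $O(\lambda^{1/2+\rho})$ there), so repeated integration by parts gives $|I_{\bm}|\le C_K\lambda^{-K}$; here $\Phi$ denotes the phase \eqref{netPhi} for the frequencies of $\bm$. For the surviving terms \eqref{xioneplusbound} holds. Call $\bm$ \emph{stationary} when $|\nabla\Phi(z_\nu)|<\lambda^{1/2+2\rho}$. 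By \eqref{Ibound}, each non-stationary surviving term satisfies $|I_{\bm}|\le C_K\lambda^{-K}$, so these contribute $O_K(\lambda^{1-K})\le\lambda^{-1}$ in total once $K\ge2$.

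It remains to bound the stationary terms, and for these I would use only $|I_{\bm}|\le|Q_\nu^*|\le C\lambda^{-1}$, together with near-disjointness of the cubes. Define bounded measurable step functions $\alpha_j$ on $[0,1]$ by $\alpha_j\equiv\lambda^{-1}\xi_{j,m}$ on $I_m$, truncated to zero on the intervals discarded above; then $|\alpha_j|\le\lambda^{\rho}$ for all three indices, using the bandlimitedness \eqref{fj2} for $j=3$. If $\bm$ is stationary then, since $\nabla\Phi$ varies by $O(\lambda^{1/2+\rho})$ over $Q_\nu^*$, at every $\bx\in Q_\nu^*$ both coordinates of $\lambda^{-1}\nabla\Phi(\bx)$, namely
\[
\alpha_1(x_1)+\lambda^{-1}\xi_{3,m_3}\,\varphi_1(\bx)+\psi_1(\bx)
\quad\text{and}\quad
\alpha_2(x_2)+\lambda^{-1}\xi_{3,m_3}\,\varphi_2(\bx)+\psi_2(\bx),
\]
have absolute value less than $\eps:=C\lambda^{-1/2+2\rho}$. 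Because $\varphi(Q_\nu^*)$ meets only $O(1)$ of the intervals $I_m$, on each such cube the constant $\lambda^{-1}\xi_{3,m_3}$ equals $(f_3^{(\ell)}\circ\varphi)(\bx)$ for one of $O(1)$ bounded measurable functions $f_3^{(\ell)}$ on $[0,1]$ (integer translates of $\alpha_3$ at scale $\lambda^{-1/2}$), each with $\norm{f_3^{(\ell)}}_\infty\le C\lambda^{\rho}$. Hence the union of all cubes $Q_\nu$ admitting a stationary $\bm$ lies in a union of $O(1)$ sets of the form
\[
\Big\{\bx\in[0,1]^2:\ \big|a_1(\bx)f_1(x_1)+a_2(\bx)f_2(x_2)+a_3(\bx)(f_3^{(\ell)}\circ\varphi)(\bx)+b(\bx)\big|<\eps\Big\},
\]
with $\reals^2$-valued coefficients $a_1\equiv(1,0)$, $a_2\equiv(0,1)$, $a_3=(\varphi_1,\varphi_2)$, $b=(\psi_1,\psi_2)$, and $(f_1,f_2)=(\alpha_1,\alpha_2)$.

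Each such set is an inhomogeneous, vector-valued sublevel set of precisely the kind treated in \S\ref{section:sublevel1}. The hypotheses carried over from Lemma~\ref{lemma:CLTTbandlimited} --- that $\varphi$ is nowhere of the form $h_1(x_1)+h_2(x_2)$, and $\psi$ nowhere of the form $h_1(x_1)+h_2(x_2)+h_3(\varphi)$ --- are exactly the non-degeneracy conditions under which that section yields a bound $C\eps^{\gamma}$, with $\gamma>0$ and $C<\infty$ depending only on $(\varphi,\psi)$, uniformly over all bounded measurable $f_1,f_2,f_3$. Heuristically, an exact solution of $a_1f_1+a_2f_2+a_3(f_3\circ\varphi)+b\equiv0$ on an open set would force $\nabla_{\bx}\big(\psi(\bx)+F_1(x_1)+F_2(x_2)+G(\varphi(\bx))\big)\equiv0$, i.e. rank one degeneracy of the phase $x_3\varphi(x_1,x_2)+\psi(x_1,x_2)$, hence $\psi$ of the excluded form; but the quantitative inequality, valid for wild $f_j$, is the genuine content. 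Granting it, the stationary contribution to $|S_\lambda(G_1,G_2,G_3)|$ is $O(\eps^\gamma)=O(\lambda^{-\gamma(1/2-2\rho)})$, and choosing $\rho<\tfrac14$ and any $\tau_0<\min(1,\gamma(\tfrac12-2\rho))$ finishes the proof. The main obstacle is thus the sublevel estimate of \S\ref{section:sublevel1}: it must hold with no regularity imposed on the $f_j$, with constants depending only on the non-degeneracy of $(\varphi,\psi)$, and for the vector-valued, inhomogeneous sublevel sets appearing here; everything else is integration by parts and bookkeeping.
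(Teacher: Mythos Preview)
Your proposal is correct and follows essentially the same route as the paper: expand $S_\lambda(G_1,G_2,G_3)$ over $\bm$, use integration by parts to discard non-stationary terms, bound each stationary term trivially by $O(\lambda^{-1})$, and show that the union of stationary cubes is contained in the sublevel set of Lemma~\ref{lemma:sublevel} with $h_j=\lambda^{-1}\sum_m\xi_{j,m}\one_{I_m}$ and $\eps\asymp\lambda^{-1/2+2\rho}$. Your handling of the $O(1)$ indices $m_3$ per $\nu$ via the functions $f_3^{(\ell)}$ is equivalent to the paper's preliminary decomposition of $G_3$ into $O(1)$ subsums so that each $\nu$ determines at most one $m_3$.

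One small point: your truncation of terms with $\max(|\xi_{1,m_1}|,|\xi_{2,m_2}|)>\lambda^{1+\rho}$ is not quite justified as stated, since $|\xi_{1,m_1}|>\lambda^{1+\rho}$ does not by itself force $|\partial_{x_1}\Phi|\gtrsim\lambda^{1+\rho}$ (the term $\xi_{3,m_3}\varphi_1+\lambda\psi_1$ is itself $O(\lambda^{1+\rho})$ and could cancel). With the threshold raised to $C\lambda^{1+\rho}$ for $C$ depending on $\varphi,\psi$ the argument goes through. But in fact this step is unnecessary: Lemma~\ref{lemma:sublevel} is stated for \emph{arbitrary} measurable $h_1,h_2,h_3$ with no boundedness hypothesis, so you may simply skip the truncation and feed the raw step functions $\alpha_j$ directly into the sublevel bound, exactly as the paper does.
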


Taking Lemma~\ref{lemma:finalspecialform} for granted for the present, 
we can now complete the proof of Theorem~\ref{thm:3} in the $O(|\lambda|)$--bandlimited
case, and hence the proof of Theorem~\ref{thm2}.
Applying Lemma~\ref{lemma:finalspecialform} to each of the $N^3$ summands in \eqref{gtoGsum} yields
\begin{equation}
|S_\lambda(g_1,g_2,g_3)| \le CN^3 \lambda^{-\tau_0} \prod_{j=1}^3 \norm{f_j}_\infty.
\end{equation}
In all,
\begin{align}
|S_\lambda(f_1,f_2,f_3)| 
&\le \big(CN^3 \lambda^{-\tau_0} + C\lambda^{-\sigma}\lambda^{2\rho} \big)
\prod_{j=1}^3 \norm{f_j}_\infty
\notag
\\
&\le \big(C\lambda^{6\sigma} \lambda^{-\tau_0} + C\lambda^{-\sigma}\lambda^{2\rho} \big)
\prod_{j=1}^3 \norm{f_j}_\infty,
\label{in_all}
\end{align}
where $C<\infty$ depends only on $\varphi,\psi$ and the auxiliary parameters $\sigma,\rho>0$.
The exponent $\sigma$ remains at our disposal, while $\rho$ may be taken to be arbitrarily small.
Choosing $\sigma = \tau_0/7$ gives
\begin{equation} \label{gives}
|S_\lambda(f_1,f_2,f_3)| 
\le 
C\lambda^{-\tau}
\prod_{j=1}^3 \norm{f_j}_\infty
\end{equation}
for every $\tau< \tau_0/7$.
\qed

\medskip
We next reduce 
Lemma~\ref{lemma:finalspecialform} to a sublevel set bound.
Let $G_j$ have the above form for $j\in\{1,2,3\}$.
By decomposing $G_3$ as a sum of $O(1)$ subsums,
we may assume that for each $\nu = (m_1,m_2)$
there exists at most one index $m_3=m_3(\nu)$ 
for which the product
$\eta_{m_1}(x_1) \eta_{m_2}(x_2) \eta_{m_3}(\varphi(x_1,x_2))$
does not vanish identically.

For $\nu = (m_1,m_2)$ and for $\bm = (m_1,m_2,m_3(\nu))$,
for each $(x_1,x_2)\in Q_\nu$ define 
\[\Phi_\nu (x_1,x_2) = \xi_{1,m_1}x_1 + \xi_{2,m_2}x_2 + \xi_{3,m_3}\varphi(x_1,x_2)
+ \lambda \psi(x_1,x_2).\]

Decompose $S_\lambda(G_1,G_2,G_3)$ as 
\begin{equation} \label{SlG}
\sum_{\nu}
a_{1,m_1} a_{2,m_2}a_{3,m_3} 
\int 
e^{i\Phi_\bm(x_1,x_2)}
\eta_{m_1}(x_1) \eta_{m_2}(x_2) \eta_{m_3}(\varphi(x_1,x_2))
\,d\bx
\end{equation}
with $\nu,\bm = (m_1,m_2,m_3)$ related as above. 
This sum is effectively taken over either a single index $m_3=m_3(\nu)$,
or over an empty set of indices $m_3$. Indices $\nu$ of the latter type may be dropped.

For each remaining $\nu$, the integral in \eqref{SlG} 
is $O(\lambda^{-K})$ for every $K$ unless
$|\nabla \Phi_{\nu} (z_\nu)| \le \lambda^{2\rho}\lambda^{1/2}$.

\begin{definition} \label{defn:sublevelset}
The sublevel set 
$\scripte_\sharp$ is the union of all $Q_\nu$
for which
$|\nabla \Phi_\nu (z_\nu)| \le \lambda^{2\rho}\lambda^{1/2}$.
\end{definition}

The contribution of each such $Q_\nu$ to $S_\lambda(G_1,G_2,G_3)$
is $O(|Q_\nu| \prod_j \norm{f_j}_\infty)$.
Therefore
\begin{equation}
|S_\lambda(G_1,G_2,G_3)|
= O\big(\lambda^{-K} + |\scripte_\sharp| \big) \prod_j \norm{f_j}_\infty.
\end{equation}
To complete the proof of Lemma~\ref{lemma:finalspecialform}
and hence the proofs of Theorems~\ref{thm:3} and \ref{thm2}, it suffices to show that
there exists $\tau_0>0$ such that
\begin{equation} \label{Esharpbound} 
|\scripte_\sharp| = O(\lambda^{-\tau_0}) \end{equation}
uniformly in all possible choices of functions $m_j\mapsto \xi_{j,m_j}$.

\section{Interlude}

A connection between oscillatory integral bounds of the form
\begin{equation} \label{interlude:osc}
\big| \int_B e^{i\lambda\psi} \prod_{j\in J} (f_j\circ\varphi_j)\big|
\le\Theta(\lambda)\prod_{j\in J}\norm{f_j}_\infty,
\end{equation}
where $\Theta(\lambda)\to 0$ as $|\lambda|\to\infty$, 
and bounds for Lebesgue measures of sublevel sets
\begin{equation} \label{interlude:subsetdefn}
\scripte=\{x\in B: \big| \psi(x) - \sum_j (g_j\circ\varphi_j)(x)\big|<\eps\},
\end{equation}
of the form
\begin{equation} \label{interlude:sub} |\scripte|\le \theta(\eps) \end{equation}
where $\theta(\eps)\to 0$ as $\eps\to 0^+$
with $\theta(\eps)$ independent of $(g_j)$,
is well known. The former implies the latter:
Fix an auxiliary compactly supported $C^\infty$ function 
$\zeta:\reals\to [0,\infty)$
satisfying $\zeta(t)=1$ for $|t|\le 1$.
Then
\[ |\scripte| 
\le \int_B \zeta( \eps^{-1} \big( \psi-\sum_j (g_j\circ\varphi_j))
= \int_\reals \widehat{\zeta}(t)
\big(\int_B e^{2\pi i (t/\eps)\psi(x)} 
\prod_{j\in J} (f_{j,t}\circ\varphi_j)(x)\,dx\big)\,dt
\]
with $f_{j,t} = e^{-2\pi i (t/\eps) g_j}$.
Rewriting this as
\[
\int_\reals \eps \widehat{\zeta}(\eps\lambda)
\Big(\int_B e^{2\pi i \lambda \psi(x)} 
\prod_{j\in J} (f_{j,\eps\lambda}\circ\varphi_j)(x)\,dx\Big)\,d\lambda
\]
and invoking  \eqref{interlude:osc} gives \eqref{interlude:sub}.
 
The analysis in this paper proceeds primarily in the opposite sense, 
using sublevel set bounds to deduce bounds for oscillatory integrals.
However, the sublevel sets that arise here are variants of those defined
by \eqref{interlude:subsetdefn}, 
in which $\nabla\psi$ appears, rather than $\psi$ itself. 
The reasoning in the preceding paragraph is elaborated
in \ref{section:FEqns} to establish an inverse theorem,
roughly characterizing tuples $(g_j:\reals^2\to\reals)$ for which
associated sublevel sets $\scripte = \{x\in B: |\sum_{j=1}^3 (g_j\circ\varphi_j(x)|<\eps\}$
are relatively large.

Sublevel set bounds of the type \eqref{interlude:sub},
with $\scripte$ defined by \eqref{interlude:subsetdefn}
have been established in certain cases \cite{christ_sublevel},
with $\varphi:_j:\reals^D\to\reals^1$ and $|J|$ arbitrarily large
relative to $D$,
as consequences of an extension of Szemer\'edi's theorem
due to Furstenberg and Katznelson.

\section{Proof of a sublevel set bound}\label{section:sublevel1}

Continue to denote by $\varphi_j,\psi_j$ the partial derivatives of $\varphi,\psi$
with respect to $x_j$ for $j=1,2$, respectively.
The following lemma is essentially a restatement of the desired bound
$|\scripte_\sharp| = O(\lambda^{-\tau_0})$, with the substitutions 
\begin{equation} h_j = \lambda^{-1} \sum_m \xi_{j,m}\one_{I_m} \end{equation}
and $\eps = \lambda^{\rho-\delta_0}$.

\begin{lemma} \label{lemma:sublevel}
Let $(\varphi,\psi)$ satisfy the hypotheses of Theorem~\ref{thm:3}.
Suppose that the ordered triple of mappings $(x_1,x_2)\mapsto (x_1,x_2,\varphi(x_1,x_2))$ is not 
equivalent to a linear system.
Then there exist $C<\infty$ and $\varrho>0$ with the following property.
Let $h_j$ be real-valued Lebesgue measurable functions, and let $\eps\in(0,1]$.
Let $\scripte$ be the set of all $(x,y)\in[0,1]^2$ that satisfy
\begin{equation} \label{sublevel}
\left\{
\begin{aligned}
&\big| h_1(x) + \varphi_1(x,y) h_3(\varphi(x,y)) + \psi_1(x,y)\big| \le\eps
\\
&\big|h_2(y) + \varphi_2(x,y) h_3(\varphi(x,y))  + \psi_2(x,y)\big| \le\eps.
\end{aligned}
\right. \end{equation}
Then
\begin{equation} \label{ineq:sublevel1} |\scripte| \le C\eps^\varrho. \end{equation}
\end{lemma}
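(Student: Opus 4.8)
The plan is to peel off $h_3$ using a vector field tangent to the level sets of $\varphi$, reconstruct the remaining data up to a single real parameter, pin that parameter using the assumed nonlinearity of the web, and thereby reduce \eqref{ineq:sublevel1} to a sublevel set bound for a \emph{fixed} real analytic function --- to which the classical estimate (a real analytic function on $[0,1]^2$ that does not vanish identically has $\delta$--sublevel sets of Lebesgue measure $O(\delta^c)$) applies. First I would reduce, as in \S\ref{section:reductions}, to the situation in which $\varphi_1=\partial\varphi/\partial x_1$ and $\varphi_2=\partial\varphi/\partial x_2$ vanish nowhere on a neighborhood of $[0,1]^2$, discarding a bad set of measure $O(\eps^{c})$; then $r:=\varphi_1/\varphi_2$ and $c:=(\varphi_2\psi_1-\varphi_1\psi_2)/\varphi_2$ are real analytic. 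Forming the combination $\varphi_2\cdot(\text{first line of }\eqref{sublevel})-\varphi_1\cdot(\text{second line})$, the two terms containing $h_3(\varphi(x,y))$ cancel, leaving
\[
\big| h_1(x) - r(x,y)\,h_2(y) + c(x,y)\big| \le C\eps \qquad \text{on }\scripte .
\]

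The hypothesis that $(x_1,x_2,\varphi)$ is not equivalent to a linear system is, by the curvature criterion recalled in \S\ref{section:hypotheses}'s predecessor, exactly the statement that $r$ does not factor as a product of a function of $x$ with a function of $y$; equivalently, for \emph{every} base point $x_0$ the real analytic function $K_{x_0}(x,y,y')=\frac{r(x,y)}{r(x_0,y)}-\frac{r(x,y')}{r(x_0,y')}$ does not vanish identically. Write $\mu=|\scripte|$ and assume $\mu>0$. Using Fubini and Cauchy--Schwarz I would choose a base point $x_0$ (outside a null exceptional set) and a subset $\scripte_0\subseteq\scripte$ of measure $\gtrsim\mu^2$ on which moreover $(x_0,y)\in\scripte$; the displayed inequality at $(x_0,y)$ pins $h_2(y)$ to within $O(\eps)$ of $(h_1(x_0)+c(x_0,y))/r(x_0,y)$ on that slice. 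Feeding this back into the inequality at two points $(x,y),(x,y')\in\scripte_0$ isolates the scalar $h_1(x_0)$, giving $h_1(x_0)\,K_{x_0}(x,y,y')=B(x,y,y')+O(\eps)$ on a set of measure $\gtrsim\mu^4$, with $B$ a fixed analytic function. Applying $|\{|K_{x_0}|<\delta\}|\lesssim\delta^{c_1}$ with $\delta$ a small power of $\mu$ produces a point where $|K_{x_0}|\gtrsim\delta$, hence $h_1(x_0)=\hat\alpha+O(\eps\mu^{-A})$ for an explicit number $\hat\alpha$ and a fixed $A$. It follows that $h_1$ and $h_2$ each agree, to within $O(\eps\mu^{-A'})$, with fixed real analytic functions $\hat h_1(x)$, $\hat h_2(y)$ on a subset of $\scripte$ of measure $\gtrsim\mu^{B}$; a Remez--type inequality is used here to absorb the residual dependence on the second variable when reconstructing $\hat h_1$.

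To conclude, let $V=\psi+\hat H_1+\hat H_2$ with $\hat H_j'=\hat h_j$; this is a fixed real analytic function. Replacing $h_1,h_2$ by $\hat h_1,\hat h_2$ in \eqref{sublevel} and forming the same combination as before, the above subset is contained in $\{\,|\varphi_2\partial_{x_1}V-\varphi_1\partial_{x_2}V|\le C\eps\mu^{-A''}\,\}$. If the analytic function $XV:=\varphi_2\partial_{x_1}V-\varphi_1\partial_{x_2}V$ vanished identically, then $V$ would be constant along the (connected) level sets of $\varphi$, so $V=G\circ\varphi$ and hence $\psi=-\hat H_1(x_1)-\hat H_2(x_2)+G(\varphi(x_1,x_2))$ would have the form \eqref{psiexpression} on an open set, contradicting the hypothesis on $\psi$ in Theorem~\ref{thm:3}; so $XV\not\equiv0$ and $|\{|XV|\le s\}|\lesssim s^{c_2}$. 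Comparing $\gtrsim\mu^{B}$ with $\lesssim(\eps\mu^{-A''})^{c_2}$ yields $\mu\le C\eps^{\varrho}$ for a suitable $\varrho>0$, which is \eqref{ineq:sublevel1}.

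The crux is conceptual: recognizing that the two hypotheses play distinct roles --- nonlinearity of the web is what lets one pin the free parameter $h_1(x_0)$, while nondegeneracy of $\psi$ is precisely what forces $XV\not\equiv0$ at the very end --- so that the measurable data $h_j$, a priori a huge family, is funneled down to a one--parameter analytic family. The main technical burden is the bookkeeping of the small exponents (the choice of $\scripte_0$, of the base point, and of the power of $\mu$ used for $\delta$), and making the analytic sublevel and Remez estimates uniform over the relevant compact families: the base points $x_0\in[0,1]$ and the one--parameter family of functions $V$ indexed by $\hat\alpha$. I expect this uniformity, together with the first honest production of the sets $\scripte_0$ and the good base point via $L^2$ pigeonholing, to be the most delicate part of the argument.
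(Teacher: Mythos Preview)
Your route is genuinely different from the paper's, and the overall strategy is sound, but there is one real gap beyond the bookkeeping you flag.

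\textbf{Comparison with the paper.} The paper never eliminates $h_3$. Instead it decomposes $\scripte=\bigcup_{k\ge 0}E_k$ dyadically according to $2^{k-1}<|h_3(\varphi)|\le 2^k$ (with $E_0$ for $|h_3|\le 1$), rescales on each $E_k$ so that $|h_3|\in[\tfrac12,1]$, and then applies Lemma~\ref{lemma:squaremeasure} three times in succession to replace $h_1,h_2,h_3$ one at a time by $C^\omega$ functions drawn from a \emph{compact} family (compact because after rescaling all data are bounded). The web-nonlinearity hypothesis enters only at the very end, to rule out the degenerate limit $r=0$ of the rescaled system. Your approach trades the dyadic split for the tangential combination $X=\varphi_2\partial_1-\varphi_1\partial_2$, which kills $h_3$ at the outset and makes the conceptual roles of the two hypotheses transparent. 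What the paper's route buys is that compactness is automatic; what yours buys is a cleaner geometric picture but at the cost of an unbounded free parameter.

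\textbf{The gap.} After you pin $h_1(x_0)=\hat\alpha+O(\eps\mu^{-A})$, the number $\hat\alpha$ is \emph{not} bounded independently of $(\bff,\eps)$: your own argument only gives $|\hat\alpha|\lesssim\mu^{-M}$. Consequently the function $XV$ you land on is $P(x,y)+\hat\alpha\,Q(x,y)$ with $P,Q$ analytic (depending on the compact parameters $x_0,y_0$) but with $\hat\alpha$ ranging over all of $\reals$. The family $\{V_\alpha\}$ is therefore not compact, and the uniform sublevel bound you invoke at the end does not follow directly from Lemma~\ref{lemma:scriptf}. This is fixable, but it requires an argument you have not given: homogenize to the compact family $\{\beta P+\gamma Q:(\beta,\gamma)\in S^1\}$ and check that no member vanishes identically. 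For $\beta\ne 0$ this is your nondegeneracy-of-$\psi$ argument; for $\beta=0$ you must show $Q\not\equiv 0$. A short computation gives $Q(x,y)=\varphi_2(x,y)\big[r(x,y_0)/r(x_0,y_0)-r(x,y)/r(x_0,y)\big]$, which is $\not\equiv 0$ precisely by the web-nonlinearity hypothesis (a second use of that hypothesis, beyond $K_{x_0}\not\equiv 0$). With this in hand, Lemma~\ref{lemma:scriptf} applies uniformly in $(\beta,\gamma,x_0,y_0)$ and the argument closes. Finally, the ``Remez-type'' step you mention is not needed: to reconstruct $\hat h_1$ you should simply fix a second base point $y_0$ via Fubini, exactly as you did with $x_0$, and set $\hat h_1(x)=r(x,y_0)\hat h_2(y_0)-c(x,y_0)$.
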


The upper bound \eqref{Esharpbound} for the measure
of the set $\scripte_\sharp$ of Definition~{defn:sublevelset}
is an immediate consequence of \eqref{ineq:sublevel1},
so Lemma~\ref{lemma:sublevel} suffices to complete the proof of Theorem~\ref{thm:3}. 

The proof of Lemma~\ref{lemma:sublevel} 
relies on the next lemma, which should be regarded as
being well known, though it is more often formulated
only for the special case of families of polynomials of bounded degree.
We write $F_\omega(\bx) = F(\bx,\omega)$.

\begin{lemma} \label{lemma:scriptf}
Let $\Omega$ be a compact topological space, let $K\subset\reals^D$
be a compact convex set with nonempty interior, and let $V\subset\reals^D$
be an open set containing $K$.
Assume that $F_\omega\in C^\omega(V)$ for each $\omega\in\Omega$,
and that the mappings $(\bx,\omega)\mapsto \partial^\alpha_\bx F(\bx,\omega)$
are continuous for every multi-index $\alpha$.
Suppose further that none of the functions $F_\omega$ vanish identically on $K$.
Then there exist $\tau>0$ and $C<\infty$
such that for every $\eps>0$ and every $\omega\in\Omega$,
\begin{equation}
\big| \{\bx\in K: |F_\omega(\bx)|<\eps \} \big| \le C\eps^\tau.
\end{equation}
\end{lemma}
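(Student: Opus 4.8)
The plan is to deduce the measure bound from a one-variable sublevel inequality, once the bare hypothesis that no $F_\omega$ vanishes identically on $K$ has been upgraded to a \emph{uniform finite type} statement, which is the real content of the compactness assumptions. \textbf{Step 1 (uniform finite type).} First I would prove that there exist $N\in\naturals$ and $c_0>0$ such that
\[ \max_{|\alpha|\le N}\,\big|\partial^\alpha_\bx F(\bx,\omega)\big|\ \ge\ c_0 \qquad\text{for every }\omega\in\Omega\text{ and every }\bx\in K. \]
Argue by contradiction: if this fails, then for each $n$ the set $C_n=\{(\bx,\omega)\in K\times\Omega:\ \max_{|\alpha|\le n}|\partial^\alpha_\bx F(\bx,\omega)|\le 1/n\}$ is nonempty, and by the assumed joint continuity of all the derivatives it is closed in the compact space $K\times\Omega$; the sets $C_n$ decrease, so $\bigcap_n C_n$ contains some $(\bx_*,\omega_*)$. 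Every derivative of $F(\cdot,\omega_*)$ then vanishes at $\bx_*$. Since $\bx_*\in K\subset V$ and $K$ is convex, hence connected, $\bx_*$ lies in the connected component $V_0$ of $V$ containing $K$; as $F(\cdot,\omega_*)\in C^\omega(V_0)$ vanishes to infinite order at $\bx_*$, it vanishes identically on $V_0\supset K$, contradicting the hypothesis.

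\textbf{Step 2 (a dominant directional derivative).} A large mixed partial of order $k$ need not produce a large $k$-th derivative along any coordinate axis (witness $F=x_1x_2$), so next I would pass to an auxiliary direction. For fixed $\bx$ the directional derivative $e\mapsto\partial_e^k F(\bx,\omega)=k!\sum_{|\beta|=k}\tfrac1{\beta!}\,e^\beta\,\partial^\beta F(\bx,\omega)$ is a homogeneous polynomial of degree $k$ in $e$ whose coefficients, when $\max_{|\alpha|=k}|\partial^\alpha F(\bx,\omega)|\ge c_0$, have sup norm bounded below by a fixed multiple of $c_0$. Since homogeneous polynomials of degree $\le N$ in $D$ variables form a finite-dimensional space, I can fix once and for all a finite set $\mathcal E$ of unit vectors on which the evaluation map $P\mapsto(P(e))_{e\in\mathcal E}$ is injective, hence bounded below. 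Combining with Step~1, there is $c_1>0$ so that at every $\bx\in K$ and every $\omega$ there are $e\in\mathcal E$ and $k\in\{1,\dots,N\}$ with $|\partial_e^k F(\bx,\omega)|\ge c_1$ (the order $k=0$ is irrelevant once $\eps<c_0$, as then $\bx$ is not in the sublevel set).

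\textbf{Step 3 (propagation, slicing, Fubini).} By joint continuity and compactness of $\Omega$ the $C^{N+1}$ norms of $F(\cdot,\omega)$ on a neighborhood of $K$ are bounded uniformly in $\omega$, so $|\partial_e^k F(\cdot,\omega)|\ge c_1/2$ on the segment of some fixed length $\delta\sim c_1$ centered at $\bx$ in direction $e$. On a line $\ell$ in a direction $e\in\mathcal E$, with $g=F(\cdot,\omega)|_\ell$, I would invoke the well-known one-variable sublevel estimate --- if $|g^{(k)}|\ge\mu$ throughout an interval then $\big|\{t:|g(t)|\le\eps\}\big|\le C_k(\eps/\mu)^{1/k}$ --- on each such sub-segment, and cover $\ell\cap K$ by $O(1)$ of them, obtaining $\big|\{t\in\ell\cap K:\ |g(t)|<\eps\text{ and }\max_{1\le k\le N}|g^{(k)}(t)|\ge c_1\}\big|\le C\eps^{1/N}$ for small $\eps$, with $C$ independent of $\omega$ and $\ell$. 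Integrating over the $(D-1)$-dimensional family of lines in direction $e$ meeting $K$ and summing over the finitely many $e\in\mathcal E$ (using Step~2, these cover the sublevel set) bounds $\big|\{\bx\in K:|F(\bx,\omega)|<\eps\}\big|$ by $C\eps^{1/N}$ for small $\eps$; the trivial bound by $|K|$ handles the rest. This gives the lemma with $\tau=1/N$.

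The main obstacle is Step~1: turning the qualitative hypothesis that no $F_\omega$ vanishes identically into the quantitative, \emph{uniform} finite-type bound with a single $N$ and a single $c_0$. This is precisely where the compactness of $\Omega$, the joint continuity of all derivatives, and the rigidity of real-analytic functions on the connected open set $V_0\supset K$ (infinite-order vanishing at one point forces identical vanishing on $K$) are all used together --- and it is the step that trivializes for families of polynomials of bounded degree, which is why the lemma is usually stated only in that special case. Steps~2 and 3 are then routine.
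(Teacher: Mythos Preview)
Your argument is correct and follows essentially the same strategy the paper sketches: a compactness argument promotes the nonvanishing hypothesis to a uniform finite-type bound, then one slices and invokes the one-dimensional van der Corput sublevel estimate. The paper compresses all of this into ``a simple compactness and slicing argument reduces matters to the case in which $D=1$ and $K$ has a single element'', meaning it first localizes near each $(\bx_0,\omega_0)\in K\times\Omega$, picks one good direction there, and covers by finitely many such neighborhoods; you instead globalize the finite-type bound first (Step~1) and then handle all points simultaneously with a fixed finite set $\mathcal E$ of directions (Step~2). Your observation that a large mixed partial does not force a large pure directional derivative along a coordinate axis, and your remedy via equivalence of norms on the finite-dimensional space of homogeneous polynomials of each degree $\le N$, is exactly the detail hidden in the paper's word ``slicing''.
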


\begin{proof}
A simple compactness and slicing argument reduces matters to
the case in which $D=1$ and $K$ has a single element. 
A proof for that case is implicit in proofs of van der Corput's lemma
concerning one-dimensional oscillatory integrals, for
instance in \cite{steinbook} and \cite{zygmund}. 
For a derivation as a corollary of bounds for
oscillatory integrals, see \cite{carberywright}, page 14.
\end{proof}

The following simple result will be used repeatedly.

\begin{lemma} \label{lemma:squaremeasure}
Let $(X,\mu)$ and $(Y,\nu)$ be probability spaces.
Let $\lambda= \mu\times\nu$.
Let $E\subset X\times Y$ satisfy $\lambda(E)>0$.
Define \[\tilde E = \{x\in X: \nu(\{y: (x,y)\in E\})\ge \tfrac12 \lambda(E)\}.\]
There exists $y_0\in Y$ such that
\[ \lambda( \{ (x,y)\in E: \text{$x\in\tilde E$ and $(x,y_0)\in E$} \})
\ge \tfrac18 \lambda(E)^2.\]
\end{lemma}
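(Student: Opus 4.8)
The plan is to prove Lemma~\ref{lemma:squaremeasure} by a routine Tonelli/averaging argument: I would choose $y_0$ by averaging the target quantity over $Y$ against $\nu$ and invoking the mean value principle. Throughout, write $E_x = \{y\in Y:(x,y)\in E\}$ for the $x$-section of $E$, so that $\lambda(E) = \int_X \nu(E_x)\,d\mu(x)$ by Tonelli and $x\in\tilde E$ precisely when $\nu(E_x)\ge\tfrac12\lambda(E)$.

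First I would record the elementary bound $\int_{\tilde E}\nu(E_x)\,d\mu(x)\ge\tfrac12\lambda(E)$. This is the only step at which the definition of $\tilde E$ is used: splitting $\lambda(E) = \int_{\tilde E}\nu(E_x)\,d\mu(x) + \int_{X\setminus\tilde E}\nu(E_x)\,d\mu(x)$ and bounding the second integral by $\tfrac12\lambda(E)\,\mu(X\setminus\tilde E)\le\tfrac12\lambda(E)$, using $\mu(X)=1$, gives the claim.

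Next, for $y_0\in Y$ set $\Psi(y_0) = \lambda\big(\{(x,y)\in E : x\in\tilde E \text{ and } (x,y_0)\in E\}\big)$. Integrating out the $y$ variable by Tonelli,
\[
\Psi(y_0) = \int_X \one_{\tilde E}(x)\,\one_E(x,y_0)\,\nu(E_x)\,d\mu(x).
\]
I would then average $\Psi$ over $y_0$ against $\nu$; since $\int_Y \one_E(x,y_0)\,d\nu(y_0) = \nu(E_x)$, Tonelli gives $\int_Y \Psi(y_0)\,d\nu(y_0) = \int_{\tilde E}\nu(E_x)^2\,d\mu(x)$. On $\tilde E$ we have $\nu(E_x)\ge\tfrac12\lambda(E)$, so this is at least $\tfrac12\lambda(E)\int_{\tilde E}\nu(E_x)\,d\mu(x)\ge\tfrac14\lambda(E)^2$ by the first step. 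As $\nu$ is a probability measure, some $y_0\in Y$ satisfies $\Psi(y_0)\ge\tfrac14\lambda(E)^2$, which is the desired conclusion, in fact with a better constant than the stated $\tfrac18$.

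There is essentially no real obstacle here beyond bookkeeping; the only point that requires a moment's care is the measurability needed to apply Tonelli's theorem to the sets and functions displayed above (in particular, that $\Psi$ is $\nu$-measurable), which is automatic once $E$ is taken to be measurable with respect to the product $\sigma$-algebra, as it is in every application of the lemma in this paper.
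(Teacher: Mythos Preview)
Your proof is correct and essentially the same as the paper's: both reduce to showing $\int_{\tilde E}\nu(E_x)^2\,d\mu(x)\ge\tfrac14\lambda(E)^2$ and then pigeonhole in $y_0$ via Fubini. The only cosmetic difference is that the paper obtains this lower bound by Cauchy--Schwarz applied to $\int_{\tilde E}\nu(E_x)\,d\mu$, whereas you use the pointwise bound $\nu(E_x)\ge\tfrac12\lambda(E)$ on $\tilde E$ directly; both yield the constant $\tfrac14$, stronger than the stated $\tfrac18$.
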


\begin{proof}
Since \[\lambda(E\setminus (E\cap (\tilde E\times Y)) 
\le \int_Y \tfrac12 \lambda(E)\,d\nu = \tfrac12 \lambda(E),\]
one has $\lambda(E\cap (\tilde E\times Y))\ge \tfrac12\lambda(E)$
and therefore $\mu(\tilde E)\ge \tfrac12 \lambda(E)$.

Consider \[E^* = \{(x,y,y'):
\text{$x\in\tilde E$, $(x,y)\in E$, and $(x,y')\in E$,} \}\]
which satisfies $(\mu\times\nu\times\nu)(E^*) \ge \tfrac14 \lambda(E)^2$.
Indeed, by the Cauchy-Schwarz inequality,
\begin{align*}
\tfrac14 \lambda(E)^2
& \le \lambda(E \cap (\tilde E\times Y))^2
\\& 
= \Big(\int_{\tilde E} \int_Y \one_{E}(x,y)\,d\nu(y)\,d\mu(x)\Big)^2
\\& 
\le \int_{\tilde E} \int_Y \one_{E}(x,y)\,d\nu(y)\,\int_Y \one_{E}(x,y')\,d\nu(y')
\\& 
= (\mu\times\nu\times\nu)(E^*).
\end{align*}
The stated conclusion now follows from Fubini's theorem.
\end{proof}

\begin{proof}[Proof of Lemma~\ref{lemma:sublevel}]
There is a $C^\omega$ function $\kappa_1(x,t)$ satisfying
\[ \varphi(x,\kappa_1(x,t))\equiv t.\]
The hypothesis that $\partial\varphi/\partial x_2$ vanishes nowhere
implies that uniformly for all Lebesgue measurable sets $A$,
$|\{(x,t): (x,\kappa_1(x,t))\in A\}|$ is comparable to $|A|$.
Likewise, there exists $\kappa_2$ satisfying
\[ \varphi(\kappa_2(t,y),y)\equiv t\]
with
$|\{(y,t): (\kappa_2(t,y),y)\in A\}|$ comparable to $|A|$
for all measurable $A$.

Define 
\[ E_0 = \{ (x,y)\in \scripte: 
|h_3(\varphi(x,y))|\le 1 \}.  \]
For $\naturals\owns k>0$ let
\[ E_k = \{ (x,y)\in \scripte: 
2^{k-1}<|h_3(\varphi(x,y))|\le 2^k\}.  \]
It follows immediately from \eqref{sublevel} that
$|h_1(x)|$ and $|h_2(y)|$ are $O(2^k)$ whenever $(x,y)\in E_k$.
We will show that $|E_k| = O(2^{-k\varrho}\eps^\varrho)$.
Summation with respect to $k$ then yields  \eqref{ineq:sublevel1}.

Consider first $E_0$.
Define
\begin{equation*}
E'_0  = \big\{x\in[0,1]:
|\{t: (x,\kappa_1(x,t)))\in E_0\}| \ge c_0 |E_0| \big\}. 
\end{equation*}
By Lemma~\ref{lemma:squaremeasure},
there exists $t_0$ such that the set
\begin{equation} \label{Edoubleprime}
E_0'' = \{(x,t): 
\text{ $x\in E'_0$ 
and $(x,\kappa_1(x,t))\in E_0$
and $(x,\kappa_1(x,t_0))\in E_0$} \} 
\end{equation}
satisfies $|E_0''| \ge c|E_0|^2$,
where $c>0$ is a constant that depends on the function $\kappa_1$,
but not on $|E_0|$.

Define
$\alpha = h_3(t_0)$. By definition of $E_0$,
$\alpha \in[-1,1]$.
For every $(x,t)\in E_0$,
\begin{equation} \label{one_relation} 
\big| h_1(x) + \alpha\varphi_1(x,\kappa_1(x,t_0))  + \psi_1(x,\kappa_1(x,t_0))\big| \le\eps.
\end{equation}
Define
\begin{equation} \tilde h_1(x) = 
- \alpha \varphi_1(x,\kappa_1(x,t_0))  - \psi_1(x,\kappa_1(x,t_0)).\end{equation}
For any $(x,t)\in E_0''$, 
\begin{equation}
\big| \tilde h_1(x) + \varphi_1(x,\kappa_1(x,t))h_3(t)  + \psi_1(x,\kappa_1(x,t))\big| 
\le 2\eps  
\end{equation}
by \eqref{one_relation},
the inequality
\[ \big| h_1(x) + \varphi_1(x,\kappa_1(x,t))h_3(t)  + \psi_1(x,\kappa_1(x,t))\big| \le \eps
 \text{ whenever $(x,\kappa_1(x,t))\in E_0$}, \]  
and the triangle inequality.

The function $\tilde h_1$ belongs to a compact
family of $C^\omega$ functions  of $x\in[0,1]$, parametrized by $\alpha,t_0$.
This family is defined solely in terms of $\varphi,\psi$.
Defining 
\begin{equation} E_0^{(1)} = \{(x,\kappa_1(x,t)): (x,t)\in E_0''\},\end{equation}
one has
$|E_0^{(1)}| \ge c|E_0|^2$
and
\begin{equation} \big| \tilde h_1(x) + \varphi_1(x,y) h_3(\varphi(x,y))  + \psi_1(x,y)\big| \le 2\eps 
\ \text{ for all $(x,y)\in E_0^{(1)}$}. \end{equation}

Repeating this reasoning with the roles of the two coordinates $x,y$ interchanged
and with $E_0$ replaced by $E_0^{(1)}$,
we conclude that 
there exist a subset $E_0^{(2)}\subset E_0^{(1)} \subset [0,1]^2$ 
satisfying $|E_0^{(2)}| \gtrsim |E_0|^4$,
and a function $\tilde h_2$ belonging to a compact family of $C^\omega$ functions
defined solely in terms of $\varphi,\psi$,
that satisfy
\begin{equation*}
\big|\tilde h_2(y) + \varphi_2(x,y) h_3(\varphi(x,y))  + \psi_2(x,y)\big| \le 2 \eps
\ \text{ for all $(x,y)\in E_0^{(2)}$}. \end{equation*}

The condition that $(x,y)\in E_0$ 
directly provides an upper bound $|h_3(\varphi(x,y))|\le 1$.
It also implies upper bounds for $|h_j(x,y)|\le C<\infty$
for $j=1,2$ via the inequalities \eqref{sublevel} and the assumption that $\eps\le 1$.

A third iteration of this reasoning yields
a set $E_0^{(3)}\subset E_0^{(2)}$
and a function $\tilde h_3$ 
of the special form
\[ \tilde h_3(x) =   -[\varphi_1(\kappa_2(x,s),s)]^{-1} 
\big( -\alpha - \psi_1(\kappa_2(x,s),s)\big)\]
for some parameters $s\in[0,1]$ and $\alpha\in\reals$,
satisfying
\begin{equation} 
\left\{
\begin{aligned}
&\big| \tilde h_1(x) + \varphi_1(x,y) \tilde h_3(\varphi(x,y)) + \psi_1(x,y)\big| \le C\eps
\\
&\big|\tilde h_2(y) + \varphi_2(x,y) \tilde h_3(\varphi(x,y))  + \psi_2(x,y)\big| \le C\eps
\end{aligned}
\right. \end{equation}
for all $(x,y)\in E_0^{(3)}$, with $|E_0^{(3)}| \ge c|E_0|^8$.
Again, $\tilde h_3$ belongs 
to a compact family of $C^\omega$ functions
that is defined in terms of $\varphi,\psi$ alone.

Define
\begin{equation} \left\{
\begin{aligned} 
h_1^{s,\alpha}(x) &= - \alpha\varphi_1(x,\kappa_1(x,s)) h_3(s) - \psi_1(x,\kappa_1(x,s))
\\
 h_2^{s,\alpha}(y) &= - \alpha\varphi_2(\kappa_2(y,s),y) h_3(s)  + \psi_2(\kappa_2(y,s),y)
\\
 h_3^{s,\alpha}(u) &= -[\varphi_1(\kappa_2(u,s),s)]^{-1} 
\big( -\alpha - \psi_1(\kappa_2(u,s),s)\big).
\end{aligned} \right. \end{equation}
Let $\scriptf$ be the family of $\reals^2$--valued
$C^\omega$ functions $F_{(\bs,\alpha)}$ of $(x,y)\in[0,1]^2$,
parametrized by $(\bs,\alpha) = (s_1,s_2,s_3,\alpha_1,\alpha_2,\alpha_3)$ with each $s_j\in[0,1]$,
$\alpha_1,\alpha_2\in[-1,1]$, and $\alpha_3\in[-C,C]$ for some appropriate $C<\infty$, 
defined by
\begin{equation}
F_{(\bs,\alpha)}(x,y)
= 
\begin{pmatrix}
h_1^{s_1,\alpha_1}(x) + \varphi_1(x,y) h_3^{s_3,\alpha_2}(\varphi(x,y)) + \psi_1(x,y)
\\
h_2^{s_2,\alpha_2}(y) + \varphi_2(x,y) h_3^{ s_3,\alpha_3 }(\varphi(x,y))  + \psi_2(x,y)
\end{pmatrix}.
\end{equation}

There exist no real-valued functions $h_j^\sharp$ in $C^1$ that satisfy
\begin{equation} 
\left\{
\begin{aligned}
& h_1^\sharp(x) + \varphi_1(x,y) h_3^\sharp(\varphi(x,y)) + \psi_1(x,y)\equiv 0
\\
& h_2^\sharp(y) + \varphi_2(x,y) h_3^\sharp(\varphi(x,y))  + \psi_2(x,y)\equiv 0
\end{aligned}
\right. \end{equation}
on $[0,1]^2$.
For if there were, then defining $H_j$  to be an antiderivative of $h_j^\sharp$,
one would have
\[ \nabla_{x,y} \big( \psi(x,y) - H_1(x)-H_2(y)-H_3(\varphi(x,y))\big)\equiv 0,\]
contradicting the nondegeneracy hypothesis on $(\varphi,\psi)$.
Therefore for any $(\bs,\alpha)$, the function $F_{(\bs,\alpha)}$ does not vanish
identically as a function of $(x,y)\in [0,1]^2$.
Lemma~\ref{lemma:scriptf} can now be applied to conclude that
$|E_0^{(3)}|\le C\eps^\tau$,
with $C<\infty$ and $\tau>0$ depending only on $\varphi,\psi$.
Threfore
\begin{equation} |E_0| \le C'\eps^{\tau/8} \end{equation}
for another constant $C'<\infty$.
This completes the analysis of $E_0$.

\smallskip
The same analysis yields an upper bound of the form
$|E_k| \le C2^{-k\varrho} \eps^\varrho$, uniformly for all $k>0$. 
Indeed, define $\tilde h_j = 2^{-k}h_j$ for $j\in\{1,2,3\}$, 
and set $\tilde\eps = 2^{-k}\eps$, to obtain
\begin{equation} 
\left\{
\begin{aligned}
&\big| \tilde h_1(x) + \varphi_1(x,y) \tilde h_3(\varphi(x,y)) 
+ 2^{-k} \psi_1(x,y)\big| \le \tilde \eps
\\
&\big|\tilde h_2(y) + \varphi_2(x,y) \tilde h_3(\varphi(x,y))  + 2^{-k} \psi_2(x,y)\big| \le\tilde\eps
\end{aligned}
\right. \end{equation}
for all $(x,y)\in E_k$.

Compactify by considering the system of inequalities
\begin{equation} \label{sublevelk}
\left\{
\begin{aligned}
&\big| h_1(x) + \varphi_1(x,y) h_3(\varphi(x,y)) + r \psi_1(x,y)\big| \le \eps'
\\
&\big|h_2(y) + \varphi_2(x,y) h_3(\varphi(x,y))  + r \psi_2(x,y)\big| \le \eps'
\end{aligned}
\right. \end{equation}
for arbitrary $r\in [0,1]$
and $\eps'\in[0,\eps_0]$. We may assume that $\eps_0$ is as small as desired.

The situation differs from the analysis of $E_0$ in one respect: 
For $(x,y)\in E_k$, 
\begin{equation} \label{h3lower} \tfrac12 \le |h_3(\varphi(x,y))| \le 1.  \end{equation}
The lower bound, of which we had no analogue
in the analysis of $E_0$, will be crucial below.

By repeating the above reasoning, we find that if 
$h_j$ satisfy \eqref{sublevelk} and \eqref{h3lower}
on some set $\scripte'$ then there exist functions $\tilde h_j$ drawn
from a compact family of $C^\omega$ functions associated to $\varphi,\psi$,
that satisfy
\begin{equation} 
\left\{
\begin{aligned}
&\big| \tilde h_1(x) + \varphi_1(x,y) \tilde h_3(\varphi(x,y)) + r \psi_1(x,y)\big| \le C\eps'
\\
&\big|\tilde h_2(y) + \varphi_2(x,y) \tilde h_3(\varphi(x,y))  + r \psi_2(x,y)\big| \le C\eps'
\end{aligned}
\right. \end{equation}
for all $(x,y)\in\tilde\scripte'$, with $|\tilde\scripte'|\ge c|\scripte'|^8$.
Moreover, the lower bound \eqref{h3lower} implies that
$\norm{h_3}_{C^0} \ge \tfrac14$, provided that $\eps_0$ is sufficiently small.

There exists no solution $(\tilde h_j: j\in\{1,2,3\})$
of the system of equations
\begin{equation*} 
\left\{
\begin{aligned}
& \tilde h_1(x) + \varphi_1(x,y) \tilde h_3(\varphi(x,y)) + r \psi_1(x,y)=0
\\
&\tilde h_2(y) + \varphi_2(x,y) \tilde h_3(\varphi(x,y))  + r \psi_2(x,y)=0
\end{aligned}
\right. \text{ on $[0,1]^2$.} \end{equation*}
For $r\ne 0$, this follows from the same reasoning as given above for $r=1$
in the analysis of $E_0$.
For $r=0$,  the simplified system 
\begin{equation} 
\left\{
\begin{aligned}
&  h_1(x) + \varphi_1(x,y) h_3(\varphi(x,y)) \equiv 0
\\
& h_2(y) + \varphi_2(x,y) h_3(\varphi(x,y)) \equiv 0
\end{aligned}
\right. \end{equation}
admits no solutions with $h_3$ vanishing nowhere.
For if there were such a solution, 
defining $H_j$ to be an antiderivative of $\tilde h_j$
and adjusting $H_1$ by an appropriate additive constant,
 \begin{equation}
H_3(\varphi(x,y))  + H_1(x)+H_2(y) \equiv 0.
\end{equation}
If $H'_3=h_3$ vanishes nowhere,
this contradicts the hypothesis that $(x_1,x_2,\varphi(x_1,x_2))$
is not equivalent to a linear system.
Thus $h_3$ must vanish, contradicting
the lower bound \eqref{h3lower}. 

By the same reasoning as in the case $k=0$, it follows that
$|\scripte'|\le C(\eps')^\varrho$ for a certain exponent $\varrho>0$. 
Applying this with $\scripte' = E_k$
and $\eps' = 2^{-k}\eps$ gives
$|E_k| \le C 2^{-k\varrho} \eps^\varrho$.
Summing over all $k\ge 0$  completes the proof of the lemma.
\end{proof}

\section{Proof of Theorem~\ref{mainthm}} \label{section:proofofmainthm}

In the deduction of Theorem~\ref{thm2} from Theorem~\ref{thm:3},
we were able to immediately gain a factor of $|\lambda|^{-1/2}$
upon integration with respect to $x_3$,
reducing matters to a self-contained situation in which a supplementary
factor of $|\lambda|^{-\delta}$ was to be gained.
In the framework of Theorem~\ref{mainthm}, the analysis
does not split cleanly into two separate steps.

Let $\tilde\eta$ be a $C_0^\infty$ cutoff function
supported in a small neighborhood of $[0,1]^3$
and identically equal to $1$ on $[0,1]^3$,
such that $\phi$ is real analytic and continues
to satisfy the linear independence hypotheses of the theorem
in a neighborhood of the support of $\tilde\eta$.
Modify the definition of $T_\lambda^\phi$ to
\[ T_\lambda^\phi(\bff) = \int_{\reals^3} e^{i\lambda\phi(\bx)} \prod_{j=1}^3 f_j(x_j)
\tilde\eta(\bx)\,d\bx.\]
We will show that this modified form satisfies the indicated upper bound
as $\lambda\to+\infty$.

It suffices to prove the conclusion \eqref{ineq:main}
with $\prod_j \norm{f_j}_2$
replaced by $\prod_j \norm{f_j}_\infty$
on the right-hand side.
Indeed, the assumption that $\frac{\partial^2\phi}{\partial x_1\partial x_2}$
vanishes nowhere
implies that
\[ \big| \int_{[0,1]^2} e^{i\lambda\phi(x_1,x_2,x_3)} f_1(x_1)f_2(x_2)\,dx_1\,dx_2\big|
\le C|\lambda|^{-1/2}\norm{f_1}_2\norm{f_2}_2\]
uniformly for all $x_3$. Therefore
\[ |T_\lambda^\phi(\bff)| \le C|\lambda|^{-1/2} \norm{f_1}_2\norm{f_2}_2\norm{f_3}_1.\]
Therefore by interpolation, it suffices to establish the conclusion
with $\norm{f_1}_2\norm{f_2}_2\norm{f_3}_\infty$ on the right-hand side
and some exponent $\gamma > \tfrac12$.
By repeating this reduction with the roles of $f_2,f_3$ interchanged,
interpolating between bounds in terms of
$\norm{f_1}_2\norm{f_2}_1\norm{f_3}_\infty$
and
$\norm{f_1}_2\norm{f_2}_\infty\norm{f_3}_\infty$
to conclude a bound in terms of
$\norm{f_1}_2\norm{f_2}_2\norm{f_3}_\infty$,
we infer that it suffices to establish the conclusion 
in terms of $\norm{f_1}_2\norm{f_2}_\infty\norm{f_3}_\infty$.
Repeating this step once more reduces matters to a bound in terms of
the product of $L^\infty$ norms. 
Note that this reasoning requires nonvanishing of all three
mixed second partial derivatives
$\frac{\partial^2\phi}{\partial x_j\partial x_k}$,
hence does not apply to $\phi = x_1x_2 + x_2x_3$.

Write $e_\xi(x) = e^{i\xi x}$.
There exists a constant $A$ depending only on $\phi$ and on the
choice of $\tilde\eta$ such that
\begin{equation}
|T_\lambda^\phi(e_\xi,f_2,f_3)|
\le C_N |\xi|^{-N}\norm{f_2}_1 \norm{f_3}_1
\ \text{ for every $|\xi|\ge A\lambda$}
\end{equation}
for every $N<\infty$ and every $\lambda\ge 1$.
This is proved by writing
\[ e^{i\xi x_1 + i\lambda\phi(\bx)}
= \Big([i\xi + i\lambda \frac{\partial\phi}{\partial x_1}(\bx)]^{-1} 
\frac{\partial}{\partial x_1}\Big)^N e^{i\xi x_1 + i\lambda\phi(\bx)}\]
and integrating by parts $N$ times with respect to $x_1$
while holding $x_2,x_3$ fixed. 
The same holds with the role of $x_1$ taken by $x_2$ or $x_3$.
As a consequence, it suffices to analyze
$T_\lambda^\phi(\bff)$ under the bandlimitedness assumption that for
each $j\in\{1,2,3\}$, $\widehat{f_j}(\xi)=0$ whenever $|\xi|\ge A\lambda$.
We assume this for the remainder of the proof of Theorem~\ref{mainthm}.

Suppose that each function $f_j$ satisfies $\norm{f_j}_\infty \le 1$.
Expand each $f_j$ in the form 
\begin{equation} \label{fjdecompagain}
f_j(x) = \sum_m \eta_m(x) \sum_{k\in\integers} a_{j,m,k} e^{i\pi\lambda^{1/2}kx}
\end{equation}
with
\begin{equation}
\sum_k |a_{j,m,k}|^2 \le C<\infty \ \text{ uniformly in $j,m,\lambda$.}
\end{equation}
Decompose $f_j = g_j+h_j+F_j$
where $F_j$ is the sum of those terms with $|k|>\lambda^{1/2}\lambda^\rho$,
$h_j$ is the sum of those terms with $|k|\le\lambda^{1/2}\lambda^\rho$
and $|a_{j,m,k}| \le \lambda^{-\sigma}$,
and $g_j$ is the sum of all remaining terms.
From the $O(\lambda)$--bandlimitedness condition of the preceding paragraph,
it follows that 
\begin{equation}
\norm{F_j}_2 = O(\lambda^{-N})\ \text{ for every $N<\infty$.}
\end{equation}
$T_\lambda^\phi(\bff)$ equals
$T_\lambda^\phi(g_1+h_1,g_2+h_2,g_3+h_3)$
plus terms involving one or more of the functions $F_j$.
Each of the latter terms is $O(\lambda^{-N})$
for every $N<\infty$, and may consequently be disregarded henceforth.
Thus henceforth, $f_j = g_j+h_j$
and $|k|\le \lambda^{1/2}\lambda^\rho$ in \eqref{fjdecompagain}.

Expand
\begin{equation} \label{msum.mainthm}
T_\lambda^\phi(\bff) = 
\sum_{\bm} 
\sum_{\bk} 
\prod_{j=1}^3 a_{j,m_j,k_j} 
\int e^{i\Phi_\bk(\bx)} \eta_{\bm}(\bx)  \,d\bx
\end{equation}
with $\eta_{\bm}(\bx) =  \prod_{l=1}^3 \eta_l (x_l)$ 
and with the net phase function 
\begin{equation}
\Phi_\bk(\bx) =  \pi\lambda^{1/2} \bk\cdot\bx + \lambda\phi(\bx),
\end{equation}
whose partial derivatives satisfy
\begin{equation} \label{partials.mainthm}
\lambda^{-1/2} \frac{\partial\Phi_\bk}{\partial x_j} = 
\pi k_j + \lambda^{1/2} \frac{\partial \phi}{\partial x_j}
\ \text{ for each $j\in\{1,2,3\}$.}
\end{equation}
$\frac{\partial\Phi_\bk}{\partial x_j}(\bx)$  
depends only on the single component $k_j$ of $\bk = (k_1,k_2,k_3)$;
this will be exploited.
We will establish an upper bound for the sum of absolute values
\begin{equation} \label{sumabsolutevalues} \sum_{\bm} \sum_{\bk} 
\prod_{j=1}^3 |a_{j,m_j,k_j}|
\cdot \big|\int e^{i\Phi_\bk(\bx)} \eta_{\bm}(\bx)  \,d\bx\big|.  \end{equation}

For any $(\bm,\bk)$,
\begin{equation} \int e^{i\Phi_\bk(\bx)} \eta_{\bm}(\bx)  \,d\bx = O(\lambda^{-3/2}).
\end{equation}
A tuple of indices $(\bm,\bk)$ is said to be nonstationary if 
\begin{equation}  \label{nonstationary} 
|\nabla\Phi_\bk(z_\bm)| \ge \lambda^\rho\lambda^{1/2},
\end{equation}
and otherwise is said to be stationary.
For any nonstationary $(\bm,\bk)$, repeated integration by parts gives 
\begin{equation} \int e^{i\Phi_\bk(\bx)} \eta_{\bm}(\bx)  \,d\bx = O(\lambda^{-N}) 
\ \text{ for every $N<\infty$.} 
\end{equation}
The total number of ordered pairs $(\bm,\bk)$ is $O( (\lambda^{1/2})^6)=O(\lambda^3)$.
Therefore the total contribution made to \eqref{sumabsolutevalues}
by all nonstationary $(\bm,\bk)$ is $O(\lambda^{-M})$ for all $M<\infty$.

For each $(m_1,m_2,k_1)$
there are at most $O(\lambda^\rho)$ values of $m_3$ that satisfy
\begin{equation} \label{stationary_1}
|\frac{\partial\Phi_\bk}{\partial x_1}(z_\bm)| \le \lambda^{1/2}\lambda^\rho,
\end{equation}
with the standing notation $\bm = (m_1,m_2,m_3)$.
The condition \eqref{stationary_1} is independent of $k_2,k_3$, since
$\frac{\partial\Phi_\bk}{\partial x_1}(z_\bm)$ 
does not depend on these quantities.
The derivative
$\frac{\partial}{\partial x_3}\frac{\partial \Phi_\bk}{\partial x_1}$ 
vanishes nowhere and has absolute value $\ge c\lambda$. Therefore
for each $(x_1,x_2,k_1)$,
$\big|\frac{\partial\Phi_\bk}{\partial x_1}(x_1,x_2,x_3)\big|\le \lambda^{1/2}\lambda^\rho$
only on a single interval whose length is $O(\lambda^{-1/2}\lambda^\rho)$.
Such an interval intersects the support of $\eta_{m_3}$
for at most $O(\lambda^\rho)$ values of $m_3$.
Thus for each $(m_1,m_2,k_1)$,
for every $m_3$ with at most $O(\lambda^\rho)$ exceptions,
$(\bm,\bk)$ is nonstationary for every choice of $k_2,k_3$.

Likewise, for any $(m_1,m_2,k_1,m_3)$,
there are most $O(\lambda^\rho)$ values of $k_2$
for which 
$\big|\frac{\partial\Phi_\bk}{\partial x_2}(z_\bm)\big|\le \lambda^{1/2}\lambda^\rho$,
and 
most $O(\lambda^\rho)$ values of $k_3$
for which 
$\big|\frac{\partial\Phi_\bk}{\partial x_3}(z_\bm)\big|\le \lambda^{1/2}\lambda^\rho$.
Thus for each $(m_1,m_2,k_1,m_3)$
there are at most $O(\lambda^\rho)$ values of $k_2$ for which there exists $k_3$
such that $(\bm,\bk)$ is stationary; and for any such $k_2$,
there are at most $O(\lambda^\rho)$ such $k_3$.
Therefore for each $(m_1,m_2,k_1)$,
there are at most $O(\lambda^{C\rho})$ values of $k_2$
for which there exists $(m_3,k_3)$ such that $(\bm,\bk)$
is stationary; and for any such $k_2$, there are at most $O(\lambda^{C\rho})$
such pairs $(m_3,k_3)$.

Decompose $T_\lambda^\phi(\bff) = T_\lambda^\phi(f_1,f_2,g_3) + T_\lambda^\phi(f_1,f_2,h_3)$
and consider the second summand. All coefficients arising in the expansion
of $h_3$ satisfy $|a_{3,m_3,k_3}| \le \lambda^{-\sigma}$.
Therefore 
\begin{align*} 
|T_\lambda^\phi(f_1,f_2,h_3)|
&\le O(\lambda^{-N}) +
C\lambda^{-3/2} \sum_{m_1,m_2} \sum_{k_1} \sum_{m_3,k_2,k_3} 
|a_{1,m_1,k_1}a_{2,m_2,k_2}a_{3,m_3,k_3}|
\\
&\le O(\lambda^{-N}) + C\lambda^{-3/2}\lambda^{-\sigma} \sum_{m_1,m_2} \sum_{k_1} \sum_{m_3,k_2,k_3} 
|a_{1,m_1,k_1}a_{2,m_2,k_2}|
\end{align*}
for every $N<\infty$,
with the inner sums over $m_3,k_2,k_3$ extending only over those indices such that
$(\bm,\bk)$ is stationary.
Thus
\begin{equation} 
|T_\lambda^\phi(f_1,f_2,h_3)|
\le O(\lambda^{-N})
+ O(\lambda^{-3/2}\lambda^{C\rho -\sigma}) \sum_{m_1,m_2} \sum_{k_1,k_2} 
|a_{1,m_1,k_1}a_{2,m_2,k_2}|,\end{equation}
with the inner sum taken only over those $(k_1,k_2)$ for which there exist $m_3,k_3$
such that $(\bm,\bk)$ is stationary.

For each $(m_1,m_2,k_1)$, at most $O(\lambda^{2\rho})$ indices $k_2$
appear in this sum. Likewise, for each $(m_1,m_2,k_2)$, 
at most $O(\lambda^{2\rho})$ indices $k_1$ appear.
For each $j,m_j$, the sequence $a_{j,m_j,k_j}$ belongs to $\ell^2$ with respect
to $k_j$, with norm $O(1)$. Therefore an application of Cauchy-Schwarz to the
inner sum gives an upper bound
\[ O(\lambda^{-3/2}\lambda^{C\rho -\sigma}) \sum_{m_1,m_2} O(1) + O(\lambda^{-N}),\]
which is
$O(\lambda^{-1/2}\lambda^{C\rho -\sigma}) + O(\lambda^{-N})$ 
since there are $O(\lambda^{2/2})$ ordered pairs $(m_1,m_2)$.
The conclusion is that
\begin{equation}
|T_\lambda^\phi(f_1,f_2,f_3)| \le |T_\lambda^\phi(f_1,f_2,g_3)
+ O(\lambda^{-1/2}\lambda^{C\rho -\sigma}).
\end{equation}

Repeating this reasoning with indices permuted gives
\[ |T_\lambda^\phi(f_1,f_2,g_3)| \le |T_\lambda^\phi(f_1,g_2,g_3)|
+ O(\lambda^{-1/2}\lambda^{C\rho -\sigma}),\]
and after one more repetition, 
\begin{equation} |T_\lambda^\phi(\bff)| \le |T_\lambda^\phi(\bg)|
+ O(\lambda^{-1/2}\lambda^{C\rho -\sigma}),\end{equation}
where each component of $\bg = (g_1,g_2,g_3)$ satisfies \eqref{fjdecompagain}
with at most $O(\lambda^{2\sigma})$ nonzero coefficients
$a_{j,m_j,k_j}$ for each $m_j$.

It remains to treat $T_\lambda^\phi(\bg)$. 
By the same reasoning as in the proof of Theorem~\ref{thm:3}, in order to complete the
proof of Theorem~\ref{mainthm} it now suffices to prove an appropriate
upper bound for measures of associated sublevel sets,
formulated below as Lemma~\ref{lemma:sublevel2}.

\section{Sublevel set analysis for Theorem~\ref{mainthm}} \label{section:sublevel2}

Write $\nabla_j =  \frac{\partial}{\partial x_j}$
and $\nabla^2_{j,k} 
=  \frac{\partial^2}{\partial x_j \partial x_k}$.

\begin{lemma} \label{lemma:sublevel2}
Suppose that for every distinct pair of indices $j\ne k\in\{1,2,3\}$, 
the mixed partial derivative
$\frac{\partial^2\phi}{\partial x_j \partial x_k}$
vanishes nowhere on the support of $\tilde\eta$.
Then there exist $\delta>0$ and $C<\infty$ such that
for any $\eps\in(0,1]$ and
any Lebesgue measurable real-valued functions $h_1,h_2,h_3$, 
the sublevel set
\begin{equation}
\scripte = \big\{\bx: |\nabla_j\phi(\bx)-h_j(x_j)|\le  \eps
\ \text{ for each $j\in\{1,2,3\}$}\big\}
\end{equation}
satisfies
\begin{equation} |\scripte| \le C\eps^{1+\delta}.  \end{equation}
\end{lemma}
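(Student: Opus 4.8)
Throughout \S\ref{section:sublevel2} the phase $\phi$ is not rank one degenerate on any hypersurface, so that hypothesis is available here. The plan is to first isolate the trivial bound $|\scripte|\le C\eps$ and then extract an extra factor $\eps^{\delta}$ from it. For the trivial bound, fix $x_2$: the inequality for $j=2$ confines $\nabla_2\phi(\bx)$ to an interval of length $2\eps$, and since $\frac{\partial^2\phi}{\partial x_1\partial x_2}$ is bounded away from $0$ on the support of $\tilde\eta$, for each $x_3$ this confines $x_1$ to a set of measure $O(\eps)$; integrating in $x_2$ and $x_3$ yields $|\scripte|\le C\eps$. Arguing with $\frac{\partial^2\phi}{\partial x_2\partial x_3}$ in place of $\frac{\partial^2\phi}{\partial x_1\partial x_2}$ gives the slicewise refinement $|\scripte_{x_1}|\le C\eps$ for each fixed $x_1$, where $\scripte_{x_1}=\{(x_2,x_3):(x_1,x_2,x_3)\in\scripte\}$.

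To produce the gain I would solve for $x_1$. Using $\frac{\partial^2\phi}{\partial x_1\partial x_2}\ne 0$, the inequality for $j=2$ forces $x_1$ to lie within $O(\eps)$ of a value $x_1^{(2)}(x_2,x_3)$ determined by $\phi$ and $h_2(x_2)$; using $\frac{\partial^2\phi}{\partial x_1\partial x_3}\ne 0$, the inequality for $j=3$ forces $x_1$ within $O(\eps)$ of a value $x_1^{(3)}(x_2,x_3)$ determined by $\phi$ and $h_3(x_3)$. Hence $|\scripte|\le C\eps\,|\mathcal P|$, where $\mathcal P\subset[0,1]^2$ is a planar set on which $|x_1^{(2)}(x_2,x_3)-x_1^{(3)}(x_2,x_3)|\le C\eps$ holds together with a smooth trace of the inequality for $j=1$ (see below). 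It therefore suffices to show $|\mathcal P|\le C\eps^{\varrho}$ for some $\varrho>0$ --- and since only a positive power is now needed, the cost incurred by freezing is harmless, so the method of \S\S\ref{section:reducetosublevel} and \ref{section:sublevel1} applies essentially verbatim: repeated appeals to Lemma~\ref{lemma:squaremeasure} replace $h_2$ and $h_3$ by $C^{\omega}$ surrogates from compact families determined by $\phi$, after which Lemma~\ref{lemma:scriptf} produces the bound, provided the resulting compact family of real-valued $C^{\omega}$ functions does not contain the zero function. That it does not is exactly the place where rank one nondegeneracy is used: an exact solution would, on passing to antiderivatives $H_j$, give a $C^{\omega}$ hypersurface $H$ and $C^{\omega}$ functions $H_j$ with $\nabla(\phi-\sum_j H_j\circ\pi_j)|_H\equiv 0$, contradicting the hypothesis, just as in the proof of Lemma~\ref{lemma:sublevel}.

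The main obstacle is the inequality for $j=1$, which involves $h_1(x_1)$: since $h_1$ is merely measurable it is destroyed by the substitution $x_1\mapsto x_1^{(2)}(x_2,x_3)$, and the planar set cut out by the $j=2,3$ inequalities alone can be all of $[0,1]^2$ and carry no information about $\phi$. To bring the $j=1$ inequality into $\mathcal P$ one must first replace $h_1$, on a large part of $\scripte$, by a genuine $C^{\omega}$ function: applying Lemma~\ref{lemma:squaremeasure} with $x_1$ as the first factor and $(x_2,x_3)$ as the second produces $\scripte_1\subset\scripte$ with $|\scripte_1|\gtrsim|\scripte|^{2}$ on which $\nabla_1\phi(\bx)$ agrees, to within $O(\eps)$, with a $C^{\omega}$ function of $x_1$ drawn from a compact family determined by $\phi$; this smooth trace does survive the substitution. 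The delicate point, and the reason all three mixed second partials are needed rather than one, is the bookkeeping: this preliminary step costs a power of $|\scripte|$, and one must balance that loss --- using the slicewise bound $|\scripte_{x_1}|\le C\eps$ (which requires $\frac{\partial^2\phi}{\partial x_2\partial x_3}\ne 0$), the reduction above, and interpolation against the trivial bound --- so that the net exponent still exceeds $1$.
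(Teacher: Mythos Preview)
Your overall plan --- project to a planar set, gain one factor of $\eps$ from fiber length, then freeze to replace the $h_j$ by $C^\omega$ surrogates from compact families, invoke Lemma~\ref{lemma:scriptf}, and close via rank one nondegeneracy --- is precisely the paper's. The obstacle you flag with the $j=1$ constraint is also real. But the remedy you sketch does not close the gap. You eliminate $x_1$; then $h_1$ enters the planar problem only as $h_1(x_1^*)$ with $x_1^*$ merely within $O(\eps)$ of a function of $(x_2,x_3)$, and you propose to cure this by first applying Lemma~\ref{lemma:squaremeasure} to $\scripte$ in three dimensions. That yields $\scripte_1\subset\scripte$ with $|\scripte_1|\gtrsim|\scripte|^2$; your reduction then gives $|\scripte_1|\le C\eps^{1+\varrho}$, hence only $|\scripte|\le C\eps^{(1+\varrho)/2}$, which beats the trivial bound $|\scripte|\le C\eps$ only when $\varrho>1$ --- and Lemma~\ref{lemma:scriptf} gives no such guarantee. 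Feeding in the slicewise bound $|\scripte_{x_1}|\le C\eps$ improves this to $|\scripte_1|\gtrsim|\scripte|^3/\eps$, but the arithmetic still yields $|\scripte|\lesssim\eps^{(2+\varrho)/3}$, which again needs $\varrho>1$. Interpolation against the trivial bound cannot help, since both inequalities point the same way.

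The paper's decisive twist is in the choice of which variable to eliminate. It solves the $j=1$ equation for $x_3$ (not for $x_1$), defining $\kappa(x_1,x_2)$ by $\nabla_1\phi(x_1,x_2,\kappa)=h_1(x_1)$. The point is that $h_1$ is then evaluated at the \emph{retained} coordinate $x_1$: upon freezing $\bar x_1$ in the planar set $\scripte'$, the function $\kappa(\bar x_1,\cdot)$ is genuinely $C^\omega$ in $x_2$, and the $j=2$ relation yields $\tilde h_2$. All freezings thus happen in $\scripte'\subset[0,1]^2$, and the losses are powers of $|\scripte'|$ (which may be $O(1)$), not of $|\scripte|$. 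A symmetric second projection $\scripte'_2$ (eliminating $x_3$ via $j=2$, so carrying $h_2(x_2)$) allows $\tilde h_1$ to be extracted by freezing $\bar x_2$; once $\tilde h_1,\tilde h_2$ are $C^\omega$, $\kappa$ is $C^\omega$, and $\tilde h_3$ follows. The moral: absorb $h_1$ into the implicit function used for the projection, rather than leaving it as a constraint to be transported.
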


Here we seek a bound with an exponent strictly greater than $1$,
whereas in Lemma~\ref{lemma:sublevel} above, 
we merely sought an exponent greater than $0$.
Invoking Lemma~\ref{lemma:sublevel2} with $\eps = \lambda^{-1/2}\lambda^{C\rho}$,
for $\rho$ sufficiently small relative to $\delta$,
completes the proof of Theorem~\ref{mainthm}.

We may assume that $h_j(x_j)$ belongs to the range of $\nabla_j\phi$
for each index $j$.
By the implicit function theorem
together with the hypothesis $\frac{\partial^2\phi}{\partial x_1\partial x_3}\ne 0$,
there exists a $C^\omega$ function $\kappa_0$ satisfying
\begin{equation}
\nabla_1\phi(x_1,x_2,\kappa_0(x_1,x_2,x_3)) = x_3.
\end{equation}
Differentiating this equation with respect to $x_2$ gives
\begin{equation*}
\frac{\partial \kappa_0(\bx)}{\partial x_2}
=  \frac{-\nabla^2_{1,2} \phi}{\nabla^2_{1,3}\phi}(x_1,x_2,\kappa_0(\bx)).
\end{equation*}
Therefore since $\nabla^2_{1,2}\phi$ never vanishes,
the mapping $\bx\mapsto (x_1,\kappa_0(\bx),x_3)$ is locally invertible.

Define
\begin{equation} 
\kappa(x_1,x_2) = \kappa_0(x_1,x_2,t)\ \text{with $t=h_1(x_1)$.}
\end{equation}
Thus for each $x_1$, $x_2\mapsto \kappa(x_1,x_2)$ is a $C^\omega$ function that satisfies
\begin{equation} \label{12.5} 
\nabla_1\phi(x_1,x_2,\kappa(x_1,x_2)) = h_1(x_1).\end{equation}
This function of $x_2$ is drawn from a compact family
of $C^\omega$ functions that is specified in terms of $\phi$ and
is parametrized by $(x_1,t)$ with $x_1\in[0,1]$ and
$|t|\le \norm{\nabla_1\phi}_{C^0([0,1])} +1$.

Write $\by = (y_1,y_2)\in\reals^2$.
By the nonvanishing of $\frac{\partial}{\partial x_3} \nabla_1 \phi
= \nabla^2_{1,3}\phi$, the relation
$\nabla_1\phi(y_1,y_2,x_3) = h_1(y_1) + O(\eps)$
implies that $|x_3-\kappa(\by)| = O(\eps)$.
Thus $|\scripte| = O(\eps)$. 

Define
\begin{equation} \label{scripteprime}
\scripte'_1 = \big\{\by\in[0,1]^2: 
|(\nabla_j\phi)(\by,\kappa(\by))- h_j(y_j)|\le C_0\eps
\ \text{ for each $j\in\{2,3\}$} \big\}
\end{equation}
with the convention $y_3=\kappa(\by)$ and with $C_0$ a sufficiently
large constant. Then
\[ \scripte\subset\{(x_1,x_2,x_3): 
(x_1,x_2)\in\scripte'_1 \text{ and } 
|x_3-\kappa(x_1,x_2)|\le C_0\eps\}.\]

Define $\scripte'_2$ in the same way that $\scripte'_1$
was defined, but with the roles of the coordinates $x_1$ and $x_2$ interchanged,
relying on the assumption that $\nabla^2_{2,3}\phi$ never vanishes
and replacing $\kappa$ in the construction by the corresponding function
$\kappa_2$ defined by
\begin{equation} 
\kappa_2(x_1,x_2) = \kappa_0(x_1,x_2,t)\ \text{with $t=h_2(x_2)$.}
\end{equation}
Then
\[ \scripte\subset\{(x_1,x_2,x_3): 
(x_1,x_2)\in\scripte'_2 \text{ and } 
|x_3-\kappa_2(x_1,x_2)|\le C_0\eps\}.\]

Define
\begin{equation} \scripte' = \scripte'_1\cap\scripte'_2.\end{equation}
Then
\begin{equation} \scripte \subset\{(x_1,x_2,x_3): 
(x_1,x_2)\in\scripte' \text{ and } 
|x_3-\kappa(x_1,x_2)| + |x_3-\kappa_2(x_1,x_2)|
\le 2C_0\eps\},\end{equation}
whence
\begin{equation} |\scripte| \le C\eps |\scripte'|.  \end{equation}
In order to complete the proof of Lemma~\ref{lemma:sublevel2},
it remains only to show that $|\scripte'|$ is suitably
small, as asserted in the next lemma.

\begin{lemma} \label{lemma:last2Dsublevel}
Suppose that $\phi\in C^\omega$ is not rank one degenerate.
Suppose that for every pair of distinct indices $j\ne k$, 
$\nabla^2_{j,k}\phi$ vanishes nowhere in a neighborhood of $[0,1]^3$.
Then there exists $\delta$ such that
for any measurable functions $h_j$ and any $\eps>0$,
the set $\scripte'$ introduced in \eqref{scripteprime} satisfies
$|\scripte'| = O(\eps^\delta)$.
\end{lemma}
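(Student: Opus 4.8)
The plan is to follow the scheme of the proof of Lemma~\ref{lemma:sublevel}: one replaces the merely measurable functions $h_1,h_2,h_3$, in three stages, by members of compact families of real-analytic functions, losing a bounded power of $|\scripte'|$ in the measure at each stage, and then appeals to Lemma~\ref{lemma:scriptf}; the non rank one degeneracy hypothesis is invoked only at the final stage, to guarantee that the real-analytic function one has manufactured does not vanish identically. The genuinely new feature, absent from Lemma~\ref{lemma:sublevel}, is that here $\kappa$ is itself built out of the measurable $h_1$, so $h_1$ is concealed inside $\kappa$; this forces a particular choice of the order in which the three functions are frozen, and of the coordinate systems used, so that one never perturbs the argument of a function not yet known to be Lipschitz. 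That bookkeeping is the one delicate point, and it occurs in the third step below.

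First I would record the form of $\scripte'$ to be used. We may assume $\norm{h_j}_\infty\le M(\phi)$, and, since $\phi$ lives on a neighborhood of $[0,1]^3$, that $\kappa(\by)$ stays in a fixed bounded interval on $\scripte'$; then $\nabla^2_{12}\phi,\nabla^2_{13}\phi,\nabla^2_{23}\phi$ are bounded above and below on the relevant region, and $\kappa_0$ is globally defined there because $\nabla^2_{13}\phi$ never vanishes. Since $\scripte'\subset\scripte'_1$, every $\by=(y_1,y_2)\in\scripte'$ obeys, with $\kappa(\by)=\kappa_0(\by,h_1(y_1))$,
\begin{equation*}
\nabla_1\phi(\by,\kappa(\by))=h_1(y_1),\qquad|\nabla_2\phi(\by,\kappa(\by))-h_2(y_2)|\le C_0\eps,\qquad|\nabla_3\phi(\by,\kappa(\by))-h_3(\kappa(\by))|\le C_0\eps,
\end{equation*}
the first relation being the identity that defines $\kappa$. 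Write $\mu=|\scripte'|$; the goal is $\mu\le C\eps^\delta$.

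Next I would run three freezing steps, each carried out by Lemma~\ref{lemma:squaremeasure} together with a change of variables of Jacobian bounded above and below. (i) In $(y_1,y_2)$--coordinates, freeze $y_1$ to a value $y_1^\ast$; evaluating the second inequality at $(y_1^\ast,y_2)$ and setting $\beta=h_1(y_1^\ast)\in[-M,M]$ yields $h_2(y_2)=\widetilde h_2(y_2)+O(\eps)$ on a subset of measure $\gtrsim\mu^2$, with $\widetilde h_2(y_2)=\nabla_2\phi(y_1^\ast,y_2,\kappa_0(y_1^\ast,y_2,\beta))$ ranging over a compact family of $C^\omega$ functions. (ii) On that subset the second inequality holds with $\widetilde h_2$ in place of $h_2$; now freeze $y_2$ to a value $y_2^\ast$. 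Since $t\mapsto\nabla_2\phi(a,y_2^\ast,\kappa_0(a,y_2^\ast,t))$ has derivative $\nabla^2_{23}\phi/\nabla^2_{13}\phi\neq0$, it is a diffeomorphism, so the second inequality at $(y_1,y_2^\ast)$ can be solved for $h_1(y_1)=\widetilde h_1(y_1)+O(\eps)$ on a further subset of measure $\gtrsim\mu^4$, with $\widetilde h_1$ in a compact $C^\omega$ family; set $\widetilde\kappa(\by)=\kappa_0(\by,\widetilde h_1(y_1))$, so that $\kappa=\widetilde\kappa+O(\eps)$ there. (iii) Pass to coordinates $(y_1,w)$ with $w=\kappa(\by)$ — a Jacobian-bounded change in $y_2$ for each fixed $y_1$, since $\partial_{y_2}\kappa=-\nabla^2_{12}\phi/\nabla^2_{13}\phi\neq0$ — in which $y_2=\widetilde\lambda_1(y_1,w)+O(\eps)$ with $\widetilde\lambda_1\in C^\omega$ from a compact family, and in which the third inequality reads $|\nabla_3\phi(y_1,\widetilde\lambda_1(y_1,w),w)-h_3(w)|\le C\eps$, with $h_3$ now appearing cleanly. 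Freezing $y_1$ once more produces $h_3(w)=\widetilde h_3(w)+O(\eps)$ on a subset $\mathcal G$ of measure $\gtrsim\mu^8$, with $\widetilde h_3$ in a compact $C^\omega$ family.

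Finally I would conclude as follows. On $\mathcal G$, in $(y_1,w)$--coordinates and using crucially that the frozen $\widetilde h_j,\widetilde\lambda_1$ are Lipschitz, so the residual $O(\eps)$'s pass harmlessly through their arguments, one has
\begin{equation*}
|\nabla_2\phi(y_1,\widetilde\lambda_1(y_1,w),w)-\widetilde h_2(\widetilde\lambda_1(y_1,w))|\le C\eps,\qquad|\nabla_3\phi(y_1,\widetilde\lambda_1(y_1,w),w)-\widetilde h_3(w)|\le C\eps,
\end{equation*}
where all functions are real-analytic and drawn from a family parametrized by the finitely many reals $y_1^\ast,\beta,y_2^\ast,y_1^{\ast\ast}$ over a compact set. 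Let $\widetilde S$ be the $C^\omega$ hypersurface $\{(y_1,\widetilde\lambda_1(y_1,w),w)\}$, on which $\nabla_1\phi\equiv\widetilde h_1(x_1)$ by the defining property of $\widetilde\lambda_1$. If, for some value of the parameters, the $\reals^2$--valued real-analytic function $(\nabla_2\phi-\widetilde h_2,\ \nabla_3\phi-\widetilde h_3)$ restricted to $\widetilde S$ were to vanish identically, then, letting $\widetilde H_j$ be a $C^\omega$ antiderivative of $\widetilde h_j$, the net phase $\phi-\sum_j\widetilde H_j(x_j)$ would have gradient identically zero on $\widetilde S$; that is, $\phi$ would be rank one degenerate on $\widetilde S$, contradicting the hypothesis. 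Hence no member of the corresponding compact family of nonnegative real-analytic squared norms vanishes identically, so Lemma~\ref{lemma:scriptf} gives $|\mathcal G|\le C'\eps^{2\tau}$ for some $\tau>0$; therefore $\mu^8\lesssim\eps^{2\tau}$ and $|\scripte'|=\mu\le C''\eps^{\tau/4}$, the asserted bound with $\delta=\tau/4$. The only real obstacle is keeping step (iii) honest: $h_3$ must be treated last, after $\kappa$ has been replaced by the real-analytic $\widetilde\kappa$, and only then may one change coordinates so that $h_3$ appears unlinked from the remaining data.
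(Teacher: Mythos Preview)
Your proof is correct and follows the same three-stage freezing strategy as the paper: replace $h_2$, then $h_1$, then $h_3$ by members of compact $C^\omega$ families, losing a bounded power of $|\scripte'|$ at each stage, and conclude via Lemma~\ref{lemma:scriptf} together with the rank one nondegeneracy hypothesis.

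The one genuine tactical difference is in step (ii). The paper introduced $\scripte' = \scripte'_1 \cap \scripte'_2$, where $\scripte'_2$ is built from a second implicit function $\kappa_2$ (defined via $\nabla_2\phi$ and $h_2$ rather than $\nabla_1\phi$ and $h_1$), precisely so that it can exploit the symmetry under interchange of $y_1$ and $y_2$: freezing $\bar y_2$ and reading the $j=1$ inequality on $\scripte'_2$ gives $h_1(y_1) = \nabla_1\phi(y_1,\bar y_2,\kappa_2(y_1,\bar y_2)) + O(\eps)$ directly, with $\kappa_2(y_1,\bar y_2)$ manifestly $C^\omega$ in $y_1$ since it depends on $h_2$ only through the constant $h_2(\bar y_2)$. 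You instead stay inside $\scripte'_1$ throughout and recover $h_1$ by inverting the diffeomorphism $t\mapsto\nabla_2\phi(y_1,y_2^\ast,\kappa_0(y_1,y_2^\ast,t))$, whose $t$-derivative $\nabla^2_{23}\phi/\nabla^2_{13}\phi$ is nonvanishing by hypothesis. This is a legitimate and slightly more self-contained alternative: it dispenses with $\scripte'_2$ and $\kappa_2$ entirely, at the price of one additional implicit-function inversion. Either way the final set has measure $\gtrsim|\scripte'|^8$ (the paper's text writes $|\scripte'''|^{1/4}$ but a further freezing for $h_3$ is implicit there, as you correctly make explicit), and the concluding appeal to rank one nondegeneracy is identical.
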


\begin{proof}
The first step is to replace $h_2$ by a $C^\omega$ function,
drawn from a compact family specified in terms of $\phi$ alone.
There exists a set $E\subset\reals^1$ satisfying $|E|\gtrsim |\scripte'|$
such that for each $y_2\in E$, 
\[|\{y_1: (y_1,y_2)\in\scripte'\}| \gtrsim |\scripte'|.\]
Therefore the subset $\scripte''\subset\scripte'$
defined by $\scripte''=\{(y_1,y_2)\in\scripte': y_2\in E\}$
satisfies $|\scripte''| \gtrsim |\scripte'|^2$.

By Fubini's theorem, there exists $\bar y_1$ such that
\[|\{y_2\in E: (\bar y_1,y_2)\in\scripte'\}| \gtrsim |E| \gtrsim |\scripte'|.\]
Consider the relation
$\nabla_2\phi(\bar y_1,y_2,\kappa(\bar y_1,y_2)) =  h_2(y_2) + O(\eps)$ 
for those $y_2\in E$ satisfying $(\bar y_1,y_2)\in \scripte'$.
Since $\phi\in C^\omega$
and $\kappa(\bar y_1,y_2)$ is a $C^\omega$ function of $y_2$,
drawn from a compact family specified in terms of $\phi$ alone,
this relation expresses $h_2(y_2)$ as $\tilde h_2(y_2) + O(\eps)$
for these values of $y_2$,
with $\tilde h_2$ drawn from another compact family of $C^\omega$ functions.
Therefore $h_2$ can be replaced by $\tilde h_2$
in the definition of $\scripte'$, at the cost of replacing
$\scripte'$ by its subset $\scripte''$ and modifying the constant $C_0$
in that definition.

In the preceding two paragraphs, 
the roles of the variables $y_1$ and $y_2$ can be interchanged,
since the definition of $\scripte' = \scripte'_1\cap\scripte'_2$
is invariant under this interchange.
Therefore by replacing $\scripte''$ by an appropriate subset $\scripte'''$,
satisfying $|\scripte'''| \gtrsim |\scripte''|^2 \gtrsim |\scripte|^4$,
we can reduce matters to the case in which $h_1$ is also 
drawn from a compact set of $C^\omega$ functions specified solely in terms of $\phi$.

Return to the equation
$\nabla_1\phi(x_1,x_2,\kappa(x_1,x_2)) = h_1(x_1)$,
restricted now to $(x_1,x_2)\in\scripte'''$.
Since the right-hand side differs from a $C^\omega$ function by $O(\eps)$
on $\scripte'''$,
and since $\nabla_3(\nabla_1\phi)$ never vanishes,
the implicit function theorem can now
be applied 
to conclude that $\kappa$ differs on $\scripte'''$ by $O(\eps)$
from a $C^\omega$ function, drawn from an appropriate compact family. 
Therefore by \eqref{12.5}, 
$\kappa$ can in turn be replaced by a $C^\omega$ function of $\by\in[0,1]^2$,
at the price of replacing $C_0$ by a yet larger constant.

$\kappa$ was defined by the relation
$\nabla_1\phi(x_1,x_2,\kappa(x_1,x_2)) -h_1(x_1)=0$.
Differentiating this equation with respect to $x_2$ gives
\[ \nabla^2_{1,2}\phi(x_1,x_2,\kappa(x_1,x_2)) 
+ \nabla^2_{1,3}\phi(x_1,x_2,\kappa(x_1,x_2))
\, \frac{\partial\kappa(x_1,x_2)}{\partial x_2}=0.\]
Since 
$\nabla^2_{1,2}\phi$ vanishes nowhere by hypothesis,
it follows that
$\frac{\partial\kappa(x_1,x_2)}{\partial x_2}$ vanishes nowhere.
Therefore the relation
\[ x_3=\kappa(x_1,x_2) \Leftrightarrow x_2 = \tilde\kappa(x_1,x_3)\]
defines a $C^\omega$ function $\tilde\kappa$.

The relation 
\[\nabla_3\phi(x_1,x_2,\kappa(x_1,x_2) ) 
=h_3(\kappa(x_1,x_2))+O(\eps)
\ \text{ for $(x_1,x_2)\in\scripte'''$} \]
can be rewritten with the aid of  $\tilde\kappa$ as 
\begin{equation}
\nabla_3\phi(x_1,\tilde\kappa(x_1,x_3),x_3))=h_3(x_3)+O(\eps)
\ \text{ when $(x_1,\tilde\kappa(x_3))\in\scripte'''$.} 
\end{equation}
Therefore $h_3$ can likewise be replaced by
a $C^\omega$ function drawn from an appropriate compact set.

We have thus shown that under the hypotheses of Lemma~\ref{lemma:sublevel2},
there exist $\scripte'''\subset\reals^2$
satisfying $|\scripte| \le C\eps|\scripte'''|^{1/4}$
and 
$C^\omega$ functions $\tilde h_j,\kappa$ belonging to appropriate compact families
such that 
with $x_3=\kappa(x_1,x_2)$,
$|\nabla_j\phi(\bx)-\tilde h_j(x_j)| = O(\eps)$ for all $\bx\in \scripte'''$ for each $j\in\{1,2,3\}$.

With this analyticity in hand,
Lemma~\ref{lemma:scriptf} gives $|\scripte'''|\lesssim \eps^\delta$
unless there exists a choice of $C^\omega$ functions $\tilde h_j,\kappa$ in the indicated families 
satisfying the exact equations
\begin{equation} \label{neartheend}
\nabla_j\phi(\bx) \equiv h_j(x_j)
\text{ for } j\in\{1,2,3\},
\end{equation}
with $x_3=\kappa(x_1,x_2)$, identically in $[0,1]^2$.
If such $h_j,\kappa$ do exist, then for each index $j$, 
define $H_j$ to be an antiderivative of $h_j$. 
Define $\tilde\phi:[0,1]^3\to\reals$ by
\[ \tilde\phi(\bx) = \phi(\bx) - H_1(x_1)-H_2(x_2)-H_3(x_3).\]
The equations \eqref{neartheend} imply that $\nabla\tilde\phi\equiv 0$
on the graph $x_3=\kappa(x_1,x_2)$. 
Thus $\phi$ is rank one degenerate, contradicting a hypothesis of Theorem~\ref{mainthm}.

Therefore $|\scripte'''|\lesssim\eps^\delta$
and consequently $|\scripte'| \lesssim \eps^{\delta/4}$,
completing the proof of Lemma~\ref{lemma:last2Dsublevel}.
Therefore Lemma~\ref{lemma:sublevel2} is proved, as well.
\end{proof}

\section{Completion of proofs of Theorems~\ref{thm:3}, \ref{thm:4}, and \ref{thm:nonosc}}
\label{section:completeproofs}

\begin{proof}[Conclusion of proof of Theorem~\ref{thm:3}]
This theorem has been reduced to the special case
in which $(\varphi_1,\varphi_2,\varphi_3)(x_1,x_2)
= (x_1,x_2,\varphi(x_1,x_2))$
and in which $(\varphi_1,\varphi_2,\varphi_3)$
is not equivalent to a linear system.
We write $S_\lambda^{(\varphi,\psi)}$.
That subcase has been proved under a supplementary bandlimitedness hypothesis 
on $f_3$.

Therefore by choosing $\tau$ to be a positive integral power of
$2$ and summing, it suffices to analyze
$S_\lambda^{(\varphi,\psi)}(f_1,f_2,g)$
with $\widehat{g}$ supported in $[\tau,2\tau]$ with $\tau \ge \lambda^{1+\rho/2}$.
Assume that $\frac{\partial^2\varphi}{\partial x_1\partial x_2}$ does not vanish identically.
We will prove that
\begin{equation} \label{thm:3lastbound}
|S_\lambda^{(\varphi,\psi)}(f_1,f_2,g)| \le C\tau^{-\delta}\norm{g}_2 \prod_{j=1}^2 \norm{f_j}_2
\end{equation}
under this hypothesis, completing
the proof of Theorem~\ref{thm:3}.

By Plancherel's theorem and an affine change of variables, we may express
\[ g(t) = \tau^{1/2}e^{i\tau} \int_{[0,1]} f_3(x_3) e^{it\tau x_3}\,dx_3\]
with $\norm{f_3}_2 = c\norm{g}_2$.
Thus
\[ S_\lambda^{(\varphi,\psi)}(f_1,f_2,g)
= ce^{i\tau} \tau^{1/2}
\int_{[0,1]^3} e^{i\lambda\psi(x_1,x_2)}e^{i\tau x_3\varphi(x_1,x_2)}
\prod_{j=1}^3 f_j(x_j)\,d\bx.\]
Thus
\[ |S_\lambda^{(\varphi,\psi)}(f_1,f_2,g)| 
= c\tau^{1/2} T_\tau^{\Psi_{\tau,\lambda}}(\bff)\]
with
\[ \Psi_{\tau,\lambda}(\bx) = x_3\varphi(x_1,x_2) + \tau^{-1}\lambda\psi(x_1,x_2).\]
The factor $\tau^{-1}\lambda$ is $\le \lambda^{-\rho/2}\ll 1$ for large $\lambda$.
Provided that $\lambda$ is large, $\Psi_{\tau,\lambda}$ is well approximated
by $x_3\varphi(x_1,x_2)$.

If $\psi$ is any $C^\omega$ function
and the partial derivatives $\frac{\partial\varphi}{\partial x_j}$
for $j=1,2$ and $\frac{\partial^2\varphi}{\partial x_1\partial x_2}$ vanish nowhere
on $[0,1]^2$, then $\Psi_{\tau,\lambda}$ satisfies all hypotheses
of Theorem~\ref{thm2}, uniformly for all sufficiently large $\lambda$
and all $\tau \ge \lambda^{1+\rho/2}$. 
The proof of Theorem~\ref{thm2} relied only on a special 
bandlimited case of Theorem~\ref{thm:3} that has already proved in full, 
so we may invoke Theorem~\ref{thm2} here without circularity in the reasoning.
We conclude  that for $|\lambda|$ sufficiently large, 
\[ |T_\tau^{\Psi_{\tau,\lambda}}(\bff)| \le C\tau^{-\gamma}\prod_j\norm{f_j}_2\]
with $C<\infty$ and $\gamma>\tfrac12$ independent of $\lambda,\tau$.
This establishes \eqref{thm:3lastbound}
with $\delta = \gamma-\tfrac12>0$,
completing the proof of Theorem~\ref{thm:3}
under the supplemental hypothesis that the mixed second derivative
$\frac{\partial^2\varphi}{\partial x_1\partial x_2}$ 
vanishes nowhere on $[0,1]^2$.

This nonvanishing hypothesis can be weakened; it suffices to assume that the 
partial derivative does not vanish identically on any open set. 
Ineed, we have already implicitly proved a more quantitative result,
namely an upper bound of the form
\[ C(1+|\lambda|)^{-\delta} 
\left(\min_{j\ne k}\left| \frac{\partial^2\varphi}{\partial x_j\partial x_k}\right|\right)^{-N}
\]
for some $N,C<\infty$
provided that $\varphi,\psi$ lie in some compact (with respect to the $C^3$
norm) family of $C^\omega$ functions. 

Let $\eps_0$ be a sufficiently small positive number, depending
only on $\varphi, \psi$.
Partition a neighborhood of the support of the cutoff function $\eta$
into squares of sidelengths $|\lambda|^{-\eps_0}$.
The union of those squares on which some mixed second partial
derivative of $\varphi$ has magnitude $<|\lambda|^{-\eps_0}$
has Lebesgue measure $O(|\lambda|^{-\eps})$ for some $\eps>0$
that depends only on $\varphi,\psi$ and the choice of $\eps_0$.
The number of remaining squares is $O(|\lambda|^{2\eps_0})$.
The contribution of each such square can be analyzed
by making an affine change of variables that converts it to $[0,1]^2$.
Invoking the more quantitative result
produces a bound of the form
$C|\lambda|^{\delta-C\eps_0}$ 
for each. If $\eps_0$ is sufficiently small, the result follows.
\end{proof}

\begin{proof}[Conclusion of proof of Theorem~\ref{thm:4}]
The roles of the indices $1,2,3$ in Theorem~\ref{thm:3} can be freely permuted
by making changes of coordinates $(x_1,x_2)\mapsto (x_1,\varphi(x_1,x_2))$
and $\mapsto (x_2,\varphi(x_1,x_2))$.
Therefore the roles of the three functions can be freely interchanged in
\eqref{thm:3lastbound}.
Theorem~\ref{thm:4} is an immediate consequence for $p=2$.
For $p\in(\tfrac32,2)$ it is obtained by interpolating between
this result for $p=2$
and the elementary result for $(p,s) = (\tfrac32,0)$.
\end{proof}

\begin{proof}[Proof of Theorem~\ref{thm:nonosc}]
It suffices to analyze the case in which two functions are in $\lt$
and one is in a negative order Sobolev space,
that is, to prove that
\begin{equation}
\int \eta\cdot \prod_{j=1}^3 (f_j\circ\varphi_j)
= O\big(\norm{f_1}_2\norm{f_2}_2\norm{f_3}_{H^s}\big).
\end{equation}
A simple interpolation then completes the proof.

By introducing a partition of unity and making local changes
of coordinates, we may reduce matters to the case in
which $\varphi(\bx)=x_i$ for $i=1,2$,
and $\varphi = \varphi_3$ has a mixed second partial derivative
$\frac{\partial^2\varphi}{\partial x_1\partial x_2}$
that vanishes nowhere on the support of $\eta$.

Express 
\[ f_3(\varphi(x_1,x_2)) = c_0 \int_\reals e^{i\tau \varphi(x_1,x_2)} \widehat{f_3}(\tau)\,d\tau.\]
It suffices to show that for large positive $\lambda$,
the contribution of the interval $\tau\in [\lambda,2\lambda]$ is $O(\lambda^{-\delta})$
for some $\delta>0$.

Substituting $\tau = \lambda x_3$, with $x_3\in[1,2]$, expresses this contribution
as a constant multiple of
\[ \lambda^{1/2} \int_{\reals^2\times[1,2]} 
e^{i\lambda \psi(\bx)} \prod_{j=1}^3 g_j(x_j) \eta(x_1,x_2)\,d\bx\]
with \[\psi(\bx) = x_3\varphi(x_1,x_2),\]
$g_i=f_i$ for $i=1,2$,
and $g_3(t) = \lambda^{-1/2}\widehat{f_3}(\lambda^{-1}t)$.
The function $g_3$ satisfies
\[\norm{g_3}_2 \le C\lambda^{-s} \norm{f_3}_{H^s}.\]

According to Lemma~\ref{lemma:curv-nondegen},
$\psi$ is not rank one degenerate on
the product of the support of $\eta$ with $[1,2]$.
Moreover, for any pair of distinct indices $j\ne k\in\{1,2,3\}$, 
$\frac{\partial^2\psi}{\partial x_j\partial x_k}$
vanishes nowhere on the domain of integration.
For 
$\frac{\partial^2\psi}{\partial x_j\partial x_3}$,
this is equivalent to nonvanishing of $\frac{\partial\varphi}{\partial x_j}$,
which is a hypothesis.
For
$\frac{\partial^2\psi}{\partial x_1\partial x_2}$,
it follows from the nonvanishing of
$\frac{\partial^2\varphi}{\partial x_1\partial x_2}$
and of $x_3$.
Thus $\psi$ satisfies all hypotheses of Theorem~\ref{mainthm}.
Therefore
\begin{align*} \big| \int_{\reals^2\times[1,2]} 
e^{i\lambda \psi(\bx)} \prod_{j=1}^3 g_j(x_j) \eta(x_1,x_2)\,d\bx\big|
&\le C\lambda^{-\gamma} \prod_{j=1}^3 \norm{g_j}_2
\\&
\le C\lambda^{-\gamma + \tfrac12-s} \norm{f_1}_2\norm{f_2}_2 \norm{f_3}_{H^s}
\end{align*}
for some $\gamma > \tfrac12$.
If $s<0$ is sufficiently close to $0$,
then $\gamma > \tfrac12-s$, and the proof is complete.
\end{proof}

\section{Yet another variant} \label{section:yetanother}

Let $U\subset\reals^2$ be a nonempty open set.
For $j\in\{1,2,3\}$, 
let $X_j$ be a $C^\omega$ nowhere vanishing vector field in $U$.
Suppose that for any distinct indices $j\ne k\in\{1,2,3\}$,
all integral curves of $X_j,X_k$ intersect transversely
at every point of $U$.

The weak convergence theorem of \cite{JMR}
is concerned with functions that satisfy $g_j\in \lt(U)$
and $X_j g_j\in\lt(U)$, whereas the results stated above in \S\ref{section:mainresults}
are concerned with the special case in which $X_jg_j\equiv 0$. 
Here we extend those results to this more general situation.

\begin{theorem} \label{thm:nonosc2}
Let $(X_j: j\in\{1,2,3\})$ be as above. 
Suppose that 
the curvature of the $3$-web associated to $(X_j: j\in\{1,2,3\})$
does not vanish at any point of $U$.
Then for any exponent $p>\tfrac32$
and any auxiliary function $\eta\in C^\infty_0(U)$,
there exist $C<\infty$ and $s<0$ such that
\begin{equation} 
\big| \int_{\reals^2} \eta\,\prod_{j=1}^3 g_j \big|
\le C\prod_j \big( \norm{g_j}_{W^{s,p}} + \norm{X_j g_j}_{W^{s,p}}\big)
\ \text{ for all $g_j\in C^1(\reals^2)$.}
\end{equation}
\end{theorem}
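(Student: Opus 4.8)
The plan is to mimic the deduction of Theorem~\ref{thm:nonosc} from Theorem~\ref{mainthm}, allowing each $g_j$ to be a function with controlled derivative $X_jg_j$ in place of a function that is literally constant along the leaves of $X_j$. The idea is to split off from $g_j$ a ``model part'' to which the already-established machinery applies essentially verbatim, leaving a remainder on which the hypothesis $X_jg_j\in W^{s,p}$ is converted into a power-of-frequency gain.

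\textbf{Reductions.} Using a partition of unity subordinate to $\operatorname{supp}\eta$ and local $C^\omega$ changes of coordinates, I would first straighten two of the three foliations, arranging that $X_1=\partial/\partial x_1$, $X_2=\partial/\partial x_2$, that the leaves of the third foliation are the level sets of a $C^\omega$ function $\varphi$ with $\partial\varphi/\partial x_i$ vanishing nowhere ($i=1,2$), and that $X_3$ is a nonvanishing multiple of $(\partial_2\varphi)\,\partial_1-(\partial_1\varphi)\,\partial_2$. After shrinking $U$ one may also assume $\partial^2\varphi/\partial x_1\partial x_2$ does not vanish identically; then Lemma~\ref{lemma:curv-nondegen}, applied to $\psi(\bx)=x_3\varphi(x_1,x_2)$, furnishes exactly the rank-one nondegeneracy and mixed-second-derivative hypotheses that Theorem~\ref{mainthm} (and hence the reduction of Theorem~\ref{thm:nonosc}) requires. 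Since multiplication by a fixed smooth function is bounded on $W^{s,p}$ for $|s|$ small, and $X_jg_j$ differs from the relevant coordinate (or leaf-tangent) derivative of $g_j$ only by such a factor, the quantity $\norm{g_j}_{W^{s,p}}+\norm{X_jg_j}_{W^{s,p}}$ is preserved up to constants; I also replace $g_j$ by $\tilde\eta g_j$ for a slightly larger cutoff $\tilde\eta$, so that all functions are compactly supported. Finally it suffices to treat $p=2$, the cases $p\in(\tfrac32,2)$ and $p>2$ following by interpolation with the elementary $(p,s)=(\tfrac32,0)$ bound (the absolute convergence recorded just before Theorem~\ref{thm:nonosc}).

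\textbf{Microlocal splitting and assembly.} For each $j$ I would decompose $g_j = g_j^{\flat}+g_j^{\sharp}$, where $g_j^{\flat}$ collects the frequencies lying in a narrow cone around the conormal direction of the leaf of $X_j$ at the point in question ($\xi$ near the $\xi_2$-axis for $j=1$, near the $\xi_1$-axis for $j=2$, near the direction of $d\varphi(\bx)$ for $j=3$), and $g_j^{\sharp}$ is the rest. On $g_j^{\sharp}$ the operator $X_j$ is elliptic of order one at each dyadic frequency scale $2^{\mu}$, so $\norm{\Delta_\mu g_j^{\sharp}}_{L^2}\lesssim 2^{-\mu}\norm{\Delta_\mu X_jg_j}_{L^2}\lesssim 2^{-\mu(1-|s|)}\bigl(\norm{g_j}_{W^{s,2}}+\norm{X_jg_j}_{W^{s,2}}\bigr)$, a genuine gain; whereas $g_j^{\flat}$, once localized to a cube on which $\varphi_j$ is nearly affine and the cone is nearly constant-coefficient, is — up to a harmless error — of the form $f_j\circ\varphi_j$, with $\norm{f_j}_{L^2}$ controlled by the same quantity. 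Expanding $\int\eta\,g_1g_2g_3$ over the eight choices of $\flat/\sharp$ and over dyadic scales, I would estimate the purely $\flat$ contribution, at a common top frequency scale, by the bandlimited sum-over-$\lambda^{-\gamma}$-cubes form of Theorem~\ref{thm:nonosc} displayed just after its statement (whose proof, via Theorem~\ref{mainthm}, is where the nonvanishing curvature is used), obtaining a bound $C\lambda^{-\delta}\prod_j\norm{f_j}_{L^2}$; every term containing a $\sharp$-factor carries the gain above, and the surviving configurations are further constrained by the fact that, by transversality of the three conormal directions, three frequency vectors each (nearly) conormal to a distinct leaf cannot nearly cancel unless all three have comparable magnitude. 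Summing the resulting dyadic series forces $|s|$ to be chosen small, and completes the proof; setting $X_jg_j\equiv0$ throughout recovers Theorem~\ref{thm:nonosc} itself.

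\textbf{Main obstacle.} The delicate point is the last step of the microlocal splitting for $j=3$: the conormal direction $d\varphi(\bx)$ to the leaves of $X_3$ varies with position, so the constant-coefficient cone cutoff used for $j=1,2$ must be replaced by a pseudodifferential one (equivalently, by a partition into cubes on which $\varphi$ is essentially affine), and on such cubes $g_3^{\flat}$ is constant along leaves only up to an error that must be shown to contribute acceptably after summation. Reconciling the cube scale needed here with the cube scale $\lambda^{-\gamma}$ built into the refined form of Theorem~\ref{thm:nonosc}, and carrying out the three-parameter dyadic bookkeeping so that the gains from the $\sharp$-factors and from the curvature-driven decay of Theorem~\ref{thm:nonosc} outrun the Bernstein losses incurred in passing between $L^2$ norms at different frequency scales, is the technical heart of the argument; the remaining ingredients are routine.
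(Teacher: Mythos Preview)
Your approach is genuinely different from the paper's, and considerably more intricate. The paper avoids the microlocal cone decomposition entirely by the following device: for each $j$ \emph{separately}, choose $C^\omega$ coordinates $\phi_j=(\varphi_j^1,\varphi_j^2)$ so that the integral curves of $X_j$ are the level curves of $\varphi_j^1$, and write $g_j=F_j\circ\phi_j$. Control of $\norm{g_j}_{W^{s,p}}+\norm{X_jg_j}_{W^{s,p}}$ is then equivalent to control of $F_j$ and $\partial_{y_2}F_j$ in $W^{s,p}$, hence (after interpolation) of $\partial_{y_2}^M F_j$ for any fixed $M$. Expand $F_j$ in Fourier series in the leaf variable $y_2$: $F_j(y_1,y_2)=\sum_n f_{j,n}(y_1)e^{iny_2}$ with $\norm{f_{j,n}}_{W^{s,p}}=O((1+|n|)^{-N})$, $N$ as large as desired. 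Substituting back,
\[
g_j(\bx)=\sum_n f_{j,n}\bigl(\varphi_j^1(\bx)\bigr)\,e^{in\varphi_j^2(\bx)},
\]
so each summand is \emph{exactly} a function constant along the leaves of $X_j$, multiplied by a fixed smooth unimodular factor. The triple product becomes $\sum_{\bn}\int \eta_{\bn}\prod_j (f_{j,n_j}\circ\varphi_j^1)$ with $\eta_{\bn}=\eta\prod_k e^{in_k\varphi_k^2}$, and one invokes the mild quantitative strengthening of Theorem~\ref{thm:nonosc} in which the constant depends only on $\norm{\eta}_{C^K}$ for some fixed $K$ (this is obtained by the same Fourier expansion trick, absorbing $e^{in_jx_j}$ into $f_j$). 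Since $\norm{\eta_{\bn}}_{C^K}=O((1+|\bn|)^K)$ while the coefficients decay like $(1+|\bn|)^{-N}$ with $N\gg K$, the sum converges.

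What this buys over your route is that the ``main obstacle'' you correctly flag simply disappears: there is no need to approximate $g_j^\flat$ by a leaf-constant function on small cubes, because each Fourier mode already \emph{is} leaf-constant times a smooth factor, and the smooth factor is absorbed into the cutoff rather than into $f_j$. Your cone decomposition would require the cone aperture at frequency $\lambda$ to shrink like $\lambda^{-\gamma}$ for the approximation $g_j^\flat\approx f_j\circ\varphi_j$ to hold on cubes of size $\lambda^{-\gamma}$, which in turn degrades the ellipticity gain on $g_j^\sharp$ from $\lambda^{-1}$ to $\lambda^{-(1-\gamma)}$; balancing this against the Bernstein losses and the variable-direction issue for $j=3$ is exactly the bookkeeping you defer, and it is not clear from the sketch that it closes. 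The paper's Fourier-series argument sidesteps all of this with essentially no work beyond Theorem~\ref{thm:nonosc} itself.
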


\begin{corollary} \label{cor:weakconverge2}
Let $(X_j: j\in\{1,2,3\})$ be as above. 
Let $p>\tfrac32$.
Let $g_j^{\nu},X_j g_j^{\nu} \in L^p(\reals^2)$ 
be uniformly bounded,
and suppose that $g_j^{\nu}\rightharpoonup g_j$ weakly
as $\nu\to\infty$ for $j=1,2,3$.
Then
\begin{equation}
\prod_{j=1}^3 g_j^{\nu}  
\rightharpoonup
\prod_{j=1}^3 g_j
\ \text{ weakly as $\nu\to\infty$}
\end{equation}
in every relatively compact open subset of $U$.
\end{corollary}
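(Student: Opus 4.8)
The plan is to derive the corollary from Theorem~\ref{thm:nonosc2} by the familiar device that splits a weakly convergent sequence into its limit plus a remainder tending weakly to zero. Two preliminary points must be handled first. One is that $\prod_{j=1}^3 g_j$ is a well-defined element of $L^1_{\mathrm{loc}}(U)$ whenever each $g_j$ and each $X_j g_j$ lies in $L^{3/2}_{\mathrm{loc}}(U)$ and the three foliations are pairwise transverse; I would prove this by a Loomis--Whitney-type estimate, straightening one vector field at a time to a coordinate field and using the one-dimensional embedding $W^{1,q}(I)\hookrightarrow L^\infty(I)$ along its leaves, so that each factor is controlled in an appropriate mixed-norm space and the product is bounded by H\"older's inequality; this is what makes the assertion of the corollary meaningful. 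The second is that the inequality of Theorem~\ref{thm:nonosc2}, stated for $g_j\in C^1$, extends to all triples $h_j$ with $h_j, X_j h_j\in L^p$ and $\mathrm{supp}\,h_j$ contained in a fixed compact subset of $U$: one mollifies, applies the theorem to $h_j*\rho_\eps$, and passes to the limit, using that $h_j*\rho_\eps\to h_j$ in $L^p$ and, by the Friedrichs commutator lemma (legitimate because the coefficients of $X_j$ are $C^\omega$), that $X_j(h_j*\rho_\eps)\to X_j h_j$ in $L^p$; the $L^1_{\mathrm{loc}}$ fact lets one pass to the limit on the left-hand side.

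With these in hand, fix a relatively compact open set $V\Subset U$ and $\eta\in C^\infty_0(V)$; it suffices to prove $\int\eta\prod_j g_j^{\nu}\to\int\eta\prod_j g_j$. Choosing $\chi\in C^\infty_0(U)$ with $\chi\equiv1$ in a neighborhood of $\overline V$ and replacing each $g_j^{\nu}$ by $\chi g_j^{\nu}$ and each $g_j$ by $\chi g_j$ alters neither side, and it arranges that all functions are supported in the fixed compact set $K=\mathrm{supp}\,\chi\subset U$, with $\norm{g_j^{\nu}}_p$ and $\norm{X_j g_j^{\nu}}_p=\norm{(X_j\chi)g_j^{\nu}+\chi X_j g_j^{\nu}}_p$ bounded uniformly in $\nu$, while $g_j^{\nu}\rightharpoonup g_j$ weakly in $L^p$ still holds. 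From this I would extract two facts: $g_j$ and $X_j g_j$ lie in $L^p$, as weak limits of the respective bounded sequences; and $X_j g_j^{\nu}\rightharpoonup X_j g_j$ weakly in $L^p$, since $X_j g_j^{\nu}\to X_j g_j$ in ${\mathcal D}'(U)$ (the formal transpose of $X_j$ carries test functions to test functions) and a bounded sequence in the reflexive space $L^p$ whose distributional limit is prescribed converges to it weakly. The second of these is the point on which the deduction really turns: it is what allows the $\norm{X_j(\cdot)}_{W^{s,p}}$ term in the estimate of Theorem~\ref{thm:nonosc2} to be defeated as well.

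Set $r_j^{\nu}=g_j^{\nu}-g_j$. Then $r_j^{\nu}\rightharpoonup0$ and $X_j r_j^{\nu}\rightharpoonup0$ weakly in $L^p$, both sequences are bounded in $L^p$, and all terms are supported in $K$; hence, invoking the compactness of the embedding $L^p(K)\hookrightarrow W^{s,p}(\reals^2)$ for bounded $K$ and $s<0$ together with the observation that $0$ is the only possible strong $W^{s,p}$ limit,
\[
\norm{r_j^{\nu}}_{W^{s,p}}\longrightarrow0
\quad\text{and}\quad
\norm{X_j r_j^{\nu}}_{W^{s,p}}\longrightarrow0
\qquad(\nu\to\infty),\ j=1,2,3,
\]
for the exponent $s<0$ supplied by Theorem~\ref{thm:nonosc2}. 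Expanding
\[
\prod_{j=1}^3 g_j^{\nu}=\prod_{j=1}^3 g_j
+\sum_{\emptyset\ne S\subseteq\{1,2,3\}}\Big(\prod_{j\in S}r_j^{\nu}\Big)\Big(\prod_{j\notin S}g_j\Big)
\]
and applying the extended estimate to each summand with $S\ne\emptyset$ gives an upper bound
\[
C\prod_{j\in S}\big(\norm{r_j^{\nu}}_{W^{s,p}}+\norm{X_j r_j^{\nu}}_{W^{s,p}}\big)
\prod_{j\notin S}\big(\norm{g_j}_{W^{s,p}}+\norm{X_j g_j}_{W^{s,p}}\big),
\]
in which the factors with $j\notin S$ are finite constants and every factor with $j\in S$ tends to $0$; since $S$ is nonempty, each such summand contributes $o(1)$. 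Summing over $S$ yields $\int\eta\prod_j g_j^{\nu}\to\int\eta\prod_j g_j$, and as $V\Subset U$ and $\eta$ were arbitrary this is precisely the claimed weak convergence on relatively compact subsets of $U$.

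The only step carrying any weight beyond bookkeeping is the extension in the first paragraph from $C^1$ data to $L^p$ data with an $L^p$ vector-field derivative, and I expect this to be the main obstacle only in the sense that the Friedrichs lemma and the $L^1_{\mathrm{loc}}$ integrability of the product must be arranged carefully; the compactness argument giving $\norm{r_j^{\nu}}_{W^{s,p}}\to0$ and $\norm{X_j r_j^{\nu}}_{W^{s,p}}\to0$, and the multilinear expansion, are routine. In effect all of the genuine analysis has already been spent in establishing Theorem~\ref{thm:nonosc2}, and hence Theorem~\ref{mainthm}.
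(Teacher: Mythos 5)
Your argument is correct and follows essentially the route the paper intends (the corollary is stated without explicit proof, as the standard deduction from Theorem~\ref{thm:nonosc2}): split $g_j^{\nu}=g_j+r_j^{\nu}$ after localizing, use compactness of $L^p(K)\hookrightarrow W^{s,p}$ with $s<0$ to upgrade weak $L^p$ convergence of $r_j^{\nu}$ and $X_jr_j^{\nu}$ to strong $W^{s,p}$ convergence, and kill the cross terms in the multilinear expansion with the trilinear estimate. Your two preliminary points --- extending the estimate from $C^1$ data to the class $\{h:\ h,X_jh\in L^p\}$ by mollification and the Friedrichs commutator lemma, and the mixed-norm Loomis--Whitney argument making $\prod_j g_j$ a well-defined element of $L^1_{\mathrm{loc}}$ --- are details the paper leaves implicit, and you handle them adequately.
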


To deduce Theorem~\ref{thm:nonosc2} from the results proved above,
introduce $C^\omega$ diffeomorphisms $\phi_j = (\varphi_j^1,\varphi_j^2)$
from  $U$ to open subsets of $\reals^2$,
such that the curves $\{\bx: \varphi_j^1(\bx)=t\}$ are the integral
curves of $X_j$. Write $g_j = F_j\circ\phi_j$.
Then the $W^{s,p}$ norms of $g_j$ and of $X_j g_j$
together control the $W^{s,p}$ norms of $F_j$ and of $\frac{\partial F_j}{\partial y_2}$.
By simple decomposition and interpolation, 
it suffices to bound the integral under the assumption that
for each $j$,
\[ \norm{F_j}_{W^{s,p}} + \norm{\frac{\partial^M F_j}{\partial y_2^M}}_{W^{s,p}} \le 1\] 
for some $M<\infty$;
we may choose $M$ as large as may be desired.

Expand $F_j$ in Fourier series with respect to the second variable:
\[ F_j(y,t) = \sum_{n\in\integers} f_{j,n}(y)e^{int}.\]
Then
\[ \norm{f_{j,n}}_{W^{s,p}} = O(1+|n|)^{-N}\]
with $N$ as large as may be desired. 
Thus we are led to
\[ \sum_{\bn\in\integers^3} \int \eta(\bx) e_\bn(\bx) \prod_j (f_{j,n}\circ\varphi_j^1)\]
with
\[ e_\bn(\bx) = \prod_{k=1}^3 e^{i n_k \varphi_k^2(\bx)}.\]
Set $\eta_\bn = \eta e_{\bn}$.
These functions satisfy
\[ \norm{\eta_{\bn}}_{C^K} = O(1+|n|)^{K-N}\]
for any $K<\infty$. 

Thus it suffices to invoke a small improvement on Theorem~\ref{thm:nonosc}:
under the hypotheses of that theorem, there exists $K<\infty$ such that
\begin{equation}
\big| \int \eta \prod_j (f_j\circ\varphi_j)\big|
\le C\norm{\eta}_{C^K}\prod_j \norm{f_j}_{W^{s,p}},
\end{equation}
uniformly for all $C^K$ functions
$\eta$ supported in a fixed compact region
in which the hypotheses hold.
This can be deduced from the formally more restrictive result already proved,
by introducing a $C^\infty$ partition of unity $\{\zeta_\alpha^2\}$ to reduce to the case
in which $\varphi_j(\bx)\equiv x_j$ for $j=1,2$ for each $\alpha$,
then expanding $\zeta_\alpha\cdot\eta$ in Fourier series
and incorporating factors $e^{in_jx_j}$ into $f_j$. 
\qed

\section{Remarks on the nondegeneracy hypotheses} \label{section:hypotheses}

\noindent (1)\ 
Phases $\phi$ that satisfy the hypotheses of Theorem~\ref{mainthm}
exist in profusion.
Given a point $\bar x\in[0,1]^3$,
for generic tuples $(a_{j,k},b_{i,j,k})$
of real numbers satisfying the natural symmetry conditions,
any phase satisfying
\begin{equation} 
 \frac{\partial^2\phi}{\partial x_j \partial x_k}(\bar x)=a_{j,k}
\ \text{ and } \ 
 \frac{\partial^3\phi}{\partial x_i \partial x_j \partial x_k}(\bar x)=b_{i,j,k}
\end{equation}
is rank one nondegenerate in some neighborhood of $\bar x$.
In other words, we claim that if $\phi$ is rank one degenerate in every neighborhood of $\bar\bx$,
then its second and third order partial derivatives at $\bar x$ must satisfy
certain algebraic relations \eqref{ugly}.

Restrict attention to phases whose mixed second order partial derivatives
$\frac{\partial^2 \phi}{\partial x_j\partial x_k}$, $j\ne k$, are all nonzero at $\bar\bx$.
Suppose that $H$ is a small $C^\omega$ hypersurface containing $\bar\bx$,
on which $\nabla\tilde\phi$ vanishes identically,
with $\tilde\phi(\bx) = \phi(\bx)-\sum_j h_j(x_j)$.
Suppose that $H$ is represented
by an equation $x_3 = \kappa(x_1,x_2)$ in a neighborhood of $(\bar x_1,\bar x_2)$,
with $\kappa$ smooth.
Thus $\kappa(\bar x_1,\bar x_2) = \bar x_3$.

Write $\phi_j$ for $\frac{\partial\phi}{\partial x_j}$,
$\phi_{j,k}$ for the corresponding second partial derivatives,
and $\phi_{i,j,k}$ for third order derivatives.
Denote partial derivatives of $\kappa$ by $\kappa_j$, for $j=1,2$.

The vanishing of $\nabla_j\tilde\phi$ at $(x_1,x_2,\kappa(x_1,x_2))$ for $j=1,2$ implies that
$\phi_1(x_1,x_2,\kappa(x_1,x_2))$ is independent of $x_2$ in a neighborhood of $(\bar x_1,\bar x_2)$.
Therefore
\[\phi_{1,2}(x_1,x_2,\kappa(x_1,x_2)) + \kappa_2(x_1,x_2) \phi_{1,3}(x_1,x_2,\kappa(x_1,x_2))=0\]
in a neighborhood of $(\bar x_1,\bar x_2)$.
We write this relation as 
$\phi_{1,2} + \kappa_2\phi_{1,3}=0$,
leaving it understood that $\phi$ and its partial derivatives
are evaluated at $(x_1,x_2,\kappa(x_1,x_2))$ while $\kappa$ is evaluated
at $(x_1,x_2)$, and that $(x_1,x_2)$ varies within a small neighborhood of $(\bar x_1,\bar x_2)$.
Likewise, $\phi_{2,1} + \kappa_1\phi_{2,3}=0$.
Thus
\begin{equation}\label{kappas} \kappa_2 = - \phi_{1,2}\,\phi_{1,3}^{-1}
\ \text{ and } \ 
\kappa_1 = - \phi_{2,1}\,\phi_{2,3}^{-1}.\end{equation}
Differentiating the first of these relations with respect to $x_1$
and the second with respect to $x_2$, and invoking the relation
$\kappa_{2,1} = \kappa_{1,2}$, we find that 
\begin{equation} \label{ugly}
(\phi_{2,3}^2) \big( \phi_{1,2,1} \phi_{1,3} - \phi_{1,2}\phi_{1,3,1}\big)
\equiv
(\phi_{1,3}^2) \big( \phi_{1,2,2} \phi_{2,3} - \phi_{1,2}\phi_{2,3,2}\big)
\end{equation}
at $(x_1,x_2,\kappa(x_1,x_2))$,
for all $(x_1,x_2)$ in a neighborhood of $(\bar x_1,\bar x_2)$.
In particular, \eqref{ugly} holds at $\bar x$.

\eqref{ugly} was derived under the assumption that the third coordinate
vector does not belong to the tangent space to $H$
at $\bar x$. Thus without that assumption, we conclude that if $\phi$ is rank one degenerate
in every neighborhood of $\bar x$, then at least one of three variants
of \eqref{ugly}, obtained from \eqref{ugly} by permuting the three coordinate variables,
must hold for the partial derivatives of $\phi$ at $\bar x$.
Rank one nondegeneracy therefore holds in all sufficiently small
neighborhoods of $\bar x$, for generic values of second and third 
partial derivatives of $\phi$ at $\bar x$.

\medskip
\noindent (2)\ 
The hypotheses of Theorem~\ref{mainthm}, taken as a whole
rather than individually, are stable with respect to small perturbations of $\phi$.
Indeed, the hypothesis that all three mixed second partial derivatives
are nowhere vanishing is manifestly stable.
A phase $\phi$ satisfying this auxiliary hypothesis
is rank one degenerate if and only if there exist $C^\omega$ functions $h_j(x_j)$ such that
$\nabla_j\phi(\bx)=h_j(x_j)$ for every $\bx\in H$,
for some piece of $C^\omega$ hypersurface $H\subset (0,1)^3$.

An exhaustive class of candidate hypersurfaces $H$
can be constructed, in terms of $\phi$, as follows.
Fix a base point $\bar x$ and consider hypersurfaces $H\owns \bar x$
such that the third coordinate vector does not lie in the
tangent space to $H$ at $\bar x$.
Express $H$ locally as a graph $x_3 = \kappa(x_1,x_2)$.
Determine $\kappa(x_1,\bar x_2)$ by solving the differential equation
\[\frac{\partial\kappa}{\partial x_1}(x_1,\bar x_2) = - \phi_{2,1}\phi_{2,3}^{-1}(x_1,\bar x_2)\]
derived above, with initial condition $\kappa(\bar x_1,\bar x_2)=\bar x_3$.
Recall that the mixed second partial derivative $\phi_{2,3}$ vanishes nowhere,
by hypothesis. 

For each $x_1$ in a small neighborhood of $\bar x_1$,
determine $\kappa(x_1,x_2)$
by solving
\[\frac{\partial\kappa}{\partial x_2}(x_1,x_2) = - \phi_{1,2}\phi_{1,3}^{-1}(x_1,x_2)\]
with the initial condition $\kappa(x_1,\bar x_2)$ determined in the preceding step.
This defines a $C^\omega$ hypersurface $H$ containing $\bar x$,
and this is locally the only such hypersurface passing through $\bar x$ whose tangent space
does not contain the third coordinate vector and that could potentially satisfy
the condition in the definition of rank one degeneracy of $\phi$.
Repeating this construction twice more with suitable permutations
of the coordinate indices yields three (or fewer) candidate hypersurfaces
for each point $\bar x$. Plainly this construction is continuous with respect
to $\phi,\bar x$.

Once a hypersurface $H$ is specified, the vanishing of the gradient
of $\tilde\phi(\bx) = \phi(\bx)-\sum_{j=1}^3 h_j(x_j)$
at each point of $H$ determines the derivative $h'_j$ at each point
of $\reals$ sufficiently close to $x_j$. Thus the functions $h_j$ are
completely determined in a neighborhood of $\bar x$, up to additive constants.
Again, these depend continuously on $\phi,\bar x$.

If $\phi$ satisfies the hypotheses of Theorem~\ref{mainthm},
if $\bar x\in [0,1]^3$, and if $H$ and associated functions $h_j$
are as above, then $\nabla\tilde\phi$ fails to vanish identically
on $H$, so by real analyticity, some partial derivative along $H$
of $\nabla\phi$ fails to vanish at $\bar x$. This nonvanishing
is stable under small perturbations of $\phi,\bar x$.

\medskip
\noindent (3)\ 
The examples \ref{example:instability}
also demonstrate that the optimal exponent $1+\delta$
in Lemma~\ref{lemma:sublevel2} is not stable with respect to
perturbations of $\phi$, if the auxiliary hypothesis on the nonvanishing
of all three mixed partial derivatives is relaxed.

\section{More on integrals with oscillatory factors} \label{section:moreon}

Li, Tao, Thiele, and the present author \cite{CLTT}
investigated multilinear functionals
\[ S_\lambda(\bff) = \int_{\reals^D} e^{i\lambda\psi} \eta \prod_{j\in J} (f_j\circ\varphi_j)\]
with $\varphi_j:\reals^D\to\reals^{d_j}$ linear, 
and established
bounds of the type $O(|\lambda|^{-\gamma}\prod_j\norm{f_j}_\infty)$,
for certain tuples $(\psi,(\varphi_j: j\in J))$,
under two different sets of hypotheses. Both sets of hypotheses
were rather restrictive.
In one set, it was required that $d_j=D-1$
for every $j\in J$. In the other, $d_j=1$ for every $j$, and $|J|<2D$.
The latter result was invoked in the discussion above.

The method developed above yields an alternative proof of these results,
and thus our discussion can be modified to be self-contained,
with no invocation of results from \cite{CLTT}. 
More significantly, the method 
makes it possible to remove the assumption that $d_j=1$, 
as we now show.

Let $\scriptd>d\in\naturals$.
Let $\{\varphi_j: j\in J\}$ be a family of surjective linear mappings
from $\reals^\scriptd$ to $\reals^d$.
Such a family is said to be in
general position if for any subset $\tilde J\subset J$
satisfying $0<|\tilde J|\le  \scriptd/d$, 
the linear mapping 
\begin{equation} \label{Dtodinvertible} 
\reals^\scriptd\owns \bx \mapsto (\varphi_j(\bx): j\in\tilde J) \in (\reals^d)^{\tilde J}
\end{equation}
is injective. 

\begin{theorem} \label{thm:cltt_altproof}
Let $d,\scriptd\in\naturals$ with $\scriptd/d\in\naturals$.
Let $\eta\in C^\infty_0(\reals^{\scriptd})$.
Let $\psi$ be a real-valued $C^\omega$ function defined in 
a neighborhood $U$ of the support of $\eta$.
Let $J$ be a finite index set of cardinality $|J|$
satisfying $1 \le |J|<2\scriptd/d$.

Let $\{\varphi_j: j\in J\}$ be a family of surjective linear mappings
$\varphi_j:\reals^\scriptd\to\reals^d$ in general position.
Suppose that $\psi$ cannot be expressed in the form
$\psi = \sum_{j\in J} h_j\circ\varphi_j$
in any nonempty open set, with $h_j\in C^\omega$.

Then there exist $\delta>0$ and $C<\infty$ such that for all
$\lambda\in\reals$ and all functions $f_j\in L^\infty(\reals^d)$,
the form
\begin{equation}
S_\lambda(\bff) = \int_{\reals^\scriptd} e^{i\lambda\psi} 
\prod_{j\in J} (f_j\circ\varphi_j) \eta
\end{equation}
satisfies
\begin{equation} \label{extraconclusion}
|S_\lambda(\bff)| \le C|\lambda|^{-\delta} \prod_{j\in J}\norm{f_j}_{L^\infty}.
\end{equation}
\end{theorem}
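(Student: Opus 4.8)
The plan is to transcribe the method of \S\S\ref{section:decompose}--\ref{section:sublevel1} and \S\S\ref{section:proofofmainthm}--\ref{section:sublevel2} to $\reals^{\scriptd}$, with $\reals^{d}$-valued frequency variables in place of scalar ones. First dispose of the nonsingular range $|J|\le \scriptd/d$. Here the combined map $\bx\mapsto(\varphi_j(\bx):j\in J)$ is injective, so a linear change of variables turns each $\varphi_j$ into the projection onto the $j$-th block of $d$ coordinates, with some leftover coordinates $\bx^{(0)}$. The hypothesis on $\psi$ then forces one of two situations: either $\psi$ genuinely depends on $\bx^{(0)}$, in which case non-stationary phase along a leftover direction --- which disturbs none of the factors $f_j$ --- gives $O(|\lambda|^{-N})$ off the set where the relevant first derivative of $\psi$ is $<|\lambda|^{-\eps_0}$, of measure $O(|\lambda|^{-\eps'})$; or $\psi$ has, across two distinct coordinate blocks, a mixed second partial derivative not vanishing identically, in which case H\"ormander's bilinear bound in those two variables, applied off a small set and uniformly in the remaining coordinates, yields $O(|\lambda|^{-1/2+C\eps_0}\prod_j\norm{f_j}_\infty)$. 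Either way \eqref{extraconclusion} holds. From now on assume $\scriptd/d<|J|<2\scriptd/d$.

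The remaining reductions are routine: a smooth partition of unity localizes to a small ball; we take $\lambda>0$ large; a frequency-band decomposition of the $f_j$, with bands above $\lambda$ recast as additional oscillatory factors so as to bootstrap on the large parameter (cf.\ \S\ref{section:completeproofs}), reduces to $O(\lambda)$-bandlimited $f_j$; and, since $\eta$ is not of product form, a Fourier expansion writes it as an absolutely convergent combination of products $\prod_j e^{i n_j\cdot\varphi_j(\bx)}$ of unimodular functions of the individual $\varphi_j(\bx)$, whose factors are absorbed into the $f_j$. Now perform the microlocal decomposition of \S\ref{section:decompose}: partition a neighborhood of $\operatorname{supp}\eta$ into cubes $Q_\nu$ of sidelength $\lambda^{-1/2}$ and split each $f_j=g_j+h_j$ with frequencies in $\pi\lambda^{1/2}\integers^d$ exactly as in \eqref{fjdecomp1}--\eqref{fjdecomp3}, $g_j$ carrying the $O(\lambda^{2\sigma})$ large Fourier modes per cube and $h_j$ the $\ell^2$-bounded remainder with $\ell^\infty$-size $O(\lambda^{-\sigma})$. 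On each cube the phase produced by pure exponentials is $\Phi(\bx)=\sum_j\bxi_j\cdot\varphi_j(\bx)+\lambda\psi(\bx)$, with gradient $\sum_j\varphi_j^{*}\bxi_j+\lambda\nabla\psi$; integration by parts gives rapid decay unless $|\nabla\Phi(z_\nu)|\lesssim\lambda^{1/2+C\rho}$. The key combinatorial fact is that general position makes $(\bxi_j)_{j\in A}\mapsto\sum_{j\in A}\varphi_j^{*}\bxi_j$ injective for every $A\subset J$ with $|A|\le\scriptd/d$ (any such $A$ sits inside a set of size exactly $\scriptd/d$, for which that map is a linear isomorphism of $\reals^{\scriptd}$); hence, with the cube and the frequencies outside $A$ fixed, only $O(\lambda^{C\rho})$ lattice tuples $(\bxi_j)_{j\in A}$ can make $\nabla\Phi(z_\nu)$ small. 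Splitting $J$ into two pieces each of cardinality $\le\scriptd/d$ --- possible because $|J|<2\scriptd/d$, the sharp constraint of \cite{CLTT} --- and running the Cauchy--Schwarz bookkeeping of Lemma~\ref{lemma:localbound}, every term in which some $h_j$ appears contributes $O(\lambda^{-\sigma+C\rho}\prod_j\norm{f_j}_\infty)$. Exactly as in \S\ref{section:reducetosublevel}, matters reduce to proving that the stationary set $\scripte_\sharp$, the union of those $Q_\nu$ with $|\nabla\Phi_\nu(z_\nu)|\le\lambda^{1/2+C\rho}$, has measure $O(\lambda^{-\tau_0})$ uniformly over all assignments of frequencies to microlocal pieces.

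Dividing the defining inequality by $\lambda$ and writing $\eps=\lambda^{-1/2+C\rho}$, this is the linear, $\reals^d$-valued analogue of Lemmas~\ref{lemma:sublevel} and \ref{lemma:sublevel2}: there exist $C<\infty$ and $\varrho>0$ such that for all measurable $h_j:\reals^d\to\reals^d$ and all $\eps\in(0,1]$,
\[
\big|\{\bx\in\operatorname{supp}\eta:\ |\nabla\psi(\bx)+\textstyle\sum_{j\in J}\varphi_j^{*}h_j(\varphi_j(\bx))|\le\eps\}\big|\le C\eps^\varrho .
\]
I would prove this following the scheme of the proof of Lemma~\ref{lemma:sublevel}: a dyadic decomposition according to the size of $\max_j\|h_j(\varphi_j(\bx))\|$, which on each dyadic piece reduces to $\|h_j\|$ bounded and --- off the innermost piece --- bounded below; then, for each $j\in J$ in turn, slice $\reals^{\scriptd}$ by freezing $\varphi_j$ together with enough companion $\varphi_k$ to obtain a slice of codimension $d$, apply Lemma~\ref{lemma:squaremeasure} to pass to a subset of comparable-power measure on which $h_j$ coincides, up to $O(\eps)$, with a function $\tilde h_j$ drawn from a compact family of $C^\omega$ functions depending only on $\psi$ and the $\varphi_j$; after doing this for every $j$ one is left with a subset of measure $\gtrsim|\scripte_\sharp|^{C}$ on which an $\reals^{\scriptd}$-valued $C^\omega$ map $F_{(\bs,\alpha)}$, drawn from a compact family $\scriptf$, has magnitude $\le C\eps$. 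Lemma~\ref{lemma:scriptf} then gives the desired power bound unless some $F_{(\bs,\alpha)}$ vanishes identically; but that would yield $C^1$ functions $h_j$ with $\nabla\psi+\sum_j\varphi_j^{*}h_j(\varphi_j(\cdot))\equiv 0$ on a nonempty open set, and since $\nabla\psi$ is exact and the slicing construction delivers $h_j$ that are gradients of $C^\omega$ functions $H_j$ on the corresponding copies of $\reals^d$, integration produces $\psi=\sum_j H_j\circ\varphi_j$ there, contradicting the hypothesis.

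The main obstacle is this sublevel set bound --- in particular, organizing the slicing so that every $h_j$ is successively captured inside a single compact $C^\omega$ family (so that Lemma~\ref{lemma:scriptf} applies with uniform constants) and so that the gradient-valued relations harvested from the slices can be integrated consistently; it is the general-position hypothesis that guarantees the slices can be chosen to pin down each $h_j$ individually, and the non-representability of $\psi$ that excludes the only potential obstruction, namely exact solvability of $\nabla\psi=-\sum_j\varphi_j^{*}h_j\circ\varphi_j$. The microlocal and counting portions, by contrast, are a faithful transcription of \S\S\ref{section:decompose}--\ref{section:reducetosublevel} once one observes that general position together with $|J|<2\scriptd/d$ supplies exactly the bipartition of $J$ needed for the two-scale count.
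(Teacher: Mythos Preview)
Your microlocal decomposition and the counting argument are essentially correct and match the paper's approach closely, though the paper phrases the count as an interpolation over all subsets $S\subset J$ of cardinality $\scriptd/d$ rather than a single bipartition. The reduction to a sublevel set bound is also right. The genuine gap is in the sublevel set analysis itself, and the paper singles this out as precisely the step where the argument for Theorem~\ref{thm:3} must be \emph{augmented}, not merely transcribed.

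The difficulty is this. When $|J|>\scriptd/d$, there exist nontrivial linear functions $G_j:\reals^d\to\reals$ with $\sum_{j\in J} G_j\circ\varphi_j\equiv 0$. Consequently the assignment $h_j\mapsto h_j + t\nabla G_j$ leaves $\nabla\psi - \sum_j (h_j\circ\varphi_j)\cdot D\varphi_j$, and hence the sublevel set $\scripte$, unchanged for every $t\in\reals$. This means no \emph{a~priori} bound on the individual $h_j$ is available. When you run the slicing argument---restricting to translates of a $d$-dimensional subspace $V$ contained in $\bigcap_{j\notin\tilde J}\ker\varphi_j$ for some $\tilde J$ of size $\scriptd/d$, so that the $h_j$ with $j\notin\tilde J$ become unknown constants on each slice---what you actually obtain is $h_j\approx H_j + r_j$ on a large set, where $H_j$ lies in a compact $C^\omega$ family but $r_j\in\reals^d$ is a constant vector \emph{with no bound whatsoever}. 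Your assertion that the slicing captures each $h_j$ in a compact family is therefore false as stated, and Lemma~\ref{lemma:scriptf} cannot be invoked.

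The paper's fix is short but essential: although the individual $r_j$ are unbounded, the relation $|\nabla\psi - \sum_j (H_j+r_j)\circ\varphi_j\cdot D\varphi_j|<C\eps$ on a nonempty set, together with boundedness of $\nabla\psi$ and the $H_j$, forces $|\sum_j r_j\cdot D\varphi_j|=O(1)$. One may then choose bounded $\tilde r_j$ with $\sum_j\tilde r_j\cdot D\varphi_j = \sum_j r_j\cdot D\varphi_j$ and replace $H_j+r_j$ by $\tilde h_j = H_j+\tilde r_j$; these define the same sublevel set and now genuinely lie in a compact $C^\omega$ family, after which Lemma~\ref{lemma:scriptf} applies. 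Your proposal does not contain this step, and without it the argument does not close. (The paper also notes that this fix breaks down once $|J|\ge 2\scriptd/d$, because then there exist \emph{quadratic} solutions $G_j$ of $\sum_j G_j\circ\varphi_j\equiv 0$, so the ambiguity in $h_j$ is no longer by constants---this is why the restriction $|J|<2\scriptd/d$ is sharp for the method.)
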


This extends Theorem~{2.1} of
\cite{CLTT}, in which it is assumed that $d=1$, and that $\psi$ is a polynomial.
The polynomial hypothesis is not essential to the proof given 
in \cite{CLTT}, but the restriction $d=1$ is.

The simplest instance of Theorem~\ref{thm:cltt_altproof} with $d>1$ is as follows.
Let $B\subset\reals^d$ be a ball centered at $0$.
Let $Q:\reals^d\times\reals^d\to\reals$ 
be a homogeneous quadratic real-valued polynomial.
To $Q$, associate its antisymmetric part
$Q^*(x,y) = \tfrac12(Q(x,y)-Q(y,x))$.
Denote by $\norm{\cdot}$ any norm on the vector
space of all antisymmetric quadratic real-valued polymomials.

\begin{corollary} \label{cor:last}
Let $d\ge 2$.
There exist $C<\infty$ and $\gamma>0$ such that for
all functions $f_j\in L^2$,
\begin{equation}
\big|
\iint_{B\times B} e^{iQ(x,y)} f_1(x)f_2(y)f_3(x+y)\,dx\,dy
\big|
\le C\norm{Q^*}^{-\gamma} \prod_j \norm{f_j}_{L^2}.
\end{equation}
\end{corollary}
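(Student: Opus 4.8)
The plan is to recognize Corollary~\ref{cor:last} as the special case of Theorem~\ref{thm:cltt_altproof} in which $\scriptd=2d$, $|J|=3$, and $(\varphi_1,\varphi_2,\varphi_3)(x,y)=(x,y,x+y)$, made quantitative in $\norm{Q^*}$ by rescaling the phase. First, the phase is reduced: the statement is vacuous unless $Q^*\not\equiv 0$, which forces $d\ge 2$. A pure-$x$ quadratic monomial of $Q$ contributes a unimodular factor of $x$, absorbed into $f_1$ without changing $\prod_j\norm{f_j}_2$, and likewise pure-$y$ monomials into $f_2$; so we may assume $Q(x,y)=x^{\top}By$ is bilinear, and then $Q^*(x,y)=x^{\top}Ry$ with $R=\tfrac12(B-B^{\top})$, so $\norm{Q^*}$ is a fixed multiple of $\norm{R}$ for any fixed norm on $d\times d$ antisymmetric matrices. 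The symmetric part $x^{\top}B_{\mathrm s}y$ equals $\tfrac12(x+y)^{\top}B_{\mathrm s}(x+y)-\tfrac12 x^{\top}B_{\mathrm s}x-\tfrac12 y^{\top}B_{\mathrm s}y$, again a sum of a function of $x$, one of $y$, and one of $x+y$, hence also absorbed; thus we reduce to $Q=Q^*=x^{\top}Ry$, $R\ne 0$. When $\norm{R}\le 1$ the elementary inequality $\big|\iint_{B\times B}f_1(x)f_2(y)f_3(x+y)\,dx\,dy\big|\le |B|^{1/2}\prod_j\norm{f_j}_2$ suffices, so assume $\lambda:=\norm{R}\ge 1$ and write $R=\lambda R_0$ with $R_0$ in the compact set $\Sigma=\{R_0:R_0^{\top}=-R_0,\ \norm{R_0}=1\}$. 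It then suffices to prove
\[
\Big|\iint_{B\times B}e^{i\lambda\,x^{\top}R_0 y}\,f_1(x)f_2(y)f_3(x+y)\,dx\,dy\Big|\le C\lambda^{-\gamma}\prod_{j=1}^3\norm{f_j}_2,
\]
with $C,\gamma>0$ independent of $\lambda\ge 1$ and of $R_0\in\Sigma$.

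\emph{Running the CLTT machinery uniformly.} With $\scriptd=2d$ and $\varphi_1(x,y)=x$, $\varphi_2(x,y)=y$, $\varphi_3(x,y)=x+y$, one has $\scriptd/d=2\in\naturals$, $|J|=3<2\scriptd/d=4$, and the triple is in general position because $(\varphi_j,\varphi_k)$ is a linear automorphism of $\reals^{2d}$ for each pair. For every $R_0\in\Sigma$ the phase $\psi_0(x,y)=x^{\top}R_0 y$ is not of the form $\sum_j h_j\circ\varphi_j$: applying $\partial_{x_a}\partial_{y_b}$ to such an identity forces $\partial_a\partial_b h_3\equiv (R_0)_{ab}$, and symmetry of mixed second partials then gives $R_0=R_0^{\top}=0$. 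So Theorem~\ref{thm:cltt_altproof} applies for each fixed $R_0$, and two refinements of its statement are what remain. First, $L^2$ norms: the three no-decay bounds $|S_\lambda(\bff)|\le C\norm{f_i}_1\norm{f_j}_1\norm{f_k}_\infty$ (one for each choice of singled-out index) hold trivially here, since in each case two of the three functions can be used as coordinates on $\reals^{2d}$ and the third bounded in $L^\infty$; hence, exactly as in the proof of Theorem~\ref{thm:3}, it suffices to prove the decay bound with $\norm{\cdot}_\infty$ in all three slots. Second, uniformity over $\Sigma$: one reruns the proof of Theorem~\ref{thm:cltt_altproof}, which follows the template used for Theorem~\ref{thm:3}—a microlocal decomposition of the $f_j$ into phase-space pieces on rectangles of dimensions $(\lambda^{-1/2},\lambda^{1/2})$ as in \S\ref{section:decompose}, a local oscillatory bound of the kind in Lemma~\ref{lemma:localbound}, and a reduction of the main term to an upper bound for the Lebesgue measure of a sublevel set, which is in turn an instance of the compactness Lemma~\ref{lemma:scriptf} applied to an auxiliary family of $C^\omega$ functions built from $\psi_0$. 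Since $\Sigma$ is compact and $\psi_0$ is a polynomial depending continuously on $R_0$, these auxiliary families stay in a fixed $C^N$-bounded set, none of whose members vanishes identically (this is where $R_0\ne 0$ enters), so Lemma~\ref{lemma:scriptf} delivers one exponent $\tau>0$ and one constant valid for all of $\Sigma$; and the bandlimited-to-$L^2$ passage is the smoothing-operator argument of \S\ref{section:reductions}, which goes through verbatim with $\reals^1$ replaced by $\reals^d$ in the codomains of the $\varphi_j$. This yields the displayed bound with uniform constants.

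\emph{The main obstacle.} The real work is in the degenerate strata of $\Sigma$: when $R_0$ has small nonzero singular values, the oscillation of $e^{i\lambda x^{\top}R_0 y}$ is genuinely felt only in one conjugate pair of directions, and the gain $\lambda^{-\gamma}$ must be extracted there with a constant that does not blow up as the remaining singular values tend to $0$. This is exactly what the sublevel-set reduction is meant to accomplish, since Lemma~\ref{lemma:scriptf} requires only nonvanishing—not a quantitative lower bound—of the auxiliary functions; the point one must verify is that every link in the chain (the microlocal decomposition, the stationary/nonstationary dichotomy, and the sublevel-set estimate itself) is uniform across all of $\Sigma$, including at these degenerate configurations. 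Even the generic case $R_0$ nondegenerate already carries the essential CLTT-type difficulty—one cannot simply invoke Hörmander's bilinear bound, because the factor $f_3(x+y)$ couples the two integration variables—so the degenerate strata merely superimpose the further issue of uniformity on top of that.
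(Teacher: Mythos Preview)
Your proposal is correct and matches the paper's intent: the paper presents Corollary~\ref{cor:last} without a separate proof, simply as ``the simplest instance of Theorem~\ref{thm:cltt_altproof} with $d>1$,'' and your argument spells out exactly the verification this entails --- absorbing the symmetric part of $Q$ into the $f_j$, rescaling to introduce $\lambda=\norm{Q^*}$, checking the general-position and nondegeneracy hypotheses, and passing from $L^\infty$ to $L^2$ by interpolation against the three no-decay bounds. Your discussion of uniformity over the compact set $\Sigma$ of normalized antisymmetric matrices, via the compactness built into Lemma~\ref{lemma:scriptf}, is the right way to close the one point the paper leaves implicit.
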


\begin{example}
Let $d=2$ and
$Q((x_1,x_2),\,(y_1,y_2)) = x_1y_2$.
Then
\begin{equation}
\big|
\iint_{[0,1]^2\times[0,1]^2 } e^{i\lambda x_1y_2} f_1(x)f_2(y)f_3(x+y)\,dx\,dy
\big|
\le C|\lambda|^{-\gamma} \prod_j \norm{f_j}_{L^2}.
\end{equation}
\end{example}

\begin{proof}[Proof of Theorem~\ref{thm:cltt_altproof}]
The proof of Theorem~\ref{thm:cltt_altproof} has the same overarching structure as the
analysis developed above for Theorem~\ref{thm:3}. 
However, one step of the proof of Theorem~\ref{thm:3}
broke down when the mappings $\varphi_j$ were linear, and
Theorem~{2.1} of \cite{CLTT} was invoked, in a black box spirit, to treat the linear case. 
Much of the proof of Theorem~\ref{thm:cltt_altproof} closely follows
arguments above and hence will be merely sketched, 
but we will show in more detail how the problematic step, 
which arises near the end of the analysis,
can be modified to handle linear mappings.

Let $\rho>0$ be a small exponent, which will ultimately
depend on another exponent $\sigma$ introduced below, which in turn will
depend on an exponent $\tau$ in a sublevel set bound
\eqref{anothersublevel}. Assume without loss of
generality that $\lambda\ge 1$ and that
$\norm{f_j}_\infty \le 1$.
Decompose
\begin{equation}
f_j(y) = \sum_{m} \eta_{m}(y) \sum_{k\in \integers^d} 
a_{j,m,k} e^{i\pi \lambda^{1/2} k\cdot y} 
\end{equation}
with each $\eta_{m}$ supported on the double of a cube of sidelength 
$\lambda^{-1/2}$
and $|\eta_m| + |\lambda^{-1/2}\nabla \eta_m| = O(1)$,
and \[ \sum_k |a_{j,m,k}|^2 = O(1)\] 
uniformly in $j,m,\lambda$.
Decompose $f_j = g_j+h_j+F_j$ where $\norm{F_j}_\infty = O(\lambda^{-N})$
for all $N<\infty$, 
$h_j$ is the sum over $m,k$ of those terms satisfying
$|a_{j,m,k}| \le \lambda^{-\sigma}$,
and $g_j$ has an expansion of the same type 
with $a_{j,m,k}=0$ for all but at most $O(\lambda^{2\sigma})$ 
indices $k$ for each $j,m$.  The contributions of all $F_j$ are negligible,
and we may therefore henceforth replace $f_j$ by $g_j+h_j$ for each index $j$.

Write $\bk = (k_j: j\in J)\in (\integers^d)^J$.
Define the linear mapping $L: (\integers^d)^J\to \reals^\scriptd$ 
to be the transpose of $\bx\mapsto (\varphi_j(\bx): j\in J)$; thus
\begin{equation} L(\bk) = \sum_{j\in J} \varphi_j^*(k_j)\end{equation}
where $\varphi_j^*$ denotes the transpose of the linear mapping $\varphi_j$.
Writing $\bm = (m_j: j\in J)$ and $\bx \in\reals^\scriptd$,
our functional can be expanded as
\[ S_\lambda(\bff)
= \sum_\bm \sum_\bk \prod_{j\in J} a_{j,m_j,k_j}
I(\bm,\bk)
\]
with
\begin{align*}
I(\bm,\bk) &= 
\int 
e^{i\lambda\Phi_\bk(\bx)}
\zeta_\bm(\bx)\,dx
\\
\Phi_\bk(\bx) 
&= \psi(\bx) +  \pi\lambda^{-1/2} L(\bk)\cdot\bx
\\
\zeta_\bm(\bx) &= \prod_{j\in J} \eta_{m_j}(\varphi_j(\bx)).
\end{align*}

While the number of indices $\bm$ in play is comparable to $(\lambda^{d/2})^{|J|}$,
there are only $O(\lambda^{\scriptd/2})$ indices $\bm$ for which $\zeta_\bm$
does not vanish identically.
We claim that there exists $\theta\in(0,1)$,
which depends only on the ratio $\scriptd/(d|J|)$, such that for any $\bm$
and any sequences of scalars $b_j(\cdot)$,
\begin{equation} \label{beatl2}
\sum_\bk \prod_{j\in J} |b_{j}(k_j)|
\cdot |I(\bm,\bk)|
\le C \lambda^{C\rho} \lambda^{-\scriptd/2} \prod_{j\in J} 
\norm{b_j}_{l^2}^{1-\theta}\norm{b_j}_{l^\infty}^\theta
\end{equation}
uniformly in $\bm,\lambda$.
Indeed, 
\begin{equation} \label{Ifirstbound} 
|I(\bm,\bk)|= O(\lambda^{-\scriptd/2}) \end{equation}
uniformly in $\bm,\lambda$.
For each $\bm$ for which $\zeta_\bm$ does not vanish identically, choose 
$z_\bm$ in the support of $\zeta_\bm$.
Integrating by parts sufficiently many times gives
\begin{equation} \label{Isecondbound} 
|I(\bm,\bk)| \le  C_N\lambda^{-N}(1+|\nabla\psi(z_\bm) + L(\bk)|)^{-N}
\ \text{ for every $N<\infty$} \end{equation}
unless
\begin{equation}
|\nabla\Phi_\bk(z_\bm)| \le \lambda^\rho.
\end{equation}

Recalling that $|J| \ge \scriptd/d$,
consider any subset $S\subset J$ of cardinality equal to $\scriptd/d$.
If $\sum_{j\in S} \varphi_j^*(k_j)=0$
then $k_j=0$ for every $j\in S$,
since the mapping $\bx \mapsto (\varphi_j(\bx): j\in S)$
is bijective by the general position hypothesis.
Therefore if $N$ is chosen to be sufficiently large
then the summation over all vectors $(k_j: j\in S)$ of
$\min\big(\lambda^{-\scriptd/2}, \lambda^{-N}(1+|\nabla\psi(z_\bm) + L(\bk)|)^{-N}\big)$
is $O(\lambda^{-\scriptd/2}\lambda^{C\rho})$, uniformly
for all vectors $(k_j: j\in J\setminus S)$.
It follows that
\begin{equation*} 
\sum_\bk \prod_{j\in J} |b_{j}(k_j)|
\cdot |I(\bm,\bk)|
\le C \lambda^{C\rho} \lambda^{-\scriptd/2} \prod_{j\in J\setminus S} 
\norm{b_j}_{l^1}
\prod_{j\in S} \norm{b_j}_{l^\infty}
\end{equation*}
by
\eqref{Ifirstbound}, \eqref{Isecondbound},
and the general position assumption \eqref{Dtodinvertible}. 
Since $|J|<2d^{-1} \scriptd$, it follows by interpolation that
\begin{equation*} 
\sum_\bk \prod_{j\in J} |b_{j}(k_j)|
\cdot |I(\bm,\bk)|
\le C \lambda^{C\rho} \lambda^{-\scriptd/2} \prod_{j\in J} 
\norm{b_j}_{l^q}
\end{equation*}
for some exponent $q>2$. 
Then $\norm{b_j}_{l^q} 
\le \norm{b_j}_{l^2}^{1-\theta} \norm{b_j}_{l^\infty}^\theta$,
for some $\theta=\theta(q)>0$, yielding \eqref{beatl2}.

From \eqref{Ifirstbound} and \eqref{beatl2},
for $f_j=g_j+h_j$ with the properties indicated above, there follows 
\begin{equation}
|S_\lambda(\bff)|
\le |S_\lambda(\bg)| + O(\lambda^{-\sigma+C\rho}).
\end{equation}
Therefore, choosing $\rho$ to be sufficiently
small relative to $\sigma$, it suffices to analyze $S_\lambda(\bg)$.

The quantity $S_\lambda(\bg)$
can in turn be expressed as a sum of $O(\lambda^{C\sigma})$
terms, in each of which each function $g_j$ takes the simple form
\begin{equation}
g_j(x) = \sum_{m_j} a_{j,m_j} e^{i\pi\lambda^{1/2} k_{j,m_j}}
\zeta_{m_j}(x)
\end{equation}
with $|a_{j,m_j}|=O(1)$. 
We assume this form henceforth,
at the expense of a factor $O(\lambda^{C\sigma})$.
This factor can be absorbed at the end of the proof,
by choosing $\sigma$ sufficiently small
relative to the exponent $\tau$ that appears below,
just as was done in other proofs earlier in the paper.  Thus
\begin{equation}
|S_\lambda(\bg)| \le C \sum_\bm |I(\bm,\bk_\bm)|
\end{equation}
with $\bk_\bm = (k_{j,m_j}: j\in J)$.

Define 
\begin{equation}
\Phi(\bx) = \psi(\bx) + \pi \lambda^{-1/2} L(\bk) \cdot \bx.
\end{equation}
Consider those $\bm$ that
are stationary in the sense that $|\nabla\Phi(z_\bm)|
\ge\lambda^\rho$. 
By \eqref{Isecondbound}, the sum of the contributions 
of all such $\bm$ is $O(\lambda^{-N})$ for every $N<\infty$.
Therefore in order to complete the analysis,
it suffices to show that the 
number of $\bm$ for which $\Phi$ is nonstationary,
is $O(\lambda^{-\tau}\lambda^{\scriptd/2})$ for some exponent $\tau>0$.

Let $B\subset\reals^\scriptd$ be any ball of finite radius.
Let $h_j:\reals^d\to\reals^d$ be arbitrary Lebesgue measurable functions.
Define
\begin{equation}
\scripte = \{\bx\in B: 
|\nabla \psi(\bx)-\sum_{j\in J} (h_j\circ\varphi_j)(\bx) \cdot D\varphi_j| <\eps\}.
\end{equation}
Here, $h_j$ takes values in $\reals^d$,
and $(h_j\circ\varphi_j)\cdot D\varphi_j$ takes values in $\reals^\scriptd$.
To complete the proof of Theorem~\ref{thm:cltt_altproof},
it now suffices to show that there exist $C<\infty$ and $\tau>0$ such that
\begin{equation} \label{anothersublevel}
|\scripte|
\le C \eps^\tau
\end{equation}
uniformly for all $\eps\in(0,1]$ and all functions $h_j$.

Assume temporarily that $|J| \ge \scriptd/d$.
Let $\tilde J\subset J$ be any subset of cardinality $|\tilde J|=\scriptd/d$.
The general position hypothesis ensures that
there exists a linear subspace $V\subset\reals^\scriptd$ of dimension $d$
such that $\kernel(\varphi_j)\subset V$ for each $j\in J\setminus\tilde J$,
but for each $j\in\tilde J$,
$\varphi_j|_V$ is an invertible linear mapping from $V$ to $\reals^d$.

If the system of equations
$\nabla \psi(\bx)-\sum_{j\in J} (h_j\circ\varphi_j)(\bx) \,D\varphi_j| <\eps\} = 0$
is restricted to any translate $V+\by$ of $V$,
those terms $h_j\circ\varphi_j$ with $j\in J\setminus\tilde J$
become constant functions of $\bx\in V$.
For any $\bx\in V$, what results is an invertible linear system of $d$
equations for $d$ unknowns $h_j(\varphi_j(\bx)\cdot D\varphi_j$, 
with the index $j$ running over $\tilde J$.

By the same reasoning as developed in the analyses of upper bounds for measures
of sublevel sets above,
we may conclude that there exist functions of the form $H_j+r_j$,
where $H_j:\reals^d\to\reals^d$ are drawn from a compact family of $C^\omega$
functions specified in terms of $\psi,\{\varphi_j: j\in J\}$ alone
and $r_j\in\reals^d$ are constant vectors,
and a set $\tilde\scripte$ satisfying $|\tilde\scripte| \ge c|\scripte|^C$,
such that  
\begin{equation}  \label{darnconstants}
|\nabla \psi(\bx)-\sum_{j\in J} \big[(H_j\circ\varphi_j)(\bx)+r_j\big] \cdot D\varphi_j| <C\eps
\ \text{ for all $\bx\in \tilde\scripte$.}
\end{equation}

We have reached the point at which the proof of
Theorem~\ref{thm:3} must be augmented in order to 
treat Theorem~\ref{thm:cltt_altproof}.
Let $C_0$ be some finite constant.
If 
\begin{equation} \label{wishbound} |r_j|\le C_0 \ \text{ for all $j\in J$} \end{equation}
then the functions $\tilde H_j = H_j+r_j$ are drawn from a compact family of $C^\omega$
functions, and the same reasoning as in the proof of
Theorem~\ref{thm:3} can be applied to conclude that
$|\tilde\scripte|\le C\eps^\tau$,
and hence that the same holds for $|\scripte|$ with modified constants $C,\tau$.

However, it is not true that there exists $C_0$ such that \eqref{wishbound} holds.
Indeed, if $G_j:\reals^d\to\reals$ are linear functions satisfying
$\sum_{j\in J} G_j\circ \varphi_j\equiv 0$,
then for any $t\in(0,\infty)$,
replacement of $h_j$ by $h_j + t\nabla G_j$ does 
not change the quantity $\nabla\psi - \sum_j (h_j\circ\varphi_j)\cdot D\varphi_j$,
and consequently does not change $\scripte$.
If $|J|>\scriptd/d$ then there exists a linear solution of
$\sum_{j\in J} G_j\circ \varphi_j\equiv 0$,
with at least one $G_j$ not identically zero. 
By taking $t$ arbitrarily large, 
one finds that no uniform {\em a priori} bound \eqref{wishbound}
is available for the functions $h_j$ in terms of $\psi$, $(\varphi_j: j\in J)$,
and $\eps$ alone.

If $\eps\le 1$, as we may assume, and if $\scripte$ is nonempty,
then while the individual quantities $r_j$ may be large,
\begin{equation} |\sum_{j\in J} r_j\cdot D\varphi_j|=O(1). \end{equation}
This follows from the condition
\[ |\nabla\psi(\bx) - \sum_j (H_j\circ\varphi_j)\cdot D\varphi_j 
- \big(\sum_j r_j \cdot D\varphi_j\big) \big| < \eps\le 1\]
by the triangle inequality,
since $\nabla\psi$ and $H_j$ are uniformly bounded. 
There exist $\tilde r_j\in\reals^\scriptd$ satisfying
\[ \sum_{j\in J} \tilde r_j\cdot D\varphi_j =\sum_{j\in J} r_j\cdot D\varphi_j\]
and $|\tilde r_j| = O(1)$ for every $j\in J$.
Define $\tilde h_j = H_j + \tilde r_j$.
These modified functions define the same sublevel set as do the original $h_j$, since
\[\sum_{j\in J} (\tilde h_j\circ\varphi_j)\cdot D\varphi_j
\equiv \sum_{j\in J} (h_j\circ\varphi_j)\cdot D\varphi_j
\ \text{ on $\tilde\scripte$.} \]
Thus we may replace $h_j$ by $\tilde h_j$ for all indices $j$.
The functions $\tilde h_j$ are now drawn from a compact family
of $C^\omega$ functions determined by $\psi,\{\varphi_j: j\in J\}$.
The same reasoning as in the above analyses of sublevel sets
completes the proof of Theorem~\ref{thm:cltt_altproof}.
\end{proof}

The less singular case in which $|J|<\scriptd/d$ 
can be treated by a simplified form of this reasoning.
Details are omitted.

\medskip
The intermediate conclusion that $h_j = H_j+r_j$
on a large set, with $H_j$ uniformly bounded and $r_j$ constant
though not necessarily uniformly bounded,
breaks down without the restriction $|J|< 2\scriptd/d$.
For an example, consider $d=1$, $\scriptd=2$, and $|J|=4$ with mappings 
$\varphi_1(\bx) = \varphi_1(x_1,x_2) = x_1$, 
$\varphi_2(\bx) = x_2$, 
$\varphi_3(\bx) = x_1+x_2$, 
$\varphi_4(\bx) = x_1-x_2$. 
Set $G_j(x) = 2x^2$ for $j=1,2$,
and $=-x^2$ for $j=3,4$.
Then $\sum_{j=1}^4 G_j\circ\varphi_j\equiv 0$.
Therefore $g_j = G'_j$ satisfy
$\sum_{j=1}^4 (g_j\circ\varphi_j)\cdot D\varphi_j \equiv 0$.
Therefore any tuple of functions $h_j$
could be replaced by $h_j + tg_j$ for any parameter $t\in\reals$,
without changing the associated sublevel set $\scripte$.
Thus no upper bound at all holds for $(h_j: j\in J)$ modulo 
constants $r_j$, as in the above argument.

Conversely, in the context of the preceding paragraph, 
if $\sum_j G_j\circ\varphi_j\equiv 0$ then each $G_j$ must be
a polynomial of degree at most $2$. Thus each $g_j$ is a polynomial 
of degree at most $1$, though not necessarily constant.
This suggests that when $|J|\ge 2\scriptd/d$,
the reasoning should be modified by applying difference operators to
$\nabla\psi -\sum_j (h_j\circ\varphi_j)\cdot D\varphi_j$, 
and that difference operators of higher degrees 
should be required as $\scriptd/d$ increases.

\section{A scalar-valued sublevel set inequality}
\label{section:FEqns}

Let $B\subset\reals^2$ be a ball of positive
radius, and let $\varphi_j:B\to\reals^1$ be real analytic
for $j\in\{1,2,3\}$. Suppose that $\nabla\varphi_j$
are pairwise linearly independent at each point in $B$.
Let $0\le\eta\in C^\infty(B)$.

The functional equation
$f(x)+g(y)+h(x+y)=0$, has been widely studied.
Its solutions are the ordered triples $(f(x),g(y),h(x+y)) = 
(ax+c_1,ay+c_2,a(x+y)-c_1-c_2)$ with $a,c_1,c_2$
all constant, and no others.
Approximate solutions, in a certain sense, have been studied 
in \cite{christyoung}.
We consider here the more general functional equation
\begin{equation} \label{fneqn}
\sum_{j=1}^3 (f_j\circ\varphi_j) = 0 \text{ almost everywhere}
\end{equation}
where the mappings $\varphi_j$ need not be linear,
and the functions $f_j$ are real-valued.
We discuss related sublevel sets
\begin{equation} \label{fneqn:ineq}
S(\bff,r) =\{\bx\in B: |\sum_{j=1}^3 (f_j\circ\varphi)(\bx) | \le r\}
\end{equation}
associated to ordered triples $\bff$
of scalar-valued functions.
The inequality \eqref{fneqn:ineq} differs from corresponding
inequalities studied and exploited in various
proofs above in two ways: it is homogeneous rather than inhomogeneous,
and it is a single scalar inequality, rather than a system of
two scalar inequalities.

Theorem~\ref{thm:nonosc} has the following implication concerning
the nonexistence of nontrivial solutions of \eqref{fneqn}.

\begin{corollary} \label{cor:fneqn1}
Let $B\subset\reals^2$ be a closed ball of positive, finite radius.
For $j\in\{1,2,3\}$ let $\varphi_j\in C^\omega$ map a neighborhood of $B$ 
to $\reals$, and suppose that $\nabla \varphi_j$ are pairwise linearly
independent at each point of $B$.
Suppose that the curvature of the web defined by $(\varphi_j: j\in\{1,2,3\})$
does not vanish identically on $B$.
Let $\bff$ be an ordered triple of Lebesgue measurable real-valued functions.
Suppose that for each index $j$
and each $t\in\reals$,
\begin{equation} \label{nullset}
|\{x: f_j(x)=t\}|=0.
\end{equation}
If $\bff$ is a solution of the functional equation \eqref{fneqn}
then each function $f_j$ is constant.
\end{corollary}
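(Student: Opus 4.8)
The plan is to argue by contradiction, converting the functional equation into an oscillatory statement and invoking Corollary~\ref{cor:weakconverge}. Suppose that $\bff$ satisfies \eqref{fneqn} together with \eqref{nullset}. Since the curvature of the web is a real analytic function of the $\varphi_j$ and their first and second derivatives and does not vanish identically on $B$, it is nonzero at some point $\bx_0$ in the interior of $B$; at $\bx_0$ the pairwise transversality of $(\nabla\varphi_j)$ and the nonvanishing of the curvature are precisely the hypotheses of Theorem~\ref{thm:nonosc}, hence of Corollary~\ref{cor:weakconverge}. Fix $p=2$ and choose $\eta\in C^\infty_0$ with $\eta\ge 0$, $\eta(\bx_0)>0$, supported in a neighborhood $V\subset B$ of $\bx_0$ small enough for Corollary~\ref{cor:weakconverge} to apply.

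For each $\xi\in\reals$ put $g_j^\xi = e^{i\xi f_j}$, extended by $0$ outside $\varphi_j(V)$. Then $\norm{g_j^\xi}_{L^2}\le |\varphi_j(V)|^{1/2}$ uniformly in $\xi$, and, because $\sum_{j}(f_j\circ\varphi_j)=0$ almost everywhere on $V$, one has $\prod_{j=1}^3(g_j^\xi\circ\varphi_j)=e^{i\xi\sum_j f_j\circ\varphi_j}=1$ almost everywhere on $V$, whence
\[ \int_V \eta\,\prod_{j=1}^3 (g_j^\xi\circ\varphi_j) = \int_V\eta =: c_\eta>0 \qquad\text{for every }\xi. \]
The crux is the claim that there is a sequence $\xi_\nu\to+\infty$ along which $g_j^{\xi_\nu}\rightharpoonup 0$ weakly in $L^2$ for all three indices $j$ simultaneously. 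Granting this, Corollary~\ref{cor:weakconverge} gives $\prod_{j}(g_j^{\xi_\nu}\circ\varphi_j)\rightharpoonup\prod_j(0\circ\varphi_j)=0$ weakly in $V$, so that $\int_V\eta\,\prod_j(g_j^{\xi_\nu}\circ\varphi_j)\to 0$, contradicting the displayed identity since $c_\eta>0$. As \eqref{nullset} forbids any $f_j$ from being constant, this contradiction is exactly the assertion of the corollary (there is no such $\bff$).

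To produce the sequence $\xi_\nu$, invoke hypothesis \eqref{nullset}. For any $j$ and any $h\in L^2(\varphi_j(V))$ the push-forward $\mu = (f_j)_*(\overline h\,dx)$ is a finite complex Borel measure on $\reals$ with $\mu(\{t\}) = \int_{\{f_j=t\}}\overline h\,dx = 0$ for every $t$, i.e.\ $\mu$ is non-atomic; by Wiener's theorem $\frac{1}{2T}\int_{-T}^T|\widehat\mu(\xi)|^2\,d\xi\to\sum_t|\mu(\{t\})|^2=0$ as $T\to\infty$. Fix, for each $j$, a countable set $\{h_{j,k}\}_k$ dense in the unit ball of $L^2(\varphi_j(V))$, let $\mu_{j,k}=(f_j)_*(\overline{h_{j,k}}\,dx)$, and set $\Psi(\xi)=\sum_{j,k}2^{-(j+k)}\min\bigl(1,|\widehat{\mu_{j,k}}(\xi)|\bigr)$. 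Then $\frac{1}{2T}\int_{-T}^T\Psi(\xi)\,d\xi\le\sum_{j,k}2^{-(j+k)}\bigl(\frac{1}{2T}\int_{-T}^T|\widehat{\mu_{j,k}}|^2\bigr)^{1/2}\to 0$ by dominated convergence in the $(j,k)$ sum, so for every $\delta>0$ the set $\{\xi>0:\Psi(\xi)>\delta\}$ has density zero; hence $\{\xi>0:\Psi(\xi)\le\delta\}$ is unbounded and we may pick $\xi_\nu\to+\infty$ with $\Psi(\xi_\nu)\to 0$. Then $\widehat{\mu_{j,k}}(\xi_\nu)\to 0$ for all $j,k$, i.e.\ $\int e^{i\xi_\nu f_j}\overline{h_{j,k}}\,dx\to 0$; combined with the uniform $L^2$ bound on $g_j^{\xi_\nu}$ and the density of $\{h_{j,k}\}_k$, this gives $g_j^{\xi_\nu}\rightharpoonup 0$ in $L^2$ for each $j$, completing the proof. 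The only genuine obstacle is this last step: a non-atomic measure need not be a Rajchman measure, so one cannot simply let $\xi\to\infty$; the $L^2$-averaged Fourier decay from Wiener's theorem, applied simultaneously to a countable dense family of test functions for all three indices, is precisely what makes it possible to extract the common sequence $\xi_\nu$.
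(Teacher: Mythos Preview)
Your proof is correct. It shares with the paper's argument the same core mechanism: rewrite the functional equation as $\prod_j e^{i\lambda f_j\circ\varphi_j}\equiv 1$, feed the product into the trilinear smoothing machinery built on Theorem~\ref{thm:nonosc}, and use non-atomicity of the $f_j$ to force the individual oscillatory factors to be small. But the execution differs. The paper proceeds quantitatively: it first proves the sublevel set bound of Corollary~\ref{cor:fneqn2} by representing $|S(\bff,r)|$ via the Fourier inversion identity \eqref{hhatrepresentation} and then majorizing the $\lambda$-integrand through the negative-order Sobolev estimate of Lemma~\ref{lemma:nonlinearsobolev}; Corollary~\ref{cor:fneqn1} follows by letting $r\to0^+$ and observing that the measure $\mu\{(x,y):|f_j(x)-f_j(y)|\le r\}$ tends to zero under \eqref{nullset}. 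Your route is purely qualitative: you replace Lemma~\ref{lemma:nonlinearsobolev} by Wiener's theorem and the $H^\sigma$ bound by the weak convergence statement of Corollary~\ref{cor:weakconverge}. The device of pushing forward $\overline{h}\,dx$ by $f_j$ to a non-atomic measure and extracting, from the Ces\`aro decay of its Fourier transform over a countable dense family of test functions, a single sequence $\xi_\nu\to\infty$ along which all three $e^{i\xi_\nu f_j}$ converge weakly to zero is clean and sidesteps the quantitative machinery entirely. The price is that your argument does not deliver the sublevel set inequality of Corollary~\ref{cor:fneqn2}; the paper's longer route establishes both corollaries at once.
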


In particular, all $C^\omega$ solutions $\bff$ 
of \eqref{fneqn} are constants.
Indeed, one of the three component functions $f_j$
must fail to satisfy the hypothesis \eqref{nullset},
and hence must be constant.
It follows immediately from the functional equation \eqref{fneqn} that 
the other two component functions are also constant. 
\qed

A more quantitative statement is as follows.

\begin{corollary} \label{cor:fneqn2}
Let $B\subset\reals^2$ be a closed ball of positive, finite radius.
For $j\in\{1,2,3\}$ let $\varphi_j\in C^\omega$ map a neighborhood of $B$ 
to $\reals$, and suppose that $\nabla \varphi_j$ are pairwise linearly
independent at each point of $B$.
Suppose that the curvature of the web defined by $(\varphi_j: j\in\{1,2,3\})$
does not vanish identically on $B$.
There exist $\delta>0$ and $C<\infty$ such that 
for any ordered triple $\bff$ of Lebesgue measurable real-valued functions
and any $r\in(0,\infty)$,
the sublevel set $S(\bff,r)$ satisfies
\begin{equation} |S(\bff,r)| \le C
\sup_{t\in\reals} \big|\{x\in \varphi_j(B): |f_j(x)-t|\le r\}\big|^\delta
\end{equation}
for each $j\in\{1,2,3\}$.
\end{corollary}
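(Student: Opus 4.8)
\medskip
\noindent\emph{Plan of proof.}
The plan is to deduce the estimate from a quantitative inverse statement for the approximate functional equation $\sum_{j=1}^3 f_j\circ\varphi_j\approx 0$ satisfied on $S(\bff,r)$, the inverse statement being established by the methods developed earlier in the paper; the deduction itself is then a short exercise in measure theory.

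First I would reduce to a local estimate in which all hypotheses are quantitative. The Blaschke curvature $\kappa$ of the web is real analytic near $B$ and, by hypothesis, is not identically zero, so $Z=\{\kappa=0\}\cap B$ is a proper analytic subvariety, and Lemma~\ref{lemma:scriptf} gives $|\{x\in B:|\kappa(x)|\le\rho\}|\le C\rho^{\tau_0}$ for some $\tau_0>0$. Fixing a small $\rho>0$, the part of $S(\bff,r)$ lying over $\{|\kappa|\le\rho\}$ already has measure at most $C\rho^{\tau_0}$, while $\{|\kappa|\ge\rho\}\cap B$ is covered by cubes on each of which $\kappa$, the minimal angle between the $\nabla\varphi_k$, and $\min_k|\nabla\varphi_k|$ are all bounded below; the number of such cubes, and the constant in the local bound below, grow at most like a fixed power of $\rho^{-1}$ (estimating the measure of the dyadic annuli $\{\rho\le|\kappa|\le 2\rho\}$ once more by Lemma~\ref{lemma:scriptf}). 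Summing the local bounds and then optimizing $\rho$ against the quantity $\sup_t|\{u\in\varphi_j(B):|f_j(u)-t|\le r\}|$ converts a local estimate with some exponent $\delta_1>0$ into a global one with a smaller, still positive exponent --- this is the good/bad decomposition used repeatedly in \S\S\ref{section:reductions} and \ref{section:completeproofs}. A further dyadic decomposition in $\max_j\sup|f_j|$, exactly as for the sets $E_k$ in the proof of Lemma~\ref{lemma:sublevel}, lets me assume in addition that each $f_j$ is bounded (summing a geometric series over the dyadic scales). So it suffices, after a $C^\omega$ change of coordinates normalizing $\varphi_1(\bx)=x_1$, $\varphi_2(\bx)=x_2$ and making $\partial^2\varphi_3/\partial x_1\partial x_2$ and $\partial\varphi_3/\partial x_i$ bounded below with the web uniformly curved on a fixed cube $Q$, and after fixing one index (say $j=3$; the others follow by permuting coordinates, the hypotheses being symmetric), to prove $|S(\bff,r)\cap Q|\le C\,\omega^{\delta_1}$ with $\omega:=\sup_{t}|\{u\in\varphi_3(Q):|f_3(u)-t|\le r\}|$.

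The heart of the matter is the inverse statement: on $S(\bff,r)\cap Q$ one has $|\sum_j f_j\circ\varphi_j|\le r$, so $\bff$ is an $r$-approximate solution of the functional equation on a set of measure $|S(\bff,r)\cap Q|$, and I would show this forces each $f_j$ to agree, on a subset of its domain of measure $\gtrsim|S(\bff,r)\cap Q|^{C}$, with a member of a fixed compact family of $C^\omega$ functions attached to $(\varphi_1,\varphi_2,\varphi_3)$, which the curvature hypothesis then collapses to the constants. The mechanism is the one used in the proof of Lemma~\ref{lemma:sublevel}: slice $S(\bff,r)\cap Q$ successively along the leaves of the three foliations, apply Lemma~\ref{lemma:squaremeasure} at each step to freeze one argument and thereby replace one defect function by a member of a compact $C^\omega$ family (raising the relevant set to a bounded power each time), and finally invoke Lemma~\ref{lemma:scriptf} together with the fact --- a restatement of Corollary~\ref{cor:fneqn1} --- that under nonvanishing curvature the \emph{exact} equation has no $C^1$ solution except constants, to conclude that the surviving set has measure $\lesssim r^{\tau_0}$ unless $f_3$ coincides to within $O(r)$ with a single constant $c$ on a set of measure $\gtrsim|S(\bff,r)\cap Q|^{C}$. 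In the latter case $\omega\ge|\{u\in\varphi_3(Q):|f_3(u)-c|\le O(r)\}|\gtrsim|S(\bff,r)\cap Q|^{C}$, which is the desired bound; in the former, one uses (as in the Interlude) that for bounded $f_3$ one has the elementary lower bound $\sup_t|\{|f_3-t|\le r\}|\gtrsim r$, so $r^{\tau_0}\lesssim\omega^{\tau_0}$ as well.

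The step I expect to be the real obstacle is the clause ``unless $f_3$ is close to a constant'': the candidate defect functions produced by the slicing are parametrized by auxiliary values such as $\alpha=h_3(t_0)$, and these parameters need not a priori be bounded, since one may translate the defect data by that of any linear combination $\sum_j G_j\circ\varphi_j\equiv 0$ with affine $G_j$ --- precisely the runaway analyzed for linear $\varphi_j$ near the end of \S\ref{section:moreon}. Nonvanishing curvature of the web is what ultimately tames this, since any such relation with affine $G_j$ forces each $G_j$ constant (Corollary~\ref{cor:fneqn1} applied to the $G_j'$); but making this effective, and separating it from the arithmetic-progression-like near-solutions in which the values of $f_3$ cluster in a set resembling a multiprogression of rank greater than $1$ (cf.\ the example of \S\ref{section:sublevelexample}), is where the two-scale analysis and the additive-combinatorial input recalled in the introduction enter. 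Once that is secured, the remaining steps are routine applications of Lemmas~\ref{lemma:scriptf} and \ref{lemma:squaremeasure} and Fubini's theorem.
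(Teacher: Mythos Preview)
Your approach diverges from the paper's, and the gap you flag is more serious than you suggest. The paper does \emph{not} attack the scalar inequality $|\sum_j f_j\circ\varphi_j|<r$ by freezing variables as in Lemma~\ref{lemma:sublevel}. Instead it reduces to $r=1$ by scaling, writes the indicator of $S(\bff,1)$ as a Fourier integral
\[
\int_\reals \widehat h(\lambda)\Big(\int_{\reals^2}\prod_j(\eta_j\circ\varphi_j)\,e^{i\lambda f_j\circ\varphi_j}\,\zeta\Big)\,d\lambda,
\]
applies Theorem~\ref{thm:nonosc} to the inner integral to obtain a bound $C\prod_j\norm{\eta_j e^{i\lambda f_j}}_{H^\sigma}$ with $\sigma<0$, and then proves the elementary but decisive Lemma~\ref{lemma:nonlinearsobolev}:
\[
\int_{|\lambda|\le A}\norm{\eta\, e^{i\lambda f}}_{H^\sigma}^2\,d\lambda
\le CA\,\sup_t\big|\{x:|f(x)-t|\le A^{-1}\}\big|^{|\sigma|},
\]
established by computing the $\lambda$--integral explicitly as a kernel $\widehat h(f(x)-f(y))$. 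Summing over dyadic $A=2^k$ gives the corollary in a few lines; no inverse theorem for approximate solutions is needed.

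Your freezing mechanism is circular for this problem. After normalizing $\varphi_1=x_1$, $\varphi_2=x_2$, $\varphi_3=\varphi$, fixing $y_0$ in the single inequality $|f_1(x)+f_2(y)+f_3(\varphi(x,y))|\le r$ yields $f_1(x)=-f_2(y_0)-f_3(\varphi(x,y_0))+O(r)$, which expresses $f_1$ in terms of the still-unknown $f_3$, not in terms of a known $C^\omega$ function as in Lemma~\ref{lemma:sublevel} (where the \emph{two} simultaneous inequalities, coupled to the inhomogeneous $\nabla\psi$, let one solve for one unknown at a time). Iterating by fixing $x_0$ and substituting back produces
\[
\big|\,g_1(x)+g_2(y)+f_3(\varphi(x,y))\,\big|\le O(r),
\qquad g_1(x)=-f_3(\varphi(x,y_0)),\ \ g_2(y)=-f_3(\varphi(x_0,y))+c,
\]
which is another instance of the \emph{same} scalar homogeneous inequality with three unknown functions. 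The inhomogeneous vector-valued structure that makes Lemma~\ref{lemma:sublevel} terminate is absent here; differencing never drops the count of genuinely unknown functions. The obstacle is thus not the runaway constants you import from \S\ref{section:moreon}, but this circularity --- and it is exactly why the paper routes through Theorem~\ref{thm:nonosc} and Lemma~\ref{lemma:nonlinearsobolev} instead.
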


In \S\ref{section:variablecoeffs} we discuss a related 
inequality for sublevel sets associated to expressions
$\sum_{j=1}^3 a_j(x) (f_j\circ\varphi_j)(x)$
with nonconstant coefficients $a_j$,
in the special case in which the mappings $\varphi_j$ are all linear.

Returning to the two corollaries formulated above,
we will first prove Corollary~\ref{cor:fneqn2},
then will indicate how a modification of the proof gives
Corollary~\ref{cor:fneqn1}. The following lemma will be used.

\begin{lemma} \label{lemma:nonlinearsobolev}
Let $\sigma<0$. 
Let $I\subset\reals$ be a bounded interval.
Then there exists $C<\infty$ such that for any real-valued function $f\in\lt(\reals)$
supported in a fixed bounded set, for any $A\in(0,\infty)$,
\begin{equation}
\int_{\lambda\le A}  \norm{\one_I e^{i\lambda f}}_{H^\sigma}^2\,d\lambda
\le CA \sup_{t\in\reals} \big|\{x\in I: |f(x)-t| \le A^{-1}\}|^{|\sigma|}.
\end{equation}
\end{lemma}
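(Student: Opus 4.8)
The plan is to trade the negative-order Sobolev norm for a nonnegative convolution kernel and then run a sublevel-set argument of the type recorded in the Interlude, with $e^{i\lambda f}$ playing the role of $e^{i\lambda\psi}$ and the Bessel weight $(1+|\xi|^2)^{\sigma}$ that of the smooth cutoff $\widehat\zeta$. Let $K_\sigma$ denote the convolution kernel of $(1-\Delta)^{\sigma}$, i.e. the Bessel potential with $\widehat{K_\sigma}(\xi)=(1+|\xi|^2)^{\sigma}$; for $\sigma<0$ this is a nonnegative, radially nonincreasing, integrable function on $\reals$, with $\int_{\reals}K_\sigma=c_\sigma<\infty$ and, near the origin, $K_\sigma(z)\asymp|z|^{-1-2\sigma}$ when $\sigma\in(-\tfrac12,0)$ (with $K_\sigma$ bounded near $0$ when $\sigma<-\tfrac12$, and $K_\sigma(z)\asymp\log\tfrac1{|z|}$ in the borderline case $\sigma=-\tfrac12$). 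Since $\norm{g}_{H^\sigma}^2=\iint g(x)\overline{g(y)}\,K_\sigma(x-y)\,dx\,dy$ for $g\in L^2$, taking $g=\one_I e^{i\lambda f}$ gives
\[
\norm{\one_I e^{i\lambda f}}_{H^\sigma}^2=\int_I\!\int_I e^{i\lambda(f(x)-f(y))}\,K_\sigma(x-y)\,dx\,dy .
\]
Since $\iint_{I\times I}K_\sigma(x-y)\,dx\,dy\le|I|\,c_\sigma<\infty$, one may integrate in $\lambda$ and apply Fubini; using the van der Corput-type bound $\bigl|\int_0^A e^{i\lambda u}\,d\lambda\bigr|\le\min(A,2/|u|)$, valid for every real $u$, this yields
\[
\int_0^A\norm{\one_I e^{i\lambda f}}_{H^\sigma}^2\,d\lambda\ \le\ \int_I\!\int_I \min\!\Bigl(A,\tfrac{2}{|f(x)-f(y)|}\Bigr)\,K_\sigma(x-y)\,dx\,dy .
\]
(If the range of $\lambda$ in the statement is $|\lambda|\le A$, the same holds with $2A$ replacing $A$.)

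The next step is to bound this double integral by $C\,A\,m(A^{-1})^{|\sigma|}$, where $m(\delta):=\sup_{t\in\reals}\bigl|\{x\in I:|f(x)-t|\le\delta\}\bigr|$, which is precisely the quantity on the right-hand side of the lemma. The only inputs needed are the elementary doubling property $m(2\delta)\le 2m(\delta)$ and the fact that, because $K_\sigma$ is radially nonincreasing, $\int_E K_\sigma(x-y)\,dy\le\int_{|z|\le|E|/2}K_\sigma=:\kappa_\sigma(|E|)$ for every measurable $E$. For fixed $x$, split the inner $y$-integral into the set where $|f(x)-f(y)|\le A^{-1}$ and the dyadic shells $2^{k-1}A^{-1}<|f(x)-f(y)|\le 2^kA^{-1}$, $k\ge1$: on the $k$-th piece $\min(A,2/|f(x)-f(y)|)\lesssim 2^{-k}A$, while that set of $y$'s has measure at most $m(2^kA^{-1})\le 2^k m(A^{-1})$, so it contributes $\lesssim 2^{-k}A\,\kappa_\sigma\!\bigl(2^k m(A^{-1})\bigr)$. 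Summing over $k$, integrating in $x\in I$, and invoking the elementary bound $\kappa_\sigma(m)\le C_{\sigma,|I|}\,m^{|\sigma|}$ for $0<m\le|I|$ --- immediate from the origin behaviour of $K_\sigma$ recalled above --- the resulting series is geometric and sums to $C_{\sigma,|I|}\,A\,m(A^{-1})^{|\sigma|}$. (Equivalently, one writes the double integral as $\le|I|\int_{2A^{-1}}^{\infty}\kappa_\sigma\!\bigl(m(\delta)\bigr)\,2\delta^{-2}\,d\delta$, estimates $\kappa_\sigma(m(\delta))\lesssim\min\bigl(1,(\delta A\,m(A^{-1}))^{|\sigma|}\bigr)$ using the doubling property, and splits the $\delta$-integral at $\delta=(A\,m(A^{-1}))^{-1}$.) This is the asserted inequality, with $C=C_{\sigma,|I|}$.

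Everything except the last estimate is routine: the kernel identity, the application of Fubini, the bound $\bigl|\int_0^A e^{i\lambda u}\,d\lambda\bigr|\le 2/|u|$, and the dyadic sublevel decomposition. The one point requiring care --- the main obstacle --- is the bookkeeping in the previous paragraph: balancing the decay $2^{-k}$ coming from $\min(A,2/|f(x)-f(y)|)$, the growth $2^{k}$ of the level-set measures $m(2^kA^{-1})$, and the near-origin size of $K_\sigma$ encoded by $\kappa_\sigma$, so that exactly the exponent $|\sigma|$ appears. This is where the sign and size of $\sigma$ enter: both the inequality $\kappa_\sigma(m)\lesssim m^{|\sigma|}$ and the convergence of the geometric series rely on them. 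Beyond this accounting I anticipate no genuine difficulty; structurally the proof is the same oscillatory-integral-to-sublevel-set passage as in the Interlude, run with a Sobolev weight in place of the smooth cutoff.
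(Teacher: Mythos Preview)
Your approach is correct and follows the same arc as the paper's: write $\norm{\one_I e^{i\lambda f}}_{H^\sigma}^2$ as a double integral against the Bessel kernel $K_\sigma$, integrate in $\lambda$, and finish with the rearrangement bound $\int_E K_\sigma(x-\cdot)\le \kappa_\sigma(|E|)$ applied to $E=\{y:|f(x)-f(y)|\lesssim A^{-1}\}$. The one technical difference is in the $\lambda$-integration. You estimate $\bigl|\int_0^A e^{i\lambda u}\,d\lambda\bigr|\le\min(A,2/|u|)$ and then run a dyadic decomposition in $|f(x)-f(y)|$, summing via the doubling of $m(\cdot)$. The paper instead (after scaling to $A=1$) majorizes $\one_{|\lambda|\le 1}$ by a Schwartz function $h$ with $\widehat h$ supported in $[-1,1]$; the $\lambda$-integral then produces $\widehat h(f(x)-f(y))$, which vanishes identically off $\{|f(x)-f(y)|\le 1\}$, so no dyadic sum is needed and one lands directly on $\iint_{I^2}|x-y|^{-1+|\sigma|}\one_{|f(x)-f(y)|\le 1}\,dx\,dy$. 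Both routes reduce to the same rearrangement estimate; the bump-function device simply short-circuits your geometric series.
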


\begin{proof}
It suffices to treat the case $A=1$, since
the substitution $\lambda = A\tau$ reduces the general case to this one.

Let $h$ be a nonnegative Schwartz function 
satisfying $h(y)\ge 1$ for all $y\in[-1,1]$,
with $\widehat{h}$ supported in $[-1,1]$.
\begin{align*} 
\int_{\lambda\le 1}  \norm{\one_I e^{i\lambda f}}_{H^\sigma}^2\,d\lambda
&\le
\int h(\lambda) \norm{\one_I e^{i\lambda f}}_{H^\sigma}^2\,d\lambda
\\&= \int h(\lambda) \int_\reals
\big|\int e^{i\lambda f(x)}e^{-ix\xi}\,\one_I(x)\,dx\big|^2
(1+\xi^2)^\sigma\,d\xi \,d\lambda
\\ &= \int_\reals h(\lambda) \int_\reals
\iint_{I\times I} e^{i\lambda [f(x)-f(y)]}e^{-i(x-y)\xi}
\,dx\,dy
(1+\xi^2)^\sigma\,d\xi \,d\lambda
\\&=
\iint_{I\times I}
\big(\int_\reals
e^{-i(x-y)\xi}
(1+\xi^2)^\sigma\,d\xi \big)
A \widehat{h}(A(f(y)-f(x)))
\,dx\,dy
\\& \le C A
\iint_{I\times I}
|x-y|^{-1-\sigma}\,
\big|\widehat{h}(A(f(y)-f(x)))\big|
\,dx\,dy.
\end{align*}
Since $\sigma<0$, this is majorized by
\begin{align*}
C A \iint_{I^2} 
|x-y|^{-1+|\sigma|}
&\one_{|f(x)-f(y)|\le A^{-1}}(x,y)
\,dx\,dy
\\& \le CA
\sup_{y\in I} \int_I
|x-y|^{-1+|\sigma|}
\one_{|f(x)-f(y)|\le A^{-1}}(x)\,dx
\\& \le CA \sup_t 
\big|\{x\in I: |f(x)-t| \le A^{-1}\}|^{|\sigma|}.
\end{align*} 
\end{proof}

\begin{proof}[Proof of Corollary~\ref{cor:fneqn2}]
It suffices to establish the conclusion in the special case in which $r=1$,
since replacing $f_j$ by $r^{-1}f_j$ reduces the general case to this one.

Fix a nonnegative $C^\infty_0$ cutoff function $\zeta$.
We aim for an upper bound for
$\iint_{\reals^2} \one_{S(\bff),1)}\cdot\zeta\,dx\,dy$.
Let $h:\reals\to[0,\infty)$ be $C^\infty$ and compactly
supported, and be $\equiv 1$ on $[-1,1]$.
Consider instead the majorant
\begin{equation}  \label{themajorant}
\iint h\big((\sum_j (f_j\circ\varphi_j)\big)\cdot\zeta.
\end{equation} 
By implementing a partition of unity,
we may introduce $C^\infty_0$ cutoff functions 
satisfying $\prod_{j=1}^3 \eta_j(\varphi_j(x,y))\equiv 1$
on the support of $\zeta$, with $\eta_j$
supported on an interval $I_j$.
Then \eqref{themajorant} is equal to
\begin{equation} \label{hhatrepresentation}
c\int_\reals \widehat{h}(\lambda)
\Big(\int_{\reals^2} \prod_j 
(\eta_j\circ\varphi_j)e^{i\lambda f_j\circ\varphi_j}\,\zeta(x,y)\,dx\,dy\Big)
\,d\lambda.
\end{equation} 
By Theorem~\ref{thm:nonosc}, there exists $\sigma<0$
for which \eqref{hhatrepresentation} is majorized by
\begin{align} 
Cr\int_\reals (1+\lambda)^{-2} 
\prod_{j=1}^3 \norm{\eta_j e^{i\lambda f_j}}_{H^\sigma}\,d\lambda
& \le C\prod_{j=1}^3
\Big(\int_\reals (1+\lambda)^{-2} 
\norm{\eta_j e^{i\lambda f_j}}_{H^\sigma}^3\,d\lambda\Big)^{1/3}
\notag
\\
&\le C\prod_{j=1}^3
\Big(\int_\reals (1+\lambda)^{-2} 
\norm{\eta_j e^{i\lambda f_j}}_{H^\sigma}^2\,d\lambda\Big)^{1/3}
\label{majorizedby}
\end{align} 
since
$\norm{\eta_j e^{i\lambda f_j}}_{H^\sigma} 
\le \norm{\eta_j e^{i\lambda f_j}}_{L^2} = O(1)$
uniformly in all parameters because each $f_j$ is real-valued
and $\eta_j$ has bounded support.

For any index $j\in\{1,2,3\}$,
\begin{equation*}
\int_\reals (1+\lambda)^{-2} 
\norm{\eta_j e^{i\lambda f_j}}_{H^\sigma}^2\,d\lambda
\le C\sum_{k=0}^\infty  2^{-2k}
\int_{|\lambda| \le 2^k}
\norm{\eta_j e^{i\lambda f_j}}_{H^\sigma}^2\,d\lambda.
\end{equation*}
To each term in this sum, apply Lemma~\ref{lemma:nonlinearsobolev} with $A = 2^k$ to obtain
a majorization by
\begin{equation*}
 C\sum_{k=0}^\infty  2^{-2k}
\cdot 2^k 
\sup_t \big|\{x\in I_j: |f_j(x)-t|\le 2^{-k}\}\big|^{|\sigma|}
\le C \sup_t \big|\{x\in I_j: |f_j(x)-t|\le 1\}\big|^{|\sigma|}.
\end{equation*}
Inserting this bound into \eqref{majorizedby} gives
\[ |S(\bff,1)| \le C\prod_{j=1}^3 \sup_{t_j\in\reals} 
\big|\{x\in\varphi_j(B): |f_j(x)-t_j|\le 1\}\big|^{|\sigma|/3}.\]
\end{proof}

We have implicitly proved a lemma that may be useful in future work:
\begin{lemma}
Let $\sigma<0$. 
Let $\eta\in C^\infty(\reals)$ be supported in a closed bounded interval $I\subset\reals$.
There exists $C<\infty$, depending on $\sigma,\eta,|I|$,
such that for any measurable function $f:\reals\to\reals$, 
\begin{equation}
\int_\reals (1+\lambda^2)^{-1} \norm{\eta e^{i\lambda f}}_{H^\sigma}^2\,d\lambda
\le C\sup_t |\{x\in I: |f(x)-t|\le 1\}|^{|\sigma|}.
\end{equation}
\end{lemma}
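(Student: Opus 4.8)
The plan is to extract, and package cleanly, the chain of estimates that already occurs inside the proof of Corollary~\ref{cor:fneqn2}. Two preliminary reductions come first. Since $\eta\in C^\infty_0$ is supported in $I$, we may write $\eta e^{i\lambda f}=\eta\cdot(\one_I e^{i\lambda f})$, and multiplication by the fixed Schwartz function $\eta$ is bounded on $H^\sigma$ for every real $\sigma$; hence $\norm{\eta e^{i\lambda f}}_{H^\sigma}\le C_\eta\norm{\one_I e^{i\lambda f}}_{H^\sigma}$, and it suffices to bound the quantity with $\eta$ replaced by $\one_I$. Next, although Lemma~\ref{lemma:nonlinearsobolev} is stated for $f\in\lt$, its proof uses that hypothesis only to ensure $\one_I e^{i\lambda f}\in\lt$, which holds for arbitrary measurable real-valued $f$ because $|e^{i\lambda f}|\equiv1$ and $I$ is bounded (the interchanges of integration there being justified by absolute convergence, using $\sigma<0$); so Lemma~\ref{lemma:nonlinearsobolev} is available here. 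I also note at the outset that the left-hand side is finite: since $\sigma\le0$ one has $\norm{\eta e^{i\lambda f}}_{H^\sigma}\le\norm{\eta e^{i\lambda f}}_{\lt}\le\norm{\eta}_{\lt}$, and $(1+\lambda^2)^{-1}$ is integrable.

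Granting these reductions, I would decompose the $\lambda$-integral dyadically. The integrand $\lambda\mapsto\norm{\one_I e^{i\lambda f}}_{H^\sigma}^2$ is even (the weight $(1+|\xi|^2)^\sigma$ is even, so $\norm{\one_I e^{-i\lambda f}}_{H^\sigma}=\norm{\overline{\one_I e^{i\lambda f}}}_{H^\sigma}=\norm{\one_I e^{i\lambda f}}_{H^\sigma}$), and $(1+\lambda^2)^{-1}\le4\cdot2^{-2k}$ on $\{2^{k-1}<|\lambda|\le2^k\}$ while $(1+\lambda^2)^{-1}\le1$ on $\{|\lambda|\le1\}$, so
\begin{equation*}
\int_\reals(1+\lambda^2)^{-1}\norm{\one_I e^{i\lambda f}}_{H^\sigma}^2\,d\lambda
\le 4\sum_{k=0}^\infty 2^{-2k}\int_{|\lambda|\le 2^k}\norm{\one_I e^{i\lambda f}}_{H^\sigma}^2\,d\lambda.
\end{equation*}
Applying Lemma~\ref{lemma:nonlinearsobolev} (to $\lambda\ge0$ and $\lambda\le0$ separately) with $A=2^k$ to the $k$-th integral gives
\begin{equation*}
\int_{|\lambda|\le 2^k}\norm{\one_I e^{i\lambda f}}_{H^\sigma}^2\,d\lambda
\le C\,2^k\sup_{t\in\reals}\big|\{x\in I:|f(x)-t|\le 2^{-k}\}\big|^{|\sigma|}.
\end{equation*}
For $k\ge0$ one has $2^{-k}\le1$, hence $\{x\in I:|f(x)-t|\le2^{-k}\}\subseteq\{x\in I:|f(x)-t|\le1\}$, and since $|\sigma|>0$ the corresponding $|\sigma|$-th powers of the measures satisfy the same inequality. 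Thus the $k$-th term is at most $C\,2^{-k}\sup_t\big|\{x\in I:|f(x)-t|\le1\}\big|^{|\sigma|}$, and summing the geometric series $\sum_{k\ge0}2^{-k}$ produces
\begin{equation*}
\int_\reals(1+\lambda^2)^{-1}\norm{\one_I e^{i\lambda f}}_{H^\sigma}^2\,d\lambda
\le C\sup_{t\in\reals}\big|\{x\in I:|f(x)-t|\le1\}\big|^{|\sigma|},
\end{equation*}
which together with the first reduction is exactly the claim.

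I do not expect a substantial obstacle: by design the lemma is just the dyadic repackaging of Lemma~\ref{lemma:nonlinearsobolev} already carried out within the proof of Corollary~\ref{cor:fneqn2}. The only points needing a word of care — all routine — are the passage from $\eta$ to $\one_I$ (boundedness of multiplication on $H^\sigma$), the observation that the $\lt$-hypothesis on $f$ in Lemma~\ref{lemma:nonlinearsobolev} is inessential here, and the monotonicity $|\{x\in I:|f(x)-t|\le r\}|$ in $r$ that lets the variable radius $2^{-k}$ be replaced by $1$ at the cost of a convergent geometric sum. If one prefers to invoke Lemma~\ref{lemma:nonlinearsobolev} exactly as stated, one may instead first reduce to bounded $f$ via the truncation $f\mapsto(-M)\vee(f\wedge M)$, using dominated convergence on the left-hand side and the fact that $|\{x\in I:|f(x)|\ge M\}|\to0$ to control the right-hand side, and then let $M\to\infty$.
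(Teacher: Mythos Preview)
Your proof is correct and follows essentially the same route as the paper: the lemma is stated immediately after the proof of Corollary~\ref{cor:fneqn2} with the remark ``We have implicitly proved a lemma,'' and your argument is precisely the dyadic decomposition plus Lemma~\ref{lemma:nonlinearsobolev} carried out there, with the added care of spelling out the reduction from $\eta$ to $\one_I$ and the observation that the $L^2$ hypothesis on $f$ in Lemma~\ref{lemma:nonlinearsobolev} is inessential.
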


\begin{proof}[Proof of Corollary~\ref{cor:fneqn1}]
Defining a measure $\mu$ on $I_j^2$ by $\,d\mu(x,y) = |x-y|^{-1+\gamma}
\,dx\,dy$, we have shown that
\begin{equation} 
\int_{\lambda\le 2^k r^{-1}} \norm{\eta_j e^{i\lambda f_j}}_{H^\sigma}^2\,d\lambda
\le C 2^{k} r^{-1} \mu(\{(x,y): |f_j(x)-f_j(y)| \le 2^{-k}r\}).
\end{equation} 
By summing over all nonnegative integers $k$ we deduce that
\begin{equation} 
\int_{\reals} r(1+r\lambda)^{-2} 
\norm{\eta_j e^{i\lambda f_j}}_{H^\sigma}^2\,d\lambda
\le C\mu(\{(x,y)\in I_j^2: |f_j(x)-f_j(y)| \le r\}).
\end{equation} 
If $f_j$ satisfies the hypothesis \eqref{nullset},
then 
$\mu(\{(x,y)\in I_j^2: |f_j(x)-f_j(y)| \le r\})\to 0$ as $r\to 0^+$.
Therefore $|S(\bff,r)|\to 0$ as $r\to 0^+$.
Therefore the set of points at which the equation \eqref{fneqn} holds 
is a Lebesgue null set.
\end{proof}

\section{A scalar sublevel set inequality with variable coefficients}
\label{section:variablecoeffs}

Throughout this section, $\varphi_j:\reals^2\to\reals^1$ are 
assumed to be linear and surjective.
Let $\Omega\subset\reals^2$ be a nonempty bounded open ball or parallelepiped. 
For $j\in\three$ let
$a_j:\overline{\Omega}\to\reals$ be $C^\omega$ functions.
By this we mean that $a_j$ extends to a real analytic
function defined in some neighborhood of $\overline{\Omega}$.
To any three-tuple $\bff = (f_j: j\in\three)$
of Lebesgue measurable functions $f_j:\Omega\to\reals$,
and to any $\eps>0$, associate the sublevel set
\begin{equation}
S(\bff,\eps) = \{x\in\Omega: 
\big|\sum_{j=1}^3 a_j(x) (f_j\circ\varphi_j)(x)\big| <\eps\}.
\end{equation}

\begin{theorem} \label{thm:sublevel}
Let $\varphi_j:\reals^2\to\reals^1$ be pairwise linearly independent linear mappings.
Let $\Omega,a_j$ be as above.
Suppose that for each $j\in\three$, $a_j(x)\ne 0$ for every $x\in\overline{\Omega}$.
Finally, suppose that for any nonempty open set $U\subset\Omega$
and any $C^\omega$ functions $F_j:U\to\reals$ satisfying
$\sum_{j=1}^3 a_j(x)(F_j\circ\varphi_j)(x)=0$
for every $x\in U$, all three functions $F_j$ vanish identically on $U$.
Then there exist $\gamma>0$ and $C<\infty$
such that 
for every $\eps>0$ and every three-tuple $\bff$ of Lebesgue measurable
functions 
satisfying 
\begin{equation} \label{lowerbound}
|f_1(y)|\ge 1\ \forall\, y\in\varphi_1(\Omega),
\end{equation}
the sublevel set $S(\bff,\eps)$ satisfies
\begin{equation} \label{mainineq}
|S(\bff,\eps)| \le C\eps^\gamma.
\end{equation}
\end{theorem}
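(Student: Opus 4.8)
The plan is to reduce, by linear changes of variable and a dyadic decomposition, to a sublevel inequality for a single scalar expression whose unknown functions are bounded, then to collapse those unknowns onto a compact family of real analytic functions by an iterated freezing argument, and finally to invoke Lemma~\ref{lemma:scriptf} together with the nondegeneracy hypothesis. \textbf{Reductions.} Since the $\varphi_j$ are pairwise independent linear functionals on $\reals^2$, a linear change of coordinates and a relabelling of the $f_j$ put us in the case $\varphi_1(x,y)=x$, $\varphi_2(x,y)=y$, $\varphi_3(x,y)=x+y$, with $\Omega$ still a ball or parallelepiped, the $a_j$ still real analytic and nonvanishing on $\overline\Omega$, and the nondegeneracy hypothesis unaffected. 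Dividing the defining inequality by $a_1$ (bounded above and below on $\overline\Omega$) reduces to $a_1\equiv 1$ at the price of changing $\eps$ by a bounded factor. Using the scaling invariance $S(t\bff,t\eps)=S(\bff,\eps)$ and decomposing $S(\bff,\eps)$ dyadically according to the sizes of $f_1,f_2,f_3$ on the relevant fibres, it suffices to bound, for each scale $2^{-K}$ with $K\ge 0$, the measure of the corresponding piece after rescaling $\bff$ by $2^{-K}$; this puts us in the situation $\norm{f_j}_{L^\infty(\varphi_j(\Omega))}\le 1$, $|f_1|\ge 2^{-K}$, threshold $2^{-K}\eps$, and the resulting bound will carry a factor $2^{-cK}$ making the sum over $K$ converge. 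So it is enough to produce a subset $\scripte^\dagger\subset S(\bff,\eps)$ with $|\scripte^\dagger|\gtrsim|S(\bff,\eps)|^{A}$ for a fixed $A$, together with functions $g_j$ from a compact family of real analytic functions specified by $(\varphi_j,a_j)$ alone, such that $|f_j-g_j|\le C2^{-K}\eps$ on the relevant slices of $\scripte^\dagger$.

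\textbf{The freezing engine.} With $a_1\equiv 1$ the inequality reads $|f_1(x)+a_2(x,y)f_2(y)+a_3(x,y)f_3(x+y)|<\eps'$. Freezing a value $y=y_0$ (Lemma~\ref{lemma:squaremeasure}, passing to a subset of measure $\gtrsim|S|^{2}$) expresses $f_1(x)$ as a compact-family real analytic function of $x$, with bounded parameter $c_2=f_2(y_0)$, plus $a_3(x,y_0)f_3(x+y_0)+O(\eps')$. Substituting eliminates $f_1$; a further freeze $x=x_0$ then expresses $f_2(y)$ as a compact-family real analytic function of $y$ plus $Q(y)f_3(x_0+y)+O(\eps')$ with $Q$ nonvanishing. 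Substituting both, the surviving inequality involves only $f_3$ and takes the form $\big|\sum_{i=1}^3 b_i(x,y)f_3(\psi_i(x,y))-c(x,y)\big|<C\eps'$, where $\psi_1=x+y$, $\psi_2=y+x_0$, $\psi_3=x+y_0$ have pairwise independent gradients and $b_i,c$ are compact-family real analytic with $b_i$ nonvanishing. Freezing now a $y$-slice, but over a positive-measure set of admissible slices, yields for each shift $h=y-y_0$ in a positive-measure set a twisted difference relation $f_3(t+h)=\alpha_h(t)f_3(t)+\beta_h(t)+O(\eps')$, with $\alpha_h(t),\beta_h(t)$ jointly real analytic in $(t,h)$ and bounded.

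\textbf{Pinning down $f_3$ and conclusion.} Choosing, via pigeonhole, a base point $t_0$ for which these relations hold with $t=t_0$ for all admissible $h$, and setting $c=f_3(t_0)\in[-1,1]$, we get $f_3(s)=R(s;c)+O(\eps')$ for $s$ in a positive-measure set $t_0+\mathcal H$, where $R(\cdot;c)$ is a real analytic function, defined on all of $\varphi_3(\Omega)$, drawn from a compact family. Take $g_3:=R(\cdot;c)$, and define $g_2,g_1$ by substituting $g_3$ into the two relations above; all three $g_j$ lie in compact real analytic families. On the subset $\scripte^\dagger\subset S$ consisting of points whose images under $\psi_1,\psi_2,\psi_3$ all lie in $t_0+\mathcal H$ — which still has measure $\gtrsim|S|^{A}$ — one has $|f_j-g_j|\le C\eps'$, hence $\scripte^\dagger\subset\{x:|G(x)|<C\eps'\}$ where $G(x)=\sum_j a_j(x)g_j(\varphi_j(x))$ is itself drawn from a compact real analytic family. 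If $G\equiv 0$ then the nondegeneracy hypothesis forces $g_1\equiv g_2\equiv g_3\equiv 0$; but $|g_1|\ge|f_1|-C\eps'\ge 2^{-K}-C2^{-K}\eps>0$ on $\scripte^\dagger$, a contradiction for $\eps$ small, so $G\not\equiv 0$. Lemma~\ref{lemma:scriptf} then gives $|\{|G|<C\eps'\}|\le C(\eps')^{\tau}$, whence $|S(\bff,\eps)|^{A}\lesssim|\scripte^\dagger|\le C(2^{-K}\eps)^{\tau}$; summing $2^{-K\tau/A}\eps^{\tau/A}$ over the polynomially many dyadic pieces at each scale $K$ and over $K\ge 0$ yields $|S(\bff,\eps)|\le C\eps^{\gamma}$ with $\gamma=\tau/A>0$.

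\textbf{Main obstacle.} I expect the freezing engine to be the crux. For a system of two scalar inequalities, as in Lemma~\ref{lemma:sublevel}, a single freeze determines one unknown outright; here a single scalar inequality yields only relations, so the reduction must be iterated, and the genuinely delicate point is to pass from the twisted difference relations for the last surviving function $f_3$ — which only say that $f_3$ is rigid along a positive-measure set of shifts — to the statement that $f_3$ agrees to within $O(\eps')$, on a set of measure $\gtrsim|S|^{A}$, with a single member of a compact real analytic family. Keeping every auxiliary parameter (the frozen points, the frozen values of the $f_j$, the shifts) in a fixed compact set throughout, so that Lemma~\ref{lemma:scriptf} stays applicable, is exactly what forces the preliminary rescaling and dyadic decomposition; it is also why the hypothesis $|f_1|\ge 1$ is convenient rather than merely a non-triviality assumption on $\bff$, since it survives the reductions and supplies directly the nonvanishing of $g_1$ needed to activate the nondegeneracy hypothesis.
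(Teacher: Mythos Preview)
There is a genuine gap in your ``pinning down $f_3$'' step. After your first two freezes you correctly arrive at a relation
\[
b_1(x,y)f_3(x+y)+b_2(x,y)f_3(x_0+y)+b_3(x,y)f_3(x+y_0)=c(x,y)+O(\eps'),
\]
and after freezing $y=y_1$ and setting $t=x+y_0$, $h=y_1-y_0$, you obtain
\[
f_3(t+h)=\alpha_h(t)f_3(t)+\beta_h(t)+O(\eps').
\]
But $\beta_h(t)$ is not jointly real analytic in $(t,h)$: it contains the term $-\tfrac{b_2(t-y_0,y_0+h)}{b_1(t-y_0,y_0+h)}\,f_3(x_0+y_0+h)$, and the factor $f_3(x_0+y_0+h)$ is an \emph{unknown measurable} function of $h$. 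Hence your $R(s;c)=\alpha_{s-t_0}(t_0)c+\beta_{s-t_0}(t_0)$ is not drawn from a compact $C^\omega$ family; it is parametrized by the unknown $h\mapsto f_3(x_0+y_0+h)$. Your relation is really a two-term recursion $f_3(s)=\tilde\alpha(s)c+\tilde\gamma(s)f_3(s+\tau)+\tilde\delta(s)+O(\eps')$ with fixed shift $\tau=x_0+y_0-t_0$, which is still circular. This is exactly the phenomenon the paper flags in \S19: simple freezing of a single scalar relation does not pin down the last unknown, and for the borderline cases \eqref{classical}, \eqref{canonical!} approximability by a compact $C^\omega$ family actually fails.

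The paper's proof breaks this cycle with a different elimination. Instead of freezing coordinates, it differences along the null direction of $\varphi_3$, considering pairs $(x,y)$ and $(x-s,y+s)$ in $\scripte$, which kills $f_3(x+y)$ outright and leaves a relation involving $f_1$ at two points and $f_2$ at two points. A second pairing, now in the $x$-variable (taking $(x,y,s)$ and $(x',y,s)$), produces a $2\times 2$ linear system for the vector $(f_2(y),f_2(y+s))$ whose coefficient matrix $B$ is built from the $a_j$ alone. A short lemma shows $\det B\not\equiv 0$ precisely from the nondegeneracy hypothesis (if $\det B\equiv 0$ then $a_2/a_3$ factors as $h(x+y)k(y)$, yielding a nontrivial $C^\omega$ solution). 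Multiplying by the cofactor matrix then expresses $f_2(y)$ as a ratio of real analytic functions in which the only unknowns are bounded scalars $f_1(x),f_1(x-s),f_1(x'),f_1(x'-s)\in[-2,2]$ --- values of a \emph{different} $f_j$, not of $f_2$ itself --- so the iteration closes. Because the resulting approximants are merely meromorphic with denominators $\gtrsim\eps^{C\delta_0}$, the endgame cannot be Lemma~\ref{lemma:scriptf} directly; the paper clears denominators, uses a differential inequality to reduce to a function $G(\theta)$ of the parameters alone, and then invokes \L{}ojasiewicz's inequality together with the lower bound $|f_1|\ge 1$ to keep $\bar\theta$ quantitatively away from the zero variety of $G$.
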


The conclusion seems likely to remain valid
if the hypothesis that $a_j$ vanish nowhere,
is relaxed to $a_j$ not vanishing identically.
We emphasize that the mappings $\varphi_j$ are assumed in Theorem~\ref{thm:sublevel} to be linear.

Several results related to Theorem~\ref{thm:sublevel} are known,
besides those in \S\ref{section:FEqns}.
If each $a_j$ is constant and the mappings $\varphi_j$ are linear, 
then whenever $\sum_j f_j\circ\varphi_j$ vanishes Lebesgue almost everywhere,
each $f_j$ must agree almost everywhere with an affine function.
If $|\sum_j f_j\circ\varphi_j(x)|\le\eps$
for all $x\in\Omega\setminus E$, and if $|E|$ is sufficiently small,
then there exist affine functions $L_j$ satisfying
$|f_j(y)-L_j(y)| \le C\eps$
for all $y\in\varphi_j(\Omega)\setminus E_j$
with $|E_j| \le C|E|$.
However, no inequality of the form \eqref{mainineq}, with power law dependence
on $\eps$, is known for this linear constant coefficient situation.

In the proof, it suffices to treat the special case in which
$|f_j(y)|\le 2$ for every $y\in\varphi_j(\Omega)$ and each $j\in\three$,
and $|f_1(y)|\in[1,2]$ for every $y\in\varphi_1(\Omega)$.
Indeed, for $k\ge 0$ define $E_k$ to be the set of all $x\in\scripte$
that satisfy $2^k\le \max_j |f_j\circ \varphi_j(x)| < 2^{k+1}$.
Then $E_k = S(2^{-k}\bff,2^{-k}\eps)$. 
Therefore the conclusion of the special case gives 
$|E_k| \lesssim 2^{-\gamma k}\eps^\gamma$.
Summing over $k$ yields the desired bound for $|\scripte|$.

We may assume that $\overline{\Omega} = [0,1]^2$,
by partitioning a small neighborhood of $\overline{\Omega}$
into finitely many cubes, making an affine change of coordinates
in each, and treating each cube separately.
In part of the proof we use coordinates $(x,y)\in[0,1]\times[0,1]$,
and write $D_1 = \frac{\partial}{\partial x}$
and $D_2 = \frac{\partial}{\partial y}$.
By making a linear change of variables in $\reals^2$,
We may also assume without loss of generality that $\varphi_1(x,y)\equiv x$,
$\varphi_2(x,y) \equiv y$, and $\varphi_3(x,y)=x+y$. 

It suffices to show that there exists $\eps_0$ such that
the conclusion holds for all $\eps\in(0,\eps_0]$.
It is no loss of generality to assume, as we will, that
\begin{equation} \label{bycontradiction}
|\scripte| \ge\eps^{\delta_0} \end{equation}
for a sufficiently small exponent $\delta_0>0$.
Indeed, if this assumption fails to hold then we have the stated conclusion,
with $\gamma = \delta_0$ and $C=1$.

Rewrite the inequality characterizing $\scripte = S(\bff,\eps)$ as 
\begin{equation} \label{approx1}
f_3(x+y) + a(x,y)f_1(x) = b(x,y)f_2(y) + O(\eps)
\qquad  \forall\, (x,y)\in\scripte
\end{equation}
with $a = a_1/a_3$ and $b = -a_2/a_3$.
Let $c_0>0$ be small and define
\begin{equation}
\tilde\scripte_{1} = \big\{
y\in[0,1]: |\{x\in[0,1]: (x,y)\in\scripte\}| \ge c_0|\scripte|
\big\}.
\end{equation}
If $c_0$ is sufficiently small then
by Fubini's theorem and the Cauchy-Schwarz inequality,
\begin{equation} 
|\tilde \scripte_{1}| \gtrsim |\scripte|^2 \gtrsim \eps^{2\delta_0}.
\end{equation}
Henceforth we replace $\scripte$ by its subset 
$\scripte_1 =  \{(x,y)\in\scripte: y\in \tilde\scripte_1\}$.

Let $\scripte_2\subset\reals^3$ be the set of all ordered triples
$(x,y,s)\in\reals^3$ such that $(x-s,y+s)\in \scripte_1$
and $(x,y)\in\scripte_1$. This set satisfies
$|\scripte_2| \gtrsim |\scripte_{1}|^2\gtrsim |\scripte|^4$
by the Cauchy-Schwarz inequality.
Indeed, 
\[ |\scripte_1| = C\int_I |\{(x,y)\in\scripte_1: x+y=t\}|\,dt\]
where $I$ is a bounded subinterval of $\reals$
and with $|\cdot|$ denoting one-dimensional Lebesgue measure in the integral.
Therefore
\[ |\scripte_1|^2\le C\int_I 
|\{(x,y)\in\scripte_1: x+y=t\}|^2\,dt
= C|\{((x,y),(x',y'))\in \scripte_1\times\scripte_1: x+y=x'+y'\}|,  \]
with the last $|\cdot|$ denoting the natural three-dimensional Lebesgue
measure on the hyperplane in $\reals^4$ defined by this equation,
with the constant $C$ permitted to change from one occurrence to the next.
The set of all pairs
$((x,y),(x',y'))$ that satisfy $x+y=x'+y=$
is in measure-preserving one-to-one correspondence with
$\scripte_2$ via the relation $(x',y') = (x-s,y+s)$.

For any $(x,y,s)\in\scripte_2$,
\begin{equation} \label{approx2}
f_3(x+y) 
+ a(x-s,y+s)f_1(x-s) 
= b(x-s,y+s)f_2(y+s) + O(\eps) 
\ \forall\,(x,y,s)\in\scripte_2.
\end{equation}
For any $(x,y,s)\in\scripte_2$ we have the two approximate relations
\eqref{approx1},\eqref{approx2}. 
The contributions of $f_3$ cancel when these two relations are subtracted, leaving 
\begin{multline} \label{approx3}
a(x-s,y+s)f_1(x-s)  - a(x,y)f_1(x)
\\
= b(x-s,y+s)f_2(y+s) - b(x,y)f_2(y) + O(\eps) 
\qquad\forall\,(x,y,s)\in\scripte_2.
\end{multline}


The set $\scripte_3$ of all $(x,x',s,y)\in\reals^4$ such that
both $(x,y,s)$ and $(x',y,s)$ belong to $\scripte_2$ satisfies
\begin{equation}
|\scripte_3| \gtrsim |\scripte_2|^2 \gtrsim |\scripte|^8 \gtrsim\eps^{8\delta_0}.
\end{equation}
Consider any such $(x,x',s,y)$.  
Consider the conjunction of \eqref{approx3} with
the corresponding relation with $(x,y,s)$ replaced by $(x',y,s)$. 
Express this pair of relations as the approximate matrix equation  
\begin{equation} \label{approxmatrixeqn1}
B(x,x',s,y) 
\begin{pmatrix} f_2(y) \\ f_2(y+s) \end{pmatrix}
= A(x,x',s,y) + O(\eps)
\end{equation}
in which the coefficient matrices $A,B$ are the square matrix
\begin{equation} B(x,x',s,y) =
\begin{pmatrix}
b(x-s,y+s) & -b(x,y) \\ b(x'-s,y+s) & -b(x',y)
\end{pmatrix}
\end{equation}
and the column matrix
\begin{equation} A(x,x',s,y) = 
\begin{pmatrix} a(x-s,y+s)f_1(x-s)  - a(x,y)f_1(x)
\\ a(x'-s,y+s)f_1(x'-s)  - a(x',y)f_1(x') \end{pmatrix},
\end{equation}
respectively.

\begin{lemma}
As a function of $(x,x',s,y)$, the determinant $\det(B)$ does not vanish identically. 
\end{lemma}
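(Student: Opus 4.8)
The plan is to argue by contradiction: assuming $\det(B)\equiv 0$, I will manufacture a nontrivial $C^\omega$ solution of the functional equation $\sum_{j=1}^3 a_j\,(F_j\circ\varphi_j)\equiv 0$ on a nonempty open subset of $\Omega$, contradicting the standing nondegeneracy hypothesis of Theorem~\ref{thm:sublevel}. Throughout I use the reduction already in force, in which $\varphi_1(x,y)=x$, $\varphi_2(x,y)=y$, $\varphi_3(x,y)=x+y$, and I recall that $b=-a_2/a_3$ is real analytic and nowhere vanishing on a neighborhood of $\overline\Omega=[0,1]^2$; being continuous, nonvanishing, and defined on a connected set, $b$ has a constant sign there, so $\beta:=\log|b|$ is real analytic as well.

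First I would rewrite the hypothesis $\det(B)\equiv 0$. Since $\det B(x,x',s,y)=-b(x-s,y+s)\,b(x',y)+b(x,y)\,b(x'-s,y+s)$, vanishing identically is equivalent to $b(x-s,y+s)\,b(x',y)=b(x,y)\,b(x'-s,y+s)$, and taking logarithms turns this into the additive relation
\[
\beta(x-s,y+s)+\beta(x',y)=\beta(x,y)+\beta(x'-s,y+s),
\]
valid for $(x,x',s,y)$ in a connected open subset of $\reals^4$. The four evaluation points are exactly the vertices of a web rectangle for the foliations $\{\varphi_2=\mathrm{const}\}$ and $\{\varphi_3=\mathrm{const}\}$: with $v_1=y$, $v_2=y+s$, $w_1=x+y$, $w_2=x'+y$ one has $(x,y)=(v_1,w_1)$, $(x-s,y+s)=(v_2,w_1)$, $(x'-s,y+s)=(v_2,w_2)$ and $(x',y)=(v_1,w_2)$ in the linear coordinates $(v,w)=(\varphi_2,\varphi_3)$. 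Writing $\beta(u,v)=\widetilde\beta(v,u+v)$ with $\widetilde\beta$ real analytic, and noting that $(x,x',s,y)\mapsto(y+s,y,x+y,x'+y)$ is an affine isomorphism of $\reals^4$, the relation becomes the classical mixed-difference identity
\[
\widetilde\beta(p,r)+\widetilde\beta(q,t)=\widetilde\beta(q,r)+\widetilde\beta(p,t)
\]
for $(p,q,r,t)$ in an open set. Subtracting shows $\widetilde\beta(p,r)-\widetilde\beta(q,r)$ is independent of $r$, whence $\widetilde\beta(v,w)=\mu(v)+\nu(w)$ on an open set, with $\mu,\nu$ real analytic (e.g. $\nu(w)=\widetilde\beta(v_0,w)$ and $\mu(v)=\widetilde\beta(v,w_0)-\widetilde\beta(v_0,w_0)$ for fixed reference values); by real analyticity of $\widetilde\beta-\mu-\nu$ this persists on the whole connected domain. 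Unwinding, $\beta=\mu\circ\varphi_2+\nu\circ\varphi_3$ on $\Omega$, hence
\[
\frac{a_2}{a_3}=\varepsilon\,\big(m\circ\varphi_2\big)\,\big(n\circ\varphi_3\big)\quad\text{on }\Omega,
\]
where $m:=e^{\mu}>0$ and $n:=e^{\nu}>0$ are real analytic on the relevant intervals and $\varepsilon\in\{+1,-1\}$ is the constant sign of $a_2/a_3$.

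Next I would exhibit the forbidden solution. Take $U=\Omega$ (or any nonempty open subset) and set $F_1\equiv 0$ on $\varphi_1(U)$, $F_2:=\varepsilon/m$ on $\varphi_2(U)$, and $F_3:=-n$ on $\varphi_3(U)$. These functions are real analytic, and $F_2,F_3$ vanish nowhere, so neither is identically zero. Dividing $\sum_{j=1}^3 a_j\,(F_j\circ\varphi_j)$ by $a_3\ne 0$ and inserting the formula for $a_2/a_3$ gives, on $U$,
\[
\frac{1}{a_3}\sum_{j=1}^3 a_j\,(F_j\circ\varphi_j)
=\varepsilon\,(m\circ\varphi_2)\,(n\circ\varphi_3)\cdot\frac{\varepsilon}{m\circ\varphi_2}-(n\circ\varphi_3)\equiv 0,
\]
so $(0,F_2,F_3)$ is a nontrivial $C^\omega$ solution of $\sum_j a_j(F_j\circ\varphi_j)\equiv 0$ on $U$. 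This contradicts the hypothesis of Theorem~\ref{thm:sublevel}, and therefore $\det(B)$ does not vanish identically.

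The main obstacle is not conceptual but a matter of checking that the reductions are legitimate: one must verify that the natural domain $\{(x,x',s,y)\in\reals^4:\text{all four points lie in }\mathrm{dom}(b)\}$ on which $\det B$ is defined is a nonempty connected open set (so that the rectangle identity is not vacuous and propagates by analytic continuation), that passing to logarithms and to the web coordinates $(\varphi_2,\varphi_3)$ is valid (here nonvanishing of $a_2,a_3$ and connectedness supply the constant sign $\varepsilon$, and real analyticity of $\widetilde\beta-\mu-\nu$ propagates $\widetilde\beta=\mu+\nu$ from an open set to the whole domain), and that $m$ and $n$ are nonvanishing on $\varphi_2(U)$ and $\varphi_3(U)$ so that $F_2=\varepsilon/m$ and $F_3=-n$ are well defined. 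Once these routine points are in place the proof is complete; the heart of the matter is the identification of $\det B\equiv 0$ with the statement that $\log|a_2/a_3|$ lies in the degenerate class $\mu\circ\varphi_2+\nu\circ\varphi_3$, which is exactly what produces a nonzero analytic solution of the functional equation.
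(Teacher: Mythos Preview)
Your proof is correct and follows essentially the same route as the paper: argue by contradiction, deduce from $\det(B)\equiv 0$ that $b=-a_2/a_3$ factors as a product of a function of $\varphi_2$ with a function of $\varphi_3$, and then exhibit a nontrivial $C^\omega$ solution of the functional equation. The only cosmetic difference is that the paper obtains the factorization via the one-line observation $\partial^2_{s,x}\ln|b(x-s,y+s)|\equiv 0$, whereas you spell out the same conclusion through the rectangle identity in the web coordinates $(\varphi_2,\varphi_3)$.
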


\begin{proof}
Assume to the contrary that $\det(B)\equiv 0$.
Then the ratio $b(x-s,y+s)\,/\,b(x'-s,y+s)$ is independent of $s$,
whence 
\[ \frac{\partial^2}{\partial s\,\partial x} \ln|b(x-s,y+s)| \equiv 0.\]
Therefore $b$ takes the form 
\[ b(x,y) \equiv h(x+y)\cdot k(y) \]
for some smooth nowhere vanishing functions $h,k$.

Choosing $f_1(x)\equiv 0$, $f_2(y) = k(y)^{-1}$,
and $f_3(z) = h(z)$ , we have
\begin{equation}
f_3(x+y)  + a(x,y)f_1(x)  \equiv b(x,y)f_2(y)
\end{equation}
on a nonempty open set. This contradicts the hypothesis of Theorem~\ref{thm:sublevel} that
the functional equation has no solution except the trivial
solution $f_1\equiv f_2 \equiv f_3\equiv 0$. 
\end{proof}

For any $(x,x',s,y)$, multiply both sides of 
the approximate matrix equation \eqref{approxmatrixeqn1}
by the cofactor matrix of $B(x,x',s,y)$ to conclude that
\begin{equation} \label{approxmatrixeqn2}
\det(B)(x,x',s,y) \cdot g(y) = \scripta(x,x',s,y) + O(\eps),
\end{equation}
where $\scripta(x,x',s,y)$ is one of the two components of the product of the
cofactor matrix of $B(x,x',s,y)$ with $A(x,x',s,y)$.
Thus $\scripta$ 
is a linear combination of products of the given coefficients
$a,b$, evaluated at points that are functions
of $(x,x',s,y)$, with coefficients in $[-2,2]^4$.
Those coefficients are the quantities
$f_1(x-s),f_1(x),f_1(x'-s),f_1(x')$, whose dependence on
$(x,x',s)$ is merely Lebesgue measurable and is unknown.
However, $\scripta$ depends linearly, hence real analytically,
on those coefficients.

By partitioning $[0,1]^2$ into finitely many smaller cubes,
and identifying each subcube again with $[0,1]^2$
via an affine change of variables, we may assume that each coefficient
$a_j$ is defined and analytic in a large fixed ball that contains $[0,1]^2$.
Define $K\subset\reals^7$ to be the set of all tuples
$\theta = (x,x',s,r) = (x,x',s,r_1,r_2,r_3,r_4)$
such that $r\in[-2,2]^4$, $(x,x')\in[0,1]^2$, and $s\in[-2,2]$.
$K$ is compact and connected. 
Let $(y,\theta)$ vary over $[0,1]\times K$.
\eqref{approxmatrixeqn2} can thus be written as
\begin{equation} \label{approxmatrixeqn3}
\det(B)(y,\theta) \cdot f_2(y) = \scriptastar(y,\theta) + O(\eps)
\end{equation}
for all $(y,\theta)\in K$ for which $(x,x',s,y)\in\scripte_3$, 
with $\scriptastar$ a real analytic function of $(y,\theta)$
in a neighborhood of $[0,1]\times K$.

The set of all $(x,x',s,y)\in\scripte_3$ 
has Lebesgue measure $\gtrsim |\scripte|^{C_0}\ge\eps^{C_0\delta_0}$.
On the other hand, since the $C^\omega$ function $(x,x',s,y)\mapsto \det(B)
(x,x',s,y)$ does not vanish identically, there exists $\eta>0$ such that
\begin{equation}
\big|\{(x,x',s,y):  |\det(B)(x,x',s,y)|\le r\} \big| \lesssim r^\eta
\ \forall\,r\in(0,1].
\end{equation}
Choose a constant $C_1\in\reals^+$ that satisfies $\eta\cdot C_1 > C_0$.
Applying the preceding inequality with $r = \eps^{C_1\delta_0}$,
$r^\eta$ is small relative to $\eps^{C_0\delta_0}$, and thus
we may conclude that there exists $(x,x',s)$ satisfying
\begin{equation}
\big|\{
y\in[0,1]: (x,x',s,y)\in\scripte_3
\ \text{ and } \ |\det(B)(x,x',s,y)|\ge\eps^{C_1\delta_0} 
\}\big|
\gtrsim \eps^{\delta_0}.
\end{equation}
The conclusion is that there exists 
$\bar\theta=\bar\theta(\bff,\eps)\in K$ satisfying
\eqref{approxmatrixeqn3} for the indicated set of pairs $(y,\theta)$, with 
\begin{equation} |\det(B)(x,x',s,y)|\ge\eps^{C_1\delta_0}. \end{equation}
For such $\bar\theta$,
\begin{equation} \label{gconclusion}
|f_2(y) -\det(B)(y,\bar\theta)^{-1} \scriptastar(y,\bar\theta)| = O(\eps)
\end{equation}
for all $y$ in a set of measure $\gtrsim \eps^{\delta_0}$.

Revert to the initial notation, with mappings $\varphi_j$ and coefficients $a_j$.
The conclusion proved thus far can be summarized as follows.
Let $a_j,\varphi_j$ satisfy the hypotheses of Theorem~\ref{thm:sublevel}.
Let $\delta_0,\eps_0>0$ be sufficiently small.
There exist a compact connected set $K\subset \reals^7$, 
and a function $F_2:[0,1]\times K\to\reals$
that extends meromorphically to a neighborhood of $[0,1]\times K$,
with the following property.
Let $\bff$ and $\eps\in(0,\eps_0]$ 
satisfy the hypotheses of the theorem,
as well as the auxiliary condition $|S(\bff,\eps)|\ge \eps^{\delta_0}$.
Then there exist $\scripte'\subset S(\bff,\eps)$  
satisfying $|\scripte'|\gtrsim|S(\bff,\eps)|^C$,
and $\bar\theta = \bar\theta(\bff,\eps)\in K$, 
such that the triple $(f_1,\tilde f_2,f_3)$
defined by $\tilde f_2(y) = F_2(y,\bar\theta)$
satisfies
\begin{equation}
\big| a_2(x) \tilde f_2(\varphi_2(x))
+ \sum_{j\ne 2} a_j(x) f_j(\varphi_j(x))\big|
= O(\eps)\ \forall\,x\in\scripte'
\end{equation}
and 
\begin{equation}
|f_2(y)-\tilde f_2(y)| = O(\eps) \ \forall\,y\in\varphi_2(\scripte').
\end{equation}
Moreover, the function $F_2$ factors as
$F_2(y,\theta) = \alpha(y,\theta)/\beta(y,\theta)$
with $\alpha,\beta$ both analytic in a neighborhood
of $[0,1]\times K$ and satisfying 
\begin{equation}
|\beta(y,\bar\theta)| \ge \eps^{C_1\delta_0} \ \forall\, y\in\varphi_2(\scripte').
\end{equation}

This reasoning can be applied twice more in succession, with the roles of the
indices $j\in\three$ permuted, to approximate each of $f_1,f_3$
by $C^\omega$ functions in the same way as has been done for $f_2$.
With each iteration, $\scripte$ is replaced by a subset,
and one of the functions $f_k$ is replaced by an approximating meromorphic
function $\tilde f_k$; these replacements are retained through subsequent iterations.
The conclusion may be summarized as follows,
incorporating a change in the meaning of the auxiliary space $K$.

Let $a_j,\varphi_j$ be as in the statement of Theorem~\ref{thm:sublevel}.
Let $\delta_0,\eps_0>0$ be sufficiently small.
There exist a compact connected set $K\subset \reals^{21} = (\reals^7)^3$
and three $C^\omega$ functions $F_j:[0,1]\times K\to\reals$,
such that for any $\bff$ and any $\eps\in(0,\eps_0]$ 
satisfying the hypotheses of the theorem with associated sublevel
set $S(\bff,\eps)$ satisfying 
$|S(\bff,\eps)|\ge\eps^{\delta_0}$,
there exist a subset $\scripte'\subset S(\bff,\eps) \subset[0,1]^2$  
satisfying $|\scripte'|\gtrsim|S(\bff,\eps)|^C$
and an associated parameter $\bar\theta = \bar\theta(\bff,\eps)\in K$, 
such that the ordered triple of approximating functions 
$(\tilde f_j: j\in\three)$
defined by $\tilde f_j(y) = F_j(y,\bar\theta)$ satisfies
\begin{equation}
\big| \sum_{j=1}^3 a_j(x) \tilde f_j(\varphi_j(x))\big|
= O(\eps)\ \forall\,x\in\scripte'
\end{equation}
and 
\begin{equation}
|f_j(y)-\tilde f_j(y)| = O(\eps) \ \forall\,y\in\varphi_j(\scripte').
\end{equation}
Moreover, for each $j\in\three$, the function $F_j$ factors almost everywhere 
in its domain $[0,1]\times K$ as
\[ F_j(y,\theta) = \alpha_j(y,\theta)/\beta_j(y,\theta)\]
with $\alpha_j,\beta_j$ analytic in a neighborhood of $[0,1]\times K$. 
The denominators $\beta_j$ satisfy
\begin{equation}
|\beta_j(y,\bar\theta(\bff,\eps))| \ge \eps^{C_1\delta_0}\ 
\ \forall\, y\in\varphi_j(\scripte'). 
\end{equation}
The exponents $C,C_1$ depend only on the data $a_j,\varphi_j$
and the choice of $\eps_0,\delta_0$.

Consider the function
of $(x,\theta)\in[0,1]^2\times K$ defined by
\begin{equation}
H(x,\theta) = 
\sum_{j=1}^3 a_j(x) \cdot \alpha_j(\varphi_j(x),\theta)
\cdot \prod_{i\ne j} \beta_i(\varphi_i(x),\theta)
\end{equation}
along with the partial derivatives
$\frac{\partial^\alpha}{\partial x^\alpha} H(x,\theta)$ with respect to $x$
of $H$, indexed by $\alpha\in\{0,1,2,\dots\}^2$.
This function $H$ is arrived at by multiplying
$\sum_{j=1}^3 a_j(x) F_j(\varphi_j(x),\theta)$
by
$\prod_{i=1}^3 \beta_i(\varphi_i(x),\theta)$
in order to arrive at a function that is holomorphic,
rather than merely meromorphic.

If $x\in\scripte'$ then 
\[ |H(x,\theta)| \lesssim
\big| \sum_j a_j(x) F_j(\varphi_j(x,\theta))\big|
= O(\eps)\]
since the functions $\beta_i$ are bounded.
Thus in order to majorize the Lebesgue measure
of the sublevel set $S(\bff,\eps)$, it will suffice to produce
a satisfactory majorization of the measure of a sublevel
set of $x\mapsto H(x,\bar\theta(\bff,\eps))$. 

To analyze sublevel sets associated to $H$
requires information concerning $H$,
and information concerning $\bar\theta(\bff,\eps)$.
But first, we review a happy general property \eqref{happy} of 
real analytic functions that depend real analytically
on auxiliary parameters. 
See Bourgain \cite{bourgainold}, and Stein and Street \cite{steinstreet}. 
There exist $N,C<\infty$ such that
for any multi-index satisfying $|\alpha| = N+1$,
for every $(x,\theta)\in[0,1]^2\times K$,
\begin{equation} \label{happy}
\big|\frac{\partial^\alpha}{\partial x^\alpha} H(x,\theta)\big|
\le C \sum_{|\beta|\le N} 
\big|\frac{\partial^\beta}{\partial x^\beta} H(x,\theta)\big|.
\end{equation}

Introducing the nonnegative $C^\omega$ function
\begin{equation}
\tilde H(x,\theta) = \sum_{|\beta|\le N} 
\big|\frac{\partial^\alpha H(x,\theta)}{\partial x^\alpha} \big|^2,
\end{equation}
it follows from the Cauchy-Schwarz inequality that
$\tilde H$ satisfies the differential inequality
\begin{equation}
|\nabla_x \tilde H(x,\theta)|\le C'|\tilde H(x,\theta)|
\end{equation}
uniformly for all $(x,\theta)\in [0,1]^2 \times K$.
This differential inequality allows us to replace $\tilde H(x,\theta)$
by a function of $\theta$ alone; it implies that
there exists $C\in(0,\infty)$ such that the function 
$G(\theta) = \tilde H((0,0),\theta)$
satisfies
\begin{equation}
C^{-1} G(\theta) \le \tilde H(x,\theta) \le CG(\theta)
 \text{ uniformly for all $(x,\theta)\in [0,1]^2 \times K$.}
\end{equation}
$G\in C^\omega$ in a neighborhood of $K$,
and $H(x,\theta)=0$ for every $x\in[0,1]^2$
if and only if $G(\theta)=0$.

The following result, a variant of a lemma often
attributed to van der Corput, is essentially well known.
\begin{lemma}
Let $N<\infty$.
Let $C_1,C_2\in(0,\infty)$.
There exist $C<\infty$  and $\rho>0$ with the following property.
Let $\psi\in C^{N+1}([0,1]^2)$
satisfy $\norm{\psi}_{C^{N+1}}\le C_2$
and
\[ \sum_{0\le|\alpha|\le N} |\partial^\alpha\psi(x)| \ge C_1
\ \forall\,x\in[0,1]^2.\]
Then
for any $\eps>0$,
\begin{equation}
\big| \{x\in[0,1]^2:
|\psi(x)|\le\eps\} \big|
\le C\eps^\rho.
\end{equation}
\end{lemma}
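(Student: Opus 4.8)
The plan is to reduce the two–dimensional statement to the classical one–dimensional sublevel set inequality by a covering argument, the one genuinely new ingredient being a rotation of coordinates that converts a dominant \emph{mixed} derivative into a large \emph{pure} derivative in a suitable direction. We may assume $\eps\le 1$, since for $\eps\ge 1$ the set is contained in $[0,1]^2$ and has measure $\le 1\le\eps^\rho$. Throughout, all balls are intersected with $[0,1]^2$; since such intersections are convex, slicing them by lines still produces intervals, so no extension of $\psi$ past the square is needed.

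First I would fix, for each $x_0\in[0,1]^2$, an integer $k=k(x_0)\in\{0,1,\dots,N\}$ with $\sum_{|\alpha|=k}|\partial^\alpha\psi(x_0)|\ge C_1/(N+1)$; this exists by the pigeonhole principle applied to the hypothesis $\sum_{|\alpha|\le N}|\partial^\alpha\psi(x_0)|\ge C_1$. Because $\norm{\psi}_{C^{N+1}}\le C_2$, every partial derivative of $\psi$ of order $\le N$ is Lipschitz with constant at most $2C_2$. If $k(x_0)=0$ this is the trivial case: $|\psi(x_0)|\ge C_1/(N+1)$, hence $|\psi|\ge C_1/(2(N+1))$ on a ball $B(x_0,r_0)$ whose radius $r_0>0$ depends only on $C_1,C_2,N$, and there the sublevel set is empty once $\eps$ is small, while for larger $\eps$ the desired bound is immediate. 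If $k(x_0)\ge 1$, then $\theta\mapsto \partial_{(\cos\theta,\sin\theta)}^{k}\psi(x_0)=\sum_{a+b=k}\binom{k}{a}(\cos\theta)^a(\sin\theta)^b\,\partial_{x_1}^a\partial_{x_2}^b\psi(x_0)$ is a degree-$k$ homogeneous polynomial in $(\cos\theta,\sin\theta)$ which is not identically zero on $S^1$, and whose coefficient vector has maximum modulus $\gtrsim C_1/(N+1)$. By the norm equivalence on the finite-dimensional space of degree-$k$ homogeneous polynomials in two variables, its supremum over $S^1$ is bounded below by a positive quantity depending only on $C_1$ and $N$; hence there is a unit vector $e$ with $|\partial_e^k\psi(x_0)|\ge c'$, with $c'=c'(C_1,C_2,N)>0$. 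By the Lipschitz bound, $|\partial_e^k\psi|\ge c'/2$ on $B(x_0,r_0)$ with $r_0>0$ again depending only on $C_1,C_2,N$.

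Next I would rotate coordinates on each such ball so that $e$ becomes the first coordinate axis; in the new coordinates $(u,v)$ the ball becomes a ball $B'$ of radius $r_0$ on which $\partial_u^k\psi\ge c'/2$. For each fixed $v$, the one-variable function $u\mapsto\psi(u,v)$ has $k$-th derivative bounded below by $c'/2$ on an interval of length $\le 2r_0$, so the classical one-dimensional sublevel set inequality (which is implicit in van der Corput's lemma; see \cite{steinbook}, \cite{carberywright}, or the references in the proof of Lemma~\ref{lemma:scriptf}) gives $|\{u:|\psi(u,v)|\le\eps\}|\le C\eps^{1/k}$ with $C$ depending only on $c'$ and $k$. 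Integrating in $v$ over an interval of length $\le 2r_0$ and using that rotations preserve planar Lebesgue measure yields $|\{x\in B(x_0,r_0):|\psi(x)|\le\eps\}|\le C\eps^{1/k}\le C\eps^{1/N}$ for $\eps\le 1$. Finally I would pass from the cover $\{B(x_0,r_0/2):x_0\in[0,1]^2\}$ to a finite subcover whose cardinality $L$ is bounded in terms of $r_0$, hence in terms of $C_1,C_2,N$, and sum the per-ball estimates to conclude $|\{x\in[0,1]^2:|\psi(x)|\le\eps\}|\le CL\,\eps^{1/N}$. Taking $\rho=1/N$ finishes the proof.

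I expect the main obstacle to be exactly the case in which the derivative of $\psi$ that is bounded below at a point is genuinely mixed (for instance $\partial_{x_1}\partial_{x_2}\psi$): slicing parallel to a coordinate axis then exposes no large pure derivative, and a naive application of Fubini plus the one-dimensional lemma fails. The resolution is the observation that the hypothesis controls the full block $\sum_{|\alpha|=k}|\partial^\alpha\psi|$, which forces some directional $k$-th derivative $\partial_e^k\psi$ to be large, after which the rotation reduces matters to the one-dimensional statement. The remaining points — Lipschitz control of lower-order derivatives from the $C^{N+1}$ bound, the norm equivalence supplying the good direction with constants depending only on $C_1,C_2,N$, and the bookkeeping that keeps $\rho$ and $C$ uniform — are routine.
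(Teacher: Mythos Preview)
Your argument is correct. The paper does not actually prove this lemma; it simply states it as ``essentially well known'' (a variant of van der Corput's lemma) and proceeds to use it. Your write-up supplies the standard details: pigeonhole to find a dominant order $k$, the norm-equivalence observation that a large block $\sum_{|\alpha|=k}|\partial^\alpha\psi(x_0)|$ forces a large pure directional derivative $\partial_e^k\psi(x_0)$ (since the homogeneous polynomial $(u,v)\mapsto\sum_{a+b=k}\binom{k}{a}u^av^b\,\partial_{x_1}^a\partial_{x_2}^b\psi(x_0)$ cannot vanish on $S^1$ without vanishing identically), a rotation to make that direction the $u$-axis, the one-dimensional sublevel inequality on each slice, and a finite cover by balls of radius $r_0$ depending only on $C_1,C_2,N$. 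Each step is sound; in particular the $k=1$ case of the one-dimensional slice bound needs no monotonicity hypothesis on $\psi'$ for the \emph{sublevel} (as opposed to oscillatory-integral) version, since $|\psi'|\ge c'/2$ on a connected interval already forces $\psi$ to be monotone there. The exponent $\rho=1/N$ you obtain is the natural one.
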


The upper bound on the $C^{N+1}$ norm cannot be dispensed with entirely
in this formulation. 
Consider for instance the example
$\psi(x) = \eps \sin(\eps^{-1} x_1)$, with $N=2$. 

A consequence of the lemma is for any $\theta$ for which $G(\theta)\ne 0$,
for any $\eta\in(0,\infty)$,
\begin{equation} \label{vdc:consequence}
\big|\{ x\in[0,1]^2: |H(x,\theta)| \le \eta G(\theta) \}\big|
\le C\eta^\rho.  \end{equation}

To complete the proof of the theorem, it would be desirable to know
that $G$ does not vanish
identically on $K$. We will not actually prove that this is the case.
Instead, note that 
if $|S(\bff,\eps)|\le \eps^{\delta_0}$
for every datum $(\bff,\eps)$
satisfying the hypotheses of the theorem,
then the desired conclusion holds with $\gamma = \delta_0$.
Thus it suffices to treat the case in which there exists at least one datum
$(\bff,\eps)$ that satisfies the reverse inequality
$|S(\bff,\eps)| > \eps^{\delta_0}$,
along with the hypotheses of the theorem.
We will prove that 
$G(\bar\theta(\bff,\eps))\ne 0$
for any such datum, and hence may assume 
in the remainder of the proof that $G$ does not vanish identically on $K$.

To prove that $G(\bar\theta)\ne 0$ in this situation, with 
$\bar\theta = \bar\theta(\bff,\eps)$, observe first that
none of the factors $\beta_j(y,\bar\theta)$ vanishes identically
as a function of $y$. Indeed, each such factor is $\gtrsim \eps^{C_1\delta_0}$ on a 
set whose Lebesgue measure is minorized by a positive quantity.
By dividing by $\prod_i \beta_i(\varphi_i(x),\theta)$
in the definition of $H$, 
we conclude that if $G(\bar\theta)=0$ then
$\sum_{j=1}^3 a_j(x) F_j(\varphi_j(x),\bar\theta)=0$
almost everywhere as a function of $x\in[0,1]^2$. 
By the main hypothesis of Theorem~\ref{thm:sublevel}, 
$\sum_{j=1}^3 a_j(x) F_j(\varphi_j(x),\bar\theta)$
vanishes on an open set of values of $x$ only if each function
$x\mapsto F_j(\varphi_j(x),\bar\theta)$ vanishes identically.
However, the construction has $|f_1(y)-F_1(y,\bar\theta)|=O(\eps)$ for $y$
in a subset of positive measure, and by hypothesis,
$|f_1(y)|\in[1,2]$ for almost every $y$. Therefore $F_1(y,\bar\theta)\ne 0$. 

Define the zero variety
\begin{equation} Z = \{\theta\in K: G(\theta)=0\}. \end{equation}
$G$ is  $C\omega$ and nonnegative
in a neighborhood of $K$, $G$ does not vanish identically on $K$,
and $K$ is connected. 
Therefore by a theorem of \L{}ojasiewicz \cite{Loja1959},
there exist $c,\tau>0$ such that
\begin{equation}
G(\theta) \ge c\distance(\theta,Z)^\tau \ \forall\,\theta\in K.
\end{equation}

If $\bff,\eps,S(\bff,\eps)$ satisfy the hypotheses, then
$\bar\theta = \bar\theta(\bff,\eps)$ satisfies
$\distance(\bar\theta,Z) \gtrsim \eps^{C\delta_0}$.
Indeed, consider any $x\in\scripte'$.
Then for $y = \varphi_1(x)$, $|f_1(y)-F_1(y,\bar\theta)|= O(\eps)$
and $|f_1(y)|\in[1,2]$,
so $|F_1(y,\bar\theta)|\ge 1-O(\eps)\ge\tfrac12$. 
Since $F_1 = \alpha_1/\beta_1$, it follows that
\begin{equation} 
|\alpha_1(y,\bar\theta)| \ge \tfrac12 |\beta_1(y,\bar\theta)| 
\gtrsim \eps^{C_1\delta_0}.
\end{equation}
The function $\alpha_1$ is real analytic with respect to both variables,
hence is Lipschitz, and vanishes identically on $Z$. 
Therefore
$\distance(\bar\theta,Z)\gtrsim\eps^{C\delta_0}$, 
and consequently
$G(\bar\theta) \gtrsim\eps^{C\delta_0}$.
Applying \eqref{vdc:consequence} gives
\begin{equation}
\big|\{ x\in[0,1]^2: |H(x,\bar\theta)| = O(\eps)  \}\big|
=O((\eps^{1-C\delta_0})^\rho). 
\end{equation}
If $\delta_0$ is chosen to be sufficiently small then $1-C\delta_0>0$, 
so this inequality becomes
\begin{equation}
\big|\{ x\in[0,1]^2: |H(x,\bar\theta)| = O(\eps)  \}\big|
=O(\eps^\gamma), 
\end{equation}
for a certain exponent $\gamma>0$ that depends only on the coefficients $a_j$
and the mappings $\varphi_j$.
This completes the proof of Theorem~\ref{thm:sublevel}.
\qed

\section{A remark and a question}

Continuing to assume linearity of the mappings $\varphi_j$,
more can be deduced from the analysis in \S\ref{section:variablecoeffs}. 
Drop the assumption that no nontrivial
solution exists, and ask whether for any $\bff=(f_1,f_2,f_3)$ and any $\eps$, 
$\bff$ can be approximated within $O(\eps)$ 
on some subset $S'\subset S(\bff,\eps)$
satisfying $|S'|\gtrsim |S(\bff,\eps)|^C$,
by an $\reals^3$--valued function $\bg$ 
drawn from a finite-dimensional family of $C^\omega$ functions
that depends only on the data $(\varphi_i,a_i: i\in\{1,2,3\})$. 
More generously, in light of that analysis, 
we allow meromorphic approximants by asking whether there exist $g_j$ and $\beta_j$,
drawn from such a family, such that $\beta_j$ does not vanish
identically and $\beta_j f_j-g_j = O(\eps^{1-\rho})$ on $\varphi_j(S')$.
We refer to this as the approximability property.

It suffices to approximate $f_k$ by a component $g_k$ of such a $\bg$
for a single index $k$, for then a rather simple analysis
can be applied to the relation 
$\sum_{j\ne k} a_j (f_j\circ\varphi_j) = -a_k (g_k\circ\varphi_k) +O(\eps)$;
restrict this equation to level curves of $\varphi_i$ for
each of the two indices $i\ne k$ in turn and exploit the transversality hypothesis.

The analysis in \S\ref{section:variablecoeffs} shows that  $f_2$ can be so approximated,
except possibly in the special case in which $a_2(x,y)/a_3(x,y)$
can be factored in the form $h(x+y)/k(y)$,
that is, $(h\circ\varphi_3)\,/\,(k\circ\varphi_2)$.
This reasoning can be repeated for any permutation of the
indices $1,2,3$. The conclusion, in invariant form with the mappings 
$\varphi_j$ assumed to be linear, 
is that the approximability property holds, and follows from the analysis sketched,
for all but a small family of exceptional cases. 
Each of those exceptional cases can be transformed, by
application of symmetries of the problem, to one of the two examples
\begin{align} 
\label{classical}
&f_1(x) + f_2(y) + f_3(x+y)=0. 
\\
\label{canonical!} 
& f_1(x) + f_2(y) + e^x f_3(x+y)=0. 
\end{align}
These symmetries are linear changes of variables in $\reals^2$ and
in the domains $\reals^1$ of the three mappings $\varphi_j$,
multiplication of the equation by an arbitrary nowhere vanishing
$C^\omega$ function $b(x,y)$, 
and incorporation of coefficients into functions $f_j$
via multiplicative substitutions $\tilde f_j(x) = f_j(x) u_j(x)$,
with $u_j\in C^\omega$ vanishing nowhere in the relevant domain.
The equation \eqref{canonical!} has a two-dimensional space of solutions $\bff$, 
with $f_3(x) = c_1e^{-x} + c_2$ for arbitrary coefficients $c_1,c_2\in\reals$.
The approximability property does not hold for either \eqref{classical}
or \eqref{canonical!};
counterexamples can be constructed by exploiting multiprogressions of arbitrarily high rank.
\qed

\medskip
\begin{question} \label{question:19.1}
Let $\eps>0$, and let $\bff$ be measurable.
Let $\varphi_j(x,y) = x$, $=y$, and $=x+y$ for $j=1,2,3$, respectively.
Let $S(\bff,\eps)$ be the set of all $(x,y)\in B$ satisfying $|f_1(x)+f_2(y)+e^x f_3(x+y)|<\eps$.

Do there exist an exact $C^\omega$ solution $\bff^*$ of \eqref{canonical!}
and a subset $S'\subset S(\bff,\eps)$ satisfying $|S'|\ge c|S(\bff,\eps)|^C$
such that $|f_j\circ\varphi_j(x,y)-f_j^*\circ\varphi_j(x,y)|\le C\eps$ for every $(x,y)\in S'$?
The constants $c,C$ are to be independent of $\bff,\eps$.  \end{question}

The answer is negative for the equation \eqref{classical}.

\medskip
\begin{question} \label{question:19.2}
Does Theorem~\ref{thm:sublevel} remain valid
if the mappings $\varphi_j$ are assumed to be merely real analytic
with pairwise transverse gradients, rather than linear?
\end{question}

A manuscript answering Question~\ref{question:19.2} in the affirmative, under certain
auxiliary hypotheses, is in progress \cite{sublevel4}.
That result is used to establish
a quadrilinear analogue of Theorem~\ref{thm:nonosc}  --- again, under
auxiliary hypotheses --- in \cite{quadrilinear}.
It would be desirable to go farther, dropping the hypothesis that no exact $C^\omega$
solutions of the underlying equation exist, and weakening the conclusion
to approximability by exact solutions, as in Question~\ref{question:19.1}.

\section{Large sublevel sets: An example} 
\label{section:sublevelexample}

Consider the ordered triple of submersions $[0,1]^2\to\reals$
defined by $(x,y)\mapsto x$, $\mapsto y$, and $\mapsto x+y$.
To any ordered triple $(f,g,h)$ of Lebesgue measurable functions associate
the sublevel set 
\begin{equation}
\scripte = \{(x,y)\in[0,1]^2: 
|g(x)-h(x+y)|<\eps
\ \text{ and } \ 
|y-f(x)-h(x+y)|<\eps \}
\end{equation}
defined by the indicated inhomogeneous system of two inequalities for $(f,g,h)$.
The reasoning developed above, for instance in \S\ref{section:sublevel1}, 
demonstrates that
\begin{equation} \label{crudebound} |\scripte|  = O(\eps^{1/2}).  \end{equation}
That reasoning may appear to have been wasteful,
and indeed,  $|\scripte| = O(\eps)$ uniformly for all affine functions $f,g,h$. 
Here we show, via a construction based on multiprogressions of rank $2$,
that the exponent $1/2$ in \eqref{crudebound} cannot be improved. 

Let $\eps>0$ be small, with $\eps^{-1/2}\in\naturals$.
Set $N = \eps^{-1/2}$.
For each $k\in\integers$, define
\begin{equation}
f(x) =  k\eps^{1/2}-x
\ \text{ whenever $|x - k\eps^{1/2}|<\tfrac12 \eps^{1/2}$.}
\end{equation}
Define
\begin{equation} \label{notypo1}
g(y) =  k\eps^{1/2} + k\eps
\ \text{ whenever $|y-k\eps^{1/2}|<\tfrac12\eps^{1/2}$.}
\end{equation}
For each $t\in\reals$ there exist unique $k,n\in\integers$
with $0\le n < N$ such that
$|t - (k\eps^{1/2}+n\eps)|<\tfrac12 \eps$. 
Define
\begin{equation} \label{notypo2}
h(t) =  n\eps^{1/2}+n\eps
\ \text{ whenever } 
|t - (k\eps^{1/2}+n\eps)|<\tfrac12 \eps. 
\end{equation}

For $m,n\in\integers$ satisfying $0\le n< N$,
define $\scripte(m,n)$ to be the set of all $(x,y)\in \reals^2$ that satisfy the three inequalities
\begin{equation} \left\{ \begin{aligned}
& |y-n\eps^{1/2}|< \tfrac12 \eps^{1/2}, 
\\& |x-(m-n)\eps^{1/2}|<\tfrac12\eps^{1/2},
\\& |x+y-(m\eps^{1/2}+n\eps)|< \tfrac12 \eps.
\end{aligned} \right. \end{equation}
The sets $\scripte(m,n)$ are pairwise disjoint and satisfy
\begin{equation} |\scripte(m,n)| = \eps^{3/2} + O(\eps^2). \end{equation}
The number of indices $(m,n)\in \integers \times \{0,1,2,\dots,N-1\}$ 
for which $\scripte(m,n)\subset[0,1]^2$ is $\ge c\eps^{-1}$.

If $\scripte(m,n)\subset [0,1]^2$, then $\scripte(m,n)\subset \scripte$. 
Indeed, let $(x,y)\in\scripte(m,n)$.
Firstly,
\begin{equation} g(y)-h(x+y) = 0  \end{equation}
since both $g(y)$ and $h(x+y)$ are defined to be $n\eps^{1/2}+n\eps$
in this region.
Secondly,
\begin{align*}
f(x)+h(x+y)-y 
&=  ((m-n)\eps^{1/2} -x)  + h(x+y) -y
\\ & =  - \big( x+y-m\eps^{1/2}-n\eps\big) + \big(h(x+y) -n\eps^{1/2}-n\eps\big).
\end{align*}
Since $x+y$ lies in the strip indicated in the definition
of $\scripte(m,n)$,
\[ |x+y-m\eps^{1/2}-n\eps|<\tfrac12\eps
\text{ and } h(x+y) = n\eps^{1/2} +n\eps.\]
Consequently
\begin{equation} |y-f(x)-h(x+y)| < \tfrac12\eps.  \end{equation}

Thus $\scripte(m,n)\subset\scripte$ whenever $\scripte(m,n)\subset[0,1]^2$. 
There are $\ge c\eps^{-1}$ such sets, pairwise disjoint
and satisfying $|\scripte(m,n)| \ge \eps^{3/2}-O(\eps^2)$. Therefore
\begin{equation} |\scripte| 
\ge c' \eps^{1/2} \end{equation}
for a certain constant $c'>0$.
\qed


\section{Remarks on sublevel sets} \label{section:sublevelquestion}

Implicit in the discussion is a variant of the usual notion of a sublevel set bound.
Let $d\ge 1$ be an arbitrary dimension.
Let $\eps,\delta>0$  and $N\in\naturals$ be parameters.

Let $\scripts$ be the collection of all sets 
$S\subset\delta\integers=\{\delta n: n\in\integers\}$
of cardinality exactly $|S|=N$. 
Let there be given $d$ functions $h_j$,
each with domain $[0,1]$ and with range in $\scripts$.
Codomains consisting of sets of cardinality $N$, rather than of $N$-tuples,
are natural in the variant that we seek to formulate.
Set $\bh = (h_j: j\in\{1,2,\dots,d\})$.

Let $\phi:[0,1]^d\to\reals$ be $C^1$.
Define $E_N(\phi,\bh) \subset[0,1]^d$ 
to be the set of all $\bx\in[0,1]^d$
for which there exists  
$(s_1,\dots,s_d)\in (\delta\integers)^d$, 
with each $s_j\in h_j(x_j)$,
satisfying 
\begin{equation} |\nabla_j\phi(\bx)-s_j|\le\eps. \end{equation}
Define
\begin{equation}
\Lambda_N(\phi) = \sup_{\bh} |E_N(\phi,\bh)|.
\end{equation}

\begin{question}
For $\phi$ or for a class of functions $\phi$,
what upper bounds are valid for 
$\Lambda_N(\phi)$? 
\end{question}

In the special case $N=1$,
in which $h_j(x_j)$ can be regarded as a scalar rather than a set,
we are asking for an upper bound for $\big|\{\bx: |\nabla(\tilde\phi)<\eps|\}\big|$,
with $\tilde\phi(\bx) = \phi(\bx)-\sum_j H_j(x_j)$ and
$H'_j = h_j$.
There is a trivial  majorization
\begin{equation}
\Lambda_N(\phi) \le N^d \Lambda_1(\phi),
\end{equation}
obtained by regarding each $h_j$
as a collection of $N$ real-valued functions $h_{j,i}$,
leading to an inclusion 
\[ E_N(\phi,\bh) \subset \bigcup_{i_1,\dots,i_N} E_1(\phi,(h_{i_1},\dots,h_{i_N})). \]
Thus
\[\Lambda_N(\phi) \le N^d \Lambda_1(\phi).\]
We hope that for large $N$, for natural classes of $\phi$ 
such as compact families of $C^\omega$ functions,
stronger bounds hold for $\Lambda_N(\phi)$.

This is a simplification of the issue that arose,
with $N$ comparable to $\lambda^t$ for a certain positive exponent $t$,
in the proof of Theorem~\ref{mainthm}. 
Let $L_j:[0,1]^2\to\reals$ be submersions, for $j\in\{1,2,3\}$,
with no two of these having linearly dependent differentials at any $\bx\in[0,1]^2$.
Let $\eps,\delta,N,\scripts$ be as above.
Let $M\in\naturals$ be another parameter.

Let $\phi:[0,1]^2\to\reals$ be $C^1$.
Let $\bh$ be as above. 
Define $M(\bx)$ to be the number of tuples $(s_1,s_2,s_3)$
with each $s_j\in h_j(L_j(\bx))$
that satisfy 
$|\nabla\phi(\bx)-s_j|<\eps$.
Let 
\[ E(\phi,\bh)=\{\bx: M(\bx)\ge M\}. \]
Let
\[ \Lambda(\phi) = \sup_\bh |E(\phi,\bh)|.\]

\begin{question} \label{question:sublevel2}
For $\phi$ and $\{L_j\}$ or for a class of such functions,
what upper bounds does $\Lambda(\phi)$ 
satisfy in terms of $\eps,N,M$?
\end{question}

\medskip
A multitude of variants and generalizations of sublevel set inequalities
are likely to be relevant to future investigations of oscillatory integral inequalities. 
With 
\[S(\bff,\eps) = \Big\{x\in S: \big|\sum_{j\in J} a_j(x)(f_j\circ\varphi_j)(x)\big|<\eps\Big\},\] 
the cardinality $|J|$ of the index set $J$,
the dimension $D$ of the ambient set $S$,
the dimension $d$ of the codomain of the mappings $\varphi_j$,
the nature of the coefficient functions $a_j$
(which may be scalar- or matrix-valued, $C^\omega$ or $C^\infty$, and so on),
the dimension of the codomain of vector-valued functions $f_j$
can all be varied. Mappings $\varphi_j$ that are homogeneous of degree
one with respect to a subset of the coordinates for $S$ arise naturally, 
as Cauchy-Schwarz/$TT^*$ reasoning
leads naturally to factors $f_k(\varphi_k(x'))\,\overline{f_k}(\varphi_k(x))$,
and the substitution $x' = x+t$ and Taylor expansion then lead to
$F_k(\psi_k(x,t)) = f_k(\varphi_k(x)+tD\varphi_k(x))\overline{f_k}(\varphi_k(x))$.
Sublevel sets of the more general type
\[S(\bff,\eps) = \Big\{x\in S: \big|\sum_{j\in J}\sum_{\alpha\in A} 
a_{j,\alpha}(x)(f_{j,\alpha}\circ\varphi_j)(x)\big|<\eps\Big\},\]
with $A$ another finite index set and with the mappings $\varphi_j$
independent of the index $\alpha\in A$, arise upon consideration
of the formal gradient of $\sum_{j\in J} a_j\cdot(f_j\circ\varphi_j)$.


\end{document}